\numberwithin{equation}{section}
\newcommand{\R}{\mathbb{R}}
\newcommand{\N}{\mathbb{N}}
\renewcommand{\S}{\mathbb{S}}
\renewcommand{\div}{\operatorname{div}}
\newcommand{\Ric}{\operatorname{Ric}}
\newcommand{\Rm}{\operatorname{Rm}}
\newcommand{\tr}{\operatorname{tr}}
\newcommand{\supp}{\operatorname{Supp}}
\newcommand{\vol}{\operatorname{vol}}
\newcommand{\id}{\operatorname{id}}
\newcommand{\loc}{\text{loc}}
\renewcommand{\L}{\mathcal{L}}
\newcommand{\dist}{\operatorname{dist}}
\theoremstyle{plain}
\newtheorem{theorem*}{Theorem}
\newtheorem{corollary*}{Corollary}
\newtheorem{question*}{Question}
\newtheorem{definition*}{Definition}
\newtheorem{claim*}{Claim}
\newtheorem{theorem}{Theorem}[section]
\newtheorem{lemma}[theorem]{Lemma}
\newtheorem{corollary}[theorem]{Corollary}
\newtheorem{proposition}[theorem]{Proposition}
\theoremstyle{definition}
\newtheorem{definition}[theorem]{Definition}
\theoremstyle{remark}
\newtheorem{remark}[theorem]{Remark}
\title{ADM mass for $C^0$ metrics and distortion under Ricci-DeTurck flow}
\author{Paula Burkhardt-Guim}
\address{Courant Institute of Mathematical Sciences \\ New York University}
\email{pbguim@cims.nyu.edu}
\begin{document}
\begin{abstract}
We show that there exists a quantity, depending only on $C^0$ data of a Riemannian metric, that agrees with the usual ADM mass at infinity whenever the ADM mass exists, but has a well-defined limit at infinity for any continuous Riemannian metric that is asymptotically flat in the $C^0$ sense and has nonnegative scalar curvature in the sense of Ricci flow. Moreover, the $C^0$ mass at infinity is independent of choice of $C^0$-asymptotically flat coordinate chart, and the $C^0$ local mass has controlled distortion under Ricci-DeTurck flow when coupled with a suitably evolving test function.
\end{abstract}
\maketitle

\tableofcontents

\section{Introduction}\label{sec:introduction}

In recent years considerable attention has been devoted to the study of Riemannian metrics with lower scalar curvature bounds in various nonsmooth settings (see, for instance, \cite{Gro14}, \cite{LeeLeFloch14}, \cite{LeFlochSormani14}, \cite{Bam16}, \cite{ParkTianWang18}, \cite{Jauregui20A} among others). In the $C^0$ setting, Gromov \cite{Gro14} showed that pointwise lower scalar curvature bounds are preserved under uniform convergence. Somewhat more recently, the Ricci and Ricci-DeTurck flows have emerged as useful tools in this setting, since they provide a smoothing of the metric under which the scalar curvature has a well-behaved evolution equation (a Ricci-DeTurck flow is a parabolic flow that is related to a Ricci flow via pullback by a family of diffeomorphisms; the precise definition is given in Section \ref{sec:preliminaries}). For instance, in \cite{Bam16}, Bamler provided a Ricci flow proof of Gromov's \cite{Gro14} result (see also \cite{JiangShengZhang21}, \cite{LammSimon21}, \cite{HuangLee21}, \cite{ChuLee22}). In \cite{PBG19} (see also \cite{PBG20}, \cite{PBGthesis}) the author introduced a synthetic notion of pointwise lower scalar curvature bounds for $C^0$ Riemannian metrics using Ricci flow.

In light of this context, it is natural to ask whether other metric quantities associated with the scalar curvature may be formulated using only $C^0$ data of the metric, and whether anything can be learned about these quantities by letting the metric evolve by Ricci or Ricci-DeTurck flow. One such quantity is the ADM mass. Recall that if $(M^n, g)$ is a smooth Riemannian manifold and $\Phi: M\setminus K\to \R^n\setminus \overline{B(0,1)}$ is a smooth coordinate chart for $M$, where $K$ is a compact subset of $M$, then the ADM mass (introduced in \cite{ArnowittDeserMisner61}) is given by (\cite[p. 143]{SchoenMontecatini}):
\begin{equation}\label{eq:classicalADMmassdef}
m_{ADM}(g) := \lim_{r\to \infty}\frac{1}{4\pi (n-1)\omega_{n-1}}\int_{\S(r)} \sum_{i=1}^{n}(\partial_i g_{ij} - \partial_j g_{ii})\nu^jdS,
\end{equation}
where the coordinate expression in the integrand corresponds to the coordinates $\Phi$, 
\begin{equation*}\S(r) = \{x\in \R^n : (x^1)^2 + \cdots + (x^n)^2 = r^2\},
\end{equation*}
 $\nu$ denotes the outward unit normal to $\S(r)$ with respect to the Euclidean metric, $\omega_{n-1}$ denotes the Euclidean volume of the $(n-1)$-dimensional unit sphere, and $dS$ denotes the Euclidean surface measure on $\S(r)$. Henceforth, if we wish to emphasize the coordinate chart $\Phi$, then we write $m_{ADM}(g, \Phi)$. 

A priori it is not clear whether the limit (\ref{eq:classicalADMmassdef}) should always exist, or whether the limit depends on the choice of $\Phi$, but Bartnik \cite[Theorems $4.2$ and $4.3$]{Bartnik86} (see also \cite{Chrusciel88} for the asymptotically Minkowski case) showed that under certain conditions the ADM mass does indeed exist, is finite, and is independent of choice of coordinate chart:
\begin{theorem}[cf. \cite{Bartnik86}]\label{thm:classicalADMmassexistence}
Let $(M^n,g)$ be a smooth Riemannian manifold. Suppose that for some compact set $K\subset M$ there exists a coordinate chart $\Phi: M\setminus K\to \R^n\setminus \overline{B(0,1)}$ for $M$ such that, for some $\tau > \tfrac{1}{2}(n-2)$,
we have
\begin{equation}\label{eq:C0AFdecay}
|(\Phi_*g)_{ij} - \delta_{ij}|\big|_x = O(|x|_{\delta}^{-\tau}),
\end{equation}
\begin{equation}\label{eq:CkAFdecay}
|\partial_k (\Phi_*g)_{ij}|\big|_x = O(|x|_{\delta}^{-\tau - |k|}) \text{ for } |k|= 1,2,
\end{equation}
where $\delta$ denotes the Euclidean metric and $k$ is a multiindex, and
\begin{equation}\label{eq:RL1}
\int_M |R(g)| < \infty,
\end{equation}
where $R(g)$ denotes the scalar curvature of $g$. Then the limit from (\ref{eq:classicalADMmassdef}) with respect to $\Phi$ exists, is finite, and is independent of choice of $\Phi$ satisfying (\ref{eq:C0AFdecay}) and (\ref{eq:CkAFdecay}).
\end{theorem}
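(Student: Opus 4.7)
The plan is to apply the divergence theorem in Euclidean coordinates on annular regions in $\R^n \setminus \overline{B(0,1)}$ to the vector field $V^j := \sum_i (\partial_i (\Phi_*g)_{ij} - \partial_j (\Phi_*g)_{ii})$ appearing in the ADM integrand. A direct coordinate computation, obtained by linearizing the scalar curvature expression about the Euclidean metric, produces an algebraic identity of the form
\begin{equation*}
R(g)\circ \Phi^{-1} = \partial_j V^j + E,
\end{equation*}
where $E$ is polynomial in the components of $\Phi_*g - \delta$, $\partial(\Phi_*g)$, and $\partial^2(\Phi_*g)$, with leading behavior of the form $|\partial(\Phi_*g)|^2 + |\Phi_*g - \delta|\,|\partial^2(\Phi_*g)|$. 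Applying the Euclidean divergence theorem on $B(r_2)\setminus B(r_1)$ then yields
\begin{equation*}
\int_{\S(r_2)} V\cdot \nu\, dS \;-\; \int_{\S(r_1)} V\cdot \nu\, dS \;=\; \int_{B(r_2)\setminus B(r_1)}\!\big(R(g)\circ\Phi^{-1} - E\big)\, dx,
\end{equation*}
which reduces existence and finiteness of $m_{ADM}(g,\Phi)$ to global integrability of $R(g)$ and $E$ on the end.

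Integrability of $R(g)$ is exactly hypothesis (\ref{eq:RL1}). For $E$, the decay hypotheses (\ref{eq:C0AFdecay}) and (\ref{eq:CkAFdecay}) combine to give the pointwise bound $|E|_x = O(|x|_\delta^{-2\tau - 2})$, so that $\int_{|x|\geq 1}|E|\,dx$ is controlled by $\int_1^\infty r^{-2\tau - 2 + n - 1}\,dr$, which is finite precisely because $\tau > \tfrac{1}{2}(n-2)$. This is the role of the weight assumption: it makes the quadratic remainder integrable, so that the right-hand side above is Cauchy in $r_2$ as $r_2 \to \infty$, and the limit (\ref{eq:classicalADMmassdef}) exists and is finite in a fixed admissible chart.

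For independence of $\Phi$, given two admissible charts $\Phi_1, \Phi_2$, the plan is to analyze the transition diffeomorphism $\Psi := \Phi_2\circ \Phi_1^{-1}$ near infinity in $\R^n$. The route I would follow is to produce an intermediate $g$-harmonic coordinate chart $\Phi_h$ on the end of $M$ by solving $\Delta_g u^k = 0$ with $u^k$ prescribed asymptotically by the coordinates of $\Phi_1$, and then observe that both $\Phi_1$ and $\Phi_2$ differ from $\Phi_h$ by solutions of elliptic systems on exterior Euclidean domains that decay at infinity. A Liouville-type rigidity statement in a suitable weighted function space then forces these differences to be asymptotic to affine maps, so that $\Psi$ is asymptotically a Euclidean isometry $x \mapsto Ax + b$ with $A \in O(n)$. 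The tensorial transformation of $V^j$ under rotations, together with translation invariance of the surface integral in the limit, then implies that the limits computed in the two charts agree.

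The first two steps above are essentially bookkeeping: the threshold $\tau > \tfrac{1}{2}(n-2)$ is exactly the critical weight at which the quadratic remainder $E$ becomes integrable, so that the surface integrals converge as $r \to \infty$. The main obstacle is the chart-independence assertion, which rests on the asymptotic affine rigidity of $\Psi$. This step is genuinely analytic and requires elliptic PDE on asymptotically Euclidean ends in weighted Sobolev or H\"older spaces, with enough decay control to exclude polynomial growth in the solutions; I expect it to consume the bulk of the argument and to be the technical heart of Bartnik's theorem.
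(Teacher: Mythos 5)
Your first two steps are correct and are the standard argument: writing $R(g)$ in the chart as $\sum_{i,j}(\partial_j\partial_i g_{ij}-\partial_j^2 g_{ii})$ plus a remainder bounded by $|\nabla g|^2+|g-\delta|\,|\nabla^2 g|$ and applying the Euclidean divergence theorem on annuli is precisely the computation recorded in this paper as Lemma \ref{lemma:Bartnikscalculation}, and your decay count $|E|=O(|x|^{-2\tau-2})$, integrable exactly when $\tau>\tfrac12(n-2)$, together with (\ref{eq:RL1}) (note $dV_g$ and $dx$ are comparable on the end, so the hypothesis transfers), does make the flux through $\S(r)$ Cauchy in $r$, so the limit (\ref{eq:classicalADMmassdef}) exists and is finite in a fixed admissible chart. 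Bear in mind the paper itself does not prove Theorem \ref{thm:classicalADMmassexistence}; it quotes it from Bartnik, so the comparison here is with the classical proof.

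The genuine gap is in the last step of your chart-independence argument. Establishing, via harmonic coordinates and weighted elliptic theory, that $\Psi=\Phi_2\circ\Phi_1^{-1}$ is asymptotic to a rigid motion, $\Psi(x)=Ax+b+\zeta(x)$ with $A\in O(n)$ and $\zeta$ decaying, is indeed Bartnik's route, but from there the conclusion does not follow from ``tensorial transformation of $V^j$ under rotations plus translation invariance'': $V^j$ is not a tensor, and after removing the isometry you must still account for $\zeta$. Pulling back, the two coordinate expressions of $g$ differ at leading order by $h_{ij}=\partial_i\zeta_j+\partial_j\zeta_i$ plus lower-order terms, so the ADM integrands differ by terms involving $\partial^2\zeta=O(|x|^{-\tau-1})$; their sphere integrals are a priori only $O(r^{\,n-2-\tau})$, and this exponent is positive throughout the range $\tfrac12(n-2)<\tau<n-2$ allowed by the theorem, so no size estimate can close the argument. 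The missing idea is an algebraic cancellation: for $h_{ij}=\partial_i\zeta_j+\partial_j\zeta_i$ one computes $\sum_i(\partial_i h_{ij}-\partial_j h_{ii})=\partial_i(\partial_i\zeta_j-\partial_j\zeta_i)$, the divergence of an antisymmetric two-tensor, and the flux of such a field through any closed hypersurface vanishes identically (in $n=3$ this is the statement that the flux of a curl through a closed surface is zero); the remaining cross terms, such as $\partial\big((g-\delta)\star\partial\zeta\big)$, are $O(|x|^{-2\tau-1})$, so their sphere integrals are $O(r^{\,n-2-2\tau})\to0$ precisely because $\tau>\tfrac12(n-2)$. One also needs the second-order decay $\partial^2\zeta=O(|x|^{-\tau-1})$ as output of the weighted estimates, and a Taylor expansion comparing $g$ at $\Psi(x)$ and at $Ax+b$. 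With this divergence-structure step inserted, your outline becomes essentially Bartnik's proof; without it, the assertion that the two limits agree does not follow. (As an aside, the present paper's Corollary \ref{cor:derivdecaynotneeded} obtains coordinate invariance by a completely different, Ricci--DeTurck flow argument, but only under the extra hypothesis $R(g)\geq 0$, so it cannot substitute for this step in the classical statement.)
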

Henceforth we will say that a continuous Riemannian metric is \emph{$C^0$-asymptotically flat} if, for some smooth coordinate chart, it satisfies (\ref{eq:C0AFdecay}) but not necessarily (\ref{eq:CkAFdecay}). As noted above, the ADM mass is a metric quantity that is associated with lower scalar curvature bounds; this is because of the Riemannian Positive Mass Theorem:
\begin{theorem}[cf. \cite{SchoenYau79}, \cite{SchoenYau81}, \cite{Witten81}, \cite{SchoenYau19}]\label{thm:PMT}
For $n\geq 3$ let $(M^n, g)$ be a smooth Riemannian manifold and suppose that there exists a compact set $K\subset M$ and a coordinate chart $\Phi: M\setminus K\to \R^n\setminus \overline{B(0,1)}$ for $M$ such that, for some $\tau > \tfrac{1}{2}(n-2)$, (\ref{eq:C0AFdecay}), (\ref{eq:CkAFdecay}), and (\ref{eq:RL1}) hold. If $R(g)\geq 0$ then $m_{ADM}(g) \geq 0$. Moreover, $m_{ADM}(g) = 0$ if and only if $g$ is flat.
\end{theorem}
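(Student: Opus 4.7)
My plan is to adapt Witten's spinor argument in the spin case, which yields both positivity and rigidity uniformly; for non-spin manifolds I would fall back on the Schoen-Yau minimal hypersurface method in dimensions $3 \le n \le 7$, with dimension reduction for higher $n$ as in \cite{SchoenYau19}.

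Assuming $(M^n, g)$ is spin, I would first fix a constant spinor $\psi_0$ on $\R^n \setminus \overline{B(0,1)}$, transplant it to the asymptotic end of $M$ via $\Phi$ and a smooth cutoff $\chi$, and look for a harmonic spinor $\psi$ with $D\psi = 0$ and $\psi - \chi \psi_0 \to 0$ at infinity. This reduces to showing that $D$ is Fredholm with trivial cokernel between suitable weighted Sobolev spaces adapted to the decay rate $\tau$; the threshold $\tau > \tfrac{1}{2}(n-2)$ is exactly what one needs so that, combined with the Lichnerowicz identity
\[ D^2 = \nabla^* \nabla + \tfrac{1}{4} R(g) \]
and the hypothesis $R(g) \ge 0$, standard weighted-space theory produces a unique $\psi$ of the desired form.

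Next I would integrate the Lichnerowicz identity over a coordinate ball $B(0,r)$ and apply Witten's integration-by-parts identity to obtain
\[
\int_{B(0,r)} \bigl( |\nabla \psi|^2 + \tfrac{1}{4} R(g) |\psi|^2 \bigr) dV_g = \oint_{\S(r)} (\text{boundary term}).
\]
The key computational step, which I expect to be the most delicate, is identifying the $r \to \infty$ limit of the boundary term as $c_n \cdot m_{ADM}(g)$ for an explicit positive dimensional constant $c_n$; this uses the asymptotic expansion of $\psi$ to sufficient order together with the decay assumptions (\ref{eq:C0AFdecay}) and (\ref{eq:CkAFdecay}), while (\ref{eq:RL1}) guarantees that the $L^2$ quantity on the left stabilizes as $r \to \infty$. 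Since the left-hand side is nonnegative under $R(g) \ge 0$ and $D\psi = 0$, passing to the limit yields $m_{ADM}(g) \ge 0$.

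For rigidity, if $m_{ADM}(g) = 0$ then the Witten identity forces $\nabla \psi \equiv 0$ and $R(g) |\psi|^2 \equiv 0$ for every choice of asymptotic $\psi_0$. Taking a basis of parallel spinors at infinity produces a full-rank family of parallel spinors on $M$, which forces $\Rm \equiv 0$; combined with (\ref{eq:C0AFdecay}) and completeness this gives an isometry with $(\R^n, \delta)$. The main obstacle throughout is the weighted-space analysis for $D$ and the precise boundary-term asymptotics; in the non-spin case, the further obstacle is carrying out the Schoen-Yau program, namely extracting from $m<0$ a complete stable minimal hypersurface asymptotic to a coordinate plane and deriving a contradiction from the stability inequality and the Gauss equation.
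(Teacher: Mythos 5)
Theorem \ref{thm:PMT} is not proved in this paper at all: it is quoted as classical background, with the proof deferred to the cited works of Schoen--Yau and Witten, so there is no in-paper argument to compare yours against. Your outline is essentially the standard strategy of exactly those references — Witten's harmonic-spinor identity with weighted Fredholm theory in the spin case, and the Schoen--Yau minimal-hypersurface/dimension-reduction program otherwise — and at the level of a sketch it is consistent with them. The one point to flag is that your rigidity argument (parallel spinors $\Rightarrow$ flat) only covers the spin case; in the non-spin setting the equality case requires a separate perturbation argument (e.g.\ deforming a metric with $m_{ADM}=0$ and $R\geq 0$, not flat, to one with negative mass or strictly positive scalar curvature), which your proposal does not address.
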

Here we have stated the result for a manifold with a single asymptotically flat end, but the result also holds for multiple ends; see \cite{SchoenYau81}, \cite{Witten81}. There have been a number of proofs of the Positive Mass Theorem using different techniques, the earliest of which were due to Schoen -- Yau \cite{SchoenYau79}, \cite{SchoenYau81} and Witten \cite{Witten81}. We refer the reader to \cite[Section $3.1$]{Bray11} for a more thorough discussion of the various proofs and their techniques. Moreover, we remark that the Ricci and Ricci-DeTurck flows have already emerged as useful tools in the context of the Riemannian Positive Mass Theorem; see, for instance, \cite{YLi16}, \cite{McFeronSzekelyhidi12}, and \cite{ChuLee22}. In view of Theorem \ref{thm:PMT} we often impose the condition that the Riemannian metric have nonnegative scalar curvature (in a generalized sense) throughout the rest of this paper. 

Several compelling notions of $C^0$ masses, involving volumes and capacities, have been introduced; see \cite{JaureguiLee16} and \cite{Jauregui20}. In this paper we will take a different approach from these works, with the intention of evolving the metric by Ricci-DeTurck flow in order to show the existence of a limit at infinity even for metrics that have only $C^0$ control. Towards the $C^0$ setting, observe that $m_{ADM}(g, \Phi)$ is computed by integrating over a single coordinate sphere, but when the limit $m_{ADM}(g, \Phi)$ exists, one may alternatively compute $m_{ADM}(g, \Phi)$ by integrating over a family of spheres weighted by some test function, since, if $\varphi: \R \to \R$ is any smooth function with $\int_{.9}^{1.1}\varphi(\ell)d\ell \neq 0$, then
\begin{equation}\label{eq:computingmADMviavarphi}
\begin{split}
&\frac{\int_{.9r}^{1.1r}\varphi(\tfrac{\ell}{r})\int_{\S(\ell)}\sum_{i=1}^n (\partial_i g_{ij} - \partial_j g_{ii})\nu^jdS  d\ell}{4\pi(n-1)\omega_{n-1} r\int_{.9}^{1.1}\varphi(\ell)d\ell}
\\&= \frac{\int_{.9}^{1.1}\varphi(\ell)\int_{\S(\ell r)}\sum_{i=1}^n (\partial_i g_{ij} - \partial_j g_{ii})\nu^jdS  d\ell}{4\pi(n-1)\omega_{n-1}\int_{.9}^{1.1}\varphi(\ell)d\ell} 
\xrightarrow[r\to\infty]{} m_{ADM}(g, \Phi).
\end{split}
\end{equation}

In fact, the observation (\ref{eq:computingmADMviavarphi}) is an entry point to proving a $C^0$ version of the Positive Mass Theorem: the significance of the left-hand side of (\ref{eq:computingmADMviavarphi}) is that it may be expressed solely in terms of the $C^0$ data of $g$; see Definition \ref{def:C0mass}. In order to get a well-defined limit at infinity we replace $\varphi$ with a family of functions $\varphi^r$ that vary with $r$. We will explain how $\varphi^r$ relates to $\varphi$ in Section \ref{sec:technicalintro}. We summarize this fact in the following theorem; a more precise statement is given in the next section (see Theorem \ref{thm:fullC0existence}).
\begin{theorem}\label{thm:vagueC0existence}
Let $M$ be a smooth manifold, and $g$ a continuous Riemannian metric on $M$. Suppose there is a smooth coordinate chart $\Phi: M\setminus K\to \R^n\setminus\overline{B(0,1)}$ for $M$, where $K$ is some compact set. For any smooth cutoff function $\varphi: \R \to \R^{\geq 0}$ with $\supp(\varphi)\subset\subset(.9, 1.1)$ and for any $r>0$, there exists a smooth family of functions $(\varphi^r)_{r>0}: \R\to \R$ such that $\varphi^r\xrightarrow[r\to\infty]{C^\infty} \varphi$, and there exists a quantity $M_{C^0}(g, \Phi, \varphi^r, r)$, depending on only the $C^0$ data of $\Phi_*g$, for which the following is true:
\begin{enumerate}
\item If $g$ is $C^2$ and $m_{ADM}(g,\Phi)$ exists, then $m_{ADM}(g, \Phi) = \lim_{r\to\infty}M_{C^0}(g, \Phi, \varphi^r, r)$.
\item\label{item:generalC0limitexistence} If $g$ has nonnegative scalar curvature in the sense of Ricci flow on $M\setminus K$ and (\ref{eq:C0AFdecay}) holds for $\Phi$ for some $\tau > (n-2)/2$, then $\lim_{r\to \infty}M_{C^0}(g, \Phi, \varphi^r, r)$ exists, is either finite or $+\infty$, and is independent of choice of such $\Phi$ and $\varphi$. Moreover, this limit is finite if and only if a particular condition involving the scalar curvature of time slices of Ricci-DeTurck flows associated to $g$ is satisfied.
\end{enumerate}
Furthermore, in the case that $M$ has multiple ends, the result holds if $M\setminus K$ is replaced by a neighborhood of an end of $M$ that is diffeomorphic to $\R^n\setminus \overline{B(0,1)}$.
\end{theorem}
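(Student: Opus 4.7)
The plan is to reformulate the classical ADM integrand so that all derivatives of $g$ are transferred onto the test function via integration by parts, giving a candidate $M_{C^0}$ that manifestly depends only on $C^0$ data, and then to couple the test function with a Ricci-DeTurck flow of $g$ in order to recover a well-defined limit at infinity in the nonsmooth setting. Concretely, rewriting the numerator of the left-hand side of (\ref{eq:computingmADMviavarphi}) as a volume integral over the annulus $A_r=\{.9r<|x|<1.1r\}$ (using $dS\,d\ell=dV$ and $\nu^j=x^j/|x|$), and integrating by parts using that $\varphi$ is compactly supported in $(.9,1.1)$, produces an expression that is linear in $(\Phi_*g)_{ij}-\delta_{ij}$ with coefficients built from $\varphi^r$ and its first derivatives. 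I would take this as the definition of $M_{C^0}(g,\Phi,\varphi^r,r)$.

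For part (1), when $g$ is $C^2$ the integration by parts is reversible, so $M_{C^0}(g,\Phi,\varphi,r)$ coincides with the left-hand side of (\ref{eq:computingmADMviavarphi}) and therefore converges to $m_{ADM}(g,\Phi)$. To replace $\varphi$ by $\varphi^r$, I would use the $C^\infty$-convergence $\varphi^r\to\varphi$ together with the $C^0$-AF decay (\ref{eq:C0AFdecay}) to show that $M_{C^0}(g,\Phi,\varphi^r,r)-M_{C^0}(g,\Phi,\varphi,r)\to 0$.

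For part (2), the main idea is to apply a Ricci-DeTurck flow $g(t)$ to the $C^0$ initial datum $g$ (using existence and smoothing results for $C^0$-AF initial data), obtaining for each $t>0$ a smooth, asymptotically flat metric to which Bartnik's theorem and, if $R(g(t))\geq 0$, the classical positive mass theorem apply; the hypothesis of nonnegative scalar curvature in the sense of Ricci flow is what guarantees the latter sign condition along the flow. The central technical step is the distortion identity promised in the abstract: one constructs $\varphi^r=\varphi^r(t)$ as the solution of a backward parabolic equation modeled on the linearization of Ricci-DeTurck at the Euclidean metric, designed so that $M_{C^0}(g(t),\Phi,\varphi^r(t),r)$ has $t$-derivative equal to a nonnegative spatial integral involving $R(g(t))$ plus an error tending to $0$ as $r\to\infty$. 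Integrating in $t$ and letting $r\to\infty$ yields the limit as either a finite value or $+\infty$, with finiteness equivalent to a uniform bound on the integrated scalar curvatures along the flow. Independence of $\Phi$ and of $\varphi$ is then obtained by applying the distortion identity to two admissible choices simultaneously and invoking Bartnik's invariance theorem on the smooth slices $g(t)$; the multi-ended statement is immediate since $M_{C^0}$ is supported in an annulus inside the chosen end.

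The hardest step is the error analysis in the distortion identity: one must balance the parabolic rescaling that $\varphi^r$ requires in order to absorb the evolution of $g(t)$, the annular localization at radius $\sim r$, and the decay rate $\tau>(n-2)/2$, in such a way that cross terms between $g-\delta$ and derivatives of $\varphi^r$ vanish in the limit while the positive curvature contribution survives. Establishing this requires careful weighted estimates along the Ricci-DeTurck flow and is where I expect most of the technical work to sit.
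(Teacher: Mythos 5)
Your definition of $M_{C^0}$ by integrating the ADM density against $\varphi$ over the annulus and moving all derivatives onto the test function is exactly the paper's Definition \ref{def:C0mass} (see Remark \ref{rmk:C0agreeswithC1average}), and your treatment of part (1) is essentially the paper's. But part (2) as you describe it has two genuine gaps. First, the mechanism of the distortion estimate is misidentified: the evolving test function $\varphi_{r^{-\eta}}$ of Lemma \ref{lemma:massdistortionestimate} is \emph{not} designed so that $\tfrac{d}{dt}M_{C^0}(g_t,\varphi^r(t),r)$ equals a nonnegative bulk integral of $R(g_t)$; it is designed so that all terms linear in $h=g_t-\delta$ cancel identically, leaving only quadratic terms controlled by the Koch--Lamm $\|\cdot\|_X$ estimates, so the coupled mass is \emph{nearly constant in $t$} (Lemmas \ref{lemma:massdistortionscale1}, Corollary \ref{cor:numeratordistortion}). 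The scalar curvature enters through a different route: at the smoothed slice $t=r^{2-\eta}$ one applies Bartnik's spatial divergence identity (Lemma \ref{lemma:Bartnikscalculation}) to compare the mass at radius $r$ and at a larger radius, and the $\beta$-weak nonnegativity is propagated to the lower bound $R(g_t)\geq -c\,r^{-A}$ at positive times (Lemmas \ref{lemma:earlierscalarcurvature}, \ref{lemma:scalarcurvaturegluing}). The resulting almost-monotonicity is in the \emph{radius} $r$, not in $t$, and existence of the limit then follows from Lemma \ref{lemma:monotonicityconverges}. Your proposed time-monotonicity would require the time derivative of a quantity linear in $h$ to produce a signed curvature term, which is not what the evolution (\ref{eq:heveuclbackground}) yields; without the cancellation structure you have no way to control the linear terms at all, since $\nabla h$ is only of size $t^{-1/2}\|h\|_{C^0}$.

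Second, and more decisively, coordinate-chart independence cannot be obtained by ``invoking Bartnik's invariance theorem on the smooth slices $g(t)$.'' The flows you would attach to $\Phi^1$ and $\Phi^2$ are flows of extensions of the two different pushforwards $(\Phi^m)_*g$ on $\R^n$; they are not two coordinate expressions of one smooth metric on $M$, and neither chart is assumed to satisfy the derivative decay (\ref{eq:CkAFdecay}) or $R\in L^1$ that Bartnik's theorem needs -- indeed Corollary \ref{cor:derivdecaynotneeded} exists precisely because the paper does \emph{not} assume these. The paper instead proves invariance directly at the $C^0$ level: the transition map $\Phi^2\circ(\Phi^1)^{-1}$ is shown to be locally $(1+cr^{-\tau})$-bilipschitz at scale $r$ (Lemma \ref{lemma:transmapdomains}), is mollified at scale comparable to $r$ and glued to a Euclidean isometry using the quantitative rigidity of almost-isometries (Proposition \ref{prop:gluingwmollifiedmaps}, Theorem \ref{thm:gluingtransmaptoiso}, Appendices \ref{appendix:almostisometries}--\ref{appendix:mollification}), and then the masses in the two charts are compared through the glued map together with translation invariance (Lemma \ref{lemma:affineC0monotonicity}) and the radial almost-monotonicity (Corollary \ref{cor:coordinatemonotonicity}, Theorem \ref{thm:fullC0monotonicity}). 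This gluing argument is the heart of the coordinate-invariance statement and is absent from your outline; as written, your appeal to Bartnik's theorem does not close that step.
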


\begin{corollary}\label{cor:derivdecaynotneeded}
Let $M^n$ be a smooth manifold and suppose $g$ is any $C^2$ Riemannian metric on $M$. Suppose that there is a smooth coordinate chart $\Phi: M\setminus K \to \R^n\setminus \overline{B(0,1)}$, where $K$ is some compact set, such that
\begin{enumerate}
\item\label{item:C0AF} there is some $\tau> (n-2)/2$ for which $\Phi_*g$ satisfies (\ref{eq:C0AFdecay}) for $\tau$, and
\item\label{item:mADMexists} the classical ADM mass $m_{ADM}(g, \Phi)$ exists.
\end{enumerate}
If $g$ has nonnegative scalar curvature, then $m_{ADM}(g, \Phi)$ is independent of choice of $\Phi$ satisfying \ref{item:C0AF} and \ref{item:mADMexists}, regardless of whether $\Phi_*g$ satisfies (\ref{eq:CkAFdecay}) or not. In particular, in the coordinate invariance statement in Theorem \ref{thm:classicalADMmassexistence}, the hypothesis (\ref{eq:CkAFdecay}) may be replaced with the condition that $m_{ADM}(g, \Phi)$ exists, when the scalar curvature of the metric is assumed to be nonnegative.
\end{corollary}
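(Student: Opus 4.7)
The plan is to reduce the corollary to the two items of Theorem \ref{thm:vagueC0existence} by exploiting the fact that the quantity $M_{C^0}(g, \Phi, \varphi^r, r)$ is defined using only $C^0$ data of $\Phi_*g$, and therefore is well-defined for any chart satisfying only (\ref{eq:C0AFdecay}) without requiring the derivative decay (\ref{eq:CkAFdecay}). This is the key point: item (2) of the theorem provides chart invariance of $\lim_{r\to\infty} M_{C^0}$ over the class of $C^0$-asymptotically flat charts, while item (1) identifies that limit with the classical ADM mass whenever the latter exists.

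Concretely, suppose $\Phi_1$ and $\Phi_2$ are two smooth charts, each satisfying (\ref{item:C0AF}) and (\ref{item:mADMexists}) with respective exponents $\tau_1, \tau_2 > (n-2)/2$, and fix a single cutoff $\varphi: \R \to \R^{\geq 0}$ as in the hypotheses of Theorem \ref{thm:vagueC0existence}. Applying item (1) of that theorem to each chart separately gives
\[
m_{ADM}(g, \Phi_i) \;=\; \lim_{r\to\infty} M_{C^0}(g, \Phi_i, \varphi^r, r), \qquad i = 1, 2.
\]
Since $g$ is smooth with $R(g) \geq 0$, it has nonnegative scalar curvature in the Ricci-flow sense on $M \setminus K$, so item (2) applies and asserts that the right-hand side is independent of the chosen chart within the $C^0$-asymptotically flat class. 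Hence $m_{ADM}(g, \Phi_1) = m_{ADM}(g, \Phi_2)$, giving the desired coordinate invariance; the final sentence of the corollary is then a direct restatement in view of Theorem \ref{thm:classicalADMmassexistence}.

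The only step that needs verification beyond direct invocation of Theorem \ref{thm:vagueC0existence} is the compatibility of classical $R(g) \geq 0$ with the synthetic notion of nonnegative scalar curvature in the sense of Ricci flow used in item (2). For smooth metrics this should be the easier direction of the comparison between the classical and synthetic definitions, and I expect it to be either contained in or a short consequence of \cite{PBG19}: a smooth Ricci--DeTurck flow starting at $g$ exists for short time, and nonnegativity of scalar curvature is preserved by the classical evolution equation and the maximum principle, which is precisely the condition built into the synthetic definition. This consistency check is the main technical obstacle; once it is in place, the corollary is a direct concatenation of items (1) and (2) of Theorem \ref{thm:vagueC0existence}.
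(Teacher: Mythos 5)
Your proposal is correct and is essentially the paper's own proof: the paper likewise applies the first statement of Theorem \ref{thm:vagueC0existence} to each chart to identify $m_{ADM}(g,\Phi_i)$ with $\lim_{r\to\infty}M_{C^0}(g,\Phi_i,\varphi^r,r)$, and then invokes the second statement for chart independence. The compatibility you flag — that a $C^2$ metric with $R(g)\geq 0$ has nonnegative scalar curvature in the sense of Ricci flow — is indeed the only background fact needed, and it is supplied by the earlier work \cite{PBG19}, \cite{PBG20} cited in Definition \ref{def:nonnegscalarRFdef}.
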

\begin{proof}[Proof of Corollary \ref{cor:derivdecaynotneeded}]
The first statement of Theorem \ref{thm:vagueC0existence} applied to $\Phi$ implies that, for any such $\Phi$, 
\begin{equation*}
m_{ADM}(g, \Phi) = \lim_{r\to\infty}M_{C^0}(g, \Phi, \varphi^r, r). 
\end{equation*}
On the other hand, the second statement of Theorem \ref{thm:vagueC0existence} implies that $\lim_{r\to\infty}M_{C^0}(g, \Phi, \varphi^r, r)$ is independent of choice of $\Phi$ satisfying (\ref{eq:C0AFdecay}) for $\tau > (n-2)/2$. 
\end{proof}

As we will explain, Theorem \ref{thm:vagueC0existence} follows from a monotonicity result for the $C^0$ local mass, the precise statement of which is given in the next section (see Theorem \ref{thm:fullC0monotonicity}):
\begin{theorem}\label{thm:vagueC0monotonicity}
Let $M$ be a smooth manifold and $g$ a $C^0$ metric on $M$. Suppose that $U_1$ and $U_2$ are open subsets of $M$ for which, for $m= 1,2$, there exist coordinate charts $\Phi^m: U_m\to \R^n\setminus \overline{B(0,1)}$ that determine the same end of $M$ (see Definition \ref{def:C0AFend}) and such that, for some $\tau_m> \tfrac{1}{2}(n-2)$, (\ref{eq:C0AFdecay}) holds for $\Phi^m$ and $\tau_m$. If $g$ has nonnegative scalar curvature in the sense of Ricci flow and $\varphi_m \geq 0$ are smooth cutoff functions with $\supp(\varphi_m)\subset \subset (.9, 1.1)$, then, for all sufficiently large $r$,
\begin{equation*}
M_{C^0}(g, \Phi^1, \varphi_1^{200r} ,200r) \geq M_{C^0}(g, \Phi^2, \varphi_2^r, r) - cr^{-\omega}
\end{equation*}
for some $\omega>0$, where $c$ and $\omega$ do not depend on $r$.
\end{theorem}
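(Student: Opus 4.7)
The strategy is to reduce the monotonicity to the smooth setting via Ricci--DeTurck flow, extract a signed bulk term from a divergence identity using $R(g)\geq 0$, and handle the change of charts by the $C^0$-asymptotic flatness. I would first evolve $g$ by a Ricci--DeTurck flow $(g_t)_{t\in(0,T]}$ starting from the $C^0$ datum $g$. The key reduction is the coupled flow-distortion estimate advertised in the abstract (essentially the content of Theorem \ref{thm:fullC0monotonicity}): when $\varphi^r$ is propagated along the flow by an appropriate backward parabolic equation, $M_{C^0}(g_t, \Phi^i, \varphi_i^{r_i}, r_i)$ differs from its $t=0$ value by at most $O(r_i^{-\omega})$ at a suitable time $t=t(r)$ that vanishes as $r\to\infty$. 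The hypothesis that $g$ has nonnegative scalar curvature in the sense of Ricci flow is exactly what gives $R(g_t)\geq 0$ on every positive time slice.

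For the smooth metric $g_t$, I would rewrite $M_{C^0}(g_t,\Phi,\varphi^r,r)$ as a bulk integral by combining (\ref{eq:computingmADMviavarphi}) with Fubini and integration by parts against the radial weight built from $\varphi$. Using the classical identity
\begin{equation*}
\sum_{i,j}\partial_j\bigl(\partial_i (g_t)_{ij} - \partial_j (g_t)_{ii}\bigr) = R(g_t) + Q(g_t,\partial g_t),
\end{equation*}
with $Q$ quadratic in $\partial g_t$, this rewrites the $C^0$-mass as a weighted integral of $R(g_t)$ against a nonnegative bump plus a $Q$-remainder. The $C^0$-asymptotically flat decay (\ref{eq:C0AFdecay}) combined with Shi-type interior derivative estimates on the smooth flow yields pointwise bounds on $|\partial g_t|$ on the relevant annuli, and the hypothesis $\tau>(n-2)/2$ makes this $Q$-remainder of size $O(r^{-\omega})$ for some $\omega>0$.

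The comparison between the two charts is then done in two steps. First, because the transition map $\Psi=\Phi^1\circ(\Phi^2)^{-1}$ satisfies $|\Psi(x)|/|x|\to 1$ at infinity (which holds for purely $C^0$-AF charts because the $g$-distance to a fixed compact set agrees with the Euclidean distance in either chart up to lower-order terms controlled by (\ref{eq:C0AFdecay})), one can transfer $M_{C^0}(g_t,\Phi^2,\varphi_2^r,r)$ to a $\Phi^1$-based quantity at comparable scale, up to an $O(r^{-\omega})$ error. Second, the difference of the two bulk representations within $\Phi^1$ is an integral of $R(g_t)\geq 0$ over the shell between the two weighted annuli, plus the combined $Q$- and chart-transition errors. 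Dropping the signed bulk term, and combining with the two flow-distortion reductions, yields the inequality. The factor $200$ is a safety margin ensuring enough gap between the two scales for all of these errors to be simultaneously controllable.

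The technical heart of the argument is the coupled flow-distortion estimate: one must find an evolution equation for $\varphi^r$ so that $\partial_t M_{C^0}(g_t,\Phi,\varphi_t^r,r)$ reduces cleanly to a bulk term proportional to $R(g_t)$, automatically signed, modulo genuinely small remainders. This is where the dependence $\varphi^r$ on $r$ (rather than a fixed rescaling $\varphi(\cdot/r)$) enters essentially, since the evolution of $\varphi^r$ must be designed to absorb the quadratic-in-$\partial g$ error generated by differentiating $M_{C^0}$ along the flow. The chart-transition step is also delicate in the purely $C^0$-AF setting, because the transition map has no a priori derivative control from $g$ alone, so one must exploit the smoothed flow $g_t$ rather than properties of the original metric. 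Once the distortion estimate is in hand, the divergence-theorem rewriting and the asymptotic matching of annuli across charts are comparatively routine.
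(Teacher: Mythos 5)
Your overall architecture (evolve by Ricci--DeTurck flow, show the mass is almost constant along the flow when coupled with an evolving test function, get the radial monotonicity at a positive time slice from Bartnik's divergence identity, then transfer between charts) matches the paper's, but two steps that you treat as routine are in fact where the real work lies, and as sketched they do not go through. First, it is not true that ``nonnegative scalar curvature in the sense of Ricci flow'' gives $R(g_t)\geq 0$ on every positive time slice: the hypothesis (\ref{eq:betaweakcondition}) is only a $\liminf$ condition as $t\searrow 0$ on balls of radius $Ct^{\beta}$, it is assumed only on the end (the flow is run from an extension of $g|_{A(0,.8r,1.2r)}$ to all of $\R^n$, where no curvature condition holds), and no maximum-principle preservation is available for merely $C^0$ initial data. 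The paper converts the synthetic condition into the quantitative bound $R(g_t)\geq -c\,r^{-A}$ on the relevant annuli and times via an iterated heat-kernel argument (Lemmas \ref{lemma:earlierscalarcurvature} and \ref{lemma:scalarcurvaturegluing}); without some such argument your signed bulk term is not actually signed. Relatedly, the evolving test function in Lemma \ref{lemma:massdistortionestimate} is designed so that the \emph{linear} ($\Delta h$) terms cancel exactly in $\frac{d}{dt}M_{C^0}$; the quadratic terms are not ``absorbed'' by the evolution of $\varphi^r$ but are estimated directly by the Koch--Lamm $X$-norm bound (\ref{eq:RDTFXest}) and (\ref{eq:RDTFderivests}), so the time derivative is small rather than equal to a signed $R$-term.

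Second, the chart-transition step cannot be reduced to ``$|\Psi(x)|/|x|\to 1$, hence transfer at comparable scale with $O(r^{-\omega})$ error.'' The mass at finite $r$ is a coordinate expression, and the transition map, while locally $(1+cr^{-\tau})$-bilipschitz (this \emph{is} derivative control, and it comes directly from (\ref{eq:C0AFdecay}) in both charts, not from the flow --- your suggestion to control the transition map through $g_t$ has no obvious implementation, since the flow lives in a fixed chart), may differ from the identity by a rotation and a translation of size comparable to $r$, which changes the integration annuli and the integrand $g_{ij}-\delta_{ij}$ in ways that are not $O(r^{-\omega})$ termwise. The paper handles this with a chain of quantitative steps you would need to supply: mollification of the transition map at scale $\sim r$ and the rigidity theorem that a $(1+\delta)$-bilipschitz map on a ball of radius $r$ is $C^0$-within $c\delta r$ of a Euclidean isometry (Appendix \ref{appendix:almostisometries}), a gluing of the mollified map to a single isometry $L^r$ outside radius $10r$ producing a diffeomorphism that still pulls $(\Phi_2)_*g$ back $cr^{-\tau}$-close to $\delta$ (Proposition \ref{prop:gluingwmollifiedmaps}, Theorem \ref{thm:gluingtransmaptoiso}), rotation invariance of $M_{C^0}$ (Lemma \ref{lemma:rotationinvarianceofmass}), and a separate monotonicity statement across a translation by a vector of length $\leq 25r$ (Lemma \ref{lemma:affineC0monotonicity}), which itself uses the scalar-curvature lower bound again rather than a direct comparison. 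This is exactly why the comparison is between scales $r$ and $200r$ rather than ``comparable'' scales; as written, your transfer step assumes the conclusion of the hardest part of the proof.
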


It is natural to ask where the $C^0$ quantity $\lim_{r\to\infty}M_{C^0}(g, \Phi, \varphi^r, r)$ agrees with other $C^0$ notions of mass that have already been introduced:
\begin{question*}
How does $\lim_{r\to\infty}M_{C^0}(g, \Phi, \varphi^r, r)$ relate to Huisken's isoperimetric mass \cite[Definition $11$]{JaureguiLee16} and Jauregui's isocapacitary mass \cite[Definition $4$]{Jauregui20}?
\end{question*}
Towards proving a $C^0$ version of Theorem \ref{thm:PMT} we ask:
\begin{question*}
If $g$ satisfies the hypotheses of \ref{item:generalC0limitexistence} in Theorem \ref{thm:vagueC0existence} and moreover $g$ has nonnegative scalar curvature in the sense of Ricci flow everywhere on $M$, do we have
\begin{equation*}
\lim_{r\to\infty}M_{C^0}(g, \Phi, \varphi^r, r)\geq 0?
\end{equation*}
\end{question*}

\subsection*{Acknowledgements} I would like to thank Richard Bamler and Bruce Kleiner for their guidance and helpful discussions regarding both the mathematics and the writing of this paper. I would also like to thank Dan Lee for suggesting that I look into Positive Mass Theorems, for sending me useful references, and for making several helpful comments on an earlier draft of this paper. I would like to thank Yuqiao Li for sending me many helpful comments on an earlier draft of this paper, and for reading my work in such detail. I am grateful to Otis Chodosh, Christos Mantoulidis, and Sung-Jin Oh for sending helpful references and explaining important background material to me. Finally, I would like to thank Charles Cifarelli for many useful discussions.

This material is based upon work supported by the National Science Foundation under Award No. DMS $2103145$ and by the National Science Foundation Graduate Research Fellowship Program under Grant No. DGE $1752814$.

\section{Technical Introduction}\label{sec:technicalintro}
We now make precise some of the statements from the previous section. We first define the $C^0$ mass.
\begin{definition}\label{def:C0mass}
Let $r>0$, $g$ be a $C^0$ metric on a smooth manifold $M$, $\Phi: U\to \R^n$ a smooth coordinate chart for $M$ such that $A(0, .9r, 1.1r) \subset \Phi(U)$, where $A(0, .9r, 1.1r)$ denotes the annulus in $\R^n$ measured with respect to the Euclidean metric, and $\varphi:\R\to \R$ some smooth function such that $\int_{.9}^{1.1}\varphi(s)ds \neq 0$. Writing $g_{ij}$ for $(\Phi_*g)_{ij}$, we define the \emph{$C^0$ local mass of $g$ with respect to $\varphi$ and $\Phi$ at $r$} by
\begin{equation}\label{eq:defC0mass}
\begin{split}
M_{C^0}&(g, \Phi,  \varphi, r)
\\& := 
\frac{(4\pi (n-1)\omega_{n-1})^{-1}}{r\int_{.9}^{1.1}\varphi(\ell)d\ell}\sum_{i,j = 1}^{n}\bigg[ \int_{A(o,.9r, 1.1r)} \left( \frac{n-2}{|x|}\varphi(\tfrac{|x|}{r}) + \frac{1}{r}\varphi'(\tfrac{|x|}{r})\right)\delta^{ij}(g_{ij} - \delta_{ij})
\\& \qquad \qquad + \left(\frac{1}{|x|}\varphi(\tfrac{|x|}{r}) - \frac{1}{r}\varphi'(\tfrac{|x|}{r})\right)(g_{ij} - \delta_{ij})\frac{x^ix^j}{|x|^2}dx
\\& \qquad \qquad+ \int_{\partial A(o, .9r, 1.1r)} (g_{ij} - \delta_{ij})\frac{x^i}{|x|}\varphi(\tfrac{|x|}{r})\nu^j - (g_{jj}- \delta_{jj})\frac{x^i}{|x|}\varphi(\tfrac{|x|}{r})\nu^i dS\bigg],
\end{split}
\end{equation}
where $\nu$ denotes the outward unit normal vector with respect to the Euclidean metric along $\partial A(0,.9r, 1.1r)$ and $dS$ is the Euclidean surface measure on $\partial A(0, .9r, 1.1r)$. If we are working with a given coordinate chart $\Phi$, or with a Riemannian metric on $\R^n$, then we write $M_{C^0}(g, \varphi, r)$ rather than $M_{C^0}(g, \Phi, \varphi, r)$. Henceforth we will suppress the summation notation in (\ref{eq:defC0mass}) and the normalization constant $(4\pi (n-1)\omega_{n-1})^{-1}$ for the sake of brevity.
\end{definition}
\begin{remark}\label{rmk:C0agreeswithC1average}
Observe that for any smooth $\varphi: \R \to \R$ with $\int_{.9}^{1.1}\varphi(s)ds \neq 0$, if $h_{ij}:= g_{ij} - \delta_{ij}$, we have:
\begin{align*}
\int_{.9r}^{1.1r}&\int_{\S(u)} (\partial_j g_{ij} - \partial_i g_{jj})\nu^i\varphi(\tfrac{u}{r})dSdu  = \int_{.9r}^{1.1r}\int_{\S(u)} (\partial_j h_{ij} - \partial_i h_{jj})\nu^i\varphi(\tfrac{u}{r})dSdu 
\\& \qquad \qquad + \int_{.9r}^{1.1r}\int_{\S(u)} (\partial_j \delta_{ij} - \partial_i \delta_{jj})\nu^i\varphi(\tfrac{u}{r})dSdu
\\& = \int_{.9r}^{1.1r}\int_{\S(u)} (\partial_j h_{ij} - \partial_i h_{jj})\nu^i\varphi(\tfrac{u}{r})dSdu  + 0
\\& = \int_{A(o,.9r, 1.1r)}(\partial_j h_{ij} - \partial_i h_{jj})\frac{x^i}{|x|}\varphi(\tfrac{|x|}{r})dx
\\&= \int_{A(o,.9r, 1.1r)} -h_{ij}\partial_j\left(\frac{x^i}{|x|}\varphi(\tfrac{|x|}{r})\right) + h_{jj}\partial_i\left(\frac{x^i}{|x|}\varphi(\tfrac{|x|}{r})\right)dx 
\\& \qquad \qquad + \int_{\partial A(o, .9r, 1.1r)} h_{ij}\frac{x^i}{|x|}\varphi(\tfrac{|x|}{r})\nu^j - h_{jj}\frac{x^i}{|x|}\varphi(\tfrac{|x|}{r})\nu^i dS
\\&= \int_{A(o,.9r, 1.1r)} -h_{ij}\frac{\delta_i^j}{|x|}\varphi(\tfrac{|x|}{r}) + h_{ij}\frac{x^ix^j}{|x|^3}\varphi(\tfrac{|x|}{r}) - h_{ij}\frac{x^ix^j}{r|x|^2}\varphi'(\tfrac{|x|}{r})
\\& \qquad \qquad + h_{jj}\frac{\delta_i^i}{|x|}\varphi(\tfrac{|x|}{r}) - h_{jj}\frac{(x^i)^2}{|x|^3}\varphi(\tfrac{|x|}{r}) + h_{jj}\frac{(x^i)^2}{r|x|^2}\varphi'(\tfrac{|x|}{r})dx 
\\& \qquad \qquad +  \int_{\partial A(o, .9r, 1.1r)} h_{ij}\frac{x^i}{|x|}\varphi(\tfrac{|x|}{r})\nu^j - h_{jj}\frac{x^i}{|x|}\varphi(\tfrac{|x|}{r})\nu^i dS
\\&= \int_{A(o,.9r, 1.1r)}  \tr_\delta h \frac{(n-2)}{|x|}\varphi(\tfrac{|x|}{r}) + \tr_\delta h \frac{1}{r}\varphi'(\tfrac{|x|}{r}) + h_{ij}\frac{x^ix^j}{|x|^3}\varphi(\tfrac{|x|}{r}) - h_{ij}\frac{x^ix^j}{r|x|^2}\varphi'(\tfrac{|x|}{r})dx 
\\& \qquad \qquad + \int_{\partial A(o, .9r, 1.1r)} h_{ij}\frac{x^i}{|x|}\varphi(\tfrac{|x|}{r})\nu^j - h_{jj}\frac{x^i}{|x|}\varphi(\tfrac{|x|}{r})\nu^i dS.
\end{align*} 
Therefore,
\begin{equation*}
M_{C^0}(g, \Phi, \varphi, r) = (4\pi (n-1)\omega_{n-1})^{-1}\frac{\int_{.9r}^{1.1r}\int_{\S(u)} (\partial_j g_{ij} - \partial_i g_{jj})\nu^i\varphi(\tfrac{u}{r})dSdu}{r\int_{.9}^{1.1}\varphi(\ell)d\ell}.
\end{equation*}
In particular, if $m_{ADM}(g, \Phi)$ exists, then (\ref{eq:computingmADMviavarphi}) implies that 
\begin{equation*}
m_{ADM}(g, \Phi) = \lim_{r\to \infty} M_{C^0}(g, \Phi, \varphi, r).
\end{equation*}
\end{remark}

We now discuss the meaning of ``nonnegative scalar curvature in the sense of Ricci flow'', which takes the place of the classical nonnegative scalar curvature condition in the $C^0$ setting (cf. \cite{PBG19}, \cite{PBG20}, \cite{PBGthesis}).
\begin{definition}\label{def:nonnegscalarRFdef}
Let $M^n$ be a smooth manifold and $g$ be a continuous Riemannian metric on $M$. For $\beta \in (0,1/2)$ we say that $g$ has scalar curvature bounded below by $\kappa_0\in \R$ in the $\beta$-weak sense at $x\in M$ if there exists a coordinate chart $\Phi: U_x\to \Phi(U_x)$ for $M$, where $U_x$ is a neighborhood of $x$, and there exists a continuous metric $g_0$ on $\R^n$ and a Ricci-DeTurck flow $(g_t)_{t\in (0,T]}$ for $g_0$, with respect to the Euclidean background metric, satisfying (\ref{eq:RDTFinitialcondition}), (\ref{eq:RDTFXest}), and (\ref{eq:RDTFderivests}) (cf. \cite{KL1}), such that
\begin{equation*}
g_0 \big|_{\Phi(U_x)} = \Phi_*g,
\end{equation*}
and such that
\begin{equation}\label{eq:betaweakcondition}
\inf_{C>0}\left(\liminf_{t\searrow 0}\left(\inf_{B_{\delta}(\Phi(x), Ct^{\beta})} R(g_t) \right)\right) \geq \kappa_0.
\end{equation}
We say that $g$ has scalar curvature bounded below by $\kappa_0\in \R$ in the sense of Ricci flow on an open region $U\subset M$ if there exists some $\beta\in (0, 1/2)$ such that, for all $x\in U$, $g$ has scalar curvature bounded below by $\kappa_0$ in the $\beta$-weak sense at $x$.
\end{definition}

\begin{remark}\label{rmk:globalbetaweak}
As explained in \cite[Section $6.2$]{PBGthesis}, Definition \ref{def:nonnegscalarRFdef} is an extension of the definition introduced in \cite{PBG19}, \cite{PBG20} to noncompact manifolds. Also, by the discussion in \cite[Section $6.2$]{PBGthesis} and \cite[Remark $3.6$ and Theorem $3.7$]{PBG20}, if $|| \Phi_*g - \delta||_{C^0(\Phi(U))}$ is sufficiently small so that $\Phi_*g$ may be extended to a continuous metric $g_0$ on $\R^n$ for which there is a Ricci-DeTurck flow $(g_t)_{t\in (0, T]}$ for $g_0$, with respect to the Euclidean background metric, satisfying (\ref{eq:RDTFinitialcondition}), (\ref{eq:RDTFXest}), and (\ref{eq:RDTFderivests}), then it is equivalent to require that there exists some $\beta \in (0, 1/2)$ such that, for all $y\in \Phi(U)$,
\begin{equation}\label{eq:betaweakconditionRn}
\inf_{C>0}\left(\liminf_{t\searrow 0}\left(\inf_{B_{\delta}(y, Ct^{\beta})} R(g_t) \right)\right) \geq \kappa_0,
\end{equation}
i.e. that (\ref{eq:betaweakcondition}) holds for a fixed Ricci-DeTurck flow that is independent of choice of $x\in U$.
\end{remark}
\begin{remark}\label{rmk:betaweakscalinginvariant}
By invariance of the Ricci-DeTurck flow under parabolic rescaling (see Remark \ref{rmk:parabolicscaling}), the $\beta$-weak condition (\ref{eq:betaweakcondition}) is invariant under rescaling of $g$ when $\kappa_0 = 0$.
\end{remark}

We next provide the precise statements of Theorems \ref{thm:vagueC0existence} and \ref{thm:vagueC0monotonicity}, and explain how the functions $\varphi^r$ are related to $\varphi$. Much of Theorem \ref{thm:vagueC0monotonicity} is proved by using estimates for the Ricci-DeTurck flow on perturbations of Euclidean space (see \cite{KL1}) to control the distortion of the $C^0$ local mass, when coupled with a suitably evolving smooth function:

\begin{lemma}\label{lemma:massdistortionestimate}
Let $\varphi: \R \to \R^{\geq 0}$ be a smooth cutoff function with $\supp(\varphi)\subset \subset (.9, 1.1)$. For all $r>0, \eta>0$ there exists a smooth function $\varphi_{r^{-\eta}}(\ell, t): \R \times [0, r^{-\eta}]\to \R$ such that
\begin{equation}\label{eq:cutofffunctionevsinglevar}
\begin{cases}
\partial_t \varphi_{r^{-\eta}}(|x|, t) &= -\Delta \varphi_{r^{-\eta}}(|x|, t) + \frac{n-1}{|x|^2}\varphi_{r^{-\eta}}(|x|, t) \text{ for } (x,t)\in \R^n\times (0, r^{-\eta})\\
\varphi_{r^{-\eta}}(\ell, r^{-\eta}) &= \varphi(\ell) \text{ for all } \ell\in \R.
\end{cases}
\end{equation}
Moreover, there exists $\bar r = \bar r(n, \eta)$ such that for all $r> \bar r$, the following is true:

Suppose that $g_0$ is a continuous metric on $\R^n$ such that $|| g_0 - \delta||_{C^0(\R^n)} \leq \varepsilon$ for some $\varepsilon < 1$, and for which there exists a smooth Ricci-DeTurck flow, $(g_t)_{t>0}$, with respect to the Euclidean background metric, satisfying (\ref{eq:RDTFinitialcondition}), (\ref{eq:RDTFXest}), and (\ref{eq:RDTFderivests}). Then
\begin{equation*}
\begin{split}
\int_0^{r^{2-\eta}}\left|\frac{d}{dt} M_{C^0}(g_t, \varphi_{r^{-\eta}}(\cdot, \tfrac{t}{r^2}), r)\right|dt &\leq c(n,\varphi)\varepsilon^2r^{n-2} 
\\& + c(n, \varphi)r^{n-2 +n\eta/2-\eta}\exp\left(-D(\supp(\varphi))r^{\eta}\right),
\end{split}
\end{equation*}
where $D= D(n)$.  

In particular, there exists $\bar r = \bar r(n, \eta, \supp(\varphi), c_0, \tau)$ such that for all $r>\bar r$, the following is true:

Suppose that $g_0$ is a continuous metric on $\R^n$ such that $|| g_0 - \delta||_{C^0(\R^n)} \leq c_0r^{-\tau}$ for some $\tau > (n-2)/2$. Suppose that $(g_t)_{t>0}$ is a smooth Ricci-DeTurck flow for $g_0$, with respect to the Euclidean background metric, satisfying (\ref{eq:RDTFinitialcondition}), (\ref{eq:RDTFXest}), and (\ref{eq:RDTFderivests}). Then
\begin{equation*}
\int_0^{r^{2-\eta}}\left|  \frac{d}{dt}M_{C^0}(g_t, \varphi_{r^{-\eta}}(\cdot, \tfrac{t}{r^2}), r)\right|dt \leq c(n,\varphi, c_0)r^{n - 2 - 2\tau}.
\end{equation*} 
\end{lemma}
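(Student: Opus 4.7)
The plan is to design the coupled test function so that the vector-valued weight
\begin{equation*}
V^i(x,t) := \frac{x^i}{|x|}\,\varphi_{r^{-\eta}}\!\left(|x|/r,\, t/r^2\right)
\end{equation*}
satisfies the componentwise \emph{backwards} heat equation $\partial_t V^i + \Delta V^i = 0$. First I would verify this by direct computation: using $\Delta(x^i/|x|) = -(n-1)x^i/|x|^3$ and noting that the cross term $2(\partial_k (x^i/|x|))(\partial_k \tilde\varphi)$ vanishes identically because $\nabla\tilde\varphi$ is parallel to $x/|x|$, one obtains
\begin{equation*}
\partial_t V^i + \Delta V^i = \frac{x^i}{|x|}\Bigl[\,\partial_t \tilde\varphi + \Delta \tilde\varphi - \tfrac{n-1}{|x|^2}\tilde\varphi\,\Bigr],
\end{equation*}
where $\tilde\varphi(x,t) := \varphi_{r^{-\eta}}(|x|/r, t/r^2)$. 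Under the parabolic rescaling $\ell = |x|/r$, $\tau = t/r^2$, the bracket is precisely (\ref{eq:cutofffunctionevsinglevar}), so choosing $\varphi_{r^{-\eta}}$ to solve that terminal-value problem forces the identity. Solvability follows from standard backwards parabolic theory, since the potential $(n-1)/\ell^2$ is smooth and bounded on any neighborhood of $\supp \varphi \subset\subset (.9, 1.1)$.

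I would then rewrite the $C^0$ local mass using the final identity of Remark \ref{rmk:C0agreeswithC1average}, valid for $t > 0$ where $g_t$ is smooth by parabolic regularity of the Ricci-DeTurck flow:
\begin{equation*}
M_{C^0}(g_t,\tilde\varphi(\cdot,t), r) = \frac{(4\pi(n-1)\omega_{n-1})^{-1}}{r\int_{.9}^{1.1}\varphi(\ell)\,d\ell}\int_{A(o,.9r,1.1r)} (\partial_j h_{ij}(t) - \partial_i h_{jj}(t))\,V^i(x,t)\,dx,
\end{equation*}
where $h_{ij}(t) := (g_t)_{ij} - \delta_{ij}$. Writing the Ricci-DeTurck flow schematically as $\partial_t g_{ij} = \Delta g_{ij} + E_{ij}$ with nonlinear error satisfying $|E(x,t)| \lesssim |h||\partial^2 h| + |\partial h|^2$, differentiating in $t$, and integrating by parts twice in space to move both Laplacians onto $V^i$, the coupling $\partial_t V^i + \Delta V^i = 0$ makes the linear-in-$h$ contributions cancel exactly, leaving
\begin{equation*}
\frac{d}{dt} M_{C^0}(g_t, \tilde\varphi(\cdot,t), r) = \frac{c_n}{r\int_{.9}^{1.1}\varphi(\ell)d\ell}\left(\int_{A}\bigl[-E_{ij}\partial_j V^i + E_{jj}\partial_i V^i\bigr] dx + B(t)\right),
\end{equation*}
where $B(t)$ gathers all the boundary terms on $\partial A(o,.9r,1.1r)$ generated by the integrations by parts.

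To complete the estimate I would bound the two contributions separately. For the bulk, I have $|\partial V| \lesssim 1/r$ on the annulus, and rather than using the divergent pointwise bound $|E| \lesssim \varepsilon^2/t$ I would appeal to the integrated form of the Ricci-DeTurck estimates from (\ref{eq:RDTFinitialcondition}), (\ref{eq:RDTFXest}), (\ref{eq:RDTFderivests}), \cite{KL1}: a localized parabolic energy inequality gives $\int_0^{r^{2-\eta}}\int_A(|\partial h|^2 + |h||\partial^2 h|)\,dx\,dt \lesssim \varepsilon^2 r^n$, the mixed term being reshaped into the first by a further space integration by parts. Combined with the two $1/r$ factors (from $|\partial V|$ and from the $M_{C^0}$ normalization), this produces the bulk bound $\lesssim \varepsilon^2 r^{n-2}$. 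For $B(t)$, both components of $\partial A$ correspond to $\ell \in \{.9, 1.1\}$, each at positive distance $d_0$ from $\supp \varphi$; rewriting (\ref{eq:cutofffunctionevsinglevar}) in reversed time as the forward parabolic equation $\partial_s \psi = \Delta\psi - \frac{n-1}{\ell^2}\psi$ with initial data $\varphi$, standard Gaussian heat kernel estimates (adapted to radial data on $\R^n$) give $|\partial_\ell^k \varphi_{r^{-\eta}}(\ell, s)| \lesssim s^{-(n+k)/2}\exp(-cd_0^2/s)$ for $\ell \notin \supp\varphi$ and $s \in (0, r^{-\eta}]$. Combining this with the pointwise Ricci-DeTurck derivative bounds for $h$ on $\partial A$ and carrying out the time integration (via a substitution $u = cr^2/(r^{2-\eta} - t)$ that reduces the inner integrals to Gaussian-tail asymptotics) yields the claimed boundary bound of the form $r^{n-2+n\eta/2 -\eta}\exp(-Dr^\eta)$. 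The second conclusion is then immediate on substituting $\varepsilon = c_0 r^{-\tau}$ in the first: since $2\tau > n-2$, the leading term becomes $c_0^2 r^{n-2-2\tau}$, a decaying power of $r$, and for $r$ larger than an explicit $\bar r(n,\eta,\supp\varphi, c_0, \tau)$ the exponential term is absorbed into this power.

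The main obstacle is the bulk bound. The pointwise Koch-Lamm estimates $|\partial^k h(\cdot, t)| \lesssim \varepsilon t^{-k/2}$ taken alone give an $\varepsilon^2/t$ time-integrand that fails to be integrable at $t = 0$, so the proof must exploit heat-kernel smoothing in an \emph{integrated} form. The correct mechanism is a localized parabolic energy identity of the form $\partial_t \int \chi^2 |h|^2 + 2\int \chi^2|\partial h|^2 \leq (\text{controlled error})$, with a smooth cutoff $\chi$ adapted to the annulus $A(o, .9r, 1.1r)$; this converts the pointwise-in-$t$ divergence into a clean $L^1_{t,x}$ bound on the quadratic combinations in $E$. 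Implementing this reshaping carefully, and controlling the nonlinear error terms coming from the precise form of the Ricci-DeTurck equation together with the spatial cut-off corrections, is the technical heart of the argument.
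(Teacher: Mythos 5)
Your verification that $V^i(x,t)=\tfrac{x^i}{|x|}\varphi_{r^{-\eta}}(|x|/r,t/r^2)$ solves the componentwise backward heat equation is correct and is exactly the mechanism behind the paper's construction (there $\varphi_\theta$ is literally the radial derivative of a radial solution of the heat equation, run backwards), and the resulting cancellation of the linear-in-$h$ terms is the same cancellation the paper verifies via its $A=B=0$ computation. But the way you set up the derivative of the mass creates a boundary term you cannot bound. First, a fixable slip: in $M_{C^0}(g_t,\varphi_{r^{-\eta}}(\cdot,t/r^2),r)$ the normalizing denominator of Definition \ref{def:C0mass} is $r\int_{.9}^{1.1}\varphi_{r^{-\eta}}(\ell,t/r^2)\,d\ell$, which is time-dependent; you froze it at $\int_{.9}^{1.1}\varphi(\ell)d\ell$, so your identity omits the quotient-rule term (the paper controls it separately, via the exponentially small estimate (\ref{eq:denominatordistortion})). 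The serious problem is the choice to differentiate the sphere-average form of Remark \ref{rmk:C0agreeswithC1average}, which already carries one derivative of $h$: moving the Laplacian onto $V^i$ by two integrations by parts puts $\int_{\partial A(0,.9r,1.1r)}\partial_\nu(\partial_j h_{ij}-\partial_i h_{jj})\,V^i\,dS$ into your $B(t)$, i.e.\ \emph{second} derivatives of $h$ on the boundary. The only available pointwise bound is (\ref{eq:RDTFderivests}), $|\nabla^2 h|\lesssim \varepsilon/t$, which is not integrable at $t=0$, and the exponential smallness of $V$ on $\partial A$ is uniform in $t$ (it depends on $\dist(\{.9,1.1\},\supp\varphi)$ and $r^{-\eta}$, not on $t$), so it cannot restore integrability; your boundary estimate picks up a $\log$ divergence as the lower limit of the time integral tends to $0$ and the claimed bound does not follow. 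The paper avoids this precisely by working with the zeroth-order expression of Definition \ref{def:C0mass}, so that only $h$ and $\nabla h$ (bounded by $\varepsilon t^{-1/2}$, which is time-integrable) ever appear on $\partial A$, and by treating the boundary integral that appears explicitly in the definition with the fundamental theorem of calculus in $t$ rather than differentiating it under the integral.

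Two further points. The bulk bound is not the ``technical heart'' you describe: the space-time estimate $\int_0^{r^2}\int_{B(x,r)}|\nabla h|^2\lesssim \varepsilon^2 r^n$ is already part of the hypothesis (\ref{eq:RDTFXest}), since the $X$-norm contains $\sup_x\sup_\rho \rho^{-n/2}\|\nabla h\|_{L^2(B(x,\rho)\times(0,\rho^2))}$; the paper covers the annulus by boundedly many balls of radius $.5r$, uses this term directly for $\int|\nabla h|^2$, and handles $\int|h||\nabla h|$ by H\"older together with the $L^\infty$ part of the same norm, giving $c\varepsilon^2 r^{n-2}$ without any new localized energy inequality. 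Finally, ``standard backwards parabolic theory'' is not the right justification for the existence of $\varphi_{r^{-\eta}}$: the terminal-value problem is solvable only because reversed time makes (\ref{eq:cutofffunctionevsinglevar}) forward parabolic (as you note later), and the potential $(n-1)/|x|^2$ is singular at the origin, so boundedness near $\supp\varphi$ is not enough for a global solution on $\R^n$; the paper's construction as the radial derivative of the heat flow of the radial antiderivative of $\varphi$ settles existence, nonnegativity, and the Gaussian boundary decay you invoke, all at once.
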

Observe that, because of the specified lower bound for $\tau$, $n-2 -2\tau <0$, so the right hand side decays to $0$ as $r\to \infty$. Typically we will take $g_0$ to be a continuous extension of a $C^0$ metric $g|_{A(0, .9r, 1.1r)}$ where $g$ is defined on $A(0, .8r, 1.2r)$ and satisfies $|| g - \delta||_{C^0(A(0,.8r, 1.2r))} \leq c_0r^{-\tau}$.
\begin{remark}
If $u$ is any spherically symmetric solution to the backwards heat equation $\partial_t u = -\Delta u$ on Euclidean space, then the evolution equation from (\ref{eq:cutofffunctionevsinglevar}) is precisely the evolution equation for the radial derivative of $u$.
\end{remark}

In order to address the case when $(M, g)$ has multiple ends, we now discuss what we mean by a $C^0$-asymptotically flat end of $(M,g)$. We use the convention of \cite[p. $201$]{DK03} to describe the ends. Note that because $M$ is a topological manifold, it admits a compact exhaustion. Let $K_1 \subseteq K_2 \subseteq K_3 \subseteq \dots$ be an exhaustion of $M$ by compact sets. Consider a sequence $(X_i)_{i=1}^{\infty}$ such that each $X_i$ is a connected component of $M\setminus K_i$, and such that $X_1 \supset X_2 \supset X_3 \supset \ldots$. Suppose $(K_i')_{i=1}^{\infty}$ is another compact exhaustion of $M$ and $(X_i')_{i=1}^{\infty}$ is another such sequence, i.e. each $X_i'$ is a connected component of $M\setminus K_i'$ and $X_1'\supset X_2'\supset X_3'\supset\ldots$. We say that $(X_i)_{i=1}^{\infty}$ and $(X_i')_{i=1}^{\infty}$ are equivalent if, for all $i\in \N$ there exist $j$ and $k$ such that $X_i \supset X_j'$ and $X_i' \supset X_k$. The ends of $M$ are the equivalence classes of such sequences.

Suppose that $U\subset M$ is an open subset of $M$ for which there exists a diffeomorphism $\Phi: U\to \R^n\setminus \overline{B(0, 1)}$ such that $\Phi$ satisfies (\ref{eq:C0AFdecay}) for some $\tau >0$. Then $\Phi$ determines an end of $M$ as follows: Let $V = \Phi^{-1}(\R^n\setminus B(0,2))$. Consider the sequence of annuli $A^i = \overline{A(0, 2, 10^i)}$. Observe that $\Phi^{-1}(A_i)$ is compact because $A_i$ is compact in $\R^n\setminus \overline{B(0,1)}$ and $\Phi$ is a diffeomorphism onto this region. We extend $\Phi^{-1}(A_i)$ to a a compact exhaustion of $M$ as follows: let $K_i'$ be any compact exhaustion of $M$. Let $K_i = \overline{(K_i' \setminus V)} \cup \Phi^{-1}(A_i)$. Then the $K_i$ are compact because they are all unions of two compact sets, and they exhaust $M$ since $\overline{K_i'\setminus V}$ exhausts $M\setminus V$ and $\Phi^{-1}(A_i)$ exhausts $V$ (because $A_i$ exhausts $\R^n\setminus B(0,2)$).

 For all $i\in \N$ let $X^i$ denote the connected component of $M\setminus K_i$ that contains $\Phi^{-1}(\R^n\setminus\overline{B(0, 10^i)})$; this is possible since $\Phi$ is a diffeomorphism onto its image and hence $\Phi^{-1}(\R^n\setminus\overline{B(0, 10^i)})$ is connected. We actually have that for all sufficiently large $i\in \N$, $X^i = \Phi^{-1}(\R^n\setminus\overline{B(0, 10^i)})$: to see this, it is sufficient to show that $\Phi^{-1}(\R^n\setminus\overline{B(0, 10^i)})$ is clopen in $M\setminus K_i$. Certainly $\Phi^{-1}(\R^n\setminus\overline{B(0, 10^i)})$ is open in $M\setminus K_i$ by continuity, since $\R^n\setminus\overline{B(0, 10^i)}$ is open in $\R^n$ and $M\setminus K_i$ is open in $M$. To show closedness, let $y\in M\setminus K_i$ be a limit point of $\Phi^{-1}(\R^n\setminus\overline{B(0, 10^i)})$, so that there exists a sequence of points $y_k \in \Phi^{-1}(\R^n\setminus\overline{B(0, 10^i)})$ such that $y_k \to y$. Then $(y_k)_{k=1}^{\infty}$ is a Cauchy sequence with respect to $g$. Let $x_k = \Phi(y_k)\in \R^n\setminus\overline{B(0, 10^i)}$, so that $(x_k)_{k=1}^{\infty}$ is Cauchy with respect to $\Phi_* g$ and hence is also Cauchy with respect to $\delta$, since $\Phi_*g$ is uniformly bilipschitz to $\delta$ outside of a large compact set. By completeness $x_k \to x\in \R^n\setminus B(0, 10^i)$, so either $x\in \R^n\setminus \overline{B(0, 10^i)}$ or $x\in \partial B(0, 10^i)$. By continuity we have $\Phi^{-1}(x) = \lim_{k\to \infty} \Phi^{-1}(x_k) = \lim_{k\to \infty} y_k = y$. If $x\in \partial B(0, 10^i)$ then $x\in A_i$ and hence $y \in \Phi^{-1}(A_i) \subseteq K_i$, a contradiction. Therefore $x\in \R^n\setminus \overline{B(0,10^i)}$ and $y \in \Phi^{-1}(\R^n\setminus \overline{B(0,10^i)})$, so $\Phi^{-1}(\R^n\setminus \overline{B(0, 10^i)})$ is closed in $M\setminus K_i$. In particular, $ \Phi^{-1}(\R^n\setminus \overline{B(0, 10^i)})$ is a connected component of $M\setminus K_i$. Then $X^1 \supset X^2 \supset \ldots$ so $(X^i)_{i=1}^{\infty}$ determines an end of $M$. 
  
\begin{definition}\label{def:C0AFend}
We say that an end $E$ of $M$ is $C^0$-asymptotically flat if it is determined by some such $\Phi$, and we say that $\Phi$ is a $C^0$-asymptotically flat coordinate chart for $E$. We now establish some terminology. If $\Phi$ has the property that for all $|x| \geq r_0$,
\begin{equation*}
|(\Phi_* g)_{ij}|_x - \delta_{ij}| \leq c_0 |x|^{-\tau} 
\end{equation*}
then we say that $r_0$ is the \emph{decay threshold} of $\Phi$, $c_0$ is the \emph{decay coefficient}, and $\tau$ is the \emph{decay rate}.
\end{definition}

\begin{theorem}\label{thm:fullC0existence}
Let $M$ be a smooth manifold and $g$ a continuous Riemannian metric on $M$. Suppose $E$ is an end of $M$ and that $\Phi: U\to \R^n\setminus \overline{B(0,1)}$ is a $C^0$-asymptotically flat coordinate chart for $E$. Let $\varphi: \R \to \R^{\geq 0}$ be a smooth cutoff function with $\supp(\varphi)\subset \subset (.9, 1.1)$. For all $r>0$ and $\eta>0$, let $\varphi_{r^{-\eta}}(\ell, t)$ denote the smooth time-dependent function corresponding to $\varphi$ given by Lemma \ref{lemma:massdistortionestimate}. Then the following is true:
\begin{enumerate}
\item If $g$ is $C^2$ and $m_{ADM}(g, \Phi)$ exists, then, for all $\eta>0$, we have 
\begin{equation*}
m_{ADM}(g, \Phi) = \lim_{r\to \infty}M_{C^0}(g, \Phi, \varphi_{r^{-\eta}}(\cdot, 0), r).
\end{equation*}
\item If $g$ has nonnegative scalar curvature in the sense of Ricci flow on $U$ and (\ref{eq:C0AFdecay}) holds for $\Phi$ for some $\tau > (n-2)/2$, then, for all $0 < \eta < 2\tau - n + 2$, the limit $\lim_{r\to \infty}M_{C^0}(g, \Phi, \varphi_{r^{-\eta}}(\cdot, 0), r)$ exists, is either finite or $+\infty$, and is independent of choice of $C^0$-asymptotically flat coordinate chart for $E$ with decay rate $\tau > (n-2)/2$ and choice of $\varphi$ with $\supp(\varphi)\subset \subset (.9, 1.1)$. Moreover, $\lim_{r\to \infty}M_{C^0}(g, \Phi, \varphi_{r^{-\eta}}(\cdot, 0) ,r)$ is finite if and only if the following condition holds:

There exists a sequence of numbers $r_k \to \infty$ such that $r_{k+1} > 1.1/.9 r_k >0$ for all $k$, and such that for all $k$ there exists an extension $g_0^k$ of $\Phi_* g|_{\R^n\setminus \overline{B(0, .7r_k)}}$ to all of $\R^n$ for which there is a Ricci-DeTurck flow $(g_t^k)_{t>0}$ satisfying (\ref{eq:RDTFinitialcondition}), (\ref{eq:RDTFXest}), and (\ref{eq:RDTFderivests}), such that
\begin{equation}\label{eq:L1scalarcondition}
\lim_{k\to \infty}\sup_{r' > 1.1/.9 r_k}\int_{A(0, .9r_k, 1.1 r')} R(g^k_{(.9/1.1r_k)^{2-\eta}})dx= 0.
\end{equation}
\end{enumerate}
\end{theorem}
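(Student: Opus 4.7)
The plan is to combine the distortion estimate of Lemma~\ref{lemma:massdistortionestimate} with the monotonicity result Theorem~\ref{thm:vagueC0monotonicity}, using Remark~\ref{rmk:C0agreeswithC1average} as the bridge between the purely $C^0$ expression for $M_{C^0}$ and the classical averaged ADM flux integral. For Part~(1), I first use Remark~\ref{rmk:C0agreeswithC1average} to rewrite $M_{C^0}(g, \Phi, \psi, r)$ as a constant multiple of the $\psi$-weighted average of the classical ADM flux integrals over the spheres $\mathbb{S}(u)$, $u \in (.9r, 1.1r)$. Because the time interval $[0, r^{-\eta}]$ on which equation~(\ref{eq:cutofffunctionevsinglevar}) is solved shrinks to $0$ as $r \to \infty$, the family $\varphi_{r^{-\eta}}(\cdot, 0)$ converges to $\varphi$ in $C^\infty$. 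Assuming $m_{ADM}(g, \Phi)$ exists and $g$ is $C^2$, the inner sphere integrals converge uniformly in the weighting parameter $\ell \in [.9, 1.1]$, so the $\varphi_{r^{-\eta}}(\cdot, 0)$-weighted averages converge to $m_{ADM}(g, \Phi)$.

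For Part~(2), the existence of the limit and its independence of $\Phi$ and $\varphi$ both follow from Theorem~\ref{thm:vagueC0monotonicity}. Iterating the inequality
\[
M_{C^0}(g, \Phi, \varphi^{200r}, 200r) \geq M_{C^0}(g, \Phi, \varphi^r, r) - cr^{-\omega}
\]
along the geometric sequence $r_k := 200^k r_0$ shows that $M_{C^0}(g, \Phi, \varphi^{r_k}, r_k) + c \sum_{j \geq k} r_j^{-\omega}$ is nondecreasing, so it has a limit in $(-\infty, +\infty]$; the tail sum vanishes, so $M_{C^0}(g, \Phi, \varphi^{r_k}, r_k)$ itself converges, and a short interpolation argument using the same monotonicity inequality on intermediate scales upgrades this to convergence as $r \to \infty$. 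Applying the monotonicity once with a pair $(\Phi^1, \varphi_1), (\Phi^2, \varphi_2)$ and once with their roles swapped produces matching upper and lower comparisons, yielding independence of the $C^0$-asymptotically flat chart for $E$ and of $\varphi$.

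For the equivalence with (\ref{eq:L1scalarcondition}), I transfer the question to the smoothed metric. For each $r_k$ in the sequence, extend $\Phi_* g|_{\R^n\setminus \overline{B(0, .7 r_k)}}$ to a continuous metric $g_0^k$ on $\R^n$ that is close to $\delta$ in $C^0$, and let $(g_t^k)_{t>0}$ be an associated Ricci-DeTurck flow. By Lemma~\ref{lemma:massdistortionestimate} applied on $[0, r_k^{2-\eta}]$,
\[
\left| M_{C^0}(g^k_{r_k^{2-\eta}}, \varphi, r_k) - M_{C^0}(g, \Phi, \varphi^{r_k}, r_k) \right| \leq c r_k^{n-2-2\tau},
\]
which tends to $0$ since $\tau > (n-2)/2$. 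Since $g^k_{r_k^{2-\eta}}$ is smooth on the relevant annulus, Remark~\ref{rmk:C0agreeswithC1average} together with the identity $\partial_i(\partial_j g_{ij} - \partial_i g_{jj}) = R(g) + Q(g, \partial g)$, where $Q$ is quadratic in $h := g - \delta$, and Stokes' theorem express differences
\[
M_{C^0}(g^k_{t_k}, \varphi, r') - M_{C^0}(g^k_{t_k}, \varphi, r_k), \qquad r' > \tfrac{1.1}{.9} r_k,
\]
as a constant times $\int_{A(0,.9r_k, 1.1 r')} R(g^k_{t_k})\, dx$ plus a quadratic remainder of size $O(r_k^{n-2-2\tau})$. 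Choosing $t_k = (.9/1.1 \cdot r_k)^{2-\eta}$, finiteness of $\lim_{r\to\infty}M_{C^0}(g, \Phi, \varphi^r, r)$ becomes equivalent to the Cauchy-type vanishing in (\ref{eq:L1scalarcondition}).

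The hard part will be the last paragraph: precisely matching the time scale $(.9/1.1 \cdot r_k)^{2-\eta}$ appearing in (\ref{eq:L1scalarcondition}) with the one used in Lemma~\ref{lemma:massdistortionestimate}, uniformly controlling the quadratic remainder $\int |h|^2$ over the growing annuli in the Stokes step (using the decay rate $\tau$), and verifying that the resulting equivalence is insensitive to the particular choice of extension $g_0^k$ and of Ricci-DeTurck flow used to smooth it.
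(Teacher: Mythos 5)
Your overall route is the same as the paper's: Part (1) via Remark \ref{rmk:C0agreeswithC1average} and the weighted-average bound (the paper's argument does not even need $\varphi_{r^{-\eta}}(\cdot,0)\to\varphi$, only nonnegativity of the weight, and it also treats the case $m_{ADM}(g,\Phi)=\pm\infty$, which you should either handle or exclude); Part (2) via almost-monotonicity plus a telescoping/Cauchy argument (the paper's Lemma \ref{lemma:prelimC0existence} and Lemma \ref{lemma:monotonicityconverges}), coordinate/test-function independence by passing to the limit in the monotonicity inequality and swapping roles, and the finiteness criterion by smoothing with Lemma \ref{lemma:massdistortionestimate} and converting mass differences into scalar-curvature integrals via the Bartnik/Stokes computation (the paper's Theorem \ref{thm:finitemass}). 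However, there are two concrete gaps in your Part (2).

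First, the existence of $\lim_{r\to\infty}$ cannot be extracted from Theorem \ref{thm:vagueC0monotonicity} alone, because that statement only compares the scales $r$ and $200r$. Iterating it gives convergence (in $(-\infty,+\infty]$) of $r\mapsto M_{C^0}$ along each geometric sequence $\{200^k r_0\}$, but two different base points $r_0\neq r_0'$ are never compared by the fixed-ratio inequality, so your ``short interpolation argument using the same monotonicity inequality on intermediate scales'' has nothing to interpolate with: a priori the limits along different geometric sequences could differ, and convergence as $r\to\infty$ does not follow. What is needed — and what the paper uses in Lemma \ref{lemma:prelimC0existence} — is the single-chart monotonicity with a whole interval of ratios, Corollary \ref{cor:C0monotonicity} (any $r'\in[\tfrac{1.1}{.9}r,10r]$, with error $cr^{n-2-2\tau+\eta}$), which lets one sandwich an arbitrary $r$ between consecutive scales $10^k,10^{k+1}$. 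Second, in the finiteness equivalence you never invoke the a priori lower scalar curvature bound $R(g^k_t)\geq -c\,|x|^{-A}$ at the time scale $r_k^{2-\eta}$ (Lemma \ref{lemma:scalarcurvaturegluing}, i.e. (\ref{eq:negativescalarcurvatureestimate})), and without it the argument does not close: the IVT step (Lemma \ref{lemma:integralIVT}) produces annuli with radii $\ell r_k,\ell'r'$ for unknown $\ell,\ell'\in[.9,1.1]$, and passing between these and the fixed annulus $A(0,.9r_k,1.1r')$ appearing in (\ref{eq:L1scalarcondition}) — as well as converting the signed, absolute-value-free vanishing of $\int R$ into two-sided control of $|a(r')-a(r_k)|$, and conversely converting finiteness of an almost-monotone limit into the Cauchy-type statement (\ref{eq:suplimitwithscaling}) — all use this lower bound together with the almost-monotonicity. (By contrast, the ``insensitivity to the choice of extension $g_0^k$'' you flag as a difficulty is not actually needed: the criterion only asserts existence of some extension and flow for each $k$.)
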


\begin{remark}\label{rmk:etaindependence}
The limit $\lim_{r\to \infty}M_{C^0}(g, \varphi_{r^{-\eta}}(\cdot, 0), r)$ is also independent of choice of $\eta \in (0, 2\tau - n +2)$, as we will explain in Section \ref{sec:masslimit}.
\end{remark}

\begin{remark}
A keen reader will note that while Theorem \ref{thm:fullC0existence} is an analogous statement to Theorem \ref{thm:classicalADMmassexistence} for $C^0$ metrics, it deviates from Theorem \ref{thm:classicalADMmassexistence} in the following way: Theorem \ref{thm:classicalADMmassexistence} assumes that the scalar curvature is in $L^1$ (\ref{eq:RL1}) in order to conclude that the ADM mass is a well-defined, finite limit. On the other hand, Theorem \ref{thm:fullC0existence} does not assume an integrability condition for the scalar curvature, instead substituting this condition for the condition that the scalar curvature of the metric is nonnegative (in a weak sense, in a neighborhood of infinity), and concluding that the corresponding limit is well-defined, but possibly infinite. 

The condition that the scalar curvature of the metric be nonnegative in a weak sense is natural for the purposes of pursuing a $C^0$ version of the Riemannian Positive Mass Theorem, and there is precedent in the literature for imposing this condition (see, for instance, \cite[Theorem $1.1$ and Proposition $2.4(2)$]{LeeLeFloch14}). Nonetheless, it is natural to ask whether this condition could instead be replaced by a $C^0$ version of the condition that the scalar curvature be in $L^1$:
\begin{question*}\label{question:L1}
Suppose $g$ is a continuous Riemannian metric on a smooth manifold $M$. Suppose that $E$ is an end of $M$ and that $\Phi: U\to \R^n\setminus\overline{B(0,1)}$ is a $C^0$-asymptotically flat coordinate chart for $E$. Let $\varphi$ and $\varphi_{r^{-\eta}}$ be as in Theorem \ref{thm:fullC0existence}. Does there exist a condition, $(*)$, such that:
\begin{enumerate}
\item if $g$ satisfies $(*)$, then $\lim_{r\to\infty} M_{C^0}(g, \Phi, \varphi_{r^{-\eta}}(\cdot, 0), r)$ is well-defined and finite, and
\item if $g$ is $C^2$ and satisfies (\ref{eq:C0AFdecay}), (\ref{eq:CkAFdecay}), and $R(g)\in L^1(E)$, then $(*)$ holds for $g$?
\end{enumerate}
\end{question*}
A $C^0$ condition $(*)$ should satisfy both of these items in order to be a $C^0$-weak version of the condition (\ref{eq:RL1}). We expect that a suitable modification of (\ref{eq:L1scalarcondition}) would satisfy both of these items, though we do not show such a thing in this paper. The proof of Theorem \ref{thm:finitemass} implies that when condition (\ref{eq:L1scalarcondition}) holds, then there exists a sequence of radii $\tilde r_k\to \infty$ such that $\lim_{k\to\infty}M_{C^0}(g, \varphi_{(\tilde r_k)^{-\eta}}(\cdot, 0), \tilde r_k)$ exists and is finite; the non-negativity of the scalar curvature is only used to guarantee that the limit does not depend on the sequence of radii. Whether the condition (\ref{eq:L1scalarcondition}) can be modified to satisfy the second item in Question \ref{question:L1} is more subtle: in the classical setting, McFeron -- Sz\'ekelyhidi showed \cite[Lemma $10$]{McFeronSzekelyhidi12} uniform-in-time decay of the integral of the scalar curvature outside of balls of large radii, for time slices of the Ricci flow. Therefore, we would expect some version of condition (\ref{eq:L1scalarcondition}) to hold in the classical setting when $R(g)\in L^1$. However, this is not immediate from \cite[Lemma $10$]{McFeronSzekelyhidi12}, since in this paper we use Ricci-DeTurck, rather than Ricci flow.

Finally, we remark that Lee -- LeFloch \cite[Proposition $2.4$]{LeeLeFloch14} have shown that for metrics $g\in C^0\cap W^{1,n}$ that are asymptotically flat in a suitable sense, the condition (\ref{eq:RL1}) can be replaced by the condition that the scalar curvature of $g$ be a finite, signed measure outside of a compact set. This condition is not available in our setting, since the scalar curvature does not have a distributional interpretation for general $C^0$ metrics.
\end{remark}

The proof of first statement in Theorem \ref{thm:fullC0existence} is relatively straightforward, as we will explain in Section \ref{sec:masslimit}. The second statement follows from the monotonicity result Theorem \ref{thm:fullC0monotonicity}, as we will also explain in Section \ref{sec:masslimit}. Therefore, we devote the bulk of the paper to proving Theorem \ref{thm:fullC0monotonicity}:

\begin{theorem}\label{thm:fullC0monotonicity}
Let $M$ be a smooth manifold and $g$ a continuous metric on $M$. Suppose $\Phi^1: U_1 \to \R^n\setminus \overline{B(0,1)}$ and $\Phi^2: U_2\to \R^n\setminus \overline{B(0,1)}$ are two $C^0$-asymptotically flat coordinate charts for the same end, so that for $m=1,2$, there exist $c_m>0, r_m > 1$, and $\tau_m > (n-2)/2$ such that
\begin{equation*}
|(\Phi^m_*g)_{ij} - \delta_{ij}| \leq c_m|x|^{-\tau_m} \text{ for all } |x| > r_m.
\end{equation*}
Then there exist $\bar r = \bar r(\Phi_1, \Phi_2, \varphi^1, \varphi^2, \eta, \beta ,n)$ and $c= c(\Phi_1, \Phi_2, \varphi^1, \varphi^2, \eta, \beta, n)$ such that if $g$ has nonnegative scalar curvature in the $\beta$-weak sense on $U_1 \cup U_2$, $\varphi^1$ and $\varphi^2$ are smooth cutoff functions with $\supp(\varphi^m)\subset \subset (.9, 1.1)$ for $m=1,2$, and $0 < \eta < 2\min\{\tau_1, \tau_2\}- n + 2$ then, for all $r> \bar r$,
\begin{equation*}
M_{C^0}(g, \Phi^1, \varphi_{(200 r)^{-\eta}}^1(\cdot, 0), 200 r) \geq M_{C^0}(g, \Phi^2, \varphi_{r^{-\eta}}^2(\cdot, 0), r) -cr^{n-2 - 2\min\{\tau_1, \tau_2\} + \eta},
\end{equation*}
where $\varphi^1_{r^{-\eta}}(\ell, t)$ and $\varphi^2_{r^{-\eta}}(\ell, t)$ are the smooth functions corresponding to $\varphi^1$ and $\varphi^2$ respectively, given by Lemma \ref{lemma:massdistortionestimate}. In particular,
\begin{equation*}
\lim_{r\to\infty} M_{C^0}(g, \Phi^2, \varphi_{r^{-\eta}}^2(\cdot, 0), r) = \lim_{r\to\infty} M_{C^0}(g, \Phi^1, \varphi_{r^{-\eta}}^1(\cdot, 0), r).
\end{equation*}
\end{theorem}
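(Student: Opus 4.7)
The plan is to regularize $g$ via Ricci--DeTurck (RDT) flow in each chart, use Lemma~\ref{lemma:massdistortionestimate} to control the drift of $M_{C^0}$ under this regularization, establish the desired monotonicity at the smooth level by combining Stokes' theorem with the propagated nonnegativity of scalar curvature, and handle the chart change via the fact that the transition map between $\Phi^1$ and $\Phi^2$ is almost a Euclidean motion at infinity. The last statement (equality of limits) follows from the inequality together with its analogue with the roles of $\Phi^1,\Phi^2$ swapped.

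\emph{Regularization and application of Lemma~\ref{lemma:massdistortionestimate}.} For each $m\in\{1,2\}$, extend $\Phi^m_*g$ from a neighborhood of the relevant annulus to a continuous metric $g^m_0$ on all of $\R^n$ with $\|g^m_0-\delta\|_{C^0(\R^n)}$ sufficiently small, and let $(g^m_t)_{t>0}$ denote the RDT flow from \cite{KL1}. By Remark~\ref{rmk:globalbetaweak} the extensions may be chosen so that this particular RDT flow realizes the $\beta$-weak nonnegativity of $R(g)$ on the relevant region. Lemma~\ref{lemma:massdistortionestimate} then gives, for $R=200r$ (with $m=1$) and $R=r$ (with $m=2$),
\begin{equation*}
\bigl|M_{C^0}(g,\Phi^m,\varphi^m_{R^{-\eta}}(\cdot,0),R)-M_{C^0}(g^m_{R^{2-\eta}},\varphi^m,R)\bigr|\leq c(n,\varphi^m,c_m)R^{n-2-2\tau_m},
\end{equation*}
using that $\varphi^m_{R^{-\eta}}(\cdot,R^{-\eta})=\varphi^m$; these errors are already dominated by $cr^{n-2-2\min\{\tau_1,\tau_2\}+\eta}$ once $r$ is large.

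\emph{Smooth monotonicity and chart change.} At the regularization time Remark~\ref{rmk:C0agreeswithC1average} identifies $M_{C^0}(g^m_{R^{2-\eta}},\varphi^m,R)$ with a $\varphi^m$-weighted radial average of the classical ADM boundary integrand $\int_{S(u)}(\partial_j g_{ij}-\partial_i g_{jj})\nu^i\,dS$, and Stokes' theorem converts each inner integral into $\int_{B(0,u)}(R(g^m_{R^{2-\eta}})+Q(h^m,\partial h^m))\,dx$ where $h^m=g^m_{R^{2-\eta}}-\delta$ and $Q$ is pointwise quadratic in $(h^m,\partial h^m)$. By the standard parabolic estimates~(\ref{eq:RDTFXest})--(\ref{eq:RDTFderivests}), $Q$ decays like $r^{-2-2\tau_m}$ on the annulus, integrating to at most $cr^{n-2-2\tau_m}$. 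Propagating the $\beta$-weak scalar-curvature lower bound along RDT flow (via the evolution equation for $R$, combined with a maximum-principle argument) yields that $R(g^m_t)$ is essentially nonnegative on the annular region for all $t\in(0,R^{2-\eta}]$, so that the scalar-curvature integral over $A(0,1.1r,180r)$ is nonnegative up to lower order. This provides the single-chart monotonicity. For the two-chart case, since $\Phi^1$ and $\Phi^2$ determine the same end and both pull back $g$ to metrics $C^0$-close to $\delta$, the transition $\Psi:=\Phi^1\circ(\Phi^2)^{-1}$ is a near-Euclidean motion of the form $\Psi(x)=Ox+b+O(|x|^{1-\min\{\tau_1,\tau_2\}})$ for some orthogonal $O$ and translation $b$. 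For sufficiently large $r$ the image $\Psi(A(0,.9r,1.1r))$ sits well inside $A(0,180r,220r)$; a change of variables together with Remark~\ref{rmk:C0agreeswithC1average} identifies the two smoothed $M_{C^0}$'s up to errors dominated by $cr^{n-2-2\min\{\tau_1,\tau_2\}+\eta}$, which combine with the single-chart monotonicity to give the claim.

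\emph{Main obstacle.} The key analytic difficulty is propagating the $\beta$-weak scalar-curvature condition from a pointwise $\liminf$ statement as $t\searrow0$ at each point of $U_1\cup U_2$ to an effectively integrated lower bound on $R(g^m_t)$ at the macroscopic time $R^{2-\eta}$. This amounts to a quantitative weak parabolic maximum principle for the scalar-curvature evolution along RDT flow, where the decay rates $\tau_m$, the time scale $R^{2-\eta}$, and the spatial scale of the annulus must all be coordinated to absorb the resulting error into $cr^{n-2-2\min\{\tau_1,\tau_2\}+\eta}$. A secondary bookkeeping difficulty is that the two regularizations come from independently chosen global extensions $g^1_0,g^2_0$ and are coupled only through the near-Euclidean but nontrivial transition $\Psi$; keeping track of how errors from Lemma~\ref{lemma:massdistortionestimate}, the quadratic term $Q$, and the chart change $\Psi$ all remain of the claimed order is the most intricate bookkeeping step.
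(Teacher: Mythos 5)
Your single-chart argument (regularize in each chart by Ricci--DeTurck flow, control the drift of $M_{C^0}$ with Lemma \ref{lemma:massdistortionestimate}, convert the weighted boundary integrals via Remark \ref{rmk:C0agreeswithC1average} and the divergence identity of Lemma \ref{lemma:Bartnikscalculation}, and propagate the $\beta$-weak lower bound to an effective bound $R(g_t)\geq -cr^{-A}$ at times up to $r^{2-\eta}$) is exactly the route the paper takes in Proposition \ref{prop:classicalmonotonicity} and Corollary \ref{cor:C0monotonicity}; the ``main obstacle'' you flag is handled there by the heat-kernel iteration of Lemmas \ref{lemma:earlierscalarcurvature} and \ref{lemma:scalarcurvaturegluing}, so that part of your plan is sound.

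The genuine gap is the chart-change step. First, your claimed normal form $\Psi(x)=Ox+b+O(|x|^{1-\min\{\tau_1,\tau_2\}})$ with a single fixed orthogonal $O$ and translation $b$ does not follow from $C^0$ asymptotic flatness. From the hypotheses one only gets that $\Psi$ is locally $(1+c|x|^{-\tau})$-bilipschitz (Lemma \ref{lemma:transmapdomains}), and the almost-isometry theorem (Lemma \ref{lemma:almostisometryclosetoisowmultloss}) then produces, on each annulus of scale $r$, closeness $O(r^{1-\tau})$ to some Euclidean isometry $L^r$ that depends on $r$; since $\tau\leq 1$ is allowed (e.g.\ $n=3$, $\tau\in(1/2,1]$), the translations of the $L^{r}$ at dyadic scales can drift by amounts whose sum diverges, so no single $b$ need exist. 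Second, even granting some normal form, you cannot compare the two smoothed masses by ``a change of variables'': the regularizations $g^1_t$ and $g^2_t$ come from independent extensions and the Ricci--DeTurck flow with Euclidean background does not commute with pullback by a non-isometric diffeomorphism, and $M_{C^0}$ at finite radius is not invariant under such coordinate changes, so the identification you assert is not available. The paper's resolution is structural and is what your proposal is missing: for each scale $r$ it builds (Proposition \ref{prop:gluingwmollifiedmaps}, Theorem \ref{thm:gluingtransmaptoiso}) a diffeomorphism $\tilde\phi^r$ equal to the transition map for $|x|\leq r$ and to an $r$-dependent isometry $L^r$ for $|x|\geq 10r$, pulls back the \emph{unsmoothed} metric $(\Phi_2)_*g$ by $\tilde\phi^r$ so that a single metric on a single chart interpolates between $(\Phi_1)_*g$ and $(L^r)^*(\Phi_2)_*g$, applies the single-chart monotonicity to that metric across radii, and then removes $L^r$ using rotation invariance (Lemma \ref{lemma:rotationinvarianceofmass}) together with Lemma \ref{lemma:affineC0monotonicity}, which compares masses at two \emph{different} radii precisely because the translation part (of size up to $25r$) cannot be absorbed by a change of variables at a fixed radius. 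Without this gluing-and-different-radii mechanism (this is also why the conclusion compares radius $r$ with radius $200r$ rather than radius $r$ with radius comparable to $r$), your chart-change estimate does not go through.
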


We now will explain the structure of the rest of the paper: In Section \ref{sec:preliminaries} we introduce the Ricci and Ricci-DeTurck flows and provide estimates for the heat kernel and scalar curvature under the flows. We also record some elementary facts, and discuss properties of glued locally bilipschitz maps and transition maps between $C^0$-asymptotically flat coordinate charts. In Section \ref{sec:distortion} we prove Lemma \ref{lemma:massdistortionestimate}. In Section \ref{sec:monotonicity} we prove a preliminary version of the monotonicity statement in Theorem \ref{thm:fullC0monotonicity}, for a single $C^0$-asymptotically flat coordinate chart. In Section \ref{sec:coordinates} we prove the full version of the monotonicity statement in Theorem \ref{thm:fullC0monotonicity}, using mollification and gluing statements from Appendix \ref{appendix:mollification}, which rely heavily on the results concerning bilipschitz maps and almost-isometries recorded in Appendix \ref{appendix:almostisometries}. In Section \ref{sec:masslimit} we prove the full statements of Theorems \ref{thm:fullC0existence} and \ref{thm:fullC0monotonicity}.

\section{Preliminaries}\label{sec:preliminaries}
\subsection{Ricci and Ricci-DeTurck flow preliminaries}
If $M$ is a smooth manifold and $(\tilde g_t)_{t\in (0, T)}$ is a smooth family of Riemannian metrics on $M$, recall that $\tilde g_t$ evolves by Ricci flow if
\begin{equation}\label{eq:RF}
\partial_t \tilde g_t = -2\Ric(\tilde g_t).
\end{equation}
We use the notation $\tilde g_t$ to distinguish this flow from the Ricci-DeTurck flow, which we use more often in this paper, and which we will denote by $g_t$. The Ricci-DeTurck flow, introduced by DeTurck in \cite{De}, is a strongly parabolic flow that is related to the Ricci flow by pullback via a family of diffeomorphisms. More specifically, we define the following operator, which maps symmetric $2$-forms on $M$ to vector fields:
\begin{equation}\label{eq:Xoperator}
X_{\bar g}(g):= \sum_{i=1}^n(\nabla^{\bar g}_{e_i}e_i - \nabla^{g}_{e_i}e_i),
\end{equation}
where $\{e_i\}_{i=1}^n$ is any local orthonormal frame with respect to $g$. Then the Ricci-DeTurck equation is
\begin{equation}\label{eq:RDTF}
\partial_t g(t) = -2\Ric(g(t)) - \L_{X_{\bar g(t)}(g(t))}g(t),
\end{equation}
where $\bar g(t)$ is a background Ricci flow. 
\begin{remark}\label{rmk:parabolicscaling}
If $g_t$ solves (\ref{eq:RDTF}) with respect to some background Ricci flow $\bar g_t$ for $t\in (a,b)$, then, for all $\lambda >0$, the parabolically rescaled flow $g'_t:= \lambda g_{t/\lambda}$ solves (\ref{eq:RDTF}) with respect to the background Ricci flow $\bar g'_t := \lambda \bar g_{t/\lambda}$ for $t\in (\lambda a, \lambda b)$.
\end{remark}

In this paper we work with Ricci-DeTurck flows on $\R^n$ with respect to a Euclidean background, so we take $\bar g(t)\equiv \delta$ and (\ref{eq:RDTF}) becomes
\begin{equation}\label{eq:RDTFeucl}
\partial_t g(t) = -2\Ric(g(t)) - \L_{X_{\delta}(g(t))}g(t).
\end{equation}
As mentioned, if $g(t)$ solves (\ref{eq:RDTF}) then it is related to a Ricci flow via pullback by diffeomorphisms. More precisely, if $g(t)$ solves (\ref{eq:RDTF}) and $(\chi_t)_{t\in (0, T)}: M\to M$ is the family of diffemorphisms satisfying
\begin{equation}\label{eq:diffeoseq}
\begin{cases}
X_{\bar g(t)}(g(t))f &= \frac{\partial}{\partial t}(f\circ\chi_t) \text{ for all } f\in C^\infty(M)\\
\chi_{\bar t} &= \id,
\end{cases}
\end{equation}
 then $\tilde g(t):= \chi_t^*g(t)$ solves (\ref{eq:RF}) with the condition $\tilde g(\bar t) = g(\bar t)$.

It is known (see \cite[Appendix A]{BK}) that if $g_t$ solves (\ref{eq:RDTF}) with respect to the background Ricci flow $\bar g_t$, and if $h_t  = g_t - \bar g_t$, then the evolution equation for $h_t$ is given by
\begin{equation}\label{eq:hevolution}
\partial_t h_t = -L h_t + Q[h_t],
\end{equation}
where 
\begin{equation}\label{eq:Lis}
\begin{split}
L h_t &= -\Delta^{\bar g_t}h_t - 2\Rm^{\bar g_t}[h_t]
\\&:= -\Delta^{\bar g_t}h_t - 2{\bar g}^{pq}R_{pij}^{m}h_{q m}dx^i\otimes dx^j + \bar g^{pq}(h_{pj}R_{qi} + h_{ip}R_{qj})
\end{split}
\end{equation}
(note that our notation convention for $\Rm^{\bar g_t}[h_t]$ differs slightly from that of \cite{BK} as we do not use the Uhlenbeck trick in this paper) and $Q$ denotes the quadratic term 
\begin{equation}
\begin{split}
\left(Q_{\bar g_t}[h_t]\right)_{ij}&:= \left((\bar g+h)^{pq} - \bar g^{pq}\right)\left(\nabla^2_{pq}h_{ij} +R_{pij}^mh_{mq} + R_{pji}^mh_{mq}\right)
\\& \qquad + \left(\bar g^{pq} - (\bar g+h)^{pq}\right)\left(R_{ipq}^mh_{mj} + R_{jpq}^mh_{im}\right)
\\& -\frac{1}{2}(\bar g+h)^{pq}(\bar g+h)^{m\ell}\big(-\nabla_i h_{pm}\nabla_jh_{q\ell} - 2\nabla_mh_{ip}\nabla_qh_{j\ell}
\\& \qquad + 2\nabla_mh_{ip}\nabla_{\ell}h_{jq} + 2\nabla_ph_{i\ell}\nabla_jh_{qm} + 2\nabla_ih_{pm}\nabla_qh_{j\ell}\big)
\\&= \nabla_p (\left((\bar g+h)^{pq} - \bar g^{pq}\right)\nabla_qh_{ij}) 
\\& \qquad - \left(\nabla_p\left((\bar g+h)^{pq} - \bar g^{pq}\right)\right)\nabla_qh_{ij} +  \left((\bar g+h)^{pq} - \bar g^{pq}\right)\left(R_{pij}^mh_{mq} + R_{pji}^mh_{mq}\right)
\\& \qquad + \left(\bar g^{pq} - (\bar g+h)^{pq}\right)\left(R_{ipq}^mh_{mj} + R_{jpq}^mh_{im}\right)
\\& -\frac{1}{2}(\bar g+h)^{pq}(\bar g+h)^{m\ell}\big(-\nabla_i h_{pm}\nabla_jh_{q\ell} - 2\nabla_mh_{ip}\nabla_qh_{j\ell}
\\& \qquad + 2\nabla_mh_{ip}\nabla_{\ell}h_{jq} + 2\nabla_ph_{i\ell}\nabla_jh_{qm} + 2\nabla_ih_{pm}\nabla_qh_{j\ell}\big),
\end{split}
\end{equation}
where here $\nabla$ denotes the covariant derivative with respect to $\bar g_t$. The second equality follows from the Leibniz rule. 

We often write
\begin{equation}
Q[h_t] = Q^0_t + \nabla^* Q^1_t,
\end{equation}
where
\begin{equation}\label{eq:Q0is}
\begin{split}
Q^0_t&:=-\frac{1}{2}(\bar g+h)^{pq}(\bar g+h)^{m\ell}\big(-\nabla_i h_{pm}\nabla_jh_{q\ell} - 2\nabla_mh_{ip}\nabla_qh_{jm}
\\& \qquad + 2\nabla_mh_{ip}\nabla_{\ell}h_{jq} + 2\nabla_ph_{i\ell}\nabla_jh_{qm} + 2\nabla_ih_{pm}\nabla_qh_{j\ell}\big)
\\& \qquad - \left(\nabla_p((\bar g+h)^{pq} - \bar g^{pq})\right)\nabla_qh_{ij} +  \left((\bar g+h)^{pq} - \bar g^{pq}\right)\left(R_{pij}^mh_{mq} + R_{pji}^mh_{mq}\right)
\\& = (\bar g + h)^{-1}\star (\bar g + h)^{-1} \star \nabla h \star \nabla h + ((\bar g + h)^{-1} - \bar g^{-1})\star \Rm^{\bar g_t}\star h
\end{split}
\end{equation}
and
\begin{equation}\label{eq:Q1is}
\nabla^*Q^1_t:= \nabla_p (\left((\bar g+h)^{pq} - \bar g^{pq}\right)\nabla_qh_{ij}) = \nabla(((\bar g + h)^{-1} - \bar g^{-1})\star \nabla h),
\end{equation}
where we use the notation $A\star B$ for two tensor fields $A$ and $B$ to mean a linear combination of products of the coefficients of $A$ and $B$, and $(\bar g + h)^{-1}$ and $\bar g^{-1}$ denote tensor fields with coefficients $(\bar g + h)^{ij}$ and $\bar g^{ij}$ respectively. Henceforth we will use the notation $A*_gB$ to denote any linear combination of tensor fields obtained from $A\otimes B$ by using $g$ to raise or lower any number of indices or by contracting any indices of such tensor fields using $g$, any number of times. Since in this paper we often work on Euclidean space, we will use $A*B$ to denote any term such that $|A*B|_\delta \leq c(n)|A|_\delta |B|_\delta$.

Henceforth we take all covariant derivatives and measure all balls and all norms with respect to the Euclidean metric $\delta$ on $\R^n$, unless otherwise stated. When $\bar g(t)\equiv \delta$, (\ref{eq:hevolution}) becomes (see \cite[(4.4)]{KL1}): 
\begin{equation}\label{eq:heveuclbackground}
\begin{split}
\partial_t h_{ij} &= \Delta h_{ij} + \frac{1}{2}(\delta + h)^{pq}(\delta + h)^{m\ell}\big( \nabla_i h_{pm}\nabla_j h_{q\ell} + 2\nabla_m h_{ip}\nabla_q h_{jm} -2\nabla_m h_{ip}\nabla_{\ell}h_{jp} 
\\& - 2\nabla_p h_{i\ell}\nabla_j h_{qm} -2\nabla_i h_{pm}\nabla_q h_{j\ell} \big) - \nabla_p((\delta + h)^{pq})\nabla_q h_{ij}
\\& + \nabla_p \big( ((\delta + h)^{pq} - \delta^{pq})\nabla_q h_{ij} \big)
\\&=: \Delta h_{ij} + Q^0[h] + \nabla^*Q^1[h]
\\& \text{ where}
\\& Q^0[h] = \frac{1}{2}(\delta + h)^{pq}(\delta + h)^{m\ell}\big( \nabla_i h_{pm}\nabla_j h_{q\ell} + 2\nabla_m h_{ip}\nabla_q h_{jm} -2\nabla_m h_{ip}\nabla_{\ell}h_{jp} 
\\& - 2\nabla_p h_{i\ell}\nabla_j h_{qm} -2\nabla_i h_{pm}\nabla_q h_{j\ell} \big) - \nabla_p((\delta + h)^{pq})\nabla_q h_{ij} = \nabla h * \nabla h,
\\& \nabla^*Q^1[h] = \nabla_p \big( ((\delta + h)^{pq} - \delta^{pq})\nabla_q h_{ij} \big) = \nabla (h * \nabla h),
\end{split}
\end{equation}
where $\Delta$ denotes the usual Euclidean Laplacian.

We now record the following result concerning Ricci-DeTurck flows starting from small $C^0$ perturbations of Euclidean space (cf. \cite[Theorem $4.3$]{KL1}, \cite[Lemma $3.3$ and Corollary $3.4$]{PBG19}):
\begin{lemma}\label{lemma:RDTFexistenceandests}
There exists $\bar \varepsilon = \bar \varepsilon(n) < 1$ and $c = c(n)$ such that the following is true:

If $g_0$ is any continuous Riemannian metric on $\R^n$ such that $|| g_0 - \delta||_{C^0(\R^n)} < \bar \varepsilon$ then there exists a smooth solution $(g_t)_{t>0}$ to (\ref{eq:RDTFeucl}) such that 
\begin{equation}\label{eq:RDTFinitialcondition}
g_t \xrightarrow[t\searrow 0]{C^0_{\loc}} g_0,
\end{equation}
\begin{equation}\label{eq:RDTFXest}
|| g_t - \delta||_{X} \leq c|| g_0 - \delta||_{C^0(\R^n)},
\end{equation}
and, for all $k\in \N$ there exists $c_k(n)>0$ such that for all $t>0$,
\begin{equation}\label{eq:RDTFderivests}
|| \nabla^k(g_t - \delta)||_{C^0(\R^n)} \leq c_k(n)\frac{|| g_0 - \delta||_{C^0(\R^n)}}{t^{k/2}},
\end{equation}
where $||\cdot||_X$ is given by
\begin{equation*}
\begin{split}
|| h ||_X &= \sup_{0 < t < \infty}|| h(t) ||_{L^\infty(\R^n)} 
\\& + \sup_{x\in \R^n}\sup_{0 < r}\left( r^{-n/2}||\nabla h ||_{L^2(B(x,r)\times (0, r^2))} + r^{\tfrac{2}{n+4}}|| \nabla h||_{L^{n+4}(B(x, r)\times(\tfrac{r^2}{2}, r^2))} \right).
\end{split}
\end{equation*}

Moreover, if $U, V \subset \R^n$ are any two open sets with $U\subset \subset V$ and $g$ is a continuous Riemannian metric on $V$ such that $||g - \delta||_{C^0(V)} < \bar \varepsilon$, then there exists a continuous Riemannian metric $g_0$ defined on all of $\R^n$ such that
\begin{equation*}
g_0\bigg|_{U} = g\bigg|_{U} \text{ and } || g_0 - \delta||_{C^0(\R^n)} \leq || g - \delta||_{C^0(V)},
\end{equation*}
so there exists a smooth solution $(g_t)_{t>0}$ to (\ref{eq:RDTFeucl}) satisfying (\ref{eq:RDTFinitialcondition}), (\ref{eq:RDTFXest}), and (\ref{eq:RDTFderivests}) for $g_0$.
\end{lemma}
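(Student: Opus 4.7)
The plan is to invoke the Koch--Lamm fixed-point construction \cite{KL1} for the existence and interior estimates of the flow, and to supplement it with a cutoff-based extension argument as in \cite{PBG19}.

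First I would set $h_t = g_t - \delta$ and view equation (\ref{eq:heveuclbackground}) through Duhamel's formula:
\begin{equation*}
h_t = e^{t\Delta}h_0 + \int_0^t e^{(t-s)\Delta}\bigl(Q^0[h_s] + \nabla^*Q^1[h_s]\bigr)\,ds,
\end{equation*}
treated as a fixed-point equation $h = \Psi(h)$ on the Banach space $\{h : \|h\|_X < \infty\}$. The two key ingredients are: (i) a linear bound $\|e^{t\Delta}h_0\|_X \leq c(n)\|h_0\|_{C^0(\R^n)}$, following from Gaussian bounds on the heat kernel together with the definition of the $X$-norm; and (ii) a bilinear estimate $\|\int_0^t e^{(t-s)\Delta}(Q^0[h_s] + \nabla^*Q^1[h_s])\,ds\|_X \leq c(n)\|h\|_X^2$, which exploits the quadratic structure of $Q^0, Q^1$ and the parabolic gain from the heat semigroup (here the $\nabla^* Q^1$ term is placed in divergence form so that one derivative can be moved onto the kernel). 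With these in hand, a standard contraction-mapping argument on a small ball in $X$ produces a fixed point, which is the desired smooth Ricci-DeTurck flow and automatically satisfies (\ref{eq:RDTFXest}). The initial condition (\ref{eq:RDTFinitialcondition}) follows from continuity of $e^{t\Delta}$ on $C^0_{\loc}$ together with the vanishing of the Duhamel term as $t \searrow 0$.

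For the higher derivative bounds (\ref{eq:RDTFderivests}), once $\|h_t\|_{C^0(\R^n)} \lesssim \|h_0\|_{C^0(\R^n)}$ is uniformly small, equation (\ref{eq:heveuclbackground}) is uniformly strictly parabolic with bounded coefficients. Applying standard interior parabolic Schauder (or Bernstein) estimates on cylinders $B(x, \sqrt{t}/2) \times [t/2, t]$ and rescaling to the natural parabolic scale $\sqrt{t}$ yields $\|\nabla^k h\|_{C^0(\R^n)} \lesssim t^{-k/2}\|h_0\|_{C^0(\R^n)}$ for every $k$; bootstrapping in $k$ gives smoothness of $g_t$ for $t > 0$.

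For the extension statement, I would pick a smooth cutoff $\psi: \R^n \to [0,1]$ with $\psi \equiv 1$ on $U$ and $\supp(\psi) \subset V$, and set
\begin{equation*}
g_0 := \delta + \psi\,(g - \delta) \text{ on } V, \qquad g_0 := \delta \text{ on } \R^n\setminus V.
\end{equation*}
This is continuous across $\partial V$ (since $\psi$ vanishes in a neighborhood of $\partial V$) and equals $g$ on $U$. Pointwise $|g_0 - \delta|_{\eucl} = \psi\,|g - \delta|_{\eucl} \leq \|g - \delta\|_{C^0(V)} < \bar\varepsilon$, which both gives the claimed $C^0$ bound and, since $\bar\varepsilon < 1$, guarantees positive-definiteness of $g_0$ as a convex combination of $\delta$ and $g$. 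Applying the first part of the lemma to this extension completes the proof. The main obstacle is the bilinear estimate (ii), which requires simultaneous control of $L^\infty$, weighted $L^2$, and $L^{n+4}$ norms of $\nabla h$ at every parabolic scale --- but this is precisely the content of \cite[Theorem 4.3]{KL1}, so in practice the heart of the argument reduces to citing that theorem and verifying its hypotheses.
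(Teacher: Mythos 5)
Your proposal is correct and follows essentially the same route as the paper: the existence, the $X$-norm bound, and the derivative estimates are ultimately obtained by citing \cite[Theorem $4.3$]{KL1} (your sketch of the Duhamel/contraction argument is just an expansion of that citation), the initial-time convergence is handled as in \cite{PBG19}, and your cutoff extension $g_0 = \delta + \psi(g-\delta)$ is exactly the paper's construction. No gaps to report.
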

\begin{proof}
The existence of a smooth solution $(g_t)_{t>0}$ and the estimates (\ref{eq:RDTFXest}) and (\ref{eq:RDTFderivests}) are due to \cite[Theorem $4.3$]{KL1}. That the solution converges to the initial data as $t\searrow 0$ follows in the same way as in the proof of \cite[Corollary $3.7$]{PBG19}.

Towards the second statement, let $\chi: \R^n \to [0,1]$ be a smooth cutoff function with $\chi\equiv 1$ on $U$ and $\supp(\chi)\subset V$. Then let $g_0 = \chi g + (1-\chi)\delta$. The $(0,2)$-tensor $g_0$ is continuous, symmetric, and positive definite because $g$ and $\delta$ are, so it is a $C^0$ Riemannian metric on $\R^n$. We also have
\begin{equation*}
|g_0 - \delta|\bigg|_x = \chi(x)|g - \delta| \bigg|_x \leq || g-\delta||_{C^0(V)} < \bar \varepsilon.
\end{equation*}
The existence of a solution $g_t$ satisfying (\ref{eq:RDTFinitialcondition}), (\ref{eq:RDTFXest}), and (\ref{eq:RDTFderivests}) then follows from the first statement.
\end{proof}
\begin{remark}
The result \cite[Theorem $4.3$]{KL1} also guarantees a solution $g_t$ when $g_0$ is only in $L^\infty(\R^n)$ with $|| g_0 - \delta||_{L^\infty(\R^n)} < \bar \varepsilon(n)$, but we will not address this situation in this paper.
\end{remark}

Observe that if $g_t$ is any solution to (\ref{eq:RDTFeucl}) satisfying (\ref{eq:RDTFderivests}), then we have
\begin{equation}\label{eq:RDTFscalargenerallowerbound}
R(g_t) \geq -\frac{c(n)}{t}.
\end{equation}

\subsection{Heat kernel estimates and evolution of scalar curvature under Ricci-DeTurck flow}
If $g_t$ evolves by Ricci-DeTurck flow (\ref{eq:RDTF}) then the scalar curvature of $g_t$ satisfies (\cite[(2.24)]{PBG19})
\begin{equation}\label{eq:RDTFscalarev}
\partial_t R(g_t) \geq \Delta^{g_t} R(g_t) - \langle X, \nabla R(g_t)\rangle + \frac{2}{n}R(g_t)^2,
\end{equation}
where $X$ is as in (\ref{eq:Xoperator}). 

For a given solution $(g_t)_{t>0}$ to (\ref{eq:RDTF}) with respect to a smooth background Ricci flow $(\bar g_t)_{t\geq 0}$ on a smooth manifold $M$, let $\Phi^{RD}(x,t; y,s)$ denote the scalar heat kernel for the operator $\partial_t - \Delta^{g_t} + \nabla^{g_t}_X$, where $X$ is as in (\ref{eq:Xoperator}), i.e. for fixed $y\in M$ and $0 < s < t$,
\begin{equation*}
\partial_t \Phi^{RD}(x,t; y,s) = \Delta_{g_t, x} \Phi^{RD}(x,t; y,s) - \nabla^{g_t}_{X, x}\Phi^{RD}(x,t;y,s).
\end{equation*}

We refer the reader to \cite{CCG+} for a thorough discussion of the heat kernel. We now record an estimate for $\Phi^{RD}(x,t;y,s)$ (cf. \cite[Lemma $3.8$]{PBGthesis}, \cite[Lemma $3$]{Bam16}, \cite[Lemma $2.9$]{PBG19}):
\begin{lemma}\label{lemma:RDTFscalarHKests}
Suppose $(g_t)_{t>0}$ is a solution to (\ref{eq:RDTFeucl}) satisfying $|| g_t - \delta||_{C^0(\R^n)} < b$ and (\ref{eq:RDTFderivests}) and let $\Phi^{RD}(x, t; y,s)$ denote the scalar heat kernel for $g_t$ as described above. Then there exist constants $C= C(n, b), D = D(n) >0$ such that for all $t>0$ and all $\tfrac{t}{2} \leq s \leq t$,
\begin{equation*}
\Phi^{RD}(x,t; y, s) \leq \frac{C}{(t-s)^{n/2}}\exp\left(-\frac{|x - y|^2_{\delta}}{D(t-s)}\right).
\end{equation*}
\end{lemma}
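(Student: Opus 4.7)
\textbf{Proof plan for Lemma \ref{lemma:RDTFscalarHKests}.} The strategy is to combine parabolic rescaling with a standard Gaussian upper bound for the heat kernel of a uniformly parabolic operator whose coefficients are controlled after rescaling. The restriction $t/2 \leq s \leq t$ is what makes this work: on the time interval $[t/2, t]$, the derivative estimates (\ref{eq:RDTFderivests}) become uniform after rescaling by $\lambda = 1/t$.

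First, I would normalize by parabolic rescaling. Fix $t > 0$ and let $\lambda = 1/t$. By Remark \ref{rmk:parabolicscaling}, the rescaled flow $g'_{\sigma}(x') := \lambda \, g_{\sigma/\lambda}(\lambda^{-1/2} x')$ is again a Ricci-DeTurck flow with respect to the Euclidean background, defined for $\sigma > 0$. The bound $\|g_t - \delta\|_{C^0} < b$ is scale-invariant, so $\|g'_\sigma - \delta\|_{C^0} < b$. The derivative estimate (\ref{eq:RDTFderivests}) at rescaled time $\sigma \in [1/2, 1]$ (corresponding to $\tau = \sigma/\lambda \in [t/2, t]$) becomes
\begin{equation*}
|\nabla^k (g'_\sigma - \delta)|_\delta \leq c_k(n)\, b \quad \text{for all } \sigma \in [1/2, 1], \ k \in \mathbb{N}.
\end{equation*}
In particular, the Christoffel symbols $\Gamma^{g'_\sigma}$ and the drift vector field $X' = X_\delta(g'_\sigma)$, which are both linear in $\nabla g'_\sigma$, are bounded by a constant depending only on $n$ and $b$ on $[1/2, 1]$.

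Next, consider the rescaled heat kernel $\Phi^{RD}{}'(x', \sigma; y', \sigma_0)$ for the operator $\partial_\sigma - \Delta^{g'_\sigma} + \nabla^{g'_\sigma}_{X'}$ on $[1/2, 1]$. This is a strictly parabolic operator on $\mathbb{R}^n$ whose top-order coefficients $(g'_\sigma)^{ij}$ are uniformly elliptic (ellipticity depending on $b$) and whose lower-order coefficients are uniformly bounded, with all bounds depending only on $n$ and $b$. I would either cite the standard Aronson-type Gaussian upper bound for such operators (as used in \cite{CCG+}, \cite[Lemma 3]{Bam16}, \cite[Lemma 2.9]{PBG19}) or construct an explicit Gaussian supersolution of the form
\begin{equation*}
\Psi(x', \sigma; y', \sigma_0) = \frac{C}{(\sigma - \sigma_0)^{n/2}} \exp\!\left( -\frac{|x' - y'|_\delta^2}{D(\sigma - \sigma_0)} \right)
\end{equation*}
and verify via direct computation that $(\partial_\sigma - \Delta^{g'_\sigma} + \nabla^{g'_\sigma}_{X'}) \Psi \geq 0$ provided $C = C(n, b)$ is large enough and $D = D(n)$ is large enough (using $|g'_\sigma - \delta| < b < 1$ to ensure that $\Delta^{g'_\sigma} \Psi$ is close to $\Delta^\delta \Psi$, while the errors from Christoffel symbols and drift can be absorbed into the Gaussian decay). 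Applying the parabolic maximum principle to $\Psi - \Phi^{RD}{}'$ with the correct initial delta behavior yields
\begin{equation*}
\Phi^{RD}{}'(x', 1; y', \sigma_0) \leq \frac{C(n, b)}{(1 - \sigma_0)^{n/2}} \exp\!\left(-\frac{|x' - y'|_\delta^2}{D(n)(1 - \sigma_0)}\right)
\end{equation*}
for $\sigma_0 \in [1/2, 1]$.

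Finally, I would undo the rescaling. The rescaling transforms heat kernels as $\Phi^{RD}(x, t; y, s) = t^{-n/2}\, \Phi^{RD}{}'(t^{-1/2} x, 1;\, t^{-1/2} y, s/t)$, with $s/t \in [1/2, 1]$ and $1 - s/t = (t-s)/t$. Substituting gives exactly the claimed bound. The main obstacle is the verification in the parabolic rescaling step that the relevant constants can be arranged so that the diffusion coefficient $D$ depends only on $n$ (with $b$ absorbed into $C$); this uses that $b < \bar\varepsilon(n) < 1$ in our setting, so $b$ is controlled by $n$ alone. The rest of the argument is essentially bookkeeping of scales and a standard Gaussian barrier computation.
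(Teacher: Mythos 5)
Your argument is essentially correct, but it is a genuinely different route from the paper's. The paper does not work with the Ricci--DeTurck operator directly: it pulls the flow back to a Ricci flow $\tilde g_t=\chi_t^*g_t$ via the DeTurck diffeomorphisms (\ref{eq:diffeoseq}), invokes an already-established Gaussian bound for the Ricci-flow heat kernel (\cite[Lemma $2.4$]{PBG19}, applied on $[\tfrac{t_0}{2},t_0]$, which is where the restriction $\tfrac{t}{2}\leq s\leq t$ enters there too), and then pushes forward through $\chi_t$, using the uniform $b$-bilipschitz volume comparison (\ref{eq:volumecomparison}) to convert $\tilde g(t_0/2)$-distances and ball volumes into $|x-y|_\delta$ and $(t-s)^{n/2}$. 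You instead rescale parabolically so that (\ref{eq:RDTFderivests}) gives uniform bounds on $\nabla^k g'_\sigma$ for $\sigma\in[\tfrac12,1]$, and then treat $\partial_\sigma-\Delta^{g'_\sigma}+\nabla^{g'_\sigma}_{X'}$ as a uniformly parabolic operator with bounded coefficients, applying an Aronson-type bound or a Gaussian barrier plus the maximum principle, and undo the scaling; your transformation law $\Phi^{RD}(x,t;y,s)=t^{-n/2}\Phi^{RD}{}'(t^{-1/2}x,1;t^{-1/2}y,s/t)$ is the right bookkeeping. Your route has the advantage of avoiding the diffeomorphisms entirely (no need to compare $d_{\tilde g}(\chi_t^{-1}x,\chi_s^{-1}y)$ with $d_{g}(x,y)$ or volumes of pushed-forward balls), at the cost of either importing Aronson's estimate or doing the barrier computation with the drift term by hand; the paper's route leverages an existing Ricci-flow kernel estimate and only needs bilipschitz/volume comparison on top of it.

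One caveat to state honestly if you write this up: the ellipticity ratio of $(g'_\sigma)^{ij}$ and the bound on the drift $X'$ depend on $b$, so an off-the-shelf Aronson bound gives $D=D(n,b)$ a priori; to get $D=D(n)$ with $b$ absorbed only into $C$ you do need $b$ bounded in terms of $n$ (e.g.\ $b<1$, as in the places the lemma is applied), since for large $b$ the hypothesis $\|g_t-\delta\|_{C^0}<b$ does not even guarantee uniform ellipticity from below. This is not a defect relative to the paper --- the paper's own conversion of $d_{g(t_0/2)}$-distances to $|x-y|_\delta$ via the $b$-bilipschitz bound has exactly the same dependence --- but your sentence attributing it to $b<\bar\varepsilon(n)$ should be phrased as an assumption on $b$ rather than something the lemma statement provides.
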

\begin{proof}
Since $g_t$ is uniformly $b$-bilipschitz to $\delta$, there exists a constant $c(n,b) >1$ such that for all $r>0$ and $x\in \R^n$, 
\begin{equation}\label{eq:volumecomparison}
c(n,b)^{-1}r^n \leq \vol_{g_t}(B_{g_t}(x,r)) \leq c(n,b)r^n.
\end{equation}
Let $\tilde g_t = \chi_t^* g_t$, where $(\chi_t)_{t>0}$ solve the differential equation from (\ref{eq:diffeoseq}) subject to the condition $\chi_{\bar t} = \id$, for some $\bar t>0$. Then $\tilde g_t$ is a Ricci flow, defined for $t>0$, with $\tilde g_{\bar t} = g_{\bar t}$. Let $\Phi^{RF}$ be the heat kernel for the operator $\partial_t - \Delta^{\tilde g_t}$ on the $\tilde g_t$-background. Let $t_0>0$. By (\ref{eq:RDTFderivests}), $\tilde  g_t$ satisfies the hypotheses of \cite[Lemma $2.4$]{PBG19}. Then, applying \cite[Lemma $2.4$]{PBG19} to the time interval $[\tfrac{t_0}{2}, t_0]$, there exist $C(n), D(n)$ such that, for $s, t \in [\tfrac{t_0}{2}, t_0]$
\begin{equation*}
\Phi^{RF}(x,t ;y,s ) \leq \frac{C\exp\left(-\frac{d^2_{\tilde g(t_0/2)}(x,y)}{D(t-s)}\right)}{\vol^{1/2}_{\tilde g(t_0/2)}B_{\tilde g(t_0/2)}\left(x, \sqrt{\tfrac{t-s}{2}}\right)\vol^{1/2}_{\tilde g(t_0/2)}B_{\tilde g(t_0/2)}\left(y, \sqrt{\tfrac{t-s}{2}}\right)}.
\end{equation*}

Pushing forward by the $\chi_t$ and arguing as in the proof of \cite[Lemma $2.9$]{PBG19}, we find, by (\ref{eq:volumecomparison})
\begin{align*}
\Phi^{RD}(x, t; y,s) &= \Phi^{RF}(\chi_t^{-1}(x), t; \chi_s^{-1}(y), s)
\\& \leq \frac{C\exp\left(-\frac{d^2_{\tilde g(t_0/2)}(\chi_t^{-1}(x),\chi_s^{-1}(y))}{D(t-s)}\right)}{\vol^{1/2}_{\tilde g(t_0/2)}B_{\tilde g(t_0/2)}\left(\chi_t^{-1}(x), \sqrt{\tfrac{t-s}{2}}\right)\vol^{1/2}_{\tilde g(t_0/2)}B_{\tilde g(t_0/2)}\left(\chi_s^{-1}(y), \sqrt{\tfrac{t-s}{2}}\right)}
\\& \leq \frac{C\exp\left(-\frac{d^2_{g(t_0/2)}(x,y)}{D(t-s)}\right)}{\vol^{1/2}_{g(t_0/2)}B_{g(t_0/2)}\left(x, \sqrt{\tfrac{t-s}{2}}\right)\vol^{1/2}_{g(t_0/2)}B_{g(t_0/2)}\left(y, \sqrt{\tfrac{t-s}{2}}\right)}
\\& \leq \frac{C}{(t-s)^{n/2}}\exp\left(-\frac{|x - y|^2_{\delta}}{D(t-s)}\right),
\end{align*}
with $C$ and $D$ adjusted. Since $C$ and $D$ and (\ref{eq:RDTFderivests}) do not depend on choice of $t_0$, the estimate holds with $t_0$ replaced by $t$, whence follows the result.
\end{proof}

\subsection{Preliminary results for the local masses}

\begin{lemma}\label{lemma:rotationinvarianceofmass}
Let $O: \R^n\to \R^n$ be a rotation. If $g$ is a $C^0$ Riemannian metric on $\overline{A(0, .9r, 1.1r)}$ for some $r>0$ and $\varphi: \R \to \R^{\geq 0}$ is some smooth function not identically $0$ on $(.9, 1.1)$, then
\begin{equation*}
M_{C^0}(O^*g, \varphi, r) = M_{C^0}(g, \varphi, r).
\end{equation*}
\end{lemma}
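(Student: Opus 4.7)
The plan is a direct change-of-variables computation, exploiting the fact that every ingredient of the integrands in the definition of $M_{C^0}$ — namely the Euclidean metric $\delta$, the norm $|x|$, the radial vector $x/|x|$, the outward normal $\nu$ to a sphere centered at $0$, the Euclidean volume element $dx$, the surface measure $dS$, and the cutoff $\varphi(|x|/r)$ and $\varphi'(|x|/r)$ — is invariant or equivariant under a rotation $O \in O(n)$. Crucially, since $O^*\delta = \delta$, we have $O^*g - \delta = O^*(g - \delta)$, so the pullback operation in the difference $g - \delta$ appearing throughout the integrand behaves well.

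First, I would check the pointwise transformation of each of the three integrands in (\ref{eq:defC0mass}). Setting $h = g - \delta$, the standard identities $\tr_\delta (O^*h)(x) = (\tr_\delta h)(O(x))$ and $(O^*h)_{ij}(x)\, x^i x^j = h_{k\ell}(O(x))\, (Ox)^k (Ox)^\ell$ together with $|x| = |O(x)|$ imply that the bulk integrands $I_1 + I_2$ satisfy $(I_1 + I_2)(O^*g)(x) = (I_1 + I_2)(g)(O(x))$. For the boundary integrand, one uses in addition that on each component of $\partial A(0,.9r,1.1r)$ the unit normal is $\nu(x) = \pm x/|x|$, so the contractions $(g-\delta)_{ij}\frac{x^i}{|x|}\nu^j$ and $(g-\delta)_{jj}\frac{x^i}{|x|}\nu^i$ can be rewritten as contractions against $x/|x|$ alone (up to a sign) and then transform in the same equivariant manner.

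Second, I apply the change of variables $y = O(x)$. The rotation $O$ has Jacobian $\pm 1$ and maps $A(0,.9r,1.1r)$ onto itself and each sphere $\partial B(0, \rho)$ onto itself, preserving both $dx$ and $dS$. Combined with the pointwise identities above, every integral in $M_{C^0}(O^*g, \varphi, r)$ transforms into the corresponding integral in $M_{C^0}(g, \varphi, r)$. The prefactor $(4\pi(n-1)\omega_{n-1})^{-1}\big(r \int_{.9}^{1.1}\varphi(\ell)\,d\ell\big)^{-1}$ is unaffected. The equality $M_{C^0}(O^*g, \varphi, r) = M_{C^0}(g, \varphi, r)$ follows.

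There is essentially no obstacle here; the lemma is a structural consistency check confirming that the $C^0$ local mass depends on the coordinate chart only through $O(n)$-equivariant data at each radius. The only mildly fiddly part is bookkeeping the signs of $\nu$ on the inner versus outer sphere in the boundary term, but these signs match on both sides of the claimed equality. Note that this proof works verbatim for any $C^0$ metric and does not require passing through mollification or through the $C^1$ identity of Remark \ref{rmk:C0agreeswithC1average}.
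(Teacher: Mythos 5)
Your proposal is correct and follows essentially the same route as the paper: the paper's proof is exactly this change-of-variables computation $y = Ox$, using orthogonality of $O$ to see that the trace, the $x^ix^j$-contraction, and the boundary terms (with $\nu = \pm x/|x|$) transform equivariantly while $dx$, $dS$, $|x|$, and the annulus are preserved. No gaps to report.
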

\begin{proof}
This is a calculation. Let $O_i^j$ denote the $ij$-entry of the matrix for $O$, so that $\sum_{\ell = 1}^{n}O^i_{\ell}O^j_{\ell} = \delta^{ij}$. Then we have
\begin{align*}
&\int_{A(o,.9r, 1.1r)} \left( \frac{n-2}{|x|}\varphi(\tfrac{|x|}{r}) + \frac{1}{r}\varphi'(\tfrac{|x|}{r})\right)\delta^{ij}(O^*g_{ij} - \delta_{ij}) 
\\& \qquad \qquad + \left(\frac{1}{|x|}\varphi(\tfrac{|x|}{r}) - \frac{1}{r}\varphi'(\tfrac{|x|}{r})\right)(O^*g_{ij} - \delta_{ij})\frac{x^ix^j}{|x|^2}dx
\\&= \int_{A(o,.9r, 1.1r)} \left( \frac{n-2}{|x|}\varphi(\tfrac{|x|}{r}) + \frac{1}{r}\varphi'(\tfrac{|x|}{r})\right)\delta^{ij}(O^a_i g_{ab} O_j^b|_{Ox} - \delta_{ij}) 
\\& \qquad \qquad + \left(\frac{1}{|x|}\varphi(\tfrac{|x|}{r}) - \frac{1}{r}\varphi'(\tfrac{|x|}{r})\right)(O^a_i g_{ab} O_j^b|_{Ox} - \delta_{ij})\frac{x^ix^j}{|x|^2}dx
\\&= \int_{A(o,.9r, 1.1r)} \left( \frac{n-2}{|y|}\varphi(\tfrac{|y|}{r}) + \frac{1}{r}\varphi'(\tfrac{|y|}{r})\right)(\delta^{ab}g_{ab}|_{y} - \delta^{ij}\delta_{ij}) 
\\& \qquad \qquad + \left(\frac{1}{|y|}\varphi(\tfrac{|y|}{r}) - \frac{1}{r}\varphi'(\tfrac{|y|}{r})\right)(O^a_i g_{ab} O_j^b|_{y} - \delta_{ij})\frac{O^\ell_{i}y^{\ell}O^m_{j}y^{m}}{|y|^2}dy
\\&= \int_{A(o,.9r, 1.1r)} \left( \frac{n-2}{|y|}\varphi(\tfrac{|y|}{r}) + \frac{1}{r}\varphi'(\tfrac{|y|}{r})\right)(\delta^{ab}g_{ab}|_{y} - \delta^{ij}\delta_{ij}) 
\\& \qquad \qquad + \left(\frac{1}{|y|}\varphi(\tfrac{|y|}{r}) - \frac{1}{r}\varphi'(\tfrac{|y|}{r})\right) \left(g_{\ell m} - \delta_{\ell m} \right) \frac{y^{\ell}y^{m}}{|y|^2}dy.
\end{align*}
Similarly,
\begin{align*}
&\int_{\partial A(o, .9r, 1.1r)} (O^*g_{ij} - \delta_{ij})\frac{x^i}{|x|}\varphi(\tfrac{|x|}{r})\nu^j - (O^*g_{jj}- \delta_{jj})\frac{x^i}{|x|}\varphi(\tfrac{|x|}{r})\nu^i dS(x)
\\&= \int_{\partial A(o, .9r, 1.1r)} (O^*g_{ij}|_x - \delta_{ij})\varphi(\tfrac{|x|}{r})\frac{x^ix^j}{|x|^2} - (O^*g_{jj}|_x- \delta_{jj})\varphi(\tfrac{|x|}{r})\frac{(x^i)^2}{|x|^2} dS(x)
\\&= \int_{\partial A(o, .9r, 1.1r)} (O_i^ag_{ab}O_j^b|_y - \delta_{ij})\varphi(\tfrac{|y|}{r})\frac{O^i_{\ell}y^{\ell}O^j_my^m}{|y|^2} - (O_j^ag_{ab}O_j^b|_y- \delta_{jj})\varphi(\tfrac{|y|}{r})\frac{(O^i_{\ell}y^{\ell})^2}{|y|^2} dS(y)
\\&= \int_{\partial A(o, .9r, 1.1r)} (g_{\ell m}|_y - \delta_{\ell m})\varphi(\tfrac{|y|}{r})\frac{y^{\ell}y^m}{|y|^2} - (\delta^{ab}g_{ab}|_y- \delta_{jj})\varphi(\tfrac{|y|}{r})\frac{(y^{i})^2}{|y|^2} dS(y).
\end{align*}
\end{proof}

We now introduce the following notation: if $g$ is a $C^2$ Riemannian metric defined on a region in $\R^n$ containing $\S(r)$ for some $r>0$, then we write the following:
\begin{equation}\label{eq:mC2notation}
m_{C^2}(g, r) := \int_{\S(r)}(\partial_i g_{ij} - \partial_j g_{ii})\nu^jdS.
\end{equation}
If $g$ is a $C^2$ Riemannian defined on a subset of a smooth manifold $M$ and $\Phi$ is a coordinate chart for $M$ such that $\Phi_*g$ is defined on a region in $\R^n$ containing $\S(r)$ for some $r>0$, then we let
\begin{equation}
m_{C^2}(g, \Phi, r) := m_{C^2}(\Phi_*g, r).
\end{equation}
Here we record a calculation concerning the classical local mass, cf. \cite[Proposition $4.1$]{Bartnik86}. We will make use of this calculation in Section \ref{sec:monotonicity}.
\begin{lemma}\label{lemma:Bartnikscalculation}
Suppose $g$ is a $C^2$ Riemannian metric on $\R^n\setminus \overline{B(0,r_0)}$ such that 
\begin{equation*}
|| g - \delta||_{C^0(\R^n\setminus \overline{B(0, r_0)})} \leq b.
\end{equation*}
 For $r_0 < r_1 < r_2$ we have
\begin{equation*}
m_{C^2}(g, r_2) - m_{C^2}(g, r_1)
 \geq \int_{A(0, r_1, r_2)} R(g)  - c(n,b) |g - \delta| |\nabla^2 g | - c(n,b)|\nabla g|^2 dV_\delta.
\end{equation*}
\end{lemma}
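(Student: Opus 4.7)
The strategy is the classical Bartnik expansion of $R(g)$ around the Euclidean metric, combined with the divergence theorem. Write $h := g - \delta$, so $\|h\|_{C^0} \leq b$, and decompose the scalar curvature as $R(g) = R_{\mathrm{lin}}(g) + E(g)$, where $R_{\mathrm{lin}}(g)$ is the first-order part of $R(g)$ in $h$ about $\delta$ and $E(g)$ collects the remainder. A direct computation linearizing $\Gamma^k_{ij} = \tfrac{1}{2}g^{k\ell}(\partial_i g_{j\ell} + \partial_j g_{i\ell} - \partial_\ell g_{ij})$ and $R_{ij} = \partial_k\Gamma^k_{ij} - \partial_j\Gamma^k_{ik} + \Gamma\ast\Gamma$ at $g=\delta$ shows
\begin{equation*}
R_{\mathrm{lin}}(g) = \partial_j\bigl(\partial_i g_{ij} - \partial_j g_{ii}\bigr),
\end{equation*}
i.e. the linearization is exactly the Euclidean divergence of the ADM integrand vector field $V^j := \partial_i g_{ij} - \partial_j g_{ii}$.

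Next I would apply the divergence theorem on the annular region $A(0,r_1,r_2)$, with outward Euclidean unit normals $\nu = x/|x|$ on $\S(r_2)$ and $-x/|x|$ on $\S(r_1)$, to obtain
\begin{equation*}
\int_{A(0,r_1,r_2)} R_{\mathrm{lin}}(g)\, dV_{\delta} \;=\; m_{C^2}(g,r_2) - m_{C^2}(g,r_1).
\end{equation*}

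The remaining task is to bound $E(g) = R(g) - R_{\mathrm{lin}}(g)$ pointwise. The error splits naturally into two pieces: a piece of the form $(g^{ij} - \delta^{ij}) \ast \nabla^2 g$ coming from replacing $\delta^{ij}$ by $g^{ij}$ in the terms of $R$ that are linear in $\nabla^2 g$ (together with the $g^{-1}$-factors hidden in the second derivatives of $\Gamma$ computed w.r.t.\ a general metric), and a purely quadratic piece of the form $g^{-1}\ast g^{-1} \ast \nabla g \ast \nabla g$ coming from the $\Gamma \ast \Gamma$ terms in $R_{ij}$. Since $\|h\|_{C^0}\leq b$ implies $|g^{ij}-\delta^{ij}| \leq c(n,b)|h|$ and $|g^{ij}| \leq c(n,b)$, these give
\begin{equation*}
|E(g)| \leq c(n,b)\,|g-\delta|\,|\nabla^2 g| + c(n,b)\,|\nabla g|^2.
\end{equation*}
Combining this pointwise bound with the divergence theorem identity yields
\begin{equation*}
m_{C^2}(g,r_2) - m_{C^2}(g,r_1) = \int_{A(0,r_1,r_2)} R(g)\,dV_\delta - \int_{A(0,r_1,r_2)} E(g)\, dV_\delta,
\end{equation*}
from which the claimed inequality follows after estimating $-E(g) \geq -c(n,b)(|g-\delta||\nabla^2 g| + |\nabla g|^2)$.

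I do not expect any real obstacle here beyond bookkeeping; the main step that needs care is keeping track of which factors of $g^{-1}$ give rise to the $|g-\delta||\nabla^2 g|$ term versus the $|\nabla g|^2$ term, in order to make sure every term in $R(g) - R_{\mathrm{lin}}(g)$ is absorbed into one of the two error types with a constant depending only on $n$ and $b$ (using that $b$ controls $\|g^{-1}\|_{C^0}$).
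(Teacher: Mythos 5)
Your proposal is correct and follows essentially the same route as the paper: the paper likewise expands $R(g) = \partial_j\partial_i g_{ij} - \partial_j^2 g_{ii} + Q^R[g]$ (your $E(g)$ is exactly its $Q^R[g]$), applies the Euclidean divergence theorem to the vector field $Y^j = \partial_i g_{ij} - \partial_j g_{ii}$ on the annulus, and bounds the remainder by $c(n,b)\bigl(|g-\delta||\nabla^2 g| + |\nabla g|^2\bigr)$. The only cosmetic difference is that you phrase the leading term as the linearization of $R$ at $\delta$, while the paper just computes the expansion directly; the substance is identical.
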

\begin{proof}
First observe that 
\begin{align*}
R(g) &= g^{k\ell}(\partial_m \Gamma_{k\ell}^m - \partial_{\ell}\Gamma_{km}^m) +g^{k\ell}( \Gamma_{k\ell}^q\Gamma_{qm}^m - \Gamma_{mk}^q\Gamma_{\ell q}^m)
\\& = \frac{g^{k\ell}g^{mp}}{2}[(\partial_m\partial_k g_{p\ell} + \partial_m\partial_{\ell}g_{pk} - \partial_m\partial_{p}g_{k\ell}) 
\\& \qquad \qquad - (\partial_\ell\partial_k g_{pm} + \partial_\ell\partial_{m}g_{pk} - \partial_\ell\partial_p g_{km})]  
\\& + \frac{g^{k\ell}}{2}\left[ (\partial_m g^{mp})(\partial_k g_{\ell p} + \partial_{\ell}g_{kp} - \partial_p g_{k\ell}) - (\partial_{\ell}g^{mp})(\partial_k g_{mp} + \partial_m g_{kp} - \partial_p g_{km}) \right]
\\& + g^{k\ell}(\Gamma_{k\ell}^q\Gamma_{qm}^m - \Gamma_{mk}^q\Gamma_{\ell q}^m)
\\&= \sum_{m, k= 1}^{n}\frac{1}{2}[\partial_m \partial_k g_{mk} + \partial_m\partial_k g_{mk} - \partial_m^2g_{kk} - (\partial_k^2 g_{mm} + \partial_k\partial_m g_{mk} - \partial_k\partial_m g_{km})]
\\& + \frac{(g^{k\ell}g^{mp} - \delta^{k\ell}\delta^{mp})}{2}[(\partial_m\partial_k g_{p\ell} + \partial_m\partial_{\ell}g_{pk} - \partial_m\partial_{p}g_{k\ell}) 
\\& \qquad \qquad - (\partial_\ell\partial_k g_{pm} + \partial_\ell\partial_{m}g_{pk} - \partial_\ell\partial_p g_{km})]
\\& + \frac{g^{k\ell}}{2}\left[ (\partial_m g^{mp})(\partial_k g_{\ell p} + \partial_{\ell}g_{kp} - \partial_p g_{k\ell}) - (\partial_{\ell}g^{mp})(\partial_k g_{mp} + \partial_m g_{kp} - \partial_p g_{km}) \right]
\\& + g^{k\ell}(\Gamma_{k\ell}^q\Gamma_{qm}^m - \Gamma_{mk}^q\Gamma_{\ell q}^m)
\\&= \sum_{i,j = 1}^n \partial_j\partial_i g_{ij} - \partial_j^2g_{ii} + \frac{(g^{k\ell}g^{mp} - \delta^{k\ell}\delta^{mp})}{2}[(\partial_m\partial_k g_{p\ell} + \partial_m\partial_{\ell}g_{pk} - \partial_m\partial_{p}g_{k\ell}) 
\\& \qquad \qquad - (\partial_\ell\partial_k g_{pm} + \partial_\ell\partial_{m}g_{pk} - \partial_\ell\partial_p g_{km})]
\\& + \frac{g^{k\ell}}{2}\left[ (\partial_m g^{mp})(\partial_k g_{\ell p} + \partial_{\ell}g_{kp} - \partial_p g_{k\ell}) - (\partial_{\ell}g^{mp})(\partial_k g_{mp} + \partial_m g_{kp} - \partial_p g_{km}) \right]
\\& + g^{k\ell}(\Gamma_{k\ell}^q\Gamma_{qm}^m - \Gamma_{mk}^q\Gamma_{\ell q}^m).
\end{align*}
To summarize, we have
\begin{equation}\label{eq:scalarcurvexpansion}
R(g) = \sum_{i,j = 1}^n \partial_j\partial_i g_{ij} - \partial_j^2g_{ii} + Q^{R}[g]
\end{equation}
where
\begin{equation*}
\begin{split}
Q^{R}[g] &:=\frac{(g^{k\ell}g^{mp} - \delta^{k\ell}\delta^{mp})}{2}[(\partial_m\partial_k g_{p\ell} + \partial_m\partial_{\ell}g_{pk} - \partial_m\partial_{p}g_{k\ell}) 
\\& \qquad \qquad - (\partial_\ell\partial_k g_{pm} + \partial_\ell\partial_{m}g_{pk} - \partial_\ell\partial_p g_{km})]
\\& + \frac{g^{k\ell}}{2}\left[ (\partial_m g^{mp})(\partial_k g_{\ell p} + \partial_{\ell}g_{kp} - \partial_p g_{k\ell}) - (\partial_{\ell}g^{mp})(\partial_k g_{mp} + \partial_m g_{kp} - \partial_p g_{km}) \right]
\\& + g^{k\ell}(\Gamma_{k\ell}^q\Gamma_{qm}^m - \Gamma_{mk}^q\Gamma_{\ell q}^m).
\end{split}
\end{equation*}

Therefore, if $Y$ is the vector field on $\R^n$ given by $\sum_{j=1}^{n}Y^j\partial_j = \sum_{i,j=1}^{n}(\partial_i g_{ij} - \partial_j g_{ii})\partial_j$ and $\vec{n}$ denotes the outward unit normal to $A(0, r_1, r_2)$, we have
\begin{align*}
\int_{\S(r_2)}\sum_{i=1}^{n}&(\partial_i g_{ij} - \partial_j g_{ii})\nu^jdS - \int_{\S(r_1)}\sum_{i=1}^{n}(\partial_i g_{ij} - \partial_j g_{ii})\nu^jdS 
\\& = \int_{\partial A(0, r_1, r_2)} \langle Y, \vec{n} \rangle_{\delta} 
\\& =  \int_{A(0, r_1, r_2)} \div_{\delta}(Y)dV_\delta
\\&= \int_{A(0, r_1, r_2)} R(g) - Q^R[g]
\\& \geq \int_{A(0, r_1, r_2)} R(g) - c(n, b)|g - \delta | | \nabla^2 g |- c(n,b)|\nabla g|^2 dV_\delta,
\end{align*} 
where $\nabla$ denotes the covariant derivative with respect to $\delta$, as usual.
\end{proof}

\subsection{Some elementary analytic facts}
We first record the following variant of the Intermediate Value Theorem for integrals:
\begin{lemma}\label{lemma:integralIVT}
Let $\varphi: [a,b]\to \R^{\geq 0}$ be a nonnegative continuous function such that $\int_{a}^{b}\varphi \neq 0$, and let $f:[a,b]\to \R$ be any continuous function. Then there exists $c\in [a,b]$ such that 
\begin{equation*}
f(c) = \frac{\int_a^b f(\ell)\varphi(\ell)d\ell}{\int_a^b \varphi(\ell)d\ell}.
\end{equation*}
\end{lemma}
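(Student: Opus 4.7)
The plan is to reduce this to the standard Intermediate Value Theorem applied to $f$, by sandwiching the weighted average between the minimum and maximum of $f$. Since $[a,b]$ is compact and $f$ is continuous, $f$ attains its infimum $m$ and supremum $M$ on $[a,b]$. The nonnegativity of $\varphi$ is the key hypothesis: it lets us multiply the pointwise inequalities $m \leq f(\ell) \leq M$ by $\varphi(\ell)$ without reversing any signs, yielding $m\varphi(\ell) \leq f(\ell)\varphi(\ell) \leq M\varphi(\ell)$ for every $\ell \in [a,b]$.

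Next I would integrate this chain of inequalities over $[a,b]$ to get
\begin{equation*}
m\int_a^b \varphi(\ell)\, d\ell \;\leq\; \int_a^b f(\ell)\varphi(\ell)\, d\ell \;\leq\; M\int_a^b \varphi(\ell)\, d\ell.
\end{equation*}
Because $\varphi \geq 0$ and $\int_a^b \varphi \neq 0$, the quantity $\int_a^b \varphi(\ell)\, d\ell$ is strictly positive, so I may divide through by it to conclude that the weighted average $A := \int_a^b f\varphi / \int_a^b \varphi$ lies in the closed interval $[m, M]$.

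Finally, applying the classical Intermediate Value Theorem to the continuous function $f$ on $[a,b]$, every value in $[m,M]$ is attained; in particular there exists $c \in [a,b]$ with $f(c) = A$, which is the desired conclusion. There is no real obstacle here — the only subtlety is making sure the hypothesis $\varphi \geq 0$ (rather than just $\int_a^b \varphi \neq 0$) is used, since it is what guarantees that multiplying the bounds $m \leq f \leq M$ by $\varphi$ preserves the inequalities.
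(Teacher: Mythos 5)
Your argument is correct and is essentially the same as the paper's: both sandwich the weighted average between $\min f$ and $\max f$ using the nonnegativity of $\varphi$ and the positivity of $\int_a^b\varphi$, then apply the classical Intermediate Value Theorem to $f$. No issues.
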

\begin{proof}
This follows from the usual Intermediate Value Theorem. We have
\begin{align*}
\min_{s\in [a,b]}f(s) &= \frac{\int_a^b \min_{s\in [a,b]}f(s)\varphi(\ell)d\ell}{\int_a^b \varphi(\ell)d\ell} \leq \frac{\int_a^b f(\ell)\varphi(\ell)d\ell}{\int_a^b \varphi(\ell)d\ell}
\\& \leq \frac{\int_a^b \max_{s\in [a,b]}f(s)\varphi(\ell)d\ell}{\int_a^b \varphi(\ell)d\ell} = \max_{s\in [a,b]}f(s)
\end{align*}
so the Intermediate Value Theorem applied to $f$ implies that there exists $c$ with
\begin{equation*}
f(c) = \frac{\int_a^b f(\ell)\varphi(\ell)d\ell}{\int_a^b \varphi(\ell)d\ell}.
\end{equation*}
\end{proof}

We now record the following elementary result, concerning convergence of almost-monotone sequences:
\begin{lemma}\label{lemma:monotonicityconverges}
Suppose that $(a_k)_{k=0}^{\infty}$ is a sequence of numbers satisfying
\begin{equation}\label{eq:monotonesequenceconvergence}
a_k\geq a_{k-1}- b_{k-1},
\end{equation}
for some sequence of nonnegative numbers $(b_k)_{k=0}^{\infty}$ satisfying $\sum_{i=0}^{\infty}b_i < \infty$. Then either $a_k$ converges or $a_k\to +\infty$.
\end{lemma}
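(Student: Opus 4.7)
The plan is to reduce the almost-monotone sequence $(a_k)$ to a genuinely monotone one by adding a correction term built from the tails of $\sum b_i$. Specifically, since $\sum_{i=0}^\infty b_i < \infty$, the tail $B_k := \sum_{i=k}^\infty b_i$ is well-defined, nonnegative, and satisfies $B_k \to 0$ as $k \to \infty$.

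First I would set $c_k := a_k + B_k$ and verify monotonicity. Using $B_{k-1} = b_{k-1} + B_k$ together with the hypothesis (\ref{eq:monotonesequenceconvergence}), we compute
\begin{equation*}
c_k - c_{k-1} = (a_k - a_{k-1}) + (B_k - B_{k-1}) = (a_k - a_{k-1}) - b_{k-1} \geq 0,
\end{equation*}
so $(c_k)$ is nondecreasing. By the monotone convergence theorem for sequences, $(c_k)$ either converges to some finite limit $L \in \R$ or satisfies $c_k \to +\infty$.

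Finally I would transfer the dichotomy back to $(a_k)$ via $a_k = c_k - B_k$. Since $B_k \to 0$, if $c_k \to L$ then $a_k \to L$, and if $c_k \to +\infty$ then $a_k \to +\infty$ as well (the correction $B_k$ is bounded, in fact tends to $0$). This yields the desired dichotomy. There is no real obstacle here; the only subtlety is recognizing that the tail sum provides exactly the right correction to absorb the error terms $b_{k-1}$ and convert the one-sided inequality into monotonicity.
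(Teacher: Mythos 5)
There is a sign error at the heart of your monotonicity step, and as written the argument fails. With $B_k := \sum_{i=k}^{\infty} b_i$ you have $B_k - B_{k-1} = -b_{k-1}$, so for $c_k := a_k + B_k$ the increment is
\begin{equation*}
c_k - c_{k-1} = (a_k - a_{k-1}) - b_{k-1}.
\end{equation*}
But the hypothesis (\ref{eq:monotonesequenceconvergence}) only gives $a_k - a_{k-1} \geq -b_{k-1}$, hence $c_k - c_{k-1} \geq -2b_{k-1}$, which is not $\geq 0$ in general; adding the tail doubles the error instead of absorbing it. (Concrete counterexample to your claim: $a_{k-1} = 0$, $a_k = -b_{k-1}$ with $b_{k-1} > 0$ satisfies the hypothesis but gives $c_k - c_{k-1} = -2b_{k-1} < 0$.)

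The fix is a one-line change: you must correct with the \emph{opposite} sign, i.e.\ set $d_k := a_k - B_k$, so that $d_k - d_{k-1} = (a_k - a_{k-1}) + b_{k-1} \geq 0$; then $d_k$ is nondecreasing, hence converges or tends to $+\infty$, and since $a_k = d_k + B_k$ with $B_k \to 0$ the dichotomy transfers to $a_k$ exactly as you describe. This corrected version is essentially the paper's proof in disguise: the paper uses the partial-sum correction $A_k := a_k + \sum_{i=0}^{k-1} b_i$, which differs from $d_k$ only by the constant $\sum_{i=0}^{\infty} b_i$, and then subtracts the (finite) total sum at the end rather than exploiting $B_k \to 0$. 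So your intended route is fine and marginally cleaner in the transfer step, but the submitted verification of monotonicity is incorrect and must be repaired as above.
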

\begin{proof}
Define the sequence $(A_k)_{k=1}^{\infty}$ by
\begin{equation*}
A_k := a_k + \sum_{i=0}^{k-1}b_i.
\end{equation*}
Then $(A_k)_{k=1}^{\infty}$ is (weakly) increasing. In particular, if $(A_k)$ is bounded, then it converges to some finite limit $A_{\infty}$, and if $(A_k)$ is unbounded, then it tends to $+\infty$. In the case that $(A_k)$ is bounded and hence tends to $A_\infty < \infty$, we have
\begin{equation*}
\lim_{k\to\infty}a_k = \lim_{k\to\infty} A_k - \lim_{k\to\infty}\sum_{i=0}^{k-1}b_i = A_{\infty} - \sum_{i=0}^{\infty}b_i < \infty.
\end{equation*}
In the case that $(A_k)$ is unbounded, and hence tends to $+\infty$, we have
\begin{equation*}
a_k \geq A_k - \sum_{i=0}^{\infty}b_i,
\end{equation*}
so $a_k\to +\infty$ as well.
\end{proof}

\subsection{Bilipschitz maps and $C^0$ asymptotically flat transition maps}\label{subsec:bilipschitzmaps}

For $0\leq \delta < 1$ we say that a map $\phi: D\to C$ between open subsets of normed spaces is $(1+\delta)$-bilipschitz if, for all $x,y \in D$ such that $x\neq y$, we have
\begin{equation}\label{eq:bilipschitzdefinition}
(1+\delta)^{-1} \leq \frac{|\phi(x) - \phi(y)|_C}{|x-y|_D} \leq 1+\delta.
\end{equation}
We will use this condition interchangeably with the condition that
\begin{equation*}
1-\delta \leq \frac{|\phi(x) - \phi(y)|_C}{|x-y|_D} \leq 1+\delta,
\end{equation*}
since we typically work with $\delta$ that are very small, and in this case the two conditions agree, up to multiplying $\delta$ by a constant. In this paper we will usually take $D$ and $C$ to be subsets of $\R^n$, and when we say ``bilipschitz'' we mean bilipschitz with respect to the Euclidean metric.

If $C$ and $D$ are open subsets of $\R^n$ and $\phi: D\to C$ is $C^1$, then we say that $\phi$ is \emph{locally} $(1+\delta)$-bilipschitz for some $\delta<1$ if, for all $x\in D$,
\begin{equation*}
(1 + \delta)^{-1} \leq |d\phi|\big|_x \leq 1+\delta.
\end{equation*}
As above, we use this condition interchangeably with the condition that
\begin{equation*}
1 - \delta \leq |d\phi|\big|_x \leq 1+\delta.
\end{equation*}

If $D$ and $C$ are open subsets of $\R^n$ and $\phi: D\to C$ is a $C^1$ diffeomorphism onto its image such that, for some $\delta < 1/2$,
\begin{equation*}
1 - \delta  \leq || d\phi||_{C^0(D)} \leq  1 + \delta,
\end{equation*}
then
\begin{equation}\label{eq:bilipschitzinverse}
1- 2\delta \leq || d\phi^{-1}||_{C^0(\phi(D))} \leq 1+ 2\delta.
\end{equation}
This is due to the fact that, for all $x\in D$, we have (after rotation by some orthogonal matrix $O$ depending on $x$)
\begin{equation*} 
d\phi^{-1} |_{\phi(x)} = (d\phi |_x)^{-1} = I + (I - d\phi |_x) + (I - d\phi |_x)^2 + \cdots 
\end{equation*}
so
\begin{equation*}
| d\phi^{-1} |_{\phi(x)} - I| \leq 2|I - d\phi |_x| \leq 2\delta.
\end{equation*}

In particular, suppose $C, D\subset \R^n$ are open sets and $x,y\in D$ such that $D$ contains the line from $x$ to $y$ and $C$ contains the line from $\phi(x)$ to $\phi(y)$. If $\phi:D\to C$ is a locally $(1+\delta)$-bilipschitz map for some $\delta< 1/2$ and is also a diffeomorphism, then
\begin{equation}\label{eq:localglobalbilipdiffeo}
\begin{split}
(1 + 2\delta)^{-1}|x-y| &= (1 + 2\delta)^{-1}|\phi^{-1}(\phi(x))-\phi^{-1}(\phi(y))|
\\& \leq (1 + 2\delta)^{-1} ||d\phi^{-1} ||_{C^0(C)} |\phi(x) - \phi(y)| \leq |\phi(x) - \phi(y)|
\\& \leq (1+2\delta)|x-y|.
\end{split}
\end{equation}

If $f: D\to C$ is any map such that, for all $x,y\in D$, $||f(x) - f(y)|_C- |x-y|_D| \leq \delta$, then we say $f$ is a $\delta$-isometry. We will compare $\delta$-isometries to $(1+\delta)$-bilipschitz maps in Appendix \ref{appendix:almostisometries}.

\begin{lemma}\label{lemma:generaldiffeogluing}
Let $r_0 > 0$. Suppose that $\tilde \phi: \R^n\setminus \overline{B(0, r_0)} \to \R^n$ is a local diffeomorphism such that for some $r> r_0$,
\begin{equation*}
\tilde \phi(x) \big|_{\R^n\setminus \overline{B(0, r)}} = L \big|_{\R^n\setminus\overline{B(0,r)}},
\end{equation*}
where $L:\R^n \to \R^n$ is a Euclidean isometry. Then $\tilde \phi|_{\R^n\setminus \overline{B(0, r_0)}}$ is a diffeomorphism onto its image.
\end{lemma}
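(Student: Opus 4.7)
The assertion is that $\tilde{\phi}$ is globally injective, since it is already a local diffeomorphism. My plan is a topological-degree argument, after a reduction using the Euclidean isometry $L$.

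First, by replacing $\tilde{\phi}$ with $L^{-1}\circ\tilde{\phi}$ (still a local diffeomorphism, and now equal to the identity on $\R^{n}\setminus\overline{B(0,r)}$), I may assume $L=\id$. Since $\tilde{\phi}$ is a local diffeomorphism on the connected set $U:=\R^{n}\setminus\overline{B(0,r_{0})}$ and equals the identity on the non-empty open subset $\R^{n}\setminus\overline{B(0,r)}$, the locally constant sign of $\det(d\tilde{\phi})$ is $+1$ throughout $U$. Thus $\tilde{\phi}$ is globally orientation-preserving, so the Brouwer degree counts preimages without sign cancellation.

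Now suppose, for contradiction, that $\tilde{\phi}(x_{1})=\tilde{\phi}(x_{2})=:y$ with $x_{1}\neq x_{2}\in U$. Since $\tilde{\phi}=\id$ is injective on $\R^{n}\setminus\overline{B(0,r)}$, at least one of the $x_{i}$ must lie in the bounded annulus $\{r_{0}<|x|\leq r\}$. Choose $R>r$ large enough to contain $x_{1},x_{2}$ and $y$ in the open ball $B(0,R)$, and $\epsilon\in(0,\min_{i}|x_{i}|-r_{0})$ small, so that the compact annular region $D:=\{r_{0}+\epsilon\leq|x|\leq R\}\subset U$ contains both $x_{i}$ in its interior; after a slight perturbation of $R$ and $\epsilon$ ensure $y\notin\tilde{\phi}(\partial D)$. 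Orientation-preservation then gives $\deg(\tilde{\phi}|_{D};y)=|\tilde{\phi}^{-1}(y)\cap D|\geq 2$, while the boundary-decomposition formula expresses this degree as the sum of (i) the winding of $\tilde{\phi}|_{\partial B(0,R)}=\id|_{\partial B(0,R)}$ around $y$, which is $+1$ since $|y|<R$, and (ii) an integer $k_{\epsilon}$ coming from the winding of $\tilde{\phi}|_{\partial B(0,r_{0}+\epsilon)}$ around $y$, so that $k_{\epsilon}\geq 1$.

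The main obstacle is ruling out $k_{\epsilon}\geq 1$, i.e., showing that $\tilde{\phi}(\partial B(0,r_{0}+\epsilon))$ does not wind non-trivially around $y$. The set of $y'$ with at least two preimages under $\tilde{\phi}|_{D}$ is open (by the local diffeomorphism property applied near each preimage), and the plan is to use path-lifting along continuous paths in $\R^{n}\setminus\tilde{\phi}(\partial D)$ to transport the pair $(x_{1},x_{2})$ to a new multi-preimage point lying in the unbounded component of $\R^{n}\setminus\tilde{\phi}(\partial D)$; since $\tilde{\phi}(D)$ is compact, that complement has a unique unbounded component, on which the Brouwer degree vanishes by a homotopy-to-infinity argument, contradicting the $\geq 2$ lower bound. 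The delicate step is justifying this transport without the lifted paths escaping through $\partial B(0,r_{0})$; this ultimately hinges on combining the orientation-preserving structure with the explicit identity-like behaviour of $\tilde{\phi}$ for $|x|>r$ to force the winding contribution $k_{\epsilon}$ to vanish, completing the proof.
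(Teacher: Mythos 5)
Your argument stops at exactly the point where the real work lies, and you say so yourself: everything up to the identity $\deg(\tilde\phi|_D;y)=\#\,\tilde\phi^{-1}(y)\cap D\geq 2$ and the boundary decomposition $\deg = 1 + k_\epsilon$ is fine (after the harmless reduction to $L=\id$ and the orientation argument), but the contradiction requires showing that the inner boundary sphere $\partial B(0,r_0+\epsilon)$ contributes nothing, i.e.\ that the degree at $y$ is $1$, and this is never established. The proposed repair --- transporting the pair $(x_1,x_2)$ along a path in $\R^n\setminus\tilde\phi(\partial D)$ to a multiply-covered point in the unbounded component, where the degree vanishes --- does not go through as stated: $\tilde\phi$ is only a local diffeomorphism, not a covering map, so paths need not lift, and the lifted preimages can run into the inner boundary $\partial B(0,r_0+\epsilon)$ and disappear from $D$ (the set of points with two preimages in the open annulus is open but not closed along the path). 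Moreover, the degree is only locally constant on components of $\R^n\setminus\tilde\phi(\partial D)$, and $y$ has no reason to lie in the unbounded component, so the vanishing of the degree ``at infinity'' cannot be invoked directly. In short, $k_\epsilon=0$ is essentially equivalent to the injectivity you are trying to prove, and the proposal leaves it as an acknowledged obstacle, so there is a genuine gap.

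For comparison, the paper avoids the inner-boundary issue entirely by a covering-space argument on $\R^n\setminus B(0,r'')$ for $r''\in(r_0,r)$: fibers are finite (at most finitely many preimages in the compact annulus $\overline{A(0,r'',r')}$ by the local diffeomorphism property, plus at most one in the outer region where $\tilde\phi=L$), which yields evenly covered neighborhoods, so $\tilde\phi$ is a covering map onto its connected image; since the image is unbounded while $\tilde\phi(\overline{A(0,r'',r')})$ is bounded, some image point has exactly one preimage, forcing the covering to have degree one, hence injectivity, and then one lets $r''\downarrow r_0$. If you want to salvage a degree-theoretic proof, you would need an ingredient playing the same role --- e.g.\ first proving properness/covering onto the image so that preimage counts are locally constant on the image --- rather than hoping to control the winding $k_\epsilon$ directly.
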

\begin{proof}
Let $r'' \in (r_0, r)$. We will show that $\tilde \phi|_{\R^n\setminus B(0, r'')}$ is a degree one covering map of its image. Fix some $y\in \tilde \phi(\R^n\setminus B(0, r''))$. We first show that there is a neighborhood of $y$ that is evenly covered. Write $L(x) = Ox + v$, where $O$ is an orthogonal matrix and $v\in \R^n$. Now choose $r'> r > r''$ as follows: write $y = \tilde \phi(x)$ for some $x\in \R^n\setminus B(0, r'')$. Take $r' > \max\{|x|, r\}$. Then $y\in \tilde \phi(\overline{A(0, r'', r')})$ since $x\in \overline{A(0, r'', r')}$.

Since $\tilde \phi$ is a local diffeomorphism and $\overline{A(0, r'', r')}$ is compact, $\tilde\phi^{-1}(y)\cap \overline{A(0, r'', r')}$ is finite (otherwise, by compactness, there would exist some point $x^\infty\in \overline{A(0, r'', r')}$ such that on any neighborhood of $x_{\infty}$, $\tilde \phi$ is not injective). There is at most one point in $\R^n\setminus \overline{B(0, r')}$ that maps to $y$ under $\tilde \phi$, since $\tilde \phi$ agrees with $L$ and hence is injective in this region. Therefore we write $\tilde \phi^{-1}(y) = \{x^1, x^2, \ldots, x^k, x^{k+1}\}$, where $x^1, x^1,\ldots, x^k \in \overline{A(0, r'', r')}$ and $x^{k+1} \in \R^n\setminus \overline{B(0,r')}$. Since $\tilde \phi$ is a local diffeomorphism there exist disjoint open neighborhoods $U^i$ of each $x^i$ on which $\tilde \phi$ is a diffeomorphism. Let $V = \cap_{i=1}^{k+1}\tilde \phi(U^i)$. Then $V$ is a neighborhood of $y$ that is evenly covered by $\tilde \phi|_{\R^n\setminus B(0, r'')}$. 

Now note that since $\R^n\setminus B(0, r'')$ and hence $\tilde \phi(\R^n\setminus B(0, r''))$ is connected (because $\tilde \phi$ is continuous), to show $\tilde \phi$ is a degree one covering map it is sufficient to show that there is a point in the image of $\tilde \phi$ that is evenly covered by a single sheet. Note that, by compactness, $\tilde \phi(\overline{A(0, r'', r')})$ is bounded, but $\tilde \phi(\R^n\setminus B(0, r''))$ is unbounded because $\tilde \phi$ agrees with $L$ on $\R^n\setminus \overline{B(0,r)}$, which is unbounded. Therefore, there exists $y\in \tilde\phi(\R^n\setminus \overline{B(0,r)})$ such that $y\notin \tilde \phi(\overline{A(0, r'', r')})$. In particular, $\tilde \phi^{-1}(y)$ consists of a single point in $\R^n\setminus \overline{B(0, r)}$, since $\tilde \phi$ agrees with $L$ and hence is injective in this region. Therefore, $\tilde \phi$ is degree-one.

We have shown that $\tilde \phi|_{\R^n\setminus B(0, r'')}$ is injective for all $r'' > r_0$. In particular, if $x,y\in \R^n\setminus \overline{B(0, r_0)}$ such that $x\neq y$ then there exists $r'' > r_0$ such that $x,y\in \R^n\setminus B(0, r'')$, so $\tilde \phi(x)\neq \tilde \phi(y)$. Therefore, $\tilde \phi$ is injective on $\R^n\setminus \overline{B(0, r_0)}$ and hence is a diffeomorphism onto its image.
\end{proof}

\begin{lemma}\label{lemma:transmapdomains}
Suppose that $M^n$ is a smooth manifold and $g$ is a $C^0$ metric on $M$, and that $E$ is an end of $M$. Suppose that $\Phi_1$ and $\Phi_2$ are $C^0$-asymptotically flat coordinate charts for $E$, so that there exist $c_k>0, r_k >1$ such that for all $|x| \geq r_k$, we have
\begin{equation}\label{eq:AFcondition2charts}
|((\Phi_k)_*g)_{ij}|_x - \delta_{ij}| \leq c_k |x|^{-\tau_k}.
\end{equation}
Let $\phi$ denote the transition map $\phi:= \Phi_2\circ\Phi_1^{-1}$. There exist $r_0 = r_0(\Phi_1, \Phi_2)$ and $r_0' = r_0'(\Phi_1, \Phi_2)\geq r_0$ such that: 
\begin{enumerate}
\item\label{item:transmapdefndoutsideofball}  $\phi$ is defined on $\R^n\setminus \overline{B(0, r_0/10)}$ and is a locally $(1+\tfrac{1}{2})$-bilipschitz map in this region,
\item\label{item:bilipoutsideofball} for all $r\geq r_0$, $\phi$ is locally $(1+cr^{-\min\{\tau_1, \tau_2\}})$-bilipschitz on $\R^n\setminus \overline{B(0, r/10)}$, where $c = c(\Phi_1, \Phi_2)$,
\item\label{item:imagecontainsmostofRn} $\phi(\R^n\setminus \overline{B(0, r_0/10)}) \supset \R^n\setminus \overline{B(0, \tilde r_0)}$ for some $\tilde r_0 = \tilde r_0(\Phi_1, \Phi_2)$, and
\item\label{item:imagecontainsballs} for all $r> r_0'$ and all $x\in \R^n\setminus \overline{B(0, r)}$, $B(\phi(x), r/4)\subset \phi(\R^n\setminus \overline{B(0, r_0/10)})$.
\end{enumerate}
\end{lemma}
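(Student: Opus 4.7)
The plan is to argue in four steps.

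First, the domain: since $\Phi_1$ and $\Phi_2$ are $C^0$-asymptotically flat coordinate charts for the same end $E$, the discussion preceding Definition \ref{def:C0AFend} shows that for $R$ sufficiently large, $\Phi_1^{-1}(\R^n\setminus\overline{B(0,R)})\subset U_2$; choosing $r_0$ large enough, $\phi = \Phi_2\circ\Phi_1^{-1}$ is therefore defined on $\R^n\setminus\overline{B(0, r_0/10)}$ and, as a composition of diffeomorphisms onto their images, is itself a smooth diffeomorphism onto its image there.

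Second, I would prove (\ref{item:transmapdefndoutsideofball}) and (\ref{item:bilipoutsideofball}) from the covariance identity $\phi^* g_2 = g_1$, where $g_k := (\Phi_k)_* g$. Pointwise this reads $g_2|_{\phi(x)}(d\phi(v), d\phi(v)) = g_1|_x(v, v)$, and combined with bounds $|g_k - \delta|_y \leq c_k |y|^{-\tau_k}$ (valid once $|y|$ exceeds the decay threshold for $\Phi_k$) this gives
\begin{equation*}
\frac{1 - c_1|x|^{-\tau_1}}{1 + c_2|\phi(x)|^{-\tau_2}} \leq \frac{|d\phi(v)|^2}{|v|^2} \leq \frac{1 + c_1|x|^{-\tau_1}}{1 - c_2|\phi(x)|^{-\tau_2}}
\end{equation*}
for all nonzero $v\in T_x\R^n$ and all $|x|$ sufficiently large. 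Since $\Phi_1,\Phi_2$ determine the same end, $|\phi(x)|\to\infty$ as $|x|\to\infty$; this crude fact makes both sides above close to $1$ for large enough $|x|$, yielding (\ref{item:transmapdefndoutsideofball}) for $r_0$ large. Bootstrapping by integrating $|d\phi|$ along a path of length $\lesssim |x|$ from $x$ back to a fixed reference sphere, via the almost-isometry results of Appendix \ref{appendix:almostisometries}, yields the quantitative comparability $|\phi(x)| \gtrsim |x|$ on $\R^n\setminus\overline{B(0, r_0/10)}$; substituting this into the displayed inequality upgrades the bilipschitz constant to $(1 + c r^{-\min(\tau_1,\tau_2)})$ throughout $\R^n\setminus\overline{B(0, r/10)}$ for $r\geq r_0$, which is (\ref{item:bilipoutsideofball}).

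Third, I would derive (\ref{item:imagecontainsmostofRn}) and (\ref{item:imagecontainsballs}) from the analogous properties of $\phi^{-1} = \Phi_1\circ\Phi_2^{-1}$ (which has the same form as $\phi$ with the roles of $1$ and $2$ swapped). Applying steps one and two to $\phi^{-1}$ shows that it is defined and locally bilipschitz with constant close to $1$ on some $\R^n\setminus\overline{B(0, s_0)}$ and satisfies $|\phi^{-1}(y)| \geq |y|/2$ for $|y|$ large. Choosing $\tilde r_0 > s_0$ with $\tilde r_0/2 > r_0/10$, any $y\in\R^n\setminus\overline{B(0, \tilde r_0)}$ has $\phi^{-1}(y)\in\R^n\setminus\overline{B(0, r_0/10)}$, hence $y = \phi(\phi^{-1}(y))\in \phi(\R^n\setminus\overline{B(0,r_0/10)})$, giving (\ref{item:imagecontainsmostofRn}). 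For (\ref{item:imagecontainsballs}), the symmetric bound $|\phi(x)| \geq |x|/2$ for $|x|$ large, combined with the triangle inequality, gives $B(\phi(x), r/4) \subset \R^n\setminus\overline{B(0, r/4)}$ whenever $x\in\R^n\setminus\overline{B(0,r)}$ and $r$ is above the almost-isometry threshold; taking $r_0' \geq \max\{r_0, 4\tilde r_0\}$ sufficiently large forces $r/4 > \tilde r_0$ whenever $r > r_0'$, so (\ref{item:imagecontainsmostofRn}) then yields $B(\phi(x), r/4)\subset\phi(\R^n\setminus\overline{B(0, r_0/10)})$.

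The main obstacle is the bootstrap in step two: upgrading pointwise infinitesimal bilipschitz control to the global comparability $|\phi(x)| \sim |x|$, which is needed both to sharpen the rate in (\ref{item:bilipoutsideofball}) and to underpin (\ref{item:imagecontainsmostofRn})--(\ref{item:imagecontainsballs}). This requires choosing a path in $\R^n\setminus\overline{B(0, r_0/10)}$ from $x$ to a fixed reference sphere along which $|d\phi|$ is uniformly controlled, and is precisely the setting in which the almost-isometry lemmas in Appendix \ref{appendix:almostisometries} are needed; once the comparability is in hand, the rest of the proof reduces to algebraic manipulation of $\phi^*g_2 = g_1$ and elementary triangle-inequality considerations.
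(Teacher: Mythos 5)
Your proposal is essentially correct and, for items (\ref{item:transmapdefndoutsideofball}), (\ref{item:bilipoutsideofball}), and (\ref{item:imagecontainsballs}), follows the same strategy as the paper: the covariance identity $\phi^*(\Phi_2)_*g=(\Phi_1)_*g$ together with the $C^0$ decay gives first a rough uniform bilipschitz bound, then the two-sided comparability $|\phi(x)|\sim|x|$ by integrating the controlled differential along paths back to a fixed reference sphere, and finally the refined rate $1+c\,r^{-\min\{\tau_1,\tau_2\}}$ once $|\phi(x)|^{-\tau_2}$ can be traded for $|x|^{-\tau_2}$; item (\ref{item:imagecontainsballs}) then follows from $|\phi(x)|\geq|x|/2$ and the triangle inequality exactly as in the paper. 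Two genuine differences are worth noting. First, for item (\ref{item:imagecontainsmostofRn}) the paper does not use any quantitative estimate at all: it argues purely from the end-equivalence of the exhaustions preceding Definition~\ref{def:C0AFend}, obtaining $\phi(\R^n\setminus\overline{B(0,r_0/10)})\supset\Phi_2(X^\ell)\supset\Phi_2(Y^m)=\R^n\setminus\overline{B(0,10^m)}$; your route through the inverse transition map $\Phi_1\circ\Phi_2^{-1}$ and the bound $|\phi^{-1}(y)|\geq|y|/2$ also works, but it imports the analytic machinery where a topological containment suffices. Second, the comparability step does not need the almost-isometry results of Appendix~\ref{appendix:almostisometries} (those are used only for the mollification and gluing in Section~\ref{sec:coordinates}); the paper uses only the elementary inverse estimate (\ref{eq:bilipschitzinverse}) and the distance-to-a-reference-sphere bound. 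Relatedly, be careful that integrating $|d\phi|$ along a path only yields the upper bound $|\phi(x)|\lesssim|x|$; the lower bound $|\phi(x)|\gtrsim|x|$ that you actually need to upgrade the bilipschitz rate comes from writing $|x|=|\phi^{-1}(\phi(x))|$ and integrating $|d\phi^{-1}|$ (controlled via (\ref{eq:bilipschitzinverse})) on the image side, after first arranging, as the paper does, that $\phi$ maps the complement of a large ball into the region where the decay for $\Phi_2$ is in force. With that one-direction-versus-two-directions point made precise, your argument closes all four items.
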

\begin{proof}
As in the discussion preceding Definition \ref{def:C0AFend}, extend the $\Phi_1^{-1}(\overline{A(0, 2, 10^i)})$ and $\Phi_2^{-1}(\overline{A(0, 2, 10^i)})$ to compact exhaustions $(K_i)_{i=1}^{\infty}$ and $(K_i')_{i=1}^{\infty}$ respectively, of $M$. Let $(X^i)_{i=1}^{\infty}$ denote the sequence of connected components of $M\setminus K_i$ that contain $\Phi_1^{-1}(\R^n\setminus \overline{B(0, 10^i)})$, and let $(Y^i)_{i=1}^{\infty}$ denote the sequence of connected components of $M\setminus K_i'$ that contain $\Phi_2^{-1}(\R^n\setminus \overline{B(0, 10^i)})$. By the discussion preceding Definition \ref{def:C0AFend} we actually have that, for all sufficiently large $i$, $X^i = \Phi_1^{-1}(\R^n\setminus \overline{B(0, 10^i)})$ and $Y^i = \Phi_2^{-1}(\R^n\setminus \overline{B(0, 10^i)})$. Choose some $i$ sufficiently large so that this holds. Since $\Phi_1$ and $\Phi_2$ are both $C^0$-asymptotically flat coordinate charts for $E$, $(X^i)_{i=1}^{\infty}$ and $(Y^i)_{i=1}^{\infty}$ determine the same end, and hence there exists $j > i$ such that $X^i \supset Y^j$ and also there exists $k> j$ such that $Y^j \supset X^k$ (see the discussion before Definition \ref{def:C0AFend}). In particular, $\phi = \Phi_2 \circ \Phi_1^{-1}$ is defined on $\Phi_1(X^k) = \R^n\setminus \overline{B(0, 10^k)}$. Now set $r_0 > 10^{k+1}$. This proves the first part of \ref{item:transmapdefndoutsideofball}; the second part we will address after the proof of \ref{item:bilipoutsideofball}. We will increase $r_0$ throughout the proof as needed.

We now show \ref{item:bilipoutsideofball}. First, observe that, because of (\ref{eq:AFcondition2charts}) there exist $r_1'>r_1$ and $r_2' > r_2$ depending on $c_1, r_1, \tau_1$ and $c_2, r_2, \tau_2$ respectively such that, for $k=1,2$, we have that
\begin{equation*}
|| (\Phi_k)_*g- \delta||_{C^0(\R^n\setminus \overline{B(0, r_k')})} \leq \frac{1}{10}.
\end{equation*}
Increase $r_1'$ if necessary so that $r_1' > r_0$ and $\phi$ is defined on $\R^n\setminus \overline{B(0, r_1')}$. Note that by increasing $r_1'$ further we may assume that for all $x\in \R^n\setminus \overline{B(0, r_1')}$, $\phi(x)\in \R^n\setminus \overline{B(0, r_2')}$, otherwise we could find a sequence of radii $\ell_i \to \infty$ and a sequence of points points $(x_i)_{i=1}^{\infty}$ with $|x_i| > \ell_i$ but $\phi(x_i) \in \overline{B(0, r_2')} \cap \phi(\overline{A(0, r_0/10, \ell_i)})$, which is impossible because otherwise, after passing to a subsequence, we would find $\phi(x_i)\to y\in \overline{B(0, r_2')} \cap \phi(\overline{A(0, r_0/10, \ell_i)})$ and hence $x_i \to \phi^{-1}(y)$. Then, for any unit vector $v\in \R^n$ and any $x\in \R^n\setminus \overline{B(0, r_1')}$, we have
\begin{align*}
\frac{.9}{1.1} &= \frac{.9}{1.1}\delta (v,v) \leq \frac{1}{1.1}(\Phi_1)_*g(v,v)\big|_x
\\& =  \frac{1}{1.1}(\Phi_2)_*g(d_x\phi v, d_x\phi v)\big|_{\phi(x)}
\\& \leq \delta(d_x\phi v, d_x\phi v)
\\& \leq 1.1 (\Phi_2)_*g(d_x\phi v, d_x\phi v)\big|_{\phi(x)}
\\&= 1.1 (\Phi_1)_*g(v, v)\big|_x \leq \frac{1.1}{.9}\delta (v,v) = \frac{1.1}{.9}.
\end{align*}
Therefore,
\begin{equation*}
\sqrt{.9/1.1} \leq |d_x\phi| \leq \sqrt{1.1/.9}.
\end{equation*}
In particular, if we increase $r_0$ so that $r_0 > 10 r_1'$, then for all $r>r_0$ and all $x\in \R^n\setminus \overline{B(0, r/10})$ we have
\begin{equation}\label{eq:phi(x)comparabletox}
\begin{split}
|\phi(x)| &\leq ||d\phi||_{C^0(\R^n\setminus \overline{B(0, r_1')})}\dist_{\delta}(x, \overline{B(0, r_1') }) + ||\phi||_{C^0(\partial B(0, r_1'))}
\\& \leq \frac{1.1}{.9}\dist_{\delta}(x, \overline{B(0, r_1')}) + ||\phi||_{C^0(\partial B(0, r_1'))}
\\& \leq C|x|,
\end{split}
\end{equation}
where $C$ depends on $\phi$ and $r_1'$ but not $x$. Also, using (\ref{eq:bilipschitzinverse}), we have, for all $r>r_0$ and all $x\in \R^n\setminus \overline{B(0, r/10)}$,
\begin{align*}
|x| &= |\phi^{-1}(\phi(x))| \leq ||d\phi^{-1}||\dist_{\delta}(\phi(x), \overline{B(0, r_2')}) + ||\phi^{-1}||_{C^0(\partial B(0, r_2')))}
\\& \leq C|\phi(x)|,
\end{align*}
where again $C$ depends on $r_2'$ and $\phi$ but not on $x$.

Returning to the proof of \ref{item:bilipoutsideofball} let $c = \max\{c_1, c_2\}$ and $\tau = \min\{\tau_1, \tau_2\}$. Now let $r>r_0$ and $x\in \R^n\setminus \overline{B(0, r/10)}$. The condition (\ref{eq:AFcondition2charts}) implies 
\begin{equation*}
|(\Phi_1)_*g|_x - \delta_{ij} | \leq c|x|^{-\tau}
\end{equation*}
and
\begin{equation*}
|(\Phi_2)_*g|_{\phi(x)} - \delta_{ij}| \leq c|\phi(x)|^{-\tau}.
\end{equation*}
Then, for any unit vector $v\in \R^n$ we argue as above to find
\begin{align*}
(1 + c|x|^{-\tau})^{-2} &= (1+c|x|^{-\tau})^{-2}\delta(v,v) \leq (1+ c|x|^{-\tau})^{-1}(\Phi_1)_*g(v,v)\big|_x 
\\& = (1+ c|x|^{-\tau})^{-1}(\Phi_2)_*g(d\phi v, d\phi v)\big|_{\phi(x)}
 \\& \leq \delta(d\phi_x v, d\phi_x v) 
 \\& \leq (1 + c|\phi(x)|^{-\tau})((\Phi_2)_*g)_{\phi(x)}(d\phi_x v, d\phi_x v) 
\\& = (1 + c|\phi(x)|^{-\tau})(\Phi_1)_*g(v,v)\big|_x 
 \\& \leq (1+c|\phi(x)|^{-\tau})^2 \delta(v, v)
 \\&= (1+c|\phi(x)|^{-\tau})^2.
\end{align*}
In particular, for any unit vector $v\in \R^n$,
\begin{equation*}
1 - 2c|x|^{-\tau} \leq || d\phi v||_{C^0(\R^n\setminus \overline{B(0, r/10)})} \leq 1 + 2c|\phi(x)|^{-\tau}.
\end{equation*}
By (\ref{eq:phi(x)comparabletox}), this becomes
\begin{equation*}
1 - c|x|^{-\tau} \leq || d\phi v||_{C^0(\R^n\setminus \overline{B(0, r/10)})} \leq 1 + c|x|^{-\tau},
\end{equation*}
where the constant $c$ is adjusted. This establishes \ref{item:bilipoutsideofball}. The second part of \ref{item:transmapdefndoutsideofball} follows from \ref{item:bilipoutsideofball} after possibly increasing $r_0$ depending on $\max\{c_1, c_2\}$ and $\min \{\tau_1, \tau_2\}$ so that $\max\{c_1, c_2\}r^{-\min\{\tau_1, \tau_2\}} \leq \tfrac{1}{2}$.

To see why \ref{item:imagecontainsmostofRn} is true, choose $\ell \in \N$ such that $r_0/10 < 10^{\ell}$. Then $\R^n\setminus \overline{B(0, r_0/10)}\supset \R^n\setminus \overline{B(0, 10^{\ell})}$ and, by equivalence of $(X^i)$ and $(Y^i)$ there exists $m > \ell$ such that 
\begin{equation*}
\Phi_1^{-1}(\R^n\setminus \overline{B(0, 10^{\ell})}) = X^\ell \supset Y^m = \Phi_2^{-1}(\R^n\setminus \overline{B(0, 10^{m})}.
\end{equation*} 
Then we have
\begin{align*}
\phi(\R^n\setminus \overline{B(0, r_0/10)}) & \supset \phi(\R^n\setminus \overline{B(0, 10^{\ell})})
\\& = \Phi_2\circ \Phi_1^{-1}(\R^n\setminus \overline{B(0, 10^{\ell})})
\\& = \Phi_2(X^{\ell})
\\& \supset \Phi_2(Y^m)
\\&= \R^n\setminus \overline{B(0, 10^m)}
\end{align*}
so the result follows from setting $\tilde r_0 = 10^m$.

We now show \ref{item:imagecontainsballs}. Choose $r_0' \geq r_0$ depending on $c_1, r_1, \tau_1, c_2, r_2, \tau_2, \phi$ so that, for all $r> r_0'$, $cr^{-\min\{\tau_1, \tau_2\}}< 1/100$, where $c$ is as in \ref{item:bilipoutsideofball}. Then, by \ref{item:bilipoutsideofball}, $\phi$ is locally $(1+1/100)$-bilipschitz on $\R^n\setminus \overline{B(0, r_0'/10)}$. We will increase $r_0'$ further in the course of the proof.

Suppose $|x| \geq r_0' \geq r_0$. By \ref{item:imagecontainsmostofRn}, $|\phi(x)| \geq \tilde r_0$, so we have
\begin{align*}
|\phi(x)| & \geq \dist_{\delta}(\phi(x), \partial B(0, \tilde r_0))
\\& \geq (1-\tfrac{1}{100})\dist_{\delta}(x, \phi^{-1}(B(0, \tilde r_0)))
\\& \geq (1-\tfrac{1}{100})||x| - ||\phi^{-1}||_{C^0(B(0, \tilde r_0))}  |.
\end{align*}
Increase $r_0'$ depending on $||\phi^{-1}||_{C^0(B(0, \tilde r_0))}$ so that $r_0' \geq 10||\phi^{-1}||_{C^0(B(0, \tilde r_0))}$. Then, if $|x| \geq r_0'$,
\begin{equation*}
|\phi(x)| \geq (1 - \tfrac{1}{100})(9|x|/10) \geq |x|/2.
\end{equation*}
If in addition we require that $r_0' \geq 4\tilde r_0$, then it follows from the previous estimate that for $r> r_0'$ and $|x| \geq r$,
\begin{equation*}
\inf_{y\in B(\phi(x), r/4)} |y| > |\phi(x)| - r/4 \geq r/4 \geq \tilde r_0,
\end{equation*}
so
\begin{equation*}
B(\phi(x), r/4) \subset \R^n\setminus \overline{B(0, \tilde r_0)} \subset \phi(\R^n\setminus \overline{B(0, r_0/10)}).
\end{equation*}
This establishes \ref{item:imagecontainsballs}.
\end{proof}

\section{Distortion of the $C^0$ local mass along Ricci-DeTurck flow}\label{sec:distortion}
In this section we prove Lemma \ref{lemma:massdistortionestimate}. We first record some results involving the requisite evolving test function.
\begin{lemma}\label{lemma:cutofffunctiondecay}
For all $n \in \N, a > .9, b < 1.1$ there exists $\bar \theta = \bar \theta(n, a, b) >0$ such that the following is true:

Suppose $\varphi: \R\to \R^{\geq 0}$ is a smooth cutoff function with $\supp(\varphi)\subset (a,b) \subset\subset (.9, 1.1)$. For all $0 < \theta < \bar \theta$ there exists a function $\varphi_{\theta}: \R \times [0, \theta]\to \R$ such that
\begin{equation}\label{eq:radialsolvesbackwardsheat}
\begin{cases}
\partial_t \varphi_\theta(|x|,t) &= -\Delta \varphi_\theta(|x|,t) + \frac{n-1}{|x|^2}\varphi_\theta(|x|,t) \text{ for } (x,t) \in \R^n\times(0,\theta) \\
\varphi_{\theta}(\ell, \theta) &= \varphi(\ell).
\end{cases}
\end{equation}
Moreover, $\varphi_\theta(\ell, t)\geq 0$ for all $\ell\in \R$ and $t\in [0, \theta]$, and
\begin{equation}\label{eq:boundarydecay}
\begin{split}
\sup_{(x,t)\in \partial A(o, .9, 1.1) \times [0, \theta]} |\varphi_\theta(|x|,t)| &\leq \frac{c(n, ||\varphi||_{C^0(\R)})}{\theta^{n/2}}\exp\left(-\frac{d_{a,b}^2}{4\theta}\right)\\
\sup_{(x,t)\in \partial A(o, .9, 1.1) \times [0, \theta]} |\varphi'_\theta(|x|,t)| &\leq \frac{c(n, ||\varphi||_{C^1(\R)})}{\theta^{n/2}}\exp\left(-\frac{d_{a,b}^2}{4\theta}\right),
\end{split}
\end{equation}
where $d_{a,b} = \min\{a - .9, 1.1 - b\}$ and $\varphi_\theta'(\ell, t) := \frac{\partial}{\partial \ell}\varphi_\theta(\ell, t)$.
\end{lemma}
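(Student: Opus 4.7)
The construction is driven by the remark immediately preceding the lemma: if $U$ is any spherically symmetric solution of the forward heat equation on $\R^n$, then $\partial_r U$ satisfies the forward-time analog of \eqref{eq:radialsolvesbackwardsheat}. Concretely, I would set $\Phi(\ell) := \int_0^\ell \varphi(s)\,ds$ (smooth, vanishing on $[0,.9]$, constant on $[1.1,\infty)$) and $U_0(x) := \Phi(|x|)$, which lies in $C^\infty(\R^n)\cap L^\infty(\R^n)$ since $\Phi$ vanishes near $0$. Let $U(x,\tau)$ be the bounded solution of $\partial_\tau U = \Delta U$ with $U(\cdot,0) = U_0$, obtained by convolution with the Euclidean heat kernel $K(z,\tau) := (4\pi\tau)^{-n/2}\exp(-|z|^2/(4\tau))$, and define
\begin{equation*}
\varphi_\theta(|x|, t) := (\partial_r U)(x, \theta - t).
\end{equation*}
The terminal condition is immediate from $(\partial_r U)(x,0) = \varphi(|x|)$. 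Differentiating the radial heat equation $\partial_\tau U = \partial_r^2 U + \tfrac{n-1}{r}\partial_r U$ once in $r$ gives $\partial_\tau W = \Delta W - \tfrac{n-1}{|x|^2}W$ for $W := \partial_r U$, which reverses to \eqref{eq:radialsolvesbackwardsheat} under $t = \theta - \tau$.

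For the boundary decay estimates, one integration by parts (justified because $\nabla_y \Phi(|y|) = (y/|y|)\varphi(|y|)$ is compactly supported and $K$ decays rapidly in $y$, so far-field boundary terms vanish) yields the representation
\begin{equation*}
\varphi_\theta(|x|, t) \;=\; \int_{\R^n} K(x - y, \theta - t)\,\frac{x\cdot y}{|x|\,|y|}\,\varphi(|y|)\,dy.
\end{equation*}
For $x \in \partial A(o,.9,1.1)$ and $y \in \supp(\varphi) \subset A(o,a,b)$ the separation satisfies $|x-y| \geq d_{a,b}$; bounding the integrand by $\|\varphi\|_{C^0}K(x-y,\theta-t)$ and using $\exp(-|x-y|^2/(4(\theta-t))) \leq \exp(-d_{a,b}^2/(4(\theta-t)))$ gives the estimate with constant $c(n,\|\varphi\|_{C^0})$. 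For $\varphi_\theta'$, I would differentiate the representation once more in $x$: the piece where the derivative falls on $\Omega(x,y) := (x\cdot y)/(|x||y|)$ is harmless because $|\partial_x \Omega|$ is bounded on $\{|x|,|y| \geq .9\}$; in the piece with $\partial_x K$, I would apply $\partial_{x^k}K = -\partial_{y^k}K$ and one further integration by parts to transfer the derivative onto $\Omega$ and $\varphi$. Each resulting term is pointwise bounded by $c(n,\|\varphi\|_{C^1})K(x-y,\theta-t)$, yielding the same Gaussian bound with $\|\varphi\|_{C^0}$ replaced by $\|\varphi\|_{C^1}$. To upgrade to a bound that is uniform in $t$, I would set $\bar\theta(n,a,b) := d_{a,b}^2/(2n)$ and observe that $\tau \mapsto \tau^{-n/2}\exp(-d_{a,b}^2/(4\tau))$ is increasing on $(0,\bar\theta]$; hence for $\theta \leq \bar\theta$ and $\tau = \theta - t \in [0,\theta]$ the maximum is attained at $\tau = \theta$, giving the form stated in \eqref{eq:boundarydecay}.

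The one step that does not fall out of the heat kernel formula is nonnegativity: the integrand above has the sign-changing factor $\Omega(x,y)$, and the maximum principle for \eqref{eq:radialsolvesbackwardsheat} is not directly applicable because of the singular coefficient $(n-1)/|x|^2$. This is in my view the main conceptual step of the proof, and I would handle it by a dimension-raising substitution. Writing $W = rV$ for $W := \partial_r U$, a direct computation (using the radial formulas $\partial_r W = V + r\partial_r V$ and $\partial_r^2 W = 2\partial_r V + r\partial_r^2 V$) turns $\partial_\tau W = \Delta_{\R^n} W - \tfrac{n-1}{|x|^2}W$ into
\begin{equation*}
\partial_\tau V \;=\; \partial_r^2 V + \frac{n+1}{r}\partial_r V,
\end{equation*}
which is precisely the standard heat equation for $V$ regarded as a radial function on $\R^{n+2}$. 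The initial datum $V_0(y) := \varphi(|y|)/|y|$ extends smoothly across the origin of $\R^{n+2}$ (because $\varphi$ vanishes on $[0,.9]$), is nonnegative, and is compactly supported. The maximum principle on $\R^{n+2}$ (applied to the Gauss-kernel representation $V = K^{n+2} \ast V_0$) then gives $V \geq 0$, hence $\varphi_\theta = |x|V \geq 0$. Everything else in the argument reduces to standard heat kernel bookkeeping.
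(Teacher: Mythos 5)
Your construction, PDE verification, terminal condition, and both boundary decay estimates follow essentially the same route as the paper: mollify-free explicit solution $U = K\ast\Phi(|\cdot|)$, define $\varphi_\theta$ as the time-reversed radial derivative, write the kernel representation with the angular factor $\tfrac{x\cdot y}{|x||y|}$, use $|x-y|\geq d_{a,b}$, integrate by parts in $y$ for the $\varphi'_\theta$ bound, and exploit monotonicity of $s\mapsto s^{-n/2}e^{-d_{a,b}^2/(4s)}$ on $(0,d_{a,b}^2/(2n)]$ to fix $\bar\theta$. Where you genuinely diverge is nonnegativity: the paper stays with the explicit integral and proves $\varphi_\theta\geq 0$ by splitting into $\{x\cdot y\geq 0\}$ and $\{x\cdot y\leq 0\}$, reflecting $y\mapsto -y$, and using $|x+y|\geq|x-y|$ when $x\cdot y\geq 0$; you instead substitute $W=rV$, observe that $V$ solves the radial heat equation in $\R^{n+2}$ with nonnegative initial datum $\varphi(r)/r$, and invoke positivity of the heat semigroup. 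Your dimension-raising argument is correct and arguably more conceptual (it is the classical $n\to n+2$ shift for radial derivatives), but to make it complete you must justify the identification $V=K^{n+2}\ast V_0$: you should note that $V=W/r$ extends continuously (indeed smoothly) across the origin of $\R^{n+2}$ because $W(0,\tau)=0$ and $W$ is $C^1$ in $r$, that $V$ is bounded on $\R^{n+2}\times[0,T]$ (e.g.\ by $\|\nabla^2 U\|_{C^0}\leq c\,\|\varphi\|_{C^1}$ near $r=0$ and by $\|W\|_{C^0}/r$ away from it), and then appeal to uniqueness of bounded solutions of the heat equation; without this step "$V\geq 0$" is asserted for the wrong object. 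The paper's reflection argument buys self-containedness (no uniqueness theorem needed, everything read off the formula), while yours buys a cleaner structural explanation of why positivity holds and would generalize to other radial derivative quantities; with the uniqueness remark added, your proof is complete.
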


\begin{proof}
Let $F[\varphi](\ell) = \int_0^{\ell} \varphi(s)ds$, and let $u$ solve
\begin{equation*}
\begin{cases}
\partial_t u(x,t) &= \Delta u(x,t)\\
u(x,0) &= F[\varphi](|x|).
\end{cases}
\end{equation*}
Since $x\mapsto F[\varphi](|x|)$ is a spherically symmetric function, $u(\cdot, t)$ is spherically symmetric for all $t>0$, and we may write $u(x,t) = \tilde u(|x|, t)$ for all $x\in \R^n$, where $\tilde u: \R\times \R^{\geq 0}\to \R$ is smooth. Let $\varphi_\theta(\ell, t) = \partial_{\ell}\tilde u(\ell, \theta-t)$. Now observe that, because $u$ is spherically symmetric,
\begin{align*}
\partial_t\partial_{|x|} u(x,t) &= \partial_{|x|}\partial_t u(x,t) = \partial_{|x|}\left(\partial^2_{|x|} + \frac{n-1}{|x|}\partial_{|x|}\right) u(x,t)
\\&= \left(\partial_{|x|}^3 + \frac{n-1}{|x|}\partial^2_{|x|} - \frac{n-1}{|x|^2}\partial_{|x|}\right)u(x,t)
\\&= \Delta \partial_{|x|}u(x,t) - \frac{n-1}{|x|^2}\partial_{|x|}u(x,t),
\end{align*}
where we use $\partial_{|x|}$ to denote the radial derivative. Therefore,
\begin{equation*}
\partial_t \varphi_\theta(|x|,t) = (-\partial_t\partial_{|x|}u)(x, \theta - t) = -\Delta \partial_{|x|}u(x,\theta - t) + \frac{n-1}{|x|^2}\partial_{|x|}u(x,\theta - t)
\end{equation*}
so
\begin{equation*}
\begin{cases}
\partial_t \varphi_\theta(|x|,t) &= -\Delta \varphi_\theta(|x|, t) + \frac{n-1}{|x|^2}\varphi_\theta(|x|,t)\\
\varphi_\theta(\ell,\theta) &= \partial_{\ell}\tilde u(\ell, 0) = F[\varphi]'(\ell) =\varphi(\ell).
\end{cases}
\end{equation*}
This proves (\ref{eq:radialsolvesbackwardsheat}). Let $\Phi_t(x,y)$ denote the usual scalar heat kernel on Euclidean space. To show that $\varphi_\theta(\ell, t) \geq 0$, observe that for all $x\in \R^n$,
 \begin{align*}
 \varphi_\theta(|x|,t) &= \partial_{|x|}u(x,\theta - t) = \partial_{|x|} \Phi_{\theta - t}* F[\varphi](|\cdot|)\big|_x
 \\& = \int_{\R^n}\frac{1}{(4\pi (\theta - t))^{n/2}}\exp\left(-\frac{|x-y|^2}{4(\theta - t)}\right)\varphi(|y|)\frac{x\cdot y}{|x||y|}dy
 \\& = \int_{\{y: x\cdot y \geq 0\}}\frac{1}{(4\pi (\theta - t))^{n/2}}\exp\left(-\frac{|x-y|^2}{4(\theta - t)}\right)\varphi(|y|)\frac{x\cdot y}{|x||y|}dy
 \\& + \int_{\{y: x\cdot y \leq 0\}}\frac{1}{(4\pi (\theta - t))^{n/2}}\exp\left(-\frac{|x-y|^2}{4(\theta - t)}\right)\varphi(|y|)\frac{x\cdot y}{|x||y|}dy
 \\&= \int_{\{y: x\cdot y \geq 0\}}\frac{1}{(4\pi (\theta - t))^{n/2}}\exp\left(-\frac{|x-y|^2}{4(\theta - t)}\right)\varphi(|y|)\frac{x\cdot y}{|x||y|}dy
 \\& +(-1)^n \int_{\{y: x\cdot y \geq 0\}}\frac{1}{(4\pi (\theta - t))^{n/2}}\exp\left(-\frac{|x+y|^2}{4(\theta - t)}\right)\varphi(|y|)\frac{x\cdot (-y)}{|x||y|}dy
\\& \geq \int_{\{y: x\cdot y \geq 0\}}\frac{1}{(4\pi (\theta - t))^{n/2}}\left[\exp\left(-\frac{|x-y|^2}{4(\theta - t)}\right) - \exp\left(-\frac{|x+y|^2}{4(\theta - t)}\right) \right]\varphi(|y|)\frac{x\cdot y}{|x||y|}dy
\\& \geq 0,
\end{align*}
where the last step is due to the fact that if $x\cdot y \geq 0$, we have
\begin{equation*}
|x+y|^2 = |x|^2 + |y|^2 + 2x\cdot y \geq |x|^2 + |y|^2 - 2x\cdot y = |x-y|^2
\end{equation*}
and hence
\begin{equation*}
\exp\left(-\frac{|x+y|^2}{4(\theta - t)}\right) \leq  \exp\left(-\frac{|x-y|^2}{4(\theta - t)}\right).
\end{equation*}

 Towards (\ref{eq:boundarydecay}), observe that for all $x\in \partial A(0, .9, 1.1)$ we have
\begin{align*}
|\varphi_\theta(|x|,t)| &= |\partial_{|x|}u(x, \theta - t)| = \left|\frac{x^i}{|x|}\partial_i \left(\Phi_{\theta - t} * F[\varphi](|\cdot|)\right) \right|= \left|\frac{x^i}{|x|}\int_{\R^n}\Phi_{\theta - t}(x,y)\partial_i F[\varphi](|y|)dy\right|
\\&= \left|\sum_{i=1}^{n} \frac{x^i}{|x|}\int_{A(o,a, b)} \frac{1}{(4\pi(\theta -t))^{n/2}}\exp\left(-\frac{|x-y|^2}{4(\theta - t)}\right)\varphi(|y|)\frac{y^i}{|y|}dy\right|
\\& \leq c(n, ||\varphi||_{C^0(\R)})\int_{A(o,a, b)}\frac{1}{(\theta - t)^{n-2}}\exp\left(-\frac{d_{a,b}^2}{4(\theta - t)}\right)dy 
\\& \leq \frac{c(n, ||\varphi||_{C^0(\R)}(1.1)^n)}{(\theta - t)^{n/2}}\exp\left(-\frac{d_{a,b}^2}{4(\theta - t)}\right).
\end{align*} 
Now observe that $\frac{d}{ds}\left[ \frac{1}{s^{n/2}}\exp\left(-\frac{d_{a,b}^2}{4s}\right)\right] \geq 0$ on $(0, d^2_{a,b}/(2n))$. In particular, if $0< \theta < \bar \theta < d_{a,b}^2/(2n)$ then, for $t\in [0,\theta]$, we have
 \begin{equation*}
 \frac{c(n, ||\varphi||_{C^0(\R)})(1.1)^n}{(\theta - t)^{n/2}}\exp\left(-\frac{d_{a,b}^2}{4(\theta - t)}\right) \leq \frac{c(n, ||\varphi||_{C^0(\R)})}{\theta^{n/2}}\exp\left(-\frac{d_{a,b}^2}{4\theta}\right).
 \end{equation*}
 The argument to show the second estimate is similar, since
 \begin{align*}
&| \varphi_\theta'(|x|, t)| = |\partial_{|x|}^2 u(x, \theta - t)| 
\\& = \left|\frac{x^j}{|x|}\partial_j \left[\sum_{i=1}^{n} \frac{x^i}{|x|}\int_{A(o,a, b)} \frac{1}{(4\pi(\theta -t))^{n/2}}\exp\left(-\frac{|x-y|^2}{4(\theta - t)}\right)\varphi(|y|)\frac{y^i}{|y|}dy\right]\right|
\\&\leq  \bigg| \sum_{j=1}^{n}\frac{x^j}{|x|}\sum_{i=1}^{n}\left(\frac{\delta_j^i}{|x|} - \frac{x^ix^j}{|x|^3}\right)\int_{A(o,a, b)} \frac{1}{(4\pi(\theta -t))^{n/2}}\exp\left(-\frac{|x-y|^2}{4(\theta - t)}\right)\varphi(|y|)\frac{y^i}{|y|}dy \bigg|
\\& + \bigg| \sum_{j=1}^{n}\frac{x^j}{|x|}\sum_{i=1}^{n}\frac{x^i}{|x|}\int_{A(o,a, b)} \frac{1}{(4\pi(\theta -t))^{n/2}}\exp\left(-\frac{|x-y|^2}{4(\theta - t)}\right)\left( \varphi'(|y|)\frac{y^jy^i}{|y|^2} + \varphi(|y|)\frac{\delta_j^i}{|y|} - \varphi(|y|)\frac{y^iy^j}{|y|^3} \right)dy \bigg|
\\& \leq \frac{c(n)}{|x|}\left| \int_{A(o,a, b)} \frac{1}{(4\pi(\theta -t))^{n/2}}\exp\left(-\frac{|x-y|^2}{4(\theta - t)}\right)\varphi(|y|)\frac{y^i}{|y|}dy \right|
\\& + c(n)|| \varphi'||_{C^0(\R)} \int_{A(0, a, b)}\frac{1}{(4\pi(\theta -t))^{n/2}}\exp\left(-\frac{|x-y|^2}{4(\theta - t)}\right)dy 
\\& + \sup_{y\in A(0, a, b)}\frac{c(n, ||\varphi||_{C^0(\R)})}{|y|}\int_{A(0, a, b)}\frac{1}{(4\pi(\theta -t))^{n/2}}\exp\left(-\frac{|x-y|^2}{4(\theta - t)}\right)dy
\\& \leq \frac{ c(n, ||\varphi||_{C^1(\R)})}{\theta^{n/2}}\exp\left( -\frac{d^2_{a,b}}{4\theta} \right),
 \end{align*}
 arguing as before. 
 \end{proof}

\begin{lemma}\label{lemma:massdistortionscale1}
There exists $\bar \theta = \bar \theta(n)$ such that for all $\theta < \bar \theta$ the following is true: 

Suppose $g_0$ is a $C^0$ metric on $\R^n$ such that $|| g_0 - \delta||_{C^0(\R^n)} < \varepsilon < 1$ and for which there is a smooth Ricci-DeTurck flow $(g_t)_{t>0}$ satisfying (\ref{eq:RDTFinitialcondition}), (\ref{eq:RDTFXest}), and (\ref{eq:RDTFderivests}). If $\varphi: \R\to \R^{\geq 0}$ is a smooth cutoff function with $\supp(\varphi) \subset (a,b)\subset\subset (.9, 1.1)$ and $\varphi_{\theta}(\ell,t)$ is the function corresponding to $\varphi$ and $\theta$ given by Lemma \ref{lemma:cutofffunctiondecay}, then 
\begin{equation}
\begin{split}
&  \int_{\theta_0}^{\theta}\bigg|\frac{d}{dt}\left[M_{C^0}(g_t ,\varphi_\theta(t), 1)\left(\int_{.9}^{1.1}\varphi_\theta(\ell, t)d\ell\right)\right]\bigg|dt
\\& \leq \frac{c(n)}{\theta^{n/2}}\exp\left(-\frac{d_{a,b}^2}{4\theta}\right) + c(n, \varphi)\int_0^{\theta}\int_{A(o,.9, 1.1)}|\nabla h|^2
\\& \qquad \qquad + c(n, \varphi)\int_0^{\theta}\int_{A(o,.9, 1.1)}|h||\nabla h|,
\end{split}
\end{equation}
for all $\theta_0 \in [0, \theta)$, where $d_{a,b} = \min\{a - .9, 1.1- b\}$.
\end{lemma}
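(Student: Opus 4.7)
The plan is to exploit a backwards-heat-equation structure built into the definition of $\varphi_\theta$ to get a clean identity for $\tfrac{d}{dt}N(t)$, where $N(t):=M_{C^0}(g_t,\varphi_\theta(\cdot,t),1)\cdot\int_{.9}^{1.1}\varphi_\theta(\ell,t)d\ell$, and then estimate the resulting pieces by routine (if careful) integration by parts.

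First I would rewrite $N(t)$ in a form better suited to differentiation. Using the identity established in Remark \ref{rmk:C0agreeswithC1average} (at scale $r=1$),
\[
N(t) = c_n\int_{A(0,.9,1.1)} Y^i\,F^i\,dx,\qquad Y^i:=\partial_j h_{ij}-\partial_i h_{jj},\quad F^i(x,t):=\frac{x^i}{|x|}\varphi_\theta(|x|,t),
\]
where $h:=g_t-\delta$ and $c_n$ is dimensional. The central observation is that $x^i/|x|$ is a degree-one spherical harmonic on $\R^n\setminus\{0\}$, so
\[
\Delta F^i = \frac{x^i}{|x|}\!\left(\partial_r^2\varphi_\theta + \tfrac{n-1}{r}\partial_r\varphi_\theta - \tfrac{n-1}{r^2}\varphi_\theta\right),
\]
which by (\ref{eq:radialsolvesbackwardsheat}) equals $-\partial_tF^i$. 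Thus $F$ solves the vector backwards heat equation $\partial_t F+\Delta F=0$ on $(\R^n\setminus\{0\})\times[0,\theta]$; this is precisely the \emph{raison d'\^etre} of the evolution equation chosen for $\varphi_\theta$.

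Next I would differentiate $N(t)$ and substitute (\ref{eq:heveuclbackground}) in the compact form $\partial_th_{ij}=\Delta h_{ij}+E_{ij}$ with $E=Q^0+\nabla^*Q^1$. Since $\partial$ and $\Delta$ commute on Euclidean space, $\partial_tY^i=\Delta Y^i+R^E_i$ with $R^E_i:=\partial_jE_{ij}-\partial_iE_{jj}$. Green's identity on $A$ plus the backwards heat identity for $F$ produces the clean formula
\[
\frac{dN}{dt}=c_n\int_{\partial A}\bigl[(\partial_\nu Y^i)F^i-Y^i(\partial_\nu F^i)\bigr]\,dS+c_n\int_{A}R^E\cdot F\,dx,
\]
so the bulk $\Delta h$ contribution has cancelled entirely against $\partial_tF$ modulo pure boundary data.

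I would then deal with the bulk remainder by one or two further integrations by parts, tuned to match the right-hand side of the lemma. The $Q^0\sim\nabla h*\nabla h$ piece, after one spatial IBP, yields $|Q^0|\,|\nabla F|\lesssim c(n,\varphi)|\nabla h|^2$ in the bulk; the $\nabla^*Q^1=\nabla(h*\nabla h)$ piece, after a second IBP moving the outer $\nabla^*$ onto $F$, yields $|h*\nabla h|\,|\nabla^2F|\lesssim c(n,\varphi)|h|\,|\nabla h|$; time integration then reproduces the two bulk terms on the right of the inequality. The spatial IBPs generate side contributions on $\partial A$ that pair $|h|$, $|\nabla h|$, or $|h|\,|\nabla h|$ against $|F|$ or $|\nabla F|$ evaluated on $\partial A$; by Lemma \ref{lemma:cutofffunctiondecay} these latter are uniformly bounded on $[0,\theta]$ by $c(n,\|\varphi\|_{C^1})\theta^{-n/2}\exp(-d_{a,b}^2/(4\theta))$, and combining with the pointwise bounds $|h|\le\varepsilon<1$ and $|\nabla h|\le c\varepsilon/\sqrt{t}$ from (\ref{eq:RDTFderivests}) gives a time integral of at worst $\sqrt{\theta}$ order, which for $\theta<\bar\theta(n,a,b)$ small is absorbed into the first term of the right-hand side.

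The main obstacle, and the step I would spend the most care on, is the Green-identity boundary term $\int_{\partial A}(\partial_\nu Y)^i F^i\,dS$, because $\partial_\nu Y\sim\nabla^2 h$ admits only the pointwise bound $c\varepsilon/t$ via (\ref{eq:RDTFderivests}), whose naive time integral over $[\theta_0,\theta]$ is logarithmically divergent as $\theta_0\to 0$. The plan to handle it is to exploit the radial symmetry of $F$: on each component $\S(r_*)$ of $\partial A$, $(\partial_\nu Y)\cdot F=\pm\varphi_\theta(r_*,t)\,\partial_r(Y\cdot\hat r)$, whose sphere integral equals $\pm\varphi_\theta(r_*,t)(\partial_r-\tfrac{n-1}{r})\mathcal Y(r_*,t)$, where $\mathcal Y(r,t):=\int_{\S(r)}Y\cdot\hat r\,dS$. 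By the divergence theorem $\partial_r\mathcal Y(r,t)=\int_{\S(r)}R_{\rm lin}(h)\,dS$, and (\ref{eq:scalarcurvexpansion}) rewrites $R_{\rm lin}(h)=R(g_t)-Q^R[g_t]$. The $Q^R$ piece is already of the form $|h||\nabla^2 h|+|\nabla h|^2$ and, paired with the exponentially small weight $\varphi_\theta(r_*,t)$, is absorbed just as above after one further IBP moving the second derivative onto $F$. The $R(g_t)$ piece is handled by representing it via the Ricci--DeTurck heat kernel of Lemma \ref{lemma:RDTFscalarHKests} applied to (\ref{eq:RDTFscalarev}) and using the universal bound (\ref{eq:RDTFscalargenerallowerbound}) plus the Gaussian factor from Lemma \ref{lemma:RDTFscalarHKests} to obtain a time-integrable bound on $\int_{\S(r_*)}R(g_t)\,dS$; multiplied by $\varphi_\theta(r_*,t)$ this contributes to the first RHS term $\tfrac{c(n)}{\theta^{n/2}}\exp(-d_{a,b}^2/(4\theta))$. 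The companion term $\int_{\partial A}Y^i(\partial_\nu F^i)\,dS$ is immediate from $|Y|\le c\varepsilon/\sqrt{t}$ and the $C^1$ decay estimate for $\varphi_\theta$ on $\partial A$. Assembling the three contributions and choosing $\bar\theta(n)$ small enough to dominate all polynomial-in-$\theta$ prefactors by the Gaussian yields the stated inequality.
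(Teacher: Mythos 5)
Your reformulation $N(t)=c_n\int_A Y\cdot F$ with $F^i=\tfrac{x^i}{|x|}\varphi_\theta(|x|,t)$, the observation that $\partial_tF+\Delta F=0$, and the resulting identity $\tfrac{d}{dt}N=c_n\int_{\partial A}\bigl[(\partial_\nu Y)\cdot F-Y\cdot\partial_\nu F\bigr]dS+c_n\int_AR^E\cdot F\,dx$ are all correct, and they do make manifest the same cancellation that the paper encodes in its computation $A=B=0$. The genuine gap is exactly at the step you flagged: your handling of the boundary terms carrying two derivatives of $h$ does not yield a bound that is uniform in $\theta_0\in[0,\theta)$, which is what the lemma asserts (and what its application requires, since in Corollary \ref{cor:numeratordistortion} $\theta_0$ is arbitrary). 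After rewriting $\int_{\S(r_*)}(\partial_\nu Y)\cdot F\,dS$ as $\pm\varphi_\theta(r_*,t)\bigl[\int_{\S(r_*)}(R(g_t)-Q^R[g_t])dS-\tfrac{n-1}{r_*}\mathcal{Y}(r_*,t)\bigr]$, the terms $\int_{\S(r_*)}R(g_t)dS$ and $\int_{\S(r_*)}Q^R[g_t]dS$ admit only the pointwise bound $c\varepsilon/t$ from (\ref{eq:RDTFderivests}): the $Q^R$ integrand contains $h\ast\partial_r^2h$, and an integration by parts over the sphere can only move tangential derivatives (moreover $F$ restricted to $\S(r_*)$ is just $\varphi_\theta(r_*,t)\hat r$, so there is nothing to integrate against), while for $R(g_t)$ the flow provides only the lower bound (\ref{eq:RDTFscalargenerallowerbound}) and the evolution \emph{inequality} (\ref{eq:RDTFscalarev}); the heat-kernel estimate of Lemma \ref{lemma:RDTFscalarHKests} propagates lower bounds forward in time and gives no upper bound on a fixed sphere better than $c\varepsilon/t$. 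Since $\varphi_\theta(r_*,t)$ is controlled on $[0,\theta]$ only by the $t$-independent constant in (\ref{eq:boundarydecay}), these pieces contribute on the order of $\theta^{-n/2}e^{-d_{a,b}^2/(4\theta)}\log(\theta/\theta_0)$, which diverges as $\theta_0\searrow0$. The same defect reappears in the boundary terms generated by your integrations by parts of $\int_AR^E\cdot F$: the first such boundary term pairs $E$ itself against $F$, and $E=Q^0+\nabla^*Q^1$ contains $|\nabla h|^2$ and $|h||\nabla^2h|$, both of size $1/t$ on $\partial A$ — not merely $|h|$, $|\nabla h|$, $|h||\nabla h|$ as you assert. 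In your grouping the non-integrable pieces must cancel among themselves (the total is integrable, as the paper shows), but your plan estimates them separately, so it cannot close with the stated right-hand side.

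The paper avoids this entirely by never letting more than one derivative of $h$ reach $\partial A$: it differentiates the $C^0$ form of the mass, in which $h$ appears undifferentiated against the weights and the boundary integral is handled by the fundamental theorem of calculus using only $|h|\le c(n)$; the $\Delta h$ coming from (\ref{eq:heveuclbackground}) is moved by two integrations by parts onto the weights, the resulting linear bulk terms vanish identically by the choice of evolution in Lemma \ref{lemma:cutofffunctiondecay}, and every boundary contribution then involves at worst $|\nabla h|\le c\varepsilon t^{-1/2}$ (time-integrable) multiplied by the exponentially small boundary values (\ref{eq:boundarydecay}). If, before differentiating in time, you integrate by parts once more to pass from $\int_AY\cdot F$ back to the $C^0$ expression in $h$, your argument becomes the paper's; as outlined, the boundary treatment is a genuine gap.
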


\begin{proof}
Let $\bar \theta$ be as in Lemma \ref{lemma:cutofffunctiondecay}. Write $\partial_t \varphi_\theta (|x|, t) = -\Delta \varphi_\theta(|x|, t) + f(|x|)$, where $f(\ell) = \frac{n-1}{\ell^2}\varphi_\theta(\ell, t)$, and let  $h_{ij} = g_{ij} - \delta_{ij}$. We write $\varphi'_{\theta}(\ell, t)$ for $\partial_{\ell}\varphi_{\theta}(\ell, t)$ and $f'(\ell)$ for $\partial_{\ell}f(\ell)$. We use (\ref{eq:heveuclbackground}) to obtain:
\begin{align*}
 &\int_{\theta_0}^{\theta}\frac{d}{dt}\left[M_{C^0}(g_t ,1, \varphi_\theta(t))\left(\int_{.9}^{1.1}\varphi_\theta(\ell, t)d\ell\right)\right]dt
 \\&= \int_{\theta_0}^{\theta}\frac{d}{dt}\int_{A(o,.9, 1.1)} \left( \frac{n-2}{|x|}\varphi_\theta(|x|, t) +\varphi_\theta'(|x|, t)\right)\tr_\delta h + \left(\frac{1}{|x|}\varphi_\theta(|x|,t) - \varphi_\theta'(|x|, t)\right)h_{ij}\frac{x^ix^j}{|x|^2}dxdt
\\& + \int_{\partial A(o,.9, 1.1)} h_{ij}\frac{x^i}{|x|}\varphi_\theta(|x|, t)\nu^j - h_{jj}\frac{x^i}{|x|}\varphi_\theta(|x|,t)\nu^i dS \bigg|_{t= \theta_0}^{\theta}
\\&=  \int_{\theta_0}^{\theta}\int_{A(o, .9, 1.1)} \left( \frac{n-2}{|x|} \left[-\Delta \varphi_\theta(|x|, t) + f(|x|)\right] + \left[-\Delta\varphi_\theta'(|x|, t) + \frac{n-1}{|x|^2}\varphi_\theta'(|x|, t) + f'(|x|)\right]\right)\tr_\delta h 
\\& + \int_{\theta_0}^{\theta}\int_{A(o,.9, 1.1)} \left( \frac{n-2}{|x|}\varphi_\theta(|x|, t) +\varphi_\theta'(|x|, t)\right)\tr_\delta [\Delta h + \nabla h * \nabla h + \nabla(h*\nabla h)] 
\\& + \int_{\theta_0}^{\theta}\int_{A(o,.9, 1.1)}\left(\frac{1}{|x|}\left[-\Delta \varphi_\theta(|x|, t) + f(|x|)\right] - \left[-\Delta\varphi_\theta'(|x|, t) + \frac{n-1}{|x|^2}\varphi_\theta'(|x|, t) + f'(|x|)\right]\right)h_{ij}\frac{x^ix^j}{|x|^2}dxdt
\\& +  \int_{\theta_0}^{\theta}\int_{A(o,.9, 1.1)}\left(\frac{1}{|x|}\varphi_\theta(|x|,t) - \varphi_\theta'(|x|, t)\right)[\Delta h + \nabla h * \nabla h + \nabla(h*\nabla h)]_{ij}\frac{x^ix^j}{|x|^2}dxdt
\\& + \int_{\partial A(o,.9, 1.1)} h_{ij}\frac{x^i}{|x|}\varphi_\theta(|x|, t)\nu^j - h_{jj}\frac{x^i}{|x|}\varphi_\theta(|x|,t)\nu^i dS \bigg|_{t= \theta_0}^{\theta}
\\&= \int_{\theta_0}^{\theta}\int_{A(o, .9, 1.1)} \left( \frac{n-2}{|x|} \left[-\Delta \varphi_\theta(|x|, t) + f(|x|)\right] + \left[-\Delta\varphi_\theta'(|x|, t) + \frac{n-1}{|x|^2}\varphi_\theta'(|x|, t) + f'(|x|)\right]\right)\tr_\delta h 
\\& + \Delta\left( \frac{n-2}{|x|}\varphi_\theta(|x|, t) + \varphi_\theta'(|x|, t)\right)\tr_\delta h
\\& + \left(\frac{1}{|x|}\left[-\Delta\varphi_\theta(|x|, t) + f(|x|)\right]- \left[-\Delta\varphi_\theta'(|x|, t) + \frac{n-1}{|x|^2}\varphi_\theta'(|x|, t) + f'(|x|)\right]\right)h_{ij}\frac{x^ix^j}{|x|^2} 
\\& + \Delta\left(\left(\frac{1}{|x|}\varphi_\theta(|x|, t) - \varphi_\theta'(|x|, t)\right)\frac{x^ix^j}{|x|^2}\right)h_{ij}
 dxdt
\\& + \int_{\partial A(o,.9, 1.1)} h_{ij}\frac{x^i}{|x|}\varphi_\theta(|x|, t)\nu^j - h_{jj}\frac{x^i}{|x|}\varphi_\theta(|x|,t)\nu^i dS \bigg|_{t= \theta_0}^{\theta}
\\& + \int_{\theta_0}^{\theta}\int_{A(o,.9,1.1)} \left( \frac{n-2}{|x|}\varphi_\theta(|x|, t) +\varphi_\theta'(|x|, t)\right) \nabla h *_\delta \nabla h 
\\& + \int_{\theta_0}^{\theta} \int_{A(o,.9,1.1)} \nabla \left( \frac{n-2}{|x|}\varphi_\theta(|x|, t) +\varphi_\theta'(|x|, t)\right) h *_\delta \nabla h
\\& + \int_{\theta_0}^{\theta}\int_{A(o,.9,1.1)}\left(\frac{1}{|x|}\varphi_\theta(|x|,t) - \varphi'_\theta(|x|, t)\right)\frac{x^ix^j}{|x|^2} \nabla h *_\delta \nabla h 
\\& + \int_{\theta_0}^{\theta}\int_{A(o,.9,1.1)} \nabla \left(\left(\frac{1}{|x|}\varphi_\theta(|x|,t) - \varphi_\theta'(|x|, t)\right)\frac{x^ix^j}{|x|^2} \right) h *_\delta \nabla h
\\&= \int_{\theta_0}^{\theta}\int_{A(o, .9, 1.1)} \left( \frac{n-2}{|x|} \left[-\Delta \varphi_\theta(|x|, t) + f(|x|)\right] + \left[-\Delta\varphi_\theta'(|x|, t) + \frac{n-1}{|x|^2}\varphi_\theta'(|x|, t) + f'(|x|)\right]\right)\tr_\delta h 
\\& + \Delta\left( \frac{n-2}{|x|}\varphi_\theta(|x|, t) + \varphi_\theta'(|x|, t)\right)\tr_\delta h
\\& + \left(\frac{1}{|x|}\left[-\Delta\varphi_\theta(|x|, t) +f(|x|)\right]- \left[-\Delta\varphi_\theta'(|x|, t) + \frac{n-1}{|x|^2}\varphi_\theta'(|x|, t) + f'(|x|)\right]\right)h_{ij}\frac{x^ix^j}{|x|^2} 
\\& + \Delta\left(\left(\frac{1}{|x|}\varphi_\theta(|x|, t) - \varphi_\theta'(|x|, t)\right)\frac{x^ix^j}{|x|^2}\right)h_{ij}
 dxdt + C
\\& =: \int_{\theta_0}^{\theta} \int_{A(0, .9, 1.1)} A\tr_\delta hdxdt + \int_{\theta_0}^{\theta}\int_{A(0, .9, 1.1)} Bx^ix^jh_{ij}dxdt  + C,
\end{align*}
where
\begin{equation*}
\begin{split}
|C| &\leq \frac{c(n)}{\theta^{n/2}}\exp\left(-\frac{d_{a,b}^2}{4\theta}\right) + c(n, ||\varphi||_{C^1})\int_0^{\theta}\int_{A(o,.9, 1.1)}|\nabla h|^2 
\\& \qquad \qquad + c(n, ||\varphi||_{C^2})\int_0^{\theta}\int_{A(o,.9, 1.1)}|h||\nabla h|, 
\end{split}
\end{equation*}
due to the fact that, by (\ref{eq:boundarydecay}) and (\ref{eq:RDTFXest}), $||h||_{C^0(\R^n\times[0,\theta])} \leq c(n)$. Therefore it is sufficient to show that $A = B = 0$.

Towards this objective, observe that 
\begin{align*}
\Delta\bigg(\bigg(\frac{1}{|x|}\varphi_\theta(|x|, t) - \varphi_\theta'(|x|, t)\bigg)&\frac{x^ix^j}{|x|^2}\bigg)h_{ij} = \Delta\left(\frac{1}{|x|^3}\varphi_\theta(|x|, t) - \frac{1}{|x|^2}\varphi_\theta'(|x|, t)\right)x^ix^jh_{ij} 
\\& + 2\nabla\left(\frac{1}{|x|^3}\varphi_\theta(|x|, t) - \frac{1}{|x|^2}\varphi_\theta'(|x|, t)\right)\nabla(x^ix^j)h_{ij} 
\\& + \left(\frac{1}{|x|^3}\varphi_\theta(|x|, t) - \frac{1}{|x|^2}\varphi_\theta'(|x|, t)\right)\Delta(x^ix^j)h_{ij}
\\&= \Delta\left(\frac{1}{|x|^3}\varphi_\theta(|x|, t) - \frac{1}{|x|^2}\varphi_\theta'(|x|, t)\right)x^ix^jh_{ij} 
\\& + 2\frac{d}{d\ell}\bigg|_{\ell = |x|}\left(\frac{1}{\ell^3}\varphi_\theta(\ell, t) - \frac{1}{\ell^2}\varphi_\theta'(\ell, t)\right)(\frac{x^k(\delta_k^ix^j + x^i\delta_k^j)}{|x|})h_{ij}
\\& + 2\left(\frac{1}{|x|^3}\varphi_\theta(|x|, t) - \frac{1}{|x|^2}\varphi_\theta'(|x|, t)\right)\tr_{\delta}h.
\end{align*}
Therefore,
\begin{align*}
A&= \left( \frac{n-2}{|x|} \left[-\Delta \varphi_\theta(|x|, t) + f(|x|)\right] + \left[-\Delta\varphi_\theta'(|x|, t) + \frac{n-1}{|x|^2}\varphi_\theta'(|x|, t) + f'(|x|)\right]\right)
\\ & + \Delta\left( \frac{n-2}{|x|}\varphi_\theta(|x|, t) + \varphi_\theta'(|x|, t)\right) + 2\left(\frac{1}{|x|^3}\varphi_\theta(|x|, t) - \frac{1}{|x|^2}\varphi_\theta'(|x|, t)\right),\\
B&= \left(\frac{1}{|x|}\left[-\Delta\varphi_\theta(|x|, t) + f(|x|)\right]- \left[-\Delta\varphi_\theta'(|x|, t) + \frac{n-1}{|x|^2}\varphi_\theta'(|x|, t) + f'(|x|)\right]\right)\frac{1}{|x|^2}
\\&+ \Delta\left(\frac{1}{|x|^3}\varphi_\theta(|x|, t) - \frac{1}{|x|^2}\varphi_\theta'(|x|, t)\right) + 4\frac{d}{d\ell}\bigg|_{\ell = |x|}\left(\frac{1}{\ell^3}\varphi_\theta(\ell, t) - \frac{1}{\ell^2}\varphi_\theta'(\ell, t)\right) \frac{1}{|x|} 
\end{align*}

\textbf{Term $A$.} We have
\begin{align*}
 A &= \frac{n-2}{|x|}\left(-\Delta\varphi_\theta(|x|) + f(|x|)\right) - \Delta \varphi_\theta'(|x|) + \frac{n-1}{|x|^2}\varphi_\theta'(|x|) + f'(|x|) 
 \\& \qquad \qquad + \Delta\left( \frac{n-2}{|x|}\varphi_\theta(|x|) + \varphi_\theta'(|x|)\right) + \frac{2}{|x|^3}\varphi_\theta(|x|) - \frac{2}{|x|^2}\varphi_\theta'(|x|)
 \\&= \frac{n-2}{|x|}f(|x|) + \frac{n-1}{|x|^2}\varphi'_{\theta}(|x|) + f'(|x|) + \Delta\left(\frac{n-2}{|x|}\right)\varphi_{\theta}(|x|)
 + 2\nabla\left(\frac{n-2}{|x|}\right)\cdot \nabla \varphi_{\theta}(|x|)
 \\& \qquad \qquad+ \frac{2}{|x|^3}\varphi_\theta(|x|) - \frac{2}{|x|^2}\varphi_\theta'(|x|)
 \\&= \frac{n-2}{|x|}f(|x|) + \frac{(n-1)}{|x|^2}\varphi_\theta'(|x|) + f'(|x|)  -\frac{(n-2)(n-3)}{|x|^3}\varphi_\theta(|x|) - \frac{2(n-2)}{|x|^2}\varphi_\theta'(|x|) 
 \\& \qquad \qquad + \frac{2}{|x|^3}\varphi_\theta(|x|) - \frac{2}{|x|^2}\varphi_\theta'(|x|)
 \\&= \frac{n-2}{|x|}f(|x|) + f'(|x|) - \frac{n-1}{|x|^2}\varphi_\theta'(|x|) - \frac{(n-4)(n-1)}{|x|^3}\varphi_\theta(|x|).
\end{align*}
If we take $f(\ell) = \frac{(n-1)}{\ell^2}\varphi_{\theta}(\ell)$ so that $f'(\ell) = -\frac{2(n-1)}{\ell^3}\varphi_\theta(\ell) + \frac{(n-1)}{\ell^2}\varphi_\theta'(\ell)$, then we find
\begin{align*}
\frac{n-2}{|x|}f(|x|) + f'(|x|) &= \frac{(n-2)(n-1)}{|x|^3}\varphi_\theta(|x|) - \frac{2(n-1)}{|x|^3}\varphi_\theta(|x|) + \frac{(n-1)}{|x|^2}\varphi_\theta'(|x|)
\\&= \frac{(n-4)(n-1)}{|x|^3}\varphi_\theta(|x|) + \frac{(n-1)}{|x|^2}\varphi_\theta'(|x|),
\end{align*}
so $A= 0$.

\textbf{Term $B$.} We have
\begin{align*}
B &=  \left(\frac{1}{|x|}\left[-\Delta\varphi_\theta(|x|, t) + f(|x|)\right]- \left[-\Delta\varphi_\theta'(|x|, t) + \frac{n-1}{|x|^2}\varphi_\theta'(|x|, t) + f'(|x|)\right]\right)\frac{1}{|x|^2}
\\&+ \Delta\left(\frac{1}{|x|^3}\varphi_\theta(|x|, t) - \frac{1}{|x|^2}\varphi_\theta'(|x|, t)\right) + 4\frac{d}{d\ell}\bigg|_{\ell = |x|}\left(\frac{1}{\ell^3}\varphi_\theta(\ell, t) - \frac{1}{\ell^2}\varphi_\theta'(\ell, t)\right) \frac{1}{|x|} 
\\& = \frac{1}{|x|^3}f(|x|) - \frac{n-1}{|x|^4}\varphi'_\theta(|x|, t) - \frac{1}{|x|^2}f'(|x|)
\\& + \Delta\left(\frac{1}{|x|^3}\right)\varphi_\theta(|x|, t) + 2\nabla \left(\frac{1}{|x|^3}\right)\cdot\nabla\varphi_\theta(|x|,t) - \Delta\left(\frac{1}{|x|^2}\right)\varphi'_\theta(|x|, t) 
\\& - 2\nabla\left(\frac{1}{|x|^2}\right)\cdot\nabla \varphi'_\theta(|x|, t)  
\\& + 4\left(-3\frac{1}{\ell^4}\varphi_\theta(\ell, t) + \frac{1}{\ell^3}\varphi'_\theta(\ell, t) + 2\frac{1}{\ell^3}\varphi'_\theta(\ell, t) - \frac{1}{\ell^2}\varphi''_\theta(\ell, t)\right)\bigg|_{\ell = |x|}\frac{1}{|x|}
\\& =  \frac{1}{|x|^3}f(|x|) - \frac{n-1}{|x|^4}\varphi'_\theta(|x|, t) - \frac{1}{|x|^2}f'(|x|)
\\& + \frac{15 - 3n}{|x|^5}\varphi_\theta(|x|, t) -\frac{6}{|x|^4}\varphi'_\theta(|x|, t) - \frac{8 - 2n}{|x|^4}\varphi'_\theta(|x|, t)+ \frac{4}{|x|^3}\varphi''_\theta(|x|, t)
\\& -\frac{12}{|x|^5}\varphi_\theta(|x|, t) + \frac{12}{|x|^4}\varphi'_\theta(|x|, t) -\frac{4}{|x|^3}\varphi''_\theta(|x|, t)
\\& = \frac{1}{|x|^3}f(|x|) - \frac{1}{|x|^2}f'(|x|) + \frac{3(1 - n)}{|x|^5}\varphi_\theta(|x|, t) + \frac{n-1}{|x|^4}\varphi'_\theta(|x|, t)
\end{align*}
Taking $f(\ell) = \frac{(n-1)}{\ell^2}\varphi_{\theta}(\ell)$ as above, we find
\begin{align*}
\frac{1}{|x|^3}f(|x|) &- \frac{1}{|x|^2}f'(|x|) 
\\& = \frac{1}{|x|^3}\left(\frac{(n-1)}{|x|^2}\varphi_{\theta}(|x|)\right) - \frac{1}{|x|^2}\left(-\frac{2(n-1)}{|x|^3}\varphi_{\theta}(|x|) + \frac{(n-1)}{|x|^2}\varphi_{\theta}'(|x|)\right)
\\&= \frac{(n-1)}{|x|^5}\varphi_{\theta}(|x|) + \frac{2(n-1)}{|x|^5}\varphi_{\theta}(|x|) -\frac{(n-1)}{|x|^4}\varphi_{\theta}'(|x|)
\\&= \frac{3(n-1)}{|x|^5}\varphi_{\theta}(|x|) - \frac{(n-1)}{|x|^4}\varphi_{\theta}'(|x|),
\end{align*}
so $B= 0$ as well.
\end{proof}

\begin{corollary}\label{cor:numeratordistortion}
There exists $\bar r = \bar r(n, \eta)$ such that for all $r > \bar r$ the following is true:

Suppose $g_0$ is a $C^0$ metric on $\R^n$ such that $|| g_0 - \delta||_{C^0(\R^n)} < \varepsilon < 1$ for some $\varepsilon$, and for which there is a smooth Ricci-DeTurck flow $(g_t)_{t>0}$ satisfying (\ref{eq:RDTFinitialcondition}), (\ref{eq:RDTFXest}), and (\ref{eq:RDTFderivests}). If $\varphi: \R\to \R^{\geq 0}$ is a smooth cutoff function with $\supp(\varphi) \subset (a,b)\subset\subset (.9, 1.1)$ and $\varphi_{r^{-\eta}}(\ell,t)$ is the function corresponding to $\varphi$ and $\theta = r^{-\eta}$ given by Lemma \ref{lemma:cutofffunctiondecay}, then
\begin{equation}
\begin{split}
&  \bigg|\int_{\theta_0}^{r^{2-\eta}}\frac{d}{dt}\left[M_{C^0}(g_t , \varphi_{r^{-\eta}}(\tfrac{t}{r^2}), r)\left(r\int_{.9}^{1.1}\varphi_{r^{-\eta}}(\ell, \tfrac{t}{r^2})d\ell\right)\right]dt \bigg|
\\&  \leq c\varepsilon^2r^{n-1} + c'(n)r^{n\eta/2 + n -1}\exp\left(-\frac{d^2_{ab}}{4}r^{\eta}\right)
\end{split}
\end{equation}
for all $\theta_0 \in [0, r^{2-\eta})$, where $c = c(n, \varphi)$ and $d_{a,b} = \min\{a- .9, 1.1-b\}$.
\end{corollary}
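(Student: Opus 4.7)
The plan is to deduce Corollary \ref{cor:numeratordistortion} from Lemma \ref{lemma:massdistortionscale1} by parabolic rescaling. I introduce the rescaled flow $\tilde g_s(y) := g_{r^2 s}(ry)$, viewed as a matrix of coefficients in the standard coordinates on $\R^n$. By the scale-invariance of the Ricci-DeTurck equation with Euclidean background (Remark \ref{rmk:parabolicscaling}), $\tilde g$ is again a smooth Ricci-DeTurck flow starting from $\tilde g_0$; the pointwise bound $\|\tilde g_0-\delta\|_{C^0(\R^n)} \leq \varepsilon$ is automatic from the change of variables, and the estimates (\ref{eq:RDTFXest}) and (\ref{eq:RDTFderivests}) for $\tilde g$ follow at once from the corresponding estimates for $g$ (note $|\nabla_y^k(\tilde g_s - \delta)| = r^k|\nabla_x^k(g_{r^2 s} - \delta)| \leq c_k\varepsilon\, s^{-k/2}$).

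Next I perform the substitution $x = ry$ in the integrals of Definition \ref{def:C0mass}, using $dx = r^n\,dy$, $dS_x = r^{n-1}\,dS_y$, $\varphi(|x|/r)=\varphi(|y|)$, and $x^i/|x| = y^i/|y|$. Every radial weight in (\ref{eq:defC0mass}) carries an overall factor of $1/r$ (either from $1/|x| = 1/(r|y|)$ or from the explicit $\frac{1}{r}\varphi'(|x|/r)$), so everything combines cleanly to
\begin{equation*}
M_{C^0}(g_t,\varphi,r)\cdot r\!\int_{.9}^{1.1}\!\varphi(\ell)\,d\ell \;=\; r^{n-1}\,M_{C^0}(\tilde g_{t/r^2},\varphi,1)\cdot\!\int_{.9}^{1.1}\!\varphi(\ell)\,d\ell.
\end{equation*}
Taking $\varphi(\cdot) = \varphi_{r^{-\eta}}(\cdot, t/r^2)$ and changing variables $s = t/r^2$ (so $dt = r^2\,ds$), the left-hand side of the corollary becomes
\begin{equation*}
r^{n-1}\int_{\theta_0/r^2}^{r^{-\eta}}\left|\frac{d}{ds}\!\left[M_{C^0}(\tilde g_s, \varphi_{r^{-\eta}}(\cdot,s),1)\int_{.9}^{1.1}\!\varphi_{r^{-\eta}}(\ell,s)\,d\ell\right]\right|ds.
\end{equation*}

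I then apply Lemma \ref{lemma:massdistortionscale1} to $\tilde g$ with $\theta = r^{-\eta}$ and $\theta_0$ there replaced by $\theta_0/r^2$. This is legitimate whenever $r > \bar r(n,\eta) := (\bar\theta(n))^{-1/\eta}$, so that $r^{-\eta} < \bar\theta(n)$, and for this choice of $\theta$ the evolving weight $\varphi_{r^{-\eta}}$ appearing in the corollary is exactly the weight produced by Lemma \ref{lemma:cutofffunctiondecay}. Lemma \ref{lemma:massdistortionscale1} yields the Gaussian term $c(n)r^{n\eta/2}\exp(-d_{a,b}^2\,r^\eta/4)$ together with the two error integrals $\int_0^{r^{-\eta}}\!\int_{A(0,.9,1.1)}|\nabla\tilde h|^2$ and $\int_0^{r^{-\eta}}\!\int_{A(0,.9,1.1)}|\tilde h||\nabla\tilde h|$. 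Multiplying by the overall $r^{n-1}$ factor, the Gaussian contribution becomes the advertised $c(n)r^{n-1+n\eta/2}\exp(-d_{a,b}^2\,r^\eta/4)$.

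The remaining task, and the only point requiring any real attention, is to bound the two error integrals by $c(n,\varphi)\varepsilon^2$ so that multiplying by $r^{n-1}$ gives the other advertised term $c(n,\varphi)\varepsilon^2 r^{n-1}$. Covering $A(0,.9,1.1)$ by a bounded number (depending only on $n$) of Euclidean unit balls and applying the $X$-norm bound (\ref{eq:RDTFXest}) at radius $r'=1$ on each ball gives $\int_0^1\!\int_{A(0,.9,1.1)}|\nabla\tilde h|^2 \leq c(n)\varepsilon^2$, which dominates the first error integral since $r^{-\eta} \leq 1$. For the second, $\|\tilde h\|_{L^\infty} \leq c\varepsilon$ combined with Cauchy-Schwarz on the finite-volume cylinder $A(0,.9,1.1)\times(0,r^{-\eta})$ reduces it to the first at the cost of another factor of $\varepsilon$. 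There is no substantive obstacle; the delicate bookkeeping is simply to verify that the radial weights in (\ref{eq:defC0mass}) all scale to produce exactly $r^{n-1}$ with no residual $r$-dependence inside the rescaled integrand.
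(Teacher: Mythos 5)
Your argument is correct and follows essentially the same route as the paper: parabolically rescale to unit scale, use the change of variables to extract the factor $r^{n-1}$, apply Lemma \ref{lemma:massdistortionscale1} with $\theta = r^{-\eta}$, and control the two quadratic error integrals via the $X$-norm bound (\ref{eq:RDTFXest}) together with $\|h\|_{L^\infty}\leq c\varepsilon$ and Cauchy--Schwarz. The only (immaterial) difference is that you estimate the error integrals at scale $1$ using unit parabolic cylinders and scale-invariance of $\|\cdot\|_X$, whereas the paper converts them back to scale $r$ and covers $A(0,.9r,1.1r)$ by balls of radius $.5r$, which is why it additionally imposes $r^{-\eta}<.25$; both yield the same bound $c(n,\varphi)\varepsilon^2 r^{n-1}$.
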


\begin{proof}
Let $\bar \theta$ be as in Lemma \ref{lemma:massdistortionscale1}. Let $\hat h(x,t) := h(rx, r^2t) = g(rx, r^2 t) - \delta =: \hat g(x,t)- \delta$. First note that
\begin{align*}
& \int_{\theta_0}^{r^{2-\eta}} \frac{d}{dt}\left[M_{C^0}(g_t, \varphi_{r^{-\eta}}(\cdot, \tfrac{t}{r^2}), r)\left(r\int_{.9}^{1.1}\varphi_{r^{-\eta}}(\ell, \tfrac{t}{r^2})d\ell\right)\right] dt
\\& = \int_{\theta_0}^{r^{2-\eta}}\frac{d}{dt}\int_{A(o,.9r, 1.1r)} \left(\frac{n-2}{|x|}\varphi_{r^{-\eta}}(\tfrac{|x|}{r}, \tfrac{t}{r^2}) + \frac{1}{r}\varphi_{r^{-\eta}}'(\tfrac{|x|}{r}, \tfrac{t}{r^2})\right)\tr h(x,t)
\\& \qquad \qquad + \left( \frac{1}{|x|}\varphi_{r^{-\eta}}(\tfrac{|x|}{r}, \tfrac{t}{r^2}) - \frac{1}{r}\varphi_{r^{-\eta}}'(\tfrac{|x|}{r}, \tfrac{t}{r^2})\right)\frac{h_{ij}(x,t)x^ix^j}{|x|^2}dxdt
\\& \qquad \qquad+ \int_{\theta_0}^{r^{2-\eta}}\frac{d}{dt}\int_{\partial A(o, .9r, 1.1r)} h_{ij}\frac{x^i}{|x|}\varphi_{r^{-\eta}}(\tfrac{|x|}{r}, \tfrac{t}{r^2})\nu^j - h_{jj}\frac{x^i}{|x|}\varphi_{r^{-\eta}}(\tfrac{|x|}{r}, \tfrac{t}{r^2})\nu^i dS dt
\\&= r^{n-1}\int_{\theta_0/r^2}^{r^{-\eta}} \frac{d}{dt} \int_{A(o,.9, 1.1)} \left(\frac{n-2}{|x|}\varphi_{r^{-\eta}}(|x|, t) + \varphi_{r^{-\eta}}'(|x|,t)\right)\tr \hat h(x,t) 
\\& \qquad \qquad + \left(\frac{1}{|x|}\varphi_{r^{-\eta}}(|x|, t) - \varphi_{r^{-\eta}}'(|x|, t)\right)\frac{\hat h_{ij}x^ix^j}{|x|^2}dxdt
\\& \qquad \qquad + r^{n-1}\int_{\theta_0/r^2}^{r^{-\eta}} \frac{d}{dt}\int_{\partial A(o, .9, 1.1)} \hat h_{ij}\frac{x^i}{|x|}\varphi_{r^{-\eta}}(|x|, t)\nu^j - \hat h_{jj}\frac{x^i}{|x|}\varphi_{r^{-\eta}}(|x|, t)\nu^i dS dt
\\&= r^{n-1}\int_{\theta_0/r^2}^{r^{-\eta}} \frac{d}{dt}\left[M_{C^0}(\hat g_t, \varphi_{r^{-\eta}}(t), 1)\left(\int_{.9}^{1.1}\varphi_{r^{-\eta}}(\ell, t)d\ell\right)\right]dt
\end{align*}

Therefore, if $r$ is sufficiently large depending on $\eta$ and $n$ so that we may apply Lemma \ref{lemma:massdistortionscale1} with $\theta = r^{-\eta}$ and so that $r^{-\eta} < .25$, we have
\begin{align*}
& \bigg| \int_{\theta_0}^{r^{2-\eta}}  \frac{d}{dt}\left[M_{C^0}(g_t, \varphi_{r^{-\eta}}(\cdot, \tfrac{t}{r^2}), r)\left(r\int_{.9}^{1.1}\varphi_{r^{-\eta}}(\ell, \tfrac{t}{r^2})d\ell\right)\right] dt\bigg|
\\&\leq r^{n-1}\int_{\theta_0/r^2}^{r^{-\eta}} \bigg|\frac{d}{dt}\left[M_{C^0}(\hat g_t, \varphi_{r^{-\eta}}(t), 1)\left(\int_{.9}^{1.1}\varphi_{r^{-\eta}}(\ell, t)d\ell\right)\right]\bigg|dt
\\& \leq r^{n-1}\bigg[ c(n)r^{n\eta/2}\exp\left(-\frac{d_{a,b}^2}{4}r^{\eta}\right) + c(n, \varphi)\int_{0}^{r^{-\eta}}\int_{A(o,.9, 1.1)} |\nabla \hat h|^2(x,t)dxdt 
\\& \qquad \qquad + c(n, \varphi)\int_{0}^{r^{-\eta}}\int_{A(o,.9, 1.1)}|\hat h|(x,t) |\nabla \hat h|(x,t)dxdt\bigg]
\\& = r^{n-1}\bigg[ c(n)r^{n\eta/2}\exp\left(-\frac{d_{a,b}^2}{4}r^{\eta}\right) + c(n, \varphi)\int_{0}^{r^{-\eta}}\int_{A(o,.9, 1.1)} r^2|\nabla h|^2(rx, r^2t)dxdt 
\\& \qquad \qquad + c(n, \varphi)\int_{0}^{r^{-\eta}}\int_{A(o,.9, 1.1)}r|h|(rx,r^2t) |\nabla h|(rx,r^2t)dxdt\bigg]
\\&= r^{n-1}\bigg[ c(n)r^{n\eta/2}\exp\left(-\frac{d_{a,b}^2}{4}r^{\eta}\right) + c(n, \varphi)\int_{0}^{r^{2-\eta}}\int_{A(o,.9r, 1.1r)} r^{2-n-2}|\nabla h|^2(x, t)dxdt 
\\& \qquad \qquad + c(n, \varphi)\int_{0}^{r^{2-\eta}}\int_{A(o,.9r, 1.1r)}r^{1-n-2}|h|(x,t) |\nabla h|(x,t)dxdt\bigg]
\\& = c(n)r^{n\eta/2 + n - 1}\exp\left(-\frac{d_{a,b}^2}{4}r^{\eta}\right) + \frac{c(n, \varphi)}{r}\int_{0}^{r^{2-\eta}}\int_{A(o,.9r, 1.1r)}|\nabla h|^2(x,t)dxdt 
\\& \qquad \qquad + \frac{c(n, \varphi)}{r^2}\int_{0}^{r^{2-\eta}}\int_{A(o,.9r, 1.1r)}|h|(x,t)|\nabla h|(x,t)dxdt
\\& \leq c(n)r^{n\eta/2 + n - 1}\exp\left(-\frac{d_{a,b}^2}{4}r^{\eta}\right) + \frac{c(n, \varphi)}{r}\int_{0}^{.25r^2}\int_{B(z_1, .5r)}|\nabla h|^2(x,t)dxdt 
\\& \qquad \qquad + \frac{c(n, \varphi)}{r^2}\int_{0}^{.25r^2}\int_{B(z_2, .5r)}|h|(x,t)|\nabla h|(x,t)dxdt
\\& =: c(n)r^{n\eta/2 + n - 1}\exp\left(-\frac{d_{a,b}^2}{4}r^{\eta}\right) + A + B,
\end{align*}
for some $z_1, z_2\in A(0,.9r, 1.1r)$, where $z_1$ and $z_2$ are chosen as follows: let $\{z_i\}_{i=1}^{k}$ be a maximal collection of points in $A(0, .9r, 1.1r)$ such that the $B(z_i, .25 r)$ are pairwise disjoint. Observe that $\{B(z_i, .5r)\}_{i=1}^{k}$ is a cover of $A(0, .9r, 1.1r)$. Choose $z_1$ and $z_2$ from $\{z_i\}_{i=1}^{k}$ so that
\begin{align*}
\int_{0}^{.25r^2}\int_{B(z_1, .5r)}|\nabla h|^2(x,t)dxdt &=  \max_{i = 1,\ldots, k} \int_{0}^{.25r^2}\int_{B(z_i, .5r)}|\nabla h|^2(x,t)dxdt\\
\int_{0}^{.25r^2}\int_{B(z_2, .5r)}|h|(x,t)|\nabla h|(x,t)dxdt &= \max_{i=1,\ldots, k} \int_{0}^{.25r^2}\int_{B(z_i, .5r)}|h|(x,t)|\nabla h|(x,t)dxdt.
\end{align*}
We have $k\leq c(n)$ since
\begin{equation*}
k\omega_n(.25r)^n = \sum_{i=1}^k|B(z_i, .25r)| \leq |A(0, .65r, 1.35r)| \leq \omega_n(1.35r)^n,
\end{equation*} 
so
\begin{equation*}
\begin{split}
&\frac{1}{r}\int_{0}^{r^{2-\eta}}\int_{A(o,.9r, 1.1r)}|\nabla h|^2(x,t)dxdt + \frac{1}{r^2}\int_{0}^{r^{2-\eta}}\int_{A(o,.9r, 1.1r)}|h|(x,t)|\nabla h|(x,t)dxdt 
\\& \leq \frac{k}{r}\int_{0}^{.25r^2}\int_{B(z_1, .5r)}|\nabla h|^2(x,t)dxdt + \frac{k}{r^2}\int_{0}^{.25r^2}\int_{B(z_2, .5r)}|h|(x,t)|\nabla h|(x,t)dxdt.
\end{split}
\end{equation*}

Note that, by H\"older's inequality,
\begin{align*}
\frac{1}{r^2}\int_{0}^{.25r^2}\int_{B(z_2, .5r)}&|h|(x,t)|\nabla h|(x,t)dxdt 
\\& \leq \frac{c(n)}{r^2}||h||_{L^\infty(B(z_2, .5r)\times(0, .25r^2))}r^{n/2 +1}||\nabla h||_{L^2(B(z_2, .5r)\times(0, .25r^2))},
\end{align*}
so (\ref{eq:RDTFXest}) implies
\begin{align*}
A + B &\leq c(n,\varphi)r^{n-1}(r^{-n}||\nabla h||^2_{L^2(B(z_1, .5r)\times (0,.25r))}) 
\\& \qquad \qquad+ c(n, \varphi)r^{n-1}(r^{-n/2}||h||_{C^0(B(z_2, .5r)\times(0, .25r^2))}||\nabla h||_{L^2(B(z_2,.5r)\times(0,.25r^2))})
\\& \leq c(n, \varphi)||h||_X^2r^{n-1}  \leq c(n,\varphi)\varepsilon^2r^{n-1}.
\end{align*}
This completes the proof.
\end{proof}

\begin{proof}[Proof of Lemma \ref{lemma:massdistortionestimate}]
The first statement is due to Lemma \ref{lemma:cutofffunctiondecay}. Towards the second, let $\bar r$ be as in Corollary \ref{cor:numeratordistortion}. 

Now observe, by Lemma \ref{lemma:cutofffunctiondecay}:
\begin{align*}
&\left|\frac{d}{dt}\int_{.9}^{1.1}\varphi_{r^{-\eta}}(\ell, t)d\ell\right| = \left|\int_{.9}^{1.1}-\varphi_{r^{-\eta}}''(\ell, t) - \frac{n-1}{\ell}\varphi_{r^{-\eta}}'(\ell, t) + \frac{n-1}{\ell^2}\varphi_{r^{-\eta}}(\ell, t)\right|
\\&= \bigg|-\varphi_{r^{-\eta}}'(1.1, t) + \varphi_{r^{-\eta}}'(.9, t) -\left[\frac{n-1}{\ell}\varphi_{r^{-\eta}}(\ell, t)\bigg|_{.9}^{1.1} - \int_{.9}^{1.1}\frac{d}{d\ell}\left[\frac{n-1}{\ell}\right]\varphi_{r^{-\eta}}(\ell, t)d\ell\right] 
\\&+ \int_{.9}^{1.1}\frac{n-1}{\ell^2}\varphi_{r^{-\eta}}(\ell, t)d\ell\bigg|
\\&= \bigg|-\varphi_{r^{-\eta}}'(1.1, t) + \varphi_{r^{-\eta}}'(.9, t) - \frac{n-1}{1.1}\varphi_{r^{-\eta}}(1.1, t) + \frac{n-1}{.9}\varphi_{r^{-\eta}}(.9, t) 
\\& - \int_{.9}^{1.1}\frac{n-1}{\ell^2}\varphi_{r^{-\eta}}(\ell, t)d\ell  + \int_{.9}^{1.1}\frac{n-1}{\ell^2}\varphi_{r^{-\eta}}(\ell, t)d\ell \bigg|
\\& \leq \frac{c(n,\varphi)}{(r^{-\eta})^{n/2}}\exp\left(-\frac{(d_{a,b})^2}{4}r^{\eta}\right).
\end{align*}
In particular, for $t\in (0, r^{2-\eta})$, we have
\begin{equation}\label{eq:denominatordistortion}
\left|\frac{d}{dt}\int_{.9}^{1.1}\varphi_{r^{-\eta}}(\ell, \tfrac{t}{r^2})\right| \leq \frac{c(n,\varphi)}{r^2}r^{n\eta/2}\exp\left(-D(a,b))r^{\eta}\right).
\end{equation}
We also have
\begin{equation}\label{eq:roughC0massestimate}
|M_{C^0}(g_t, r, \varphi_{r^{-\eta}}(\cdot, \tfrac{t}{r^2}))| \leq c(n, \varphi)r^{n-2},
\end{equation}
for all $t>0$, by definition and (\ref{eq:RDTFXest}). 

Combining (\ref{eq:denominatordistortion}) and (\ref{eq:roughC0massestimate}) with Corollary \ref{cor:numeratordistortion} we find
\begin{align*}
 \int_{\theta_0}^{r^{2-\eta}}&\bigg| \frac{d}{dt} M_{C^0}(g_t, r, \varphi(\cdot, \tfrac{t}{r^2}))\bigg|dt 
 = \bigg| \int_{\theta_0}^{r^{2-\eta}} \frac{\frac{d}{dt}\left[M_{C^0}(g_t ,r, \varphi(\cdot, \tfrac{t}
{r^2}))\left(r\int_{.9}^{1.1}\varphi(\ell, \tfrac{t}{r^2})d\ell\right)\right]}{r\int_{.9}^{1.1}\varphi(\ell, \tfrac{t}{r^2})d\ell}dt\bigg|
 \\& + \bigg|\int_{\theta_0}^{r^{2-\eta}} \frac{M_{C^0}(g_t ,r, \varphi(\cdot, \tfrac{t}{r^2}))\left(r\int_{.9}^{1.1}\varphi(\ell, \tfrac{t}{r^2})d\ell\right)\frac{d}{dt}\left[\int_{.9}^{1.1}\varphi(\ell, \tfrac{t}{r^2})d\ell\right]}{r\left[\int_{.9}^{1.1}\varphi(\ell, \tfrac{t}{r^2})d\ell\right]^2} dt\bigg|
\\&\leq \frac{c(\varphi)}{r}\bigg|\int_{\theta_0}^{r^{2-\eta}} \frac{d}{dt}\left[M_{C^0}(g_t ,r, \varphi(\cdot, \tfrac{t}{r^2}))\left(r\int_{.9}^{1.1}\varphi(\ell, \tfrac{t}{r^2})d\ell\right)\right]dt\bigg|
\\& +c(n,\varphi)r^{n-2}\int_{\theta_0}^{r^{2-\eta}}\bigg| \frac{d}{dt}\int_{.9}^{1.1}\varphi(\ell, \tfrac{t}{r^2})d\ell\bigg| dt
\\& \leq c(n, \varphi)\varepsilon^2r^{n-2} + c'(n)r^{n\eta/2 + n -1}\exp\left(-D(a,b)r^{\eta}\right)
\\& \qquad \qquad + c(n, \varphi)r^{n-2 +n\eta/2 - 2 + 2-\eta}\exp\left(-D(a,b)r^{\eta}\right)
\\& \leq c(n,\varphi)\varepsilon^2r^{n-2} + c(n, \varphi)r^{n-2 +n\eta/2 - 2 + 2-\eta}\exp\left(-D(a,b)r^{\eta}\right).
\end{align*}
This proves the second statement of the lemma. To prove the third, increase $\bar r$ and hence $r$ depending on $n, \eta, a, b, c_0$, and $\tau$ so that $c_0r^{-\tau} < 1$ for all $r> \bar r$, and so that 
\begin{equation*}
c(n, \varphi)r^{n-2 +n\eta/2 - 2 + 2-\eta}\exp\left(-D(a,b)r^{\eta}\right) \leq c(n, \eta, a, b, \tau) r^{n-2-2\tau},
\end{equation*}
where $c(n,\varphi)$ and $D(a,b)$ are as in the previous estimate. Then the result follows from the second statement of the lemma.
\end{proof}

\begin{remark}\label{rmk:massdistortiondiffradii}
Let $\bar r$ be as in the third statement of Lemma \ref{lemma:massdistortionestimate}, and let $g_0$ and $g_t$ be as in the hypotheses of that statement. Arguing in a similar way to the proofs of Corollary \ref{cor:numeratordistortion} and Lemma \ref{lemma:massdistortionestimate} we also have that for all $\alpha>0$,
\begin{equation*}
\int_0^{r^{2-\eta}} \left| \frac{d}{dt} M_{C^0}(g_t, \varphi_{r^{-\eta}}(\cdot, \tfrac{t}{r^2}), \alpha r) \right| \leq c(n, \varphi, c_0, \alpha)r^{n-2-2\tau}.
\end{equation*}
\end{remark}

\section{Monotonicity results for the $C^0$ local mass}\label{sec:monotonicity}

We first record the following lemma, which is essentially a special case of \cite[Lemma $5.9$]{PBGthesis} in the case of Ricci-DeTurck flows with a Euclidean background. 
\begin{lemma}\label{lemma:earlierscalarcurvature}
There exists $\bar \theta = \bar\theta(\kappa_0, n, \beta)$ such that for all $\theta < \bar \theta$, the following is true:

Suppose that $(g_t)_{t>0}$ is some smooth family of Riemannian metrics on $\R^n$ evolving by (\ref{eq:RDTFeucl}) and satisfying $|| g_t - \delta||_{C^0(\R^n)}< 1$ and (\ref{eq:RDTFderivests}). Suppose that for some $\kappa_0 \in \R$, $\beta\in (0, 1/2)$, $x\in \R^n$, and $0< \theta < \bar \theta$, the condition (\ref{eq:betaweakconditionRn}) holds for $g_t$ at $y$ with the lower bound $\kappa_0$, for all $y\in B(x, 2^\beta/(2^\beta - 1)\theta^\beta)$. Then
\begin{equation} 
R^g(x,\theta) \geq \kappa_0 - \sum_{i=1}^{\infty}\frac{c(n)2^{i-1}}{\theta}\exp\left(-\frac{2}{D(\theta/2^i)^{1-2\beta}}\right),
\end{equation}
 where $D = D(n)$, and $R^g(\cdot, \theta)$ denotes the scalar curvature of $g_t$ at $t = \theta$.
\end{lemma}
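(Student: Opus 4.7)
My plan is to iterate the scalar-curvature evolution inequality \eqref{eq:RDTFscalarev} along the dyadic sequence of time scales $s_i := \theta/2^i$. Since the reaction term $\tfrac{2}{n}R(g_t)^2 \geq 0$, $R(g_t)$ is a supersolution of the linear operator $\partial_t - \Delta^{g_t} + \langle X,\nabla\cdot\rangle$ whose heat kernel is $\Phi^{RD}$. Using the a priori bound \eqref{eq:RDTFscalargenerallowerbound} for integrability, stochastic completeness of $\Phi^{RD}$ (available because of the Gaussian bound in Lemma \ref{lemma:RDTFscalarHKests} and the uniform bilipschitz comparison $g_t \sim \delta$), and a weak maximum principle, one obtains the Duhamel-type lower bound
\begin{equation*}
R^g(x,t) \;\geq\; \int_{\R^n}\Phi^{RD}(x,t;z,s)\,R^g(z,s)\,d\mu_{g_s}(z) \qquad (0<s<t).
\end{equation*}

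Set $\sigma_i := \sum_{j=1}^{i} s_j^\beta$, so that $\sigma_0 = 0$, $\sigma_i$ is increasing, $\sigma_\infty = \tfrac{\theta^\beta}{2^\beta-1}$, and every $B(x,\sigma_i)$ lies inside the hypothesis ball $B(x,\tfrac{2^\beta\theta^\beta}{2^\beta-1})$. The crucial identity is $\sigma_i - \sigma_{i-1} = s_i^\beta$, so $B(y, s_i^\beta) \subset B(x,\sigma_i)$ whenever $y \in B(x, \sigma_{i-1})$. Fixing $\epsilon > 0$, I will prove by downward induction on $i$ (from $N$ to $0$) that
\begin{equation*}
R^g(y,s_i) \;\geq\; \kappa_0 - \epsilon - \sum_{j=i+1}^{N}\tfrac{c(n)\,2^{j-1}}{\theta}\exp\!\left(-\tfrac{2}{Ds_j^{1-2\beta}}\right) \qquad \text{for every } y\in B(x,\sigma_i),
\end{equation*}
the base case $R^g(\cdot,s_N) \geq \kappa_0 - \epsilon$ on $B(x,\sigma_N)$ being obtained for $N$ sufficiently large from the $\beta$-weak hypothesis (using the infimum over $C > 0$ in \eqref{eq:betaweakconditionRn} to allow the constant to grow with $N$).

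For the inductive step, apply the Duhamel bound at $(y, s_{i-1})$ for $y\in B(x,\sigma_{i-1})$ and split the integral at $|z-y| = s_i^\beta$. On the inner region $B(y, s_i^\beta) \subset B(x,\sigma_i)$, the inductive hypothesis gives $R^g(z, s_i) \geq \kappa_0 - E_i$; on the outer region, \eqref{eq:RDTFscalargenerallowerbound} gives $R^g(z, s_i) \geq -c(n)/s_i$. Because $s_{i-1} - s_i = s_i$, Lemma \ref{lemma:RDTFscalarHKests} applies and the Gaussian tail mass outside radius $s_i^\beta$ is bounded (after absorbing polynomial prefactors, which is permitted since $\beta < 1/2$ forces $s_i^{2\beta-1}\to\infty$) by $c(n)\exp(-2/(Ds_i^{1-2\beta}))$. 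Combined with stochastic completeness (inner mass $\leq 1$), this produces
\begin{equation*}
R^g(y,s_{i-1}) \;\geq\; \kappa_0 - E_i - \bigl(c(n)/s_i + |\kappa_0 - E_i|\bigr)\exp\!\left(-\tfrac{2}{Ds_i^{1-2\beta}}\right),
\end{equation*}
and choosing $\bar\theta = \bar\theta(\kappa_0,n,\beta)$ small enough that $c(n)/s_i$ dominates $|\kappa_0 - E_i|$ (with $1/s_i = 2^i/\theta$) gives exactly the desired increment $\tfrac{c(n)\,2^{i-1}}{\theta}\exp(-2/(Ds_i^{1-2\beta}))$ after absorbing constants into $c(n)$ and $D$.

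Evaluating the induction at $i = 0$, where $B(x, \sigma_0) = \{x\}$, and sending $\epsilon \to 0$ (which requires $N \to \infty$) while the telescoping sum converges to $\sum_{i=1}^\infty$ yields the stated inequality. The two main obstacles I anticipate are: rigorously justifying the Duhamel representation and stochastic completeness of $\Phi^{RD}$ in this noncompact, time-dependent setting (which requires the supersolution structure together with \eqref{eq:RDTFscalargenerallowerbound} to handle growth at infinity), and extracting the uniform base case from the pointwise $\beta$-weak hypothesis on a ball whose radius does not shrink with $N$—for which it is essential that one is permitted to choose the constant $C$ in \eqref{eq:betaweakconditionRn} to grow like $2^{N\beta}$ as $N \to \infty$.
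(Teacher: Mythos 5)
Your iteration scheme (Duhamel lower bound for the supersolution $R(g_t)$, dyadic times $s_i=\theta/2^i$, the telescoping geometry $\sigma_i-\sigma_{i-1}=s_i^\beta$, and the Gaussian tail estimate from Lemma \ref{lemma:RDTFscalarHKests} absorbed into $\exp(-2/(Ds_i^{1-2\beta}))$) is exactly the analytic core of the paper's argument, and your worry about the Duhamel representation is not the real issue. The genuine gap is the base case of your downward induction. You need, for each $\epsilon>0$ and some arbitrarily large $N$, the bound $R^g(\cdot,s_N)\geq \kappa_0-\epsilon$ on the ball $B(x,\sigma_N)$, whose radius is essentially the \emph{fixed} number $\theta^\beta/(2^\beta-1)$, at the single time $s_N=\theta/2^N$. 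The hypothesis \eqref{eq:betaweakconditionRn} does not give this: $\inf_{C>0}\liminf_{t\searrow 0}\big(\inf_{B(y,Ct^\beta)}R(g_t)\big)\geq\kappa_0$ means only that \emph{for each fixed} $C$ (and each fixed $y$) there is a time threshold $t(y,C,\epsilon)$ below which $\inf_{B(y,Ct^\beta)}R(g_t)\geq\kappa_0-\epsilon$. There is no uniformity in $C$ or in $y$, so you are not ``permitted to choose $C\sim 2^{N\beta}$ as $N\to\infty$'': the threshold $t(y,C,\epsilon)$ may shrink far faster than $2^{-N}$ as $C$ grows, and a compactness/covering argument over $y\in \overline{B(x,\sigma_\infty)}$ also fails because the controlled balls $B(y,Ct^\beta)$ shrink with $t$, so no finite cover controls a fixed ball at all small times. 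In short, a pointwise liminf hypothesis does not yield a lower bound on a fixed ball at one small time, and your induction cannot start.

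This is precisely why the paper proves the contrapositive instead: assuming $R^g(x,\theta)<\kappa_0$, it runs the same heat-kernel step backwards to produce points $x^k\in B(x^{k-1},(\theta/2^k)^\beta)$ with $R(x^k,\theta/2^k)<\kappa_0+\sum_{i\le k}(\text{errors})$, shows $(x^k)$ is Cauchy with limit $x^\infty\in B(x,2^\beta/(2^\beta-1)\theta^\beta)$, and then observes that the single constant $C=2^\beta/(2^\beta-1)$ and the single sequence of times $\theta/2^k\searrow 0$ already violate \eqref{eq:betaweakconditionRn} at $x^\infty$ (with the slightly shifted constant), so no uniformity is ever needed. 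To repair your write-up, keep your inductive step verbatim but reverse the logic: instead of propagating a lower bound forward from a uniform base case, assume the conclusion fails and extract, at each dyadic time, one point where the scalar curvature is too small; the accumulation point of these is where the $\beta$-weak condition fails. As written, the direct induction has no valid starting point.
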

\begin{proof}
We show the contrapositive. We first prove a claim.

\begin{claim*} Under the hypotheses of Lemma \ref{lemma:earlierscalarcurvature}, if $R^g(x,\theta) < \kappa_0$, then there exists a sequence of points $x^k \in \R^n$ such that for all $k$,:
\begin{enumerate}
\item $x^k \in B_{\delta}(x^{k-1}, (\theta/2^k)^{\beta})$, and
\item $R(x^k, \theta/ 2^k) < \kappa_0 + \sum_{i=1}^{k} \frac{c(n)}{\theta/2^{i-1}}\exp\left(-\frac{(\theta/2^i)^{2\beta}}{D(\theta/2^{i-1})}\right)$
\end{enumerate}
for some constants $c$ and $D'$ that depend only on $n$.
\end{claim*}
\begin{proof}[Proof of Claim]
We apply Lemma \ref{lemma:RDTFscalarHKests} to (\ref{eq:RDTFscalarev}). Set $x^0 = x$, and suppose that 
\begin{equation*}
R^g(\cdot, \theta/2) \geq \kappa_0 + \frac{c(n)}{\theta}\exp\left(-\frac{(\theta/2)^{2\beta}}{D\theta}\right)
\end{equation*} 
on $B(x^0, (\theta/2)^{\beta})$, where $c = c(n)$ and $D = D(n)$ will be determined in the course of the proof.

First let $C = C(n)$ and $D = D(n)$ be the constants from Lemma \ref{lemma:RDTFscalarHKests}. Observe that, by Lemma \ref{lemma:RDTFscalarHKests}, we have, for all $k\in \N$ and $x' \in \R^n$,
\begin{equation*}
\begin{split}
\int_{\R^n\setminus B(x', (\theta/2^k)^\beta)}& \Phi^{RD}(x', \theta/2^{k-1}; y,\theta/2^k)dy 
\\& \leq \exp\left(-\frac{(\theta/2^k)^{2\beta}}{2D(\theta/2^k)}\right)\int_{\R^n\setminus B(x', (\theta/2^k)^\beta)} \frac{C(n)}{(\theta/2^k)^{n/2}}\exp\left(-\frac{|x'-y|^2}{2D(\theta/2^k)}\right)dy
\\& \leq C'(n)\exp\left(-\frac{(\theta/2^k)^{2\beta}}{D(\theta/2^{k-1})}\right)
\end{split}
\end{equation*}
where $C'$ is some constant that depends only on $n$.

Then, by (\ref{eq:RDTFscalargenerallowerbound}) we have
\begin{align*}
\kappa_0 &> R^g(x^0,\theta) \geq \int_{B(x^0, (\theta/2)^{\beta})}\Phi^{RD}(x^0, \theta; y, \theta/2)R^{g}(y, \theta/2)dy 
\\& + \int_{\R^n\setminus B(x^0, (\theta/2)^{\beta})}\Phi^{RD}(x^0, \theta; y, \theta/2)R^{g}(y, \theta/2)dy
\\& \geq \left(\kappa_0 + \frac{c(n)}{\theta}\exp\left(-\frac{(\theta/2)^{2\beta}}{D'\theta}\right)\right)\left( 1 - \int_{\R^n\setminus B(x^0, (\theta/2)^{\beta})}\Phi^{RD}(x^0, \theta; y, \theta/2)dy\right) 
\\& - \frac{c'(n)}{\theta}\int_{\R^n\setminus B(x^0, (\theta/2)^{\beta})}\Phi^{RD}(x^0, \theta; y, \theta/2)dy
\\&\geq \kappa_0 + \frac{c(n)}{\theta}\exp\left(-\frac{(\theta/2)^{2\beta}}{D\theta}\right) - \left(\kappa_0 + \frac{c(n)}{\theta}\exp\left(-\frac{(\theta/2)^{2\beta}}{D\theta}\right)\right)C'(n)\exp\left(-\frac{(\theta/2)^{2\beta}}{D\theta}\right)
\\& - \frac{c'(n)C'(n)}{\theta}\exp\left(-\frac{(\theta/2)^{2\beta}}{D\theta}\right).
\end{align*}
This is a contradiction if
\begin{equation*}
\begin{split}
\frac{c(n)}{\theta}\exp\left(-\frac{(\theta/2)^{2\beta}}{D\theta}\right) &- \left(\kappa_0 + \frac{c(n)}{\theta}\exp\left(-\frac{(\theta/2)^{2\beta}}{D\theta}\right)\right)C'(n)\exp\left(-\frac{(\theta/2)^{2\beta}}{D\theta}\right) 
\\& - \frac{c'(n)C'(n)}{\theta}\exp\left(-\frac{(\theta/2)^{2\beta}}{D\theta}\right) \geq 0,
\end{split}
\end{equation*}
i.e. if 
\begin{equation*}
\begin{split}
&\frac{c(n)}{\theta}\exp\left(-\frac{(\theta/2)^{2\beta}}{D\theta}\right)
\\&  \geq \left( 1 - C'(n)\exp\left(-\frac{(\theta/2)^{2\beta}}{D(\theta)}\right) \right)^{-1}\left[\frac{c'(n)C'(n)}{\theta}\exp\left(-\frac{(\theta/2)^{2\beta}}{D\theta}\right) + \kappa_0C'(n)\exp\left(-\frac{(\theta/2)^{2\beta}}{D\theta}\right) \right].
\end{split}
\end{equation*}
This can be achieved by choosing $c(n) \geq 10c'(n)C'(n)$ and $\bar \theta$ sufficiently small depending on $\beta$, $n$, and $\kappa_0$ so that for all $\theta < \bar \theta$,
\begin{equation}\label{eq:thetascalarthreshold}
\begin{split}
C'(n)\exp\left(-\frac{(\theta/2)^{2\beta}}{D\theta}\right) &\leq \frac{1}{2},\\
\kappa_0C'(n)\exp\left(-\frac{(\theta/2)^{2\beta}}{D\theta}\right) &\leq \frac{c'(n)C'(n)}{\theta}\exp\left(-\frac{(\theta/2)^{2\beta}}{D\theta}\right).
\end{split}
\end{equation}

Thus by contradiction there exists some $x^1 \in B(x^0, (\theta/2)^{\beta})$ such that 
\begin{equation*}
R^g(x^1, \theta/2) < \kappa_0 + \frac{c(n)}{\theta}\exp\left(-\frac{(\theta/2)^{2\beta}}{D\theta}\right).
\end{equation*}

We now iterate this process as follows: Suppose that there exists $x^k \in B(x^{k-1}, (\theta/2^k)^{\beta})$ such that $R(x^k, \theta/ 2^k) < \kappa_0 + \sum_{i=1}^{k} \frac{c(n)}{\theta/2^{i-1}}\exp\left(-\frac{(\theta/2^i)^{2\beta}}{D(\theta/2^{i-1})}\right)$, but that 
\begin{equation*}
R(\cdot, \theta/2^{k+1}) \geq \kappa_0 + \sum_{i=1}^{k+1} \frac{c(n)}{\theta/2^{i-1}}\exp\left(-\frac{(\theta/2^i)^{2\beta}}{D(\theta/2^{i-1})}\right)
\end{equation*} 
on $B(x^k, (\theta/2^{k+1})^{\beta})$. Arguing as above, we then have
\begin{align*}
\kappa_0 &+ \sum_{i=1}^{k} \frac{c(n)}{\theta/2^{i-1}}\exp\left(-\frac{(\theta/2^i)^{2\beta}}{D(\theta/2^{i-1})}\right)
\\& > R(x^k, \theta/ 2^k) 
\\& \geq \int_{B(x, (\theta/2^{k+1})^{\beta})}\Phi^{RD}(x, \theta/ 2^k; y, \theta/2^{k+1})R^{g}(y, \theta/2^{k+1})dy 
\\& + \int_{\R^n\setminus B(x, (\theta/2^{k+1})^{\beta})}\Phi^{RD}(x, \theta/ 2^k; y, \theta/2^{k+1})R^{g}(y, \theta/2^{k+1})dy
\\& \geq \left(\kappa_0 + \sum_{i=1}^{k+1} \frac{c(n)}{\theta/2^{i-1}}\exp\left(-\frac{(\theta/2^i)^{2\beta}}{D(\theta/2^{i-1})}\right)\right)\left( 1 - \int_{\R^n\setminus B(x, (\theta/2^{k+1})^{\beta})}\Phi^{RD}(x, \theta/ 2^k; y, \theta/2^{k+1})dy  \right) 
\\& - \frac{c'(n)C'(n)}{\theta/2^k}\exp\left(-\frac{(\theta/2^{k+1})^{2\beta}}{D\theta/2^k}\right)
\\& \geq \kappa_0 + \sum_{i=1}^{k} \frac{c(n)}{\theta/2^{i-1}}\exp\left(-\frac{(\theta/2^i)^{2\beta}}{D(\theta/2^{i-1})}\right) + \frac{c(n)}{2^k}\exp\left(- \frac{(\theta/2^{k+1})^{2\beta}}{D\theta/2^k}\right)
\\& - \left(\kappa_0 + \sum_{i=1}^{k+1} \frac{c(n)}{\theta/2^{i-1}}\exp\left(-\frac{(\theta/2^i)^{2\beta}}{D(\theta/2^{i-1})}\right)\right)C'(n)\exp\left(- \frac{(\theta/2^{k+1})^{2\beta}}{D(\theta/2^{k})}\right) 
\\& - \frac{c'(n)C'(n)}{\theta/2^k}\exp\left(-\frac{(\theta/2^{k+1})^{2\beta}}{D\theta/2^k}\right),
\end{align*}
Decrease $\bar \theta$ further depending on $n$, $\beta$, and $\kappa_0$ so that in addition to (\ref{eq:thetascalarthreshold}) we also have, for all $0 <\theta < \bar \theta$,
\begin{equation*}
\sum_{i=1}^{\infty}\frac{c(n)}{\theta/2^{i-1}}\exp\left(-\frac{(\theta/2^{i+1})^{2\beta}}{D'\theta/2^i}\right) < c'(n).
\end{equation*}
Arguing as in the base case, we produce a contradiction. This proves the claim.
\end{proof}

Now suppose $R(x, \theta) < \kappa_0$, and let $(x^k)_{k=0}^{\infty}$ be as in the claim. We show that there is some $x^{\infty}\in B(\Phi(x), 2^\beta/(2^\beta - 1)\theta^\beta)$ such that $x^k\to x^\infty$ and (\ref{eq:betaweakcondition}) fails at $x^\infty$. Towards the first assertion, observe that for any $n,m$ with $n<m$ we have 
\begin{equation*}
d(x^n, x^m) \leq \sum_{i=n}^{m}\theta^\beta\left(\frac{1}{2^\beta}\right)^i,
\end{equation*}
which can be made arbitrarily small for $n$ and $m$ sufficiently large, since $\sum_{i=1}^{\infty}\left(\frac{1}{2^{\beta}}\right)^k$ converges. In particular $(x^k)$ is Cauchy, and hence converges to some $x^\infty\in M$. Moreover, we have
\begin{equation*}
d(x^k, x^\infty)\leq \lim_{m\to\infty} \sum_{i=k}^{m-1}d(x^i, x^{i+1})  \leq \sum_{i=k}^{\infty} \theta^\beta\left(\frac{1}{2^\beta}\right)^i = \left(\frac{2^{\beta}}{2^{\beta}-1}\right)\left(\frac{\theta}{2^k}\right)^{\beta}
\end{equation*}
and
\begin{equation*}
d(x, x^{\infty})\leq \sum_{i=0}^{\infty}\theta^{\beta}\left(\frac{1}{2^\beta}\right)^i = \left(\frac{2^{\beta}}{2^{\beta}-1}\right)\theta^{\beta}
\end{equation*}
Therefore, $x^\infty \in B(x, 2^\beta/(2^\beta - 1)\theta^\beta)$ and $x^k \in B(x, 2^\beta/(2^\beta - 1)(\theta/2^k)^\beta)$. Moreover, we have
\begin{equation*}
R(x^k, t/2^k) < \kappa_0 + \sum_{i=1}^{\infty}\frac{c(n)2^{i-1}}{\theta}\exp\left(-\frac{2}{D'(\theta/2^i)^{1-2\beta}}\right),
\end{equation*}
so $g$ does not have scalar curvature bounded below by $\kappa_0 + \sum_{i=1}^{\infty}\frac{c(n)2^{i-1}}{\theta}\exp\left(-\frac{2}{D'(\theta/2^i)^{1-2\beta}}\right)$ in the $\beta$-weak sense. Thus we have shown that if (\ref{eq:betaweakcondition}) holds on $B(x, 2^\beta/(2^\beta  -1)\theta^\beta)$ for the lower bound $\kappa_0 + \sum_{i=1}^{\infty}\frac{c(n)2^{i-1}}{\theta}\exp\left(-\frac{2}{D'(\theta/2^i)^{1-2\beta}}\right)$,
then $R(x, \theta) \geq \kappa_0$. The result now follows from replacing $\kappa_0$ by $\kappa_0 - \sum_{i=1}^{\infty}\frac{c(n)2^{i-1}}{\theta}\exp\left(-\frac{2}{D'(\theta/2^i)^{1-2\beta}}\right)$.
\end{proof}

\begin{lemma}\label{lemma:scalarcurvaturegluing}
For all $\eta>0$, $A>2$, $n\in \N$, and $\beta\in (0, 1/2)$ there exists $c = c(n ,A ,\beta, \eta)$ and $\bar r = \bar r(n, \eta, \beta)$ such that for all $r > \bar r$ the following is true:

 Suppose that $(g_t)_{t>0}$ is some smooth family of Riemannian metrics on $\R^n$ evolving by (\ref{eq:RDTFeucl}) and satisfying $|| g_t - \delta||_{C^0(\R^n)} < 1$ and (\ref{eq:RDTFderivests}), and that for some $\beta \in (0, 1/2)$ and $x\in \R^n$ the condition (\ref{eq:betaweakconditionRn}) holds for $g_t$ at $y$ with the lower bound $0$, for all $y \in B(x, 2^\beta/(2^\beta - 1)r^{1-\eta\beta})$. Then, for all $t\in (0, r^{2-\eta}]$, 
 \begin{equation*}
 R(g_t)\big|_x \geq - c(n,A,\beta,\eta)r^{-A}.
 \end{equation*}
\end{lemma}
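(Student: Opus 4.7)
The plan is to reduce to Lemma \ref{lemma:earlierscalarcurvature} applied at a single small scale $\theta^* = r^{-\eta}$, where its error bound is super-polynomially small in $r$, and then to propagate this lower bound forward in time using the heat kernel estimate Lemma \ref{lemma:RDTFscalarHKests} together with the supersolution property of $R(g_t)$ from (\ref{eq:RDTFscalarev}). The key identity behind the propagation is that if $R(g_s)(y) \geq -\epsilon$ on a large ball around $x$ and $R(g_s) \geq -c(n)/s$ globally by (\ref{eq:RDTFscalargenerallowerbound}), then for $t \geq s$ one has $R(g_t)(x) \geq \int \Phi^{RD}(x, t; y, s) R(g_s)(y)\, dy$, with the contribution from the complement of the good ball controlled by a Gaussian tail estimate.

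First I would set $R_0 = \tfrac{1}{2} \cdot \tfrac{2^\beta}{2^\beta - 1} r^{1-\eta\beta}$ and require $r$ large enough that $\theta^* < \bar\theta(0, n, \beta)$ and $R_0 + \tfrac{2^\beta}{2^\beta - 1}(\theta^*)^\beta \leq \tfrac{2^\beta}{2^\beta - 1} r^{1-\eta\beta}$. Then for every $y \in B(x, R_0)$, the ball $B(y, \tfrac{2^\beta}{2^\beta - 1}(\theta^*)^\beta)$ lies inside the region where the $\beta$-weak condition holds, so Lemma \ref{lemma:earlierscalarcurvature} with $\kappa_0 = 0$ gives $R(g_{\theta^*})(y) \geq -\epsilon_0$, where $\epsilon_0 = \sum_{i=1}^\infty \tfrac{c(n) 2^{i-1}}{\theta^*} \exp(-2 \cdot 2^{i(1-2\beta)}/(D (\theta^*)^{1-2\beta}))$. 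For $\theta^* = r^{-\eta}$, the exponent in the $i = 1$ term is of order $-r^{\eta(1-2\beta)}$, and the ratio of consecutive terms is bounded by $2 \exp(-c\, r^{\eta(1-2\beta)}) \leq \tfrac{1}{2}$ for $r$ large, so the whole series is at most twice the $i=1$ term, i.e.\ $O(r^\eta \exp(-c\, r^{\eta(1-2\beta)}))$. Using crucially that $\beta \in (0, 1/2)$ and $\eta > 0$ force $\eta(1-2\beta) > 0$, this is $\leq c(A) r^{-A}$ for any prescribed $A$. For the case $t \leq \theta^*$, applying Lemma \ref{lemma:earlierscalarcurvature} directly at $x$ with $\theta = t$ already yields the conclusion, so only the range $t \in (\theta^*, r^{2-\eta}]$ remains.

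For this range, propagate forward by doubling the time. Define $t_k = 2^k \theta^*$ and let $N$ be the largest integer with $t_N \leq t$, so $t_N \in (t/2, t]$ and $N = O(\log r)$. Set $R_k = R_0(1 - k/(2N))$ and prove inductively that $R(g_{t_k})(y) \geq -\epsilon_k$ on $B(x, R_k)$: for $y \in B(x, R_{k+1})$, the inclusion $B(y, R_k - R_{k+1}) \subset B(x, R_k)$ and the supersolution property give
\begin{equation*}
R(g_{t_{k+1}})(y) \geq -\epsilon_k - \frac{c(n)}{t_k} \int_{|y'-y| > R_k - R_{k+1}} \Phi^{RD}(y, t_{k+1}; y', t_k)\, dy',
\end{equation*}
and Lemma \ref{lemma:RDTFscalarHKests} (applicable since $t_k = t_{k+1}/2$) combined with a standard Gaussian tail estimate bounds the integral by $C \exp(-(R_k - R_{k+1})^2/(D t_k))$. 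Since $(R_k - R_{k+1})^2/(D t_k) \geq c\, r^{\eta(1-2\beta)}/(\log r)^2$, which tends to infinity, each step contributes super-polynomially small error; after $N$ iterations plus one final application from $t_N$ to $t$ at the point $x$ (again via Lemma \ref{lemma:RDTFscalarHKests}, now with $s = t_N \in [t/2, t]$), the total accumulated error is still at most $c(n, A, \beta, \eta) r^{-A}$.

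The main obstacle is that Lemma \ref{lemma:earlierscalarcurvature} alone is insufficient: for $\theta$ of order $r^{2-\eta}$ its error sum is only $O(1)$, because the terms with $2^i \sim \theta$ contribute $\sim 1$ each before the exponential cuts them off. One is therefore forced to apply the lemma at a small scale and bridge the gap via the forward evolution of $R$. The delicate bookkeeping is choosing the spatial shrinkage $R_k - R_{k+1}$ large enough (proportional to $r^{1-\eta\beta}/\log r$) that the per-step Gaussian tail beats the $c(n)/t_k$ prefactor even after accumulating over $O(\log r)$ steps, yet small enough that $B(x, R_N)$ still has radius comparable to $r^{1-\eta\beta}$ for the final Gaussian bound. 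This is precisely where the hypothesis $\beta < 1/2$ enters, ensuring $r^{\eta(1-2\beta)}/(\log r)^2 \to \infty$ and hence that the accumulated error remains super-polynomially small in $r$.
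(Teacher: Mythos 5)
Your proposal is correct, but it takes a genuinely different route from the paper's. The paper handles the entire range $t\in(0,r^{2-\eta}]$ in one stroke by parabolic rescaling: setting $\hat g(s)=r^{-2}g(r^2s)$ (Remark \ref{rmk:parabolicscaling}) and using that the $\beta$-weak condition with $\kappa_0=0$ is invariant under this rescaling (Remark \ref{rmk:betaweakscalinginvariant}), it applies Lemma \ref{lemma:earlierscalarcurvature} to $\hat g$ at time $\theta_t=t/r^2\le r^{-\eta}$ --- the hypothesis ball $B(x,\tfrac{2^\beta}{2^\beta-1}r^{1-\eta\beta})$ is precisely the rescaled ball of radius $\tfrac{2^\beta}{2^\beta-1}r^{-\eta\beta}\ge\tfrac{2^\beta}{2^\beta-1}\theta_t^{\beta}$ --- and then the curvature scaling $R^{g}(x,t)=r^{-2}R^{\hat g}(x,\theta_t)$ together with the estimate $\sum_i\tfrac{c(n)2^{i-1}}{\theta}\exp(-c(\theta/2^i)^{-(1-2\beta)})\le c\,\theta^{A-1}$ yields $-c\,r^{-2-\eta(A-1)}$, which gives $r^{-A}$ after reparametrizing $A$. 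You instead apply Lemma \ref{lemma:earlierscalarcurvature} only at the unrescaled time $\theta^*=r^{-\eta}$ on a ball of radius comparable to $r^{1-\eta\beta}$, and then propagate the almost-nonnegativity forward to time $r^{2-\eta}$ using the supersolution property (\ref{eq:RDTFscalarev}), the crude bound (\ref{eq:RDTFscalargenerallowerbound}), and the Gaussian bound of Lemma \ref{lemma:RDTFscalarHKests} over $O(\log r)$ dyadic time-doubling steps with shrinking balls; the hypothesis $\beta<1/2$ enters for you through $\eta(1-2\beta)>0$, exactly where it enters the paper's series estimate. Both arguments are sound: the paper's is a few lines and uses Lemma \ref{lemma:RDTFscalarHKests} only indirectly through Lemma \ref{lemma:earlierscalarcurvature}, whereas yours avoids invoking the two scaling remarks at the cost of the iteration and its bookkeeping, and in exchange produces a super-polynomially small error rather than merely $r^{-A}$. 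One point to tidy when writing it up: $\int\Phi^{RD}(y,t_{k+1};y',t_k)\,dy'$ is only bounded by a dimensional constant, not by $1$, so your recursion $\epsilon_{k+1}\le M\epsilon_k+\text{(tail)}$ accumulates a factor $M^{N}$ with $N=O(\log r)$; this is only a polynomial factor in $r$ and is harmless against the $\exp(-c\,r^{\eta(1-2\beta)}/(\log r)^2)$-type errors, but it should be stated rather than silently taking $M=1$.
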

\begin{proof}
Choose $\bar r$ sufficiently large such that $\bar r^{-\eta} < \bar \theta(0, n,\beta)$, where $\bar \theta$ is as in Lemma \ref{lemma:earlierscalarcurvature}. Let $r> \bar r$ and let $\theta \leq r^{-\eta}$. Consider $\hat g(t) := \frac{1}{r^2}g(r^2t)$, which, by Remark \ref{rmk:parabolicscaling}, is a Ricci-DeTurck flow with respect to the constant background $\hat\delta := \frac{1}{r^2}\delta$. By Remark \ref{rmk:betaweakscalinginvariant}, $\hat g(0)$ has nonnegative scalar curvature in the $\beta$-weak sense on $B_{\hat\delta}(x, 2^\beta/(2^\beta - 1)\theta^{\beta})$, so by Lemma \ref{lemma:earlierscalarcurvature}, we find
\begin{align*}
R^g(x, \theta r^2) &= r^{-2} R^{\hat g}(x, \theta) \geq -r^{-2}\sum_{i=1}^{\infty}\frac{c(n)2^{i-1}}{\theta}\exp\left(-\frac{2}{D(\theta/2^i)^{1-2\beta}}\right).
\end{align*}

Moreover, if $A>1$ then
\begin{align*}
\sum_{i=1}^{\infty}\frac{c(n)2^{i-1}}{\theta}\exp\left(-\frac{2}{D(\theta/2^i)^{1-2\beta}}\right) &= \sum_{i=1}^{\infty}c(n)\left(\frac{2^{i-1}}{\theta}\right)\exp\left(-c(D,\beta)\left(\frac{2^{i-1}}{\theta}\right)^{1-2\beta}\right)
\\& \leq \sum_{i=1}^{\infty}c(n,\beta,A) \left(\frac{2^{i-1}}{\theta}\right) \left(\frac{2^{i-1}}{\theta}\right)^{-A}
\\& = c(n,\beta, A)\theta^{A - 1}\sum_{i=1}^{\infty} \left(\frac{1}{2^{A-1}}\right)^{i-1} \leq c(n,\beta, A)\theta^{A - 1}.
\end{align*}
Then, for all $t\in (0, r^{2-\eta})$, $t = \theta_t r^2$ for some $\theta_t\in (0, r^{-\eta})$ so the previous analysis implies that for all $A>1$,
\begin{align*}
R^g(x,t) &= R^{g}(x, \theta_tr^2)\geq -c(n, \beta, A)r^{-2}\theta_t^{A-1} \geq -c(n, \beta, A)r^{-2}(r^{-\eta})^{A-1}.
\end{align*}
Replacing $A$ by $2 + \eta A - \eta > 2$ yields the result.
\end{proof}

\begin{proposition}\label{prop:classicalmonotonicity}
For all $c_0>0, \tau > (n-2)/2$, $\beta \in (0, 1/2)$, $0< \eta < 2\tau - (n-2)$ there exists $\bar r = \bar r(n, c_0, \tau, \beta, \eta)$ such that for all $r> \bar r$ the following is true:

Let $g$ be a $C^0$ metric on $A(0, .8r, 12r)$ such that $|| g- \delta||_{C^0(A(0, .8r, 12r))} \leq c_0r^{-\tau}$. Suppose $g$ has nonnegative scalar curvature in the $\beta$-weak sense on $A(0, .8r, 12r)$, and suppose that there exists a $C^0$ metric $g_0$ on $\R^n$ that agrees with $g$ on $A(0, .95r, 11.5r)$, for which there is a smooth Ricci-DeTurck flow $(g_t)_{t>0}$ satisfying (\ref{eq:RDTFinitialcondition}), (\ref{eq:RDTFXest}), and (\ref{eq:RDTFderivests}. If $\varphi^1$ and $\varphi^2$ are any two smooth functions with nonzero integral over $(.9, 1.1)$ that do not change sign on $(.9, 1.1)$, then for all $r' \in [\tfrac{1.1}{.9}r, 10r]$ we have
\begin{equation}\label{eq:C0classicalmonotonicity}
M_{C^0}(g_{r^{2-\eta}}, \varphi^1, r') - M_{C^0}(g_{r^{2-\eta}}, \varphi^2, r) \geq -c(n, c_0, \tau, \beta, \eta)r^{n-2 - 2\tau + \eta}.
\end{equation}
\end{proposition}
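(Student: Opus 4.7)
The plan is to reduce the statement about weighted $C^0$ local masses to a classical monotonicity statement for $m_{C^2}(g_{r^{2-\eta}}, \cdot)$ on a single pair of spheres, then apply Lemma \ref{lemma:Bartnikscalculation} and control all the resulting error terms via the Ricci-DeTurck derivative estimates (\ref{eq:RDTFderivests}) and the pointwise scalar curvature lower bound from Lemma \ref{lemma:scalarcurvaturegluing}.

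First, since $g_{r^{2-\eta}}$ is smooth, Remark \ref{rmk:C0agreeswithC1average} lets me rewrite
\begin{equation*}
M_{C^0}(g_{r^{2-\eta}}, \varphi, \rho) = \frac{1}{4\pi(n-1)\omega_{n-1}} \cdot \frac{\int_{.9}^{1.1} m_{C^2}(g_{r^{2-\eta}}, \rho\ell)\,\varphi(\ell)\, d\ell}{\int_{.9}^{1.1} \varphi(\ell)\, d\ell}.
\end{equation*}
Because $\varphi^1$ and $\varphi^2$ do not change sign on $(.9,1.1)$, Lemma \ref{lemma:integralIVT} (applied to $|\varphi^i|$ after pulling the sign outside) produces $\ell_1, \ell_2 \in [.9, 1.1]$ with $s_1 := r'\ell_1$ and $s_2 := r\ell_2$ such that
\begin{equation*}
4\pi(n-1)\omega_{n-1}\,M_{C^0}(g_{r^{2-\eta}}, \varphi^i, \rho_i) = m_{C^2}(g_{r^{2-\eta}}, s_i), \qquad i=1,2.
\end{equation*}
Since $r' \geq \tfrac{1.1}{.9}r$, we have $s_1 \geq .9 r' \geq 1.1 r \geq s_2$. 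Thus the desired inequality reduces to showing
\begin{equation*}
m_{C^2}(g_{r^{2-\eta}}, s_1) - m_{C^2}(g_{r^{2-\eta}}, s_2) \geq -c\,r^{n-2-2\tau+\eta}.
\end{equation*}

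Next, I will apply Lemma \ref{lemma:Bartnikscalculation} on the annulus $A(0, s_2, s_1) \subset A(0, .9r, 1.1 \cdot 10 r)$, giving
\begin{equation*}
m_{C^2}(g_{r^{2-\eta}}, s_1) - m_{C^2}(g_{r^{2-\eta}}, s_2) \geq \int_{A(0,s_2,s_1)} R(g_{r^{2-\eta}})\,dV_\delta - c\int_{A(0,s_2,s_1)} \bigl(|h||\nabla^2 g_{r^{2-\eta}}| + |\nabla g_{r^{2-\eta}}|^2\bigr)\,dV_\delta,
\end{equation*}
where $h = g_{r^{2-\eta}} - \delta$. From (\ref{eq:RDTFinitialcondition}), (\ref{eq:RDTFXest}), (\ref{eq:RDTFderivests}) applied with $\|g_0 - \delta\|_{C^0(\R^n)} \leq c_0 r^{-\tau}$ and $t = r^{2-\eta}$, one has the pointwise bounds $|h| \leq c\,r^{-\tau}$, $|\nabla g_{r^{2-\eta}}| \leq c\,r^{-\tau - 1 + \eta/2}$, and $|\nabla^2 g_{r^{2-\eta}}| \leq c\,r^{-\tau - 2 + \eta}$. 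Since $\mathrm{vol}_\delta(A(0,s_2,s_1)) \leq c\,r^n$, the two error integrals together are bounded by $c\,r^{n-2-2\tau+\eta}$, which is exactly the error rate claimed.

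The main obstacle, and the last ingredient, is a pointwise lower bound on $R(g_{r^{2-\eta}})$ on the annulus. For each $y \in A(0, s_2, s_1)$, the ball $B(y, \tfrac{2^\beta}{2^\beta-1}r^{1-\eta\beta})$ lies (for $r$ large enough depending on $\eta, \beta$) inside $A(0, .95r, 11.5r)$, where $g_0 = g$; by Remark \ref{rmk:globalbetaweak} (using that $\|g_0 - \delta\|_{C^0(\R^n)}$ is small for large $r$), the $\beta$-weak nonnegativity of the scalar curvature of $g$ at points of this ball transfers to the condition (\ref{eq:betaweakconditionRn}) for the fixed flow $g_t$. Lemma \ref{lemma:scalarcurvaturegluing}, applied with $A := 2\tau - n + 2$ (or any large $A$), then yields $R(g_{r^{2-\eta}})(y) \geq -c\,r^{-A}$ for all $y \in A(0, s_2, s_1)$, whence
\begin{equation*}
\int_{A(0, s_2, s_1)} R(g_{r^{2-\eta}})\,dV_\delta \geq -c\,r^{n-A} \geq -c\,r^{n-2-2\tau+\eta}.
\end{equation*}
Combining the three estimates gives the proposition.
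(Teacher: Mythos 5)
Your proposal follows essentially the same route as the paper's proof: Remark \ref{rmk:C0agreeswithC1average} together with Lemma \ref{lemma:integralIVT} to replace the two weighted masses by $m_{C^2}(g_{r^{2-\eta}},\cdot)$ at single radii, Lemma \ref{lemma:Bartnikscalculation} on the resulting annulus, the estimates (\ref{eq:RDTFXest}) and (\ref{eq:RDTFderivests}) at $t=r^{2-\eta}$ for the quadratic error terms, and Lemma \ref{lemma:scalarcurvaturegluing} (fed by Remark \ref{rmk:globalbetaweak}) for the scalar curvature lower bound. One small correction: the specific exponent $A=2\tau-n+2$ need not satisfy $A>2$ and only gives $r^{n-A}=r^{2n-2-2\tau}$, which is weaker than the claimed rate; one should take $A=2+2\tau-\eta$ (as the paper does), and your parenthetical ``any large $A$'' already accommodates this.
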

\begin{proof}
Take $\bar r$ sufficiently large depending on $c_0$ and $\tau$ so that for all $r> \bar r$, $c_0r^{-\tau} < 1$, and increase $\bar r$ as necessary depending on $\beta,\eta,$ and $n$ so that Lemma \ref{lemma:scalarcurvaturegluing} holds. Increase $\bar r$ further depending on $\eta$ and $\beta$ so that for all $r > \bar r$ and all $x\in A(0, .9r, 11r)$, 
\begin{equation*}
B(x, 2^\beta/(2^\beta - 1)r^{1-\eta\beta}) \subset A(0, .95r, 11.5r).
\end{equation*} 
Fix $r> \bar r$. Now let $r' \in [\tfrac{1.1}{.9}r, 10r]$. Using Remark \ref{rmk:C0agreeswithC1average} we find that, by the Lemma \ref{lemma:integralIVT}, there exist values $\ell_1, \ell_2 \in [.9, 1.1]$ such that, using the notation of (\ref{eq:mC2notation}), we have
\begin{align*}
M_{C^0}(g_{r^{2-\eta}}, r', \varphi^1) &- M_{C^0}(g_{r^{2-\eta}}, r, \varphi^2) 
\\& = \frac{\int_{.9}^{1.1}m_{C^2}(g_{r^{2-\eta}}, r'\ell)\varphi^1(\ell)d\ell}{\int_{.9}^{1.1}\varphi^1(\ell)d\ell} - \frac{\int_{.9}^{1.1}m_{C^2}(g_{r^{2-\eta}}, r\ell)\varphi^2(\ell)d\ell}{\int_{.9}^{1.1}\varphi^2(\ell)d\ell}
\\& = \frac{m_{C^2}(g_{r^{2-\eta}}, r'\ell_1)\int_{.9}^{1.1}\varphi^1(\ell)d\ell}{\int_{.9}^{1.1}\varphi^1(\ell)d\ell} - \frac{m_{C^2}(g_{r^{2-\eta}}, r\ell_2)\int_{.9}^{1.1}\varphi^2(\ell)d\ell}{\int_{.9}^{1.1}\varphi^2(\ell)d\ell}
\\&= m_{C^2}(g_{r^{2-\eta}}, r'\ell_1) - m_{C^2}(g_{r^{2-\eta}}, r\ell_2).
\end{align*}
Recall that $r' \geq \tfrac{1.1}{.9}r$, so $\ell_1 r' \geq .9 r' \geq 1.1r \geq \ell_2 r$ and $A(0, \ell_2 r, \ell_1r')\subset A(0, .9r, 11r)$. Also, by Remark \ref{rmk:globalbetaweak}, (\ref{eq:betaweakconditionRn}) holds for $g_t$ at all $y\in A(0, .95r, 11.5r)$. Therefore we apply Lemma \ref{lemma:Bartnikscalculation} and Lemma \ref{lemma:scalarcurvaturegluing} with $A = 2 + 2\tau - \eta$ to $g_t$ at $t= r^{2-\eta}$ and use (\ref{eq:RDTFXest}) and (\ref{eq:RDTFderivests})  to find
\begin{align*}
M_{C^0}(g_{r^{2-\eta}}, r', \varphi^1)& - M_{C^0}(g_{r^{2-\eta}}, r, \varphi^2) = m_{C^2}(g_{r^{2-\eta}}, r'\ell_1) - m_{C^2}(g_{r^{2-\eta}}, r\ell_2) 
\\& \geq \int_{A(o, r\ell_2, r'\ell_1)} R(g_{r^{2-\eta}}) - c(n) |g - \delta| |\nabla^2 g | - c(n)|\nabla g|^2 dx
\\& \geq \int_{A(o, r\ell_2, r'\ell_1)} -c(n, \tau, \beta, \eta)r^{-2 - 2\tau + \eta} - \frac{c(n, c_0)r^{-2\tau}}{r^{2-\eta}}dx
\\& \geq -c(n, c_0, \tau, \beta, \eta)(r'\ell_1)^nr^{-2 - 2\tau + \eta}
\\& \geq -c(n, c_0, \tau, \beta, \eta)r^{n-2-2\tau + \eta}.
\end{align*}
\end{proof}

\begin{corollary}\label{cor:C0monotonicity}
For all $c_0>0, \tau > (n-2)/2$, $\beta\in (0, 1/2)$, $0< \eta < 2\tau - (n-2)$, and all smooth cutoff functions $\varphi: \R \to \R^{\geq 0}$ with $\supp(\varphi)\subset\subset (.9, 1.1)$ there exists $\bar r = \bar r(n, c_0, \tau, \supp(\varphi), \eta, \beta)$ such that for all $r> \bar r$ the following is true:

Suppose $g$ is a continuous metric on $A(0, .8r, 12r)\subset \R^n$ such that $|| g - \delta||_{C^0(A(0, .8r, 12r))} \leq c_0r^{-\tau}$. Suppose also that $g$ has nonnegative scalar curvature in the $\beta$-weak sense on $A(0, .8r, 12r)$. Let $\varphi_{r^{-\eta}}(\ell, t)$ be a smooth solution to (\ref{eq:radialsolvesbackwardsheat}) corresponding to $\varphi$, given by Lemma \ref{lemma:cutofffunctiondecay}. Then, for all $r' \in [\tfrac{1.1}{.9}r, 10r]$, we have
\begin{equation}
M_{C^0}(g, \varphi_{(r')^{-\eta}}(\cdot, 0), r') -  M_{C^0}(g, \varphi_{r^{-\eta}}(\cdot, 0), r) \geq -c(n,  c_0, \varphi, \tau, \beta, \eta)r^{n-2 -2 \tau + \eta}. 
\end{equation}

Moreover, if $\tilde \varphi: \R \to \R^{\geq 0}$ is another smooth cutoff function with $\supp(\tilde \varphi)\subset\subset (.9, 1.1)$ then there exists $\bar r = \bar r(n, c_0, \tau, \supp(\varphi), \supp(\tilde\varphi), \eta, \beta)$ such that for all $r>\bar r$ the following is true:

Let $g$ and $\varphi_{r^{-\eta}}$ be as in the previous statement. Suppose $\tilde \varphi_{r^{-\eta}}(\ell, t)$ is a smooth solution of (\ref{eq:radialsolvesbackwardsheat}) corresponding to $\tilde \varphi$, given by Lemma \ref{lemma:cutofffunctiondecay}. For all $r' \in [\tfrac{1.1}{.9}r, 10r]$, we have
\begin{equation}
M_{C^0}(g, \tilde \varphi_{(r')^{-\eta}}(\cdot, 0), r') - M_{C^0}(g, \varphi_{r^{-\eta}}(\cdot, 0), r) \geq -c(n, c_0, \varphi, \tilde \varphi, \tau, \beta, \eta)r^{n-2 -2\tau +\eta}.
\end{equation}
\end{corollary}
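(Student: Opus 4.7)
My plan is to combine Lemma~\ref{lemma:massdistortionestimate}, Proposition~\ref{prop:classicalmonotonicity}, and a judicious choice of ``intermediate'' test function at the common reference time $r^{2-\eta}$. The key observation is that the explicit construction of $\tilde\varphi_\theta$ in Lemma~\ref{lemma:cutofffunctiondecay}---namely $\tilde\varphi_\theta(\ell, t) = \partial_\ell \tilde u(\ell, \theta-t)$ for a single heat-evolved function $\tilde u$ depending only on $\tilde\varphi$---gives the identity
\[
\tilde\varphi_{(r')^{-\eta}}\!\big(\cdot,\, r^{2-\eta}/(r')^2\big) \;=\; \tilde\varphi_{\theta_*}(\cdot,\, 0), \qquad \theta_* := (r')^{-\eta} - r^{2-\eta}/(r')^2 \;=\; (r')^{-\eta}\bigl[1 - (r/r')^{2-\eta}\bigr].
\]
Since $r'\in[\tfrac{1.1}{.9}r,10r]$ and $\eta$ lies in the relevant range, $\theta_*$ is strictly positive and $O(r^{-\eta})$, and for all sufficiently large $r$ the function $\tilde\varphi_{\theta_*}(\cdot, 0)$ is a smooth, nonnegative function whose integral over $(.9, 1.1)$ is comparable to $\int \tilde\varphi$, so it is admissible as a test function in Proposition~\ref{prop:classicalmonotonicity}.

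With this in hand the argument is straightforward. First I would invoke Lemma~\ref{lemma:RDTFexistenceandests} to extend $g|_{A(0,.95r,11.5r)}$ to a continuous metric $g_0$ on $\R^n$ with $\|g_0-\delta\|_{C^0(\R^n)}\leq c(n)\, c_0\, r^{-\tau}$ and produce a smooth Ricci-DeTurck flow $(g_t)_{t>0}$ satisfying (\ref{eq:RDTFinitialcondition})--(\ref{eq:RDTFderivests}). Next I would apply the third statement of Lemma~\ref{lemma:massdistortionestimate} twice to this single flow. Once with evolving test function $\varphi_{r^{-\eta}}(\cdot, t/r^2)$ and radius $r$ over the full interval $[0, r^{2-\eta}]$, giving
\[
\big|M_{C^0}(g_{r^{2-\eta}}, \varphi, r) - M_{C^0}(g, \varphi_{r^{-\eta}}(\cdot,0), r)\big| \leq c\, r^{n-2-2\tau}.
\]
And once with $\tilde\varphi$ and radius $r'$ (playing the role of ``$r$'' in the statement of the lemma), restricted to the subinterval $[0, r^{2-\eta}] \subset [0, (r')^{2-\eta}]$ and using the identity above to identify the endpoint value of the evolving test function, giving
\[
\big|M_{C^0}(g_{r^{2-\eta}}, \tilde\varphi_{\theta_*}(\cdot, 0), r') - M_{C^0}(g, \tilde\varphi_{(r')^{-\eta}}(\cdot, 0), r')\big| \leq c\, (r')^{n-2-2\tau} \leq c\, r^{n-2-2\tau}.
\]

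Finally I would apply Proposition~\ref{prop:classicalmonotonicity} at time $r^{2-\eta}$ with $\varphi^1 := \tilde\varphi_{\theta_*}(\cdot, 0)$ and $\varphi^2 := \varphi$ to obtain
\[
M_{C^0}(g_{r^{2-\eta}}, \tilde\varphi_{\theta_*}(\cdot, 0), r') - M_{C^0}(g_{r^{2-\eta}}, \varphi, r) \geq -c\, r^{n-2-2\tau+\eta}.
\]
Chaining these three bounds---and using $r^{n-2-2\tau} \leq r^{n-2-2\tau+\eta}$ for $r>1$ since $\eta>0$---yields the second statement of the corollary. The first statement is the special case $\tilde\varphi = \varphi$.

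The main obstacle, and the key technical point, is reconciling the two different backward-evolution parameters $r^{-\eta}$ and $(r')^{-\eta}$ at a single fixed reference time. The naive alternative---evolving each test function up to its own natural terminal time $r^{2-\eta}$ or $(r')^{2-\eta}$ and then bridging $M_{C^0}(g_{(r')^{2-\eta}}, \tilde\varphi, r') - M_{C^0}(g_{r^{2-\eta}}, \tilde\varphi, r')$ by an $L^1$-in-time bound on $\tfrac{d}{dt} M_{C^0}(g_t, \tilde\varphi, r')$---appears to fail, because a stationary test function loses the crucial PDE cancellation that makes Lemma~\ref{lemma:massdistortionscale1} effective: the resulting Laplacian-of-coefficient term contributes a piece of order $r^{n-2-\tau-\eta}$, which exceeds the target $r^{n-2-2\tau+\eta}$ whenever $\tau > 2\eta$. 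The trick above sidesteps the issue entirely by stopping the $(r')^{-\eta}$-evolution \emph{early}---at the $r^{-\eta}$-natural time $t=r^{2-\eta}$---so the endpoint is not $\tilde\varphi$ but the translated heat profile $\tilde\varphi_{\theta_*}(\cdot,0)$, and then exploiting the fact that Proposition~\ref{prop:classicalmonotonicity} accepts any smooth sign-definite test function, not only compactly supported cutoffs.
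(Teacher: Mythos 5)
Your proposal is correct and follows essentially the same route as the paper: extend $g$ via Lemma \ref{lemma:RDTFexistenceandests}, split the difference into two flow-distortion terms controlled by the last statement of Lemma \ref{lemma:massdistortionestimate} and a middle term at time $r^{2-\eta}$ handled by Proposition \ref{prop:classicalmonotonicity} with $\varphi^1$ equal to the evolved $(r')$-test function and $\varphi^2 = \varphi$. Your time-translation identity $\tilde\varphi_{(r')^{-\eta}}(\cdot, r^{2-\eta}/(r')^2) = \tilde\varphi_{\theta_*}(\cdot,0)$ is just a renaming of the function the paper plugs in directly (its admissibility already follows from Lemma \ref{lemma:cutofffunctiondecay}), so the arguments coincide.
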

\begin{proof}
Take $\bar \varepsilon$ be as in Lemma \ref{lemma:RDTFexistenceandests} and take $\bar r$ sufficiently large depending on $c_0, \tau, n$ so that for all $r> \bar r$, $c_0r^{-\tau} < \bar \varepsilon$. Increase $\bar r$ as needed to guarantee that $\bar r$ is sufficiently large so that both Proposition \ref{prop:classicalmonotonicity} and the last statement of Lemma \ref{lemma:massdistortionestimate} hold, and so that Lemma \ref{lemma:cutofffunctiondecay} holds for $\bar \theta = \bar r^{-\eta}$. Now fix $r> \bar r$ and let $g_0$ be an extension of $g$ to $\R^n$ that agrees with $g$ on $A(0, .85r, 11.5 r)$ as given by Lemma \ref{lemma:RDTFexistenceandests}, and let $(g_t)_{t>0}$ be a Ricci-DeTurck flow for $g_0$, whose existence is guaranteed by Lemma \ref{lemma:RDTFexistenceandests} as well.

Note that $\varphi_{(r')^{-\eta}}(\cdot, \tfrac{r^{2-\eta}}{(r')^{2}})$ does not change sign on $(.9, 1.1)$ by Lemma \ref{lemma:cutofffunctiondecay}. Since $g_0$ agrees with $g$ on $A(0, .85r, 11.5r)$, we have, by Lemma \ref{lemma:massdistortionestimate} and Proposition \ref{prop:classicalmonotonicity},
\begin{align*}
M_{C^0}(g, \varphi_{(r')^{-\eta}}(\cdot, 0), r') - &M_{C^0}(g, \varphi_{r^{-\eta}}(\cdot, 0), r) 
\\& \geq M_{C^0}(g, \varphi_{(r')^{-\eta}}(\cdot, 0), r') - M_{C^0}(g_{r^{2-\eta}}, \varphi_{(r')^{-\eta}}(\cdot, \tfrac{r^{2-\eta}}{(r')^2}), r')
\\& + M_{C^0}(g_{r^{2-\eta}}, \varphi_{(r')^{-\eta}}(\cdot, \tfrac{r^{2-\eta}}{(r')^2}), r') - M_{C^0}(g_{r^{2-\eta}}, \varphi, r)
\\& + M_{C^0}(g_{r^{2-\eta}}, \varphi, r)- M_{C^0}(g, \varphi_{r^{-\eta}}(\cdot, 0), r)
\\& \geq -\int_{0}^{r^{2-\eta}}\left| \frac{d}{dt}M_{C^0}(g_t, \varphi_{(r')^{-\eta}}(\cdot, \tfrac{t}{(r')^2}), r') \right|dt
\\& + M_{C^0}(g_{r^{2-\eta}}, \varphi_{(r')^{-\eta}}(\cdot, \tfrac{r^{2-\eta}}{(r')^2}), r') - M_{C^0}(g_{r^{2-\eta}}, \varphi, r)
\\& - \int_0^{r^{2-\eta}}\left| \frac{d}{dt} M_{C^0}(g_t, \varphi(\cdot, \tfrac{t}{r^2}), r) \right| dt
\\&\geq - c(n, c_0, \varphi, \tau, \beta, \eta)r^{n-2 - 2\tau + \eta},
\end{align*}
where in the last step we have applied the last statement of Lemma \ref{lemma:massdistortionestimate} to the first and last terms and made use of the fact that $\varphi_{r^{-\eta}}(\cdot, \tfrac{r^{2-\eta}}{r^2}) = \varphi(\cdot)$, and we have applied Proposition \ref{prop:classicalmonotonicity} with $\varphi^1 = \varphi_{(r')^{-\eta}}(\cdot, \tfrac{r^{2-\eta}}{r'}^2)$ and $\varphi^2= \varphi$ to the second term. This proves the first statement. 

To prove the second statement, increase $\bar r$ so that Lemma \ref{lemma:massdistortionestimate} and Lemma \ref{lemma:cutofffunctiondecay} also hold for $\tilde\varphi$ with $\bar \theta = \bar r^{-\eta}$. Fix $r> \bar r$, and argue in the same way to find that
\begin{align*}
M_{C^0}(g, \tilde \varphi_{(r')^{-\eta}}(\cdot, 0), r') &- M_{C^0}(g, \varphi_{r^{-\eta}}(\cdot, 0), r)
\\& \geq M_{C^0}(g, \tilde \varphi_{(r')^{-\eta}}(\cdot, 0), r') - M_{C^0}(g_{r^{2-\eta}}, \tilde \varphi_{(r')^{-\eta}}(\cdot, \tfrac{r^{2-\eta}}{(r')^2}), r')
\\& + M_{C^0}(g_{r^{2-\eta}}, \tilde \varphi_{(r')^{-\eta}}(\cdot, \tfrac{r^{2-\eta}}{(r')^2}), r') - M_{C^0}(g_{r^{2-\eta}}, \varphi, r)
\\& + M_{C^0}(g_{r^{2-\eta}}, \varphi, r) - M_{C^0}(g, \varphi_{r^{-\eta}}(\cdot, 0) ,r)
\\& \geq - c(n, c_0, \varphi, \tilde\varphi, \tau, \beta, \eta)r^{n-2 - 2\tau + \eta}.
\end{align*}
where now we have applied Proposition \ref{prop:classicalmonotonicity} with $\varphi^1 = \tilde \varphi_{(r')^{-\eta}}(\cdot, \tfrac{r^{2-\eta}}{(r')^2})$ and $\varphi^2 = \varphi$.
\end{proof}

\begin{remark}\label{rmk:etamonotonicity}
A similar argument also implies the following statement: For all $c_0>0, \tau > (n-2)/2$, $\beta\in (0, 1/2)$, $\eta_1, \eta_2\in (0, 2\tau - (n-2))$, and all smooth cutoff functions $\varphi: \R \to \R^{\geq 0}$ with $\supp(\varphi)\subset\subset (.9, 1.1)$ there exists $\bar r = \bar r(n, c_0, \tau, \supp(\varphi), \eta_1, \eta_2, \beta)$ such that for all $r> \bar r$ the following is true:

Suppose $g$ is a continuous metric on $A(0, .8r, 12r)\subset \R^n$ such that $|| g - \delta||_{C^0(A(0, .8r, 12r))} \leq c_0r^{-\tau}$. Suppose also that $g$ has nonnegative scalar curvature in the $\beta$-weak sense on $A(0, .8r, 12r)$. Let $\varphi_{r^{-\eta_1}}(\ell, t)$ and $\varphi_{r^{-\eta_2}}(\ell, t)$ be the smooth solutions to (\ref{eq:radialsolvesbackwardsheat}) corresponding to $\varphi$, given by Lemma \ref{lemma:cutofffunctiondecay}. Then, for all $r' \in [\tfrac{1.1}{.9}r, 10r]$, we have
\begin{equation}
M_{C^0}(g, \varphi_{(r')^{-\eta_1}}(\cdot, 0), r') -  M_{C^0}(g, \varphi_{r^{-\eta_2}}(\cdot, 0), r) \geq -c(n,  c_0, \varphi, \tau, \beta, \eta)r^{n-2 -2 \tau + \max\{\eta_1, \eta_2\}}. 
\end{equation}

To see why this is true, let $\eta = \max\{\eta_1, \eta_2\}$ so that $\varphi_{r^{-\eta_1}}(\ell, t)$ and $\varphi_{r^{-\eta_2}}(\ell, t)$ are both defined and do not change sign for $t\in (0, r^{2-\eta})$. Let $\bar r$ be as in Corollary \ref{cor:C0monotonicity} for this value of $\eta$. Then the result is proved by arguing as in the proof of the first statement of Corollary \ref{cor:C0monotonicity} but with Proposition \ref{prop:classicalmonotonicity} applied to $\varphi^1 = \varphi_{(r')^{-\eta_1}}(\cdot, \tfrac{r^{2-\eta}}{(r')^2})$ and $\varphi^2 = \varphi_{r^{-\eta_2}}(\cdot, \tfrac{r^{2-\eta}}{r^2})$.
\end{remark}

\begin{lemma}\label{lemma:affineC0monotonicity}
Suppose $g$ is a continuous metric on $\R^n\setminus B(0, r_0/10)$ such that $g$ has nonnegative scalar curvature in the $\beta$-weak sense and $|| g - \delta||_{C^0(\R^n\setminus B(0, r_0/10))} < \varepsilon \leq \bar \varepsilon (n)$, where $\bar \varepsilon (n)$ is as in Lemma \ref{lemma:RDTFexistenceandests}, and also $| g - \delta|_x \leq c_0|x|^{-\tau}$ for all $|x| > r_0/10$. For all $r>0$ let $L^r: \R^n \to \R^n$ denote the map given by $L^r(x) = x+v$, for some fixed vector $v$ where $|v| \leq br$. Then, for all $b>1$ and all $0 < \eta < 2\tau - n + 2$, there exists $\bar r = \bar r(b, r_0, \beta, \eta)$ such that for all $r> \bar r$, and all $r_1 > (1+b)r/.9$ and $r_2 > (br + 1.1 r_1)/.9$
we have
\begin{equation*}
\begin{split}
M_{C^0}(g, \varphi_{r_2^{-\eta}}(\cdot, 0), r_2) &\geq M_{C^0}((L^r)^*g, \varphi_{r_1^{-\eta}}(\cdot, 0), r_1) 
\\& - c(n, c_0, \varphi, \tau ,\beta, \eta)r^{n- 2 - 2\tau + \eta}.
\end{split}
\end{equation*}
\end{lemma}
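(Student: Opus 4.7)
The plan is to reduce both $M_{C^0}$ quantities, via Ricci-DeTurck smoothing, to $C^2$ sphere masses at the common time $t = r^{2-\eta}$ and then to bridge them using a divergence-theorem argument (a generalization of Lemma \ref{lemma:Bartnikscalculation}) on the region $\Omega$ lying between the two relevant spheres.

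I extend $g$ to a continuous metric $g_0$ on $\R^n$ via Lemma \ref{lemma:RDTFexistenceandests}, arranging $g_0 \equiv \delta$ on $B(0, r/2)$, $g_0 \equiv g$ on $\R^n\setminus B(0, r)$, and a smooth interpolation on $A(0, r/2, r)$; the decay $|g-\delta|_y \leq c_0|y|^{-\tau}$ yields $\|g_0 - \delta\|_{C^0(\R^n)} \leq 2^\tau c_0 r^{-\tau}$, so for $r$ large the hypotheses of Lemma \ref{lemma:RDTFexistenceandests} apply and give a Ricci-DeTurck flow $(g_t)_{t>0}$. By translation-invariance of (\ref{eq:RDTFeucl}), the flow of $\tilde g_0 := (L^r)^* g_0$ is $\tilde g_t := (L^r)^* g_t$. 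The hypothesis $r_1 > (1+b)r/.9$ together with $|v| \leq br$ guarantees $g_0 = g$ on both $A(0, .9r_2, 1.1r_2)$ and $A(v, .9r_1, 1.1r_1)$, so the $M_{C^0}$ quantities in the statement coincide with those computed from $g_0$ and $\tilde g_0$. I then apply the third statement of Lemma \ref{lemma:massdistortionestimate} together with Remark \ref{rmk:massdistortiondiffradii}, with scale $r_2$ and radius $r_2$ for $g$, and with scale $r_1$ and radius $r_1$ for $\tilde g$, restricting the resulting total-variation bound to the common subinterval $[0, r^{2-\eta}] \subset [0, r_j^{2-\eta}]$ to obtain
\begin{align*}
\bigl|M_{C^0}(g, \varphi_{r_2^{-\eta}}(\cdot, 0), r_2) - M_{C^0}(g_{r^{2-\eta}}, \varphi_{r_2^{-\eta}}(\cdot, r^{2-\eta}/r_2^2), r_2)\bigr| &\leq Cr^{n-2-2\tau}, \\
\bigl|M_{C^0}((L^r)^*g, \varphi_{r_1^{-\eta}}(\cdot, 0), r_1) - M_{C^0}(\tilde g_{r^{2-\eta}}, \varphi_{r_1^{-\eta}}(\cdot, r^{2-\eta}/r_1^2), r_1)\bigr| &\leq Cr^{n-2-2\tau}.
\end{align*}

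Since $\varphi_{r_j^{-\eta}}(\cdot, t)$ is nonnegative and not identically zero on $(.9, 1.1)$ by Lemma \ref{lemma:cutofffunctiondecay}, Remark \ref{rmk:C0agreeswithC1average} together with Lemma \ref{lemma:integralIVT} gives $\ell_1, \ell_2 \in [.9, 1.1]$ so that the two $M_{C^0}$ quantities at time $r^{2-\eta}$ equal, modulo the normalization $(4\pi(n-1)\omega_{n-1})^{-1}$, the $C^2$ sphere masses $m_{C^2}(g_{r^{2-\eta}}, r_2\ell_2)$ and $m_{C^2}(\tilde g_{r^{2-\eta}}, r_1\ell_1)$; by the change of variables $y = x + v$, the latter equals the $C^2$ mass of $g_{r^{2-\eta}}$ over the sphere $\partial B(v, r_1\ell_1)$ centered at $v$. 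The hypotheses $r_1 > (1+b)r/.9$ and $r_2 > (br+1.1r_1)/.9$ ensure $B(v, r_1\ell_1) \subsetneq B(0, r_2\ell_2)$ and, for $r$ large, $B(0, r_0/10) \subset B(v, r_1\ell_1)$, so $\Omega := B(0, r_2\ell_2) \setminus B(v, r_1\ell_1) \subset \R^n \setminus B(0, r_0/10)$, where $g$ has nonnegative scalar curvature in the $\beta$-weak sense. The divergence-theorem computation from the proof of Lemma \ref{lemma:Bartnikscalculation} extends verbatim from an annulus to $\Omega$, giving
\[
m_{C^2}(g_{r^{2-\eta}}, r_2\ell_2) - m_{C^2}(g_{r^{2-\eta}}, \partial B(v, r_1\ell_1)) \geq \int_\Omega R(g_{r^{2-\eta}})\, dx - C\int_\Omega \bigl(|g_{r^{2-\eta}} - \delta||\nabla^2 g_{r^{2-\eta}}| + |\nabla g_{r^{2-\eta}}|^2\bigr) dx.
\]
I bound the right-hand side by splitting $\Omega = \Omega_1 \cup \Omega_2$ with $\Omega_1 = \Omega \cap B(0, 3(1+b)r)$. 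On $\Omega_1$, $|\Omega_1| \leq Cr^n$ and Lemma \ref{lemma:scalarcurvaturegluing} with $\rho = r$ and any $A \geq 2 + 2\tau - \eta$ gives $R(g_{r^{2-\eta}}) \geq -Cr^{-A}$, contributing $\int_{\Omega_1} R \geq -Cr^{n-A} \geq -Cr^{n-2-2\tau+\eta}$. On $\Omega_2$, where $|x| \geq C_0 r$, Lemma \ref{lemma:scalarcurvaturegluing} applied pointwise with $\rho = |x|/C_0$ (noting $r^{2-\eta} \leq \rho^{2-\eta}$) yields $R(g_{r^{2-\eta}})|_x \geq -C|x|^{-A}$ and $\int_{\Omega_2}R \geq -C\int_r^\infty |x|^{n-1-A}d|x| \geq -Cr^{n-A}$ for $A > n$. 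The $Q^R$-error terms are controlled using the inherited pointwise decay $|\nabla^k(g_{r^{2-\eta}} - \delta)|_x \leq C|x|^{-\tau-k}$ on $\Omega$ (for $k = 0, 1, 2$), whence $\int_\Omega(\cdots)dx \leq C\int_r^\infty|x|^{n-2\tau-3}d|x| \leq Cr^{n-2-2\tau}$ since $\tau > (n-2)/2$. Combining these estimates with the preceding paragraph yields the required inequality.

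The main obstacle is establishing the localized pointwise decay $|\nabla^k(g_{r^{2-\eta}} - \delta)|_x \leq C|x|^{-\tau - k}$ on the far part of $\Omega$ -- essential when $r_2 \gg r$, because then $|\Omega_2|$ is not polynomially bounded in $r$. This goes beyond the global Schauder bounds of Lemma \ref{lemma:RDTFexistenceandests} and requires a local interior regularity argument on the parabolic cylinders $B(x, |x|/2) \times (0, r^{2-\eta})$, exploiting the initial-data decay $|g_0 - \delta|_y \leq C|y|^{-\tau}$ on $B(x, |x|/2)$ together with the concentration of the linearized heat kernel (Lemma \ref{lemma:RDTFscalarHKests}) at scales $\sqrt{r^{2-\eta}} \ll |x|$; the nonlinear terms in the evolution equation (\ref{eq:heveuclbackground}) are reabsorbed using the $X$-norm bound (\ref{eq:RDTFXest}).
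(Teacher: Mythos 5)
Your overall strategy is the same as the paper's: extend $g$, run a Ricci--DeTurck flow, control the time-distortion of $M_{C^0}$ via Lemma \ref{lemma:massdistortionestimate} and Remark \ref{rmk:massdistortiondiffradii}, reduce to $m_{C^2}$ sphere masses via Remark \ref{rmk:C0agreeswithC1average} and Lemma \ref{lemma:integralIVT}, use translation invariance of (\ref{eq:RDTFeucl}), and run the divergence identity of Lemma \ref{lemma:Bartnikscalculation} over $B(0,\ell_2 r_2)\setminus B(v,\ell_1 r_1)$ together with the scalar lower bound of Lemma \ref{lemma:scalarcurvaturegluing}. Your extension with $\|g_0-\delta\|_{C^0(\R^n)}\lesssim r^{-\tau}$ is a sensible variant (the paper glues at radius $r_0/5$ and keeps only $\|g_0-\delta\|<\varepsilon$), and in the regime $r_1,r_2\asymp r$, which is how the lemma is used later, it makes the $Q^R$-error and distortion terms come out at order $r^{n-2-2\tau+\eta}$ directly from (\ref{eq:RDTFXest}) and (\ref{eq:RDTFderivests}). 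One technical slip: you glue to $\delta$ on $B(0,r/2)$ and recover $g$ only for $|x|\geq r$, but points of $\Omega$ can have $|x|$ arbitrarily close to $r$ (take $\ell_1=.9$, $r_1$ near its minimum, and $x$ opposite $v$), and Lemma \ref{lemma:scalarcurvaturegluing} requires the $\beta$-weak condition on balls of radius $\sim\tfrac{2^\beta}{2^\beta-1}r^{1-\eta\beta}$ about such points, which dip into your interpolation annulus where no scalar curvature control exists. Moving the interpolation inward (e.g.\ to $A(0,r/4,r/2)$), as in the paper's margin computation $|y|\geq(1+b)r-br-\tfrac{2^\beta}{2^\beta-1}r^{1-\eta\beta}>r_0/5$, fixes this.

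The genuine gap is your treatment of the far region when $r_2\gg r$. Your pointwise application of Lemma \ref{lemma:scalarcurvaturegluing} at scale $\rho\sim|x|$ is fine for the scalar term, but the $Q^R$-error terms and the distortion bound at scale $r_2$ rest on the unproved estimate $|\nabla^k(g_{r^{2-\eta}}-\delta)|_x\leq C|x|^{-\tau-k}$, which you yourself flag as the main obstacle and only sketch. That estimate is stronger than interior parabolic regularity can deliver for data that is merely $C^0$-close to $\delta$ with $C^0$ decay: localizing on $B(x,|x|/2)\times(0,r^{2-\eta}]$ one can hope for $|\nabla^k(g_t-\delta)|_x\lesssim|x|^{-\tau}t^{-k/2}$, i.e.\ $|x|^{-\tau}r^{-(2-\eta)k/2}$, but not $|x|^{-\tau-k}$; there is no mechanism creating spatial decay of derivatives beyond that of the data, and data oscillating at frequency $\sim t^{-1/2}$ saturates the $t^{-k/2}$ rate. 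With the correct, weaker bound your error integral over $\Omega_2$ is of size $r^{\eta-2}\int_r^{r_2}s^{n-1-2\tau}ds$, which is not $O(r^{n-2-2\tau+\eta})$ when $\tau<n/2$ and $r_2/r\to\infty$; similarly, applying Lemma \ref{lemma:massdistortionestimate} at scale $r_2$ would need $\|g_0-\delta\|_{C^0(\R^n)}\lesssim r_2^{-\tau}$, which your extension does not provide, and the $\varepsilon^2 r_2^{n-2}$ form of the bound with $\varepsilon\sim r^{-\tau}$ again blows up relative to $r^{n-2-2\tau}$. So that step fails as written. A workable route to arbitrarily large $r_2$ is not weighted derivative decay along the flow but an iteration of the single-chart almost-monotonicity (Corollary \ref{cor:C0monotonicity}) to bring the outer radius down to a bounded multiple of $r$, with errors summing geometrically, and then your comparable-radii argument (with the gluing fix) applies.
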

\begin{proof}
Let $g_0$ be an extension of $g$ to all of $\R^n$ that agrees with $g$ on $\R^n\setminus B(0, r_0/5)$, such that $||g_0 - \delta ||_{C^0(\R^n)} < \varepsilon$. Let $g_t$ be the Ricci-DeTurck flow starting from $g_0$ as in Lemma \ref{lemma:RDTFexistenceandests}. Let $\bar r > r_0$ and fix $r> \bar r$. We will increase $\bar r$ as needed in the course of the proof. First, a couple of observations:
\begin{enumerate}
\item $B(0, r_0) \subset B(v, .9 r_1)$, since $r> r_0$.
\item $B(v, 1.1 r_1) \subset B(0, .9r_2)$.
\end{enumerate}

 For all $x\in B(0, 1.1 r_2) \setminus B(v, .9 r_1)$, if $y\in B(x, \tfrac{2^\beta}{2^\beta  -1}r^{1-\beta\eta})$ then $(1)$ implies that
 \begin{align*}
 |y| & \geq |x| -  \tfrac{2^\beta}{2^\beta  -1}r^{1-\beta\eta}
 \\& \geq |x-v| - |v| - \tfrac{2^\beta}{2^\beta  -1}r^{1-\beta\eta} 
 \\&\geq (1+b)r - br - \tfrac{2^\beta}{2^\beta  -1}r^{1-\beta\eta}
 \\&> r_0/5,
 \end{align*}
where the last step is true provided that $\bar r$ and hence $r$ is big enough depending on $\eta$, $\beta$, and $r_0$. Therefore $g_0$ has nonnegative scalar curvature in the $\beta$-weak sense on $B(x, \tfrac{2^\beta}{2^\beta  -1}r^{1-\beta\eta})$, since $g_0$ agrees with $g$ on $\R^n\setminus B(0, r_0/5)$. In particular, by Lemma \ref{lemma:scalarcurvaturegluing}, if we take $\bar r \geq r_0$ depending on $n, \tau, \beta, \eta$  and let $r> \bar r$ we have that, for all $x \in B(0, 1.1 r_2) \setminus B(v, .9 r_1)$ and all $t\in (0, r^{2-\eta})$,
\begin{equation}\label{eq:lowerscalarboundwithshift}
R(x,t) \geq -c(n,\tau, \beta, \eta)r^{-2-2\tau + \eta}.
\end{equation}

Now note that if $\tilde g$ is any smooth metric, then we have
\begin{align*}
m_{C^2}(L^*\tilde g, r) &= c(n)\int_{\S(0, r)} (\partial_i(L^*\tilde g)_{ij} - \partial_j(L^*\tilde g)_{ii})\nu^j|_xdS(x)
\\&= c(n)\int_{\S(0, r)} (\partial_i \tilde g_{ij} - \partial_j\tilde g_{ii})\nu^j|_{x+v}dS(x)
\\&= c(n)\int_{\S(v, r)} (\partial_i \tilde g_{ij} - \partial_j\tilde g_{ii})\nu^j|_{x}dS(x).
\end{align*}
Moreover, $(2)$ implies that $B(v, \ell r_1)\subset B(0, \ell' r_2)$ for all $\ell, \ell'\in (.9, 1.1)$ so, arguing as in the proof of Lemma \ref{lemma:Bartnikscalculation}, we have
\begin{equation}\label{eq:Bartnikcalculationwithshift}
\begin{split}
& m_{C^2}(\tilde g, \ell' r_2) - m_{C^2}((L^r)^*\tilde g, \ell r_1) 
\\& \geq \int_{B(0, \ell'r_2)\setminus B(v, \ell r_1)} R(\tilde g) - c(n)|\tilde g - \delta||\nabla^2 \tilde g| - c(n)|\nabla \tilde g|^2 dV_\delta.
\end{split}
\end{equation}

Now observe that $(L^r)^*g_t$ is a Ricci-DeTurck flow with respect to $(L^r)^*\delta = \delta$, satisfying (\ref{eq:RDTFinitialcondition}), (\ref{eq:RDTFXest}), and (\ref{eq:RDTFderivests}) for $(L^r)^*g_0$, so arguing as in the proofs of Proposition \ref{prop:classicalmonotonicity} and Corollary \ref{cor:C0monotonicity} and applying (\ref{eq:lowerscalarboundwithshift}) to (\ref{eq:Bartnikcalculationwithshift}) with $\tilde g = g_{r^{2-\eta}}$, we find, for some $\ell_1, \ell_2 \in (.9, 1.1)$,
\begin{align*}
M_{C^0}&(g, \varphi_{r_2^{-\eta}}(\cdot, 0), r_2) - M_{C^0}((L^r)^*g, \varphi_{r_1^{-\eta}}(\cdot, 0) ,r_1)
\\ &\geq -\int_0^{r_1^{2-\eta}} \left|\frac{d}{dt} M_{C^0}(g_t, \varphi_{r_2^{-\eta}}(\tfrac{t}{r_2^2}),  r_2)\right|  
\\& + \int_{B(0, \ell_2 r_2)\setminus B(v, \ell_1 r_1)} R(g_{(\ell_1 r_1)^{2-\eta}}) 
\\& \qquad \qquad - c(n)|g_{(\ell_1 r_1)^{2-\eta}} - \delta||\nabla^2 g_{(\ell_1 r_1)^{2-\eta}}| - c(n)|\nabla g_{(\ell_1 r_1)^{2-\eta}}|^2 dV_\delta 
\\& - \int_0^{r_1^{2-\eta}}\left|\frac{d}{dt} M_{C^0}((L^r)^*g_t, \varphi_{r_1^{-\eta}}(\tfrac{t}{r_1^2}), r_1)\right|
\\& \geq -c(n, c_0, \varphi, \tau, \beta, \eta)r^{n-2 - 2\tau +\eta}.
\end{align*}
\end{proof}

\section{Comparing masses in different coordinate charts via gluing}\label{sec:coordinates}

\begin{proposition}\label{prop:gluingwmollifiedmaps}
There exists $\bar \delta = \bar \delta(n)< 1$ and $c= c(n)$ such that for all $\delta < \bar \delta$ and all $r>0$ the following is true:

Let $\phi: D\to C$ be a diffeomorphism between some domains $C,D\subset \R^n$ such that, for some $r>0$, $\R^n\setminus \overline{B(0, r/10)} \subset D$ and, for all $x \in A(0, r, 10r)$, $B(\phi(x), r/4)\subset C$. Suppose that $\phi$ is locally $(1+\delta)$-bilipschitz on $\R^n\setminus \overline{B(0, r/10)}$. Then there exists a Euclidean isometry $L$ and a local diffeomorphism $\tilde \phi$ defined on $D$ such that $L(x_0) = \phi(x_0)$ for some $|x_0| = 9.5r$, and
\begin{equation*}
\tilde \phi(x) = \begin{cases}
				\phi(x) & \text{ for } |x| \leq r\\
				L(x) & \text{ for } |x| \geq 10r,
		        \end{cases}	
\end{equation*}
and
\begin{equation*}
|| \tilde \phi^*\delta - \delta||_{C^0(\R^n\setminus \overline{B(0, r/10)})} \leq c(n)\delta.
\end{equation*}
\end{proposition}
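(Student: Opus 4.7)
The plan is to construct $\tilde\phi$ by smoothly interpolating between $\phi$ and a well-chosen affine isometry $L$ using a radial cutoff in $|x|$. First, fix a point $x_0$ with $|x_0| = 9.5\,r$. Since $\phi$ is locally $(1+\delta)$-bilipschitz at $x_0$, the singular values of $d\phi|_{x_0}$ lie in $[1-\delta, 1+\delta]$, so its polar decomposition produces an orthogonal matrix $O$ with $|d\phi|_{x_0} - O| \le c(n)\delta$. Define the affine isometry
\[
L(x) := O(x - x_0) + \phi(x_0),
\]
so that $L(x_0) = \phi(x_0)$ and $dL \equiv O$.

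The first main step, and the principal obstacle, is to promote the pointwise bilipschitz control at $x_0$ to a uniform $C^0$ and $C^1$ comparison with $L$ throughout the annulus. I will invoke the almost-isometry results collected in Appendix \ref{appendix:almostisometries}, in conjunction with the local-to-global bilipschitz inequality (\ref{eq:localglobalbilipdiffeo}) applied along line segments in the annulus, to deduce
\[
\|\phi - L\|_{C^0(A(0,\,r/2,\,11 r))} \le c(n)\,\delta\,r, \qquad \|d\phi - dL\|_{C^0(A(0,\,r/2,\,11 r))} \le c(n)\,\delta.
\]
This is a quantitative Liouville-type rigidity statement where the bulk of the technical effort lies; the hypothesis $B(\phi(x), r/4) \subset C$ for $x \in A(0, r, 10r)$ enters here to ensure that the requisite path segments in the target lie inside the image of $\phi$, so that the bilipschitz inverse estimate is available.

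Next, let $\chi:[0,\infty)\to [0,1]$ be a smooth radial cutoff with $\chi(s) = 1$ for $s \le 1$, $\chi(s) = 0$ for $s \ge 10$, and $|\chi'| \le c(n)$, and set
\[
\tilde\phi(x) := L(x) + \chi(|x|/r)\bigl(\phi(x) - L(x)\bigr),
\]
extended by $\tilde\phi = \phi$ for $|x| \le r$ and $\tilde\phi = L$ for $|x| \ge 10\,r$. Writing $E := d\tilde\phi - dL$, a direct computation gives
\[
E = \chi(|x|/r)\bigl(d\phi - dL\bigr) + \frac{\chi'(|x|/r)}{r}\bigl(\phi(x) - L(x)\bigr) \otimes \frac{x}{|x|},
\]
so $\|E\|_{C^0} \le c(n)\,\delta$ by the comparison estimates of the previous paragraph.

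Finally, since $dL$ is orthogonal,
\[
\tilde\phi^*\delta - \delta = (dL + E)^T(dL + E) - I = dL^T E + E^T dL + E^T E,
\]
whence $\|\tilde\phi^*\delta - \delta\|_{C^0} \le 2|E| + |E|^2 \le c(n)\,\delta$ for $\delta < \bar\delta(n)$ sufficiently small. Shrinking $\bar\delta$ further if necessary forces $|E| < 1/2$, so $d\tilde\phi = dL + E$ is invertible at every point of the domain and $\tilde\phi$ is a local diffeomorphism on $\R^n\setminus\overline{B(0, r/10)}$, completing the construction.
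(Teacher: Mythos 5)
Your construction hinges on the claimed uniform derivative comparison $\| d\phi - dL\|_{C^0(A(0,r/2,11r))} \leq c(n)\delta$ with a single fixed isometry $L$, and this step is not available: it does not follow from the local $(1+\delta)$-bilipschitz hypothesis, and the results of Appendix \ref{appendix:almostisometries} do not supply it. Lemma \ref{lemma:almostisometryclosetoisowmultloss} (Vestfrid's theorem) and the chaining via (\ref{eq:localglobalbilipdiffeo}) only give $C^0$ closeness of $\phi$ to an isometry, never closeness of the differentials. Indeed the pointwise bilipschitz bound forces $d\phi|_x$ to lie within $c\delta$ of the orthogonal group, but the orthogonal part may rotate at small scales: by nesting $k \sim 1/\delta$ localized rotations (each transition annulus disjoint, each contributing distortion $O(\delta)$, but the rotations composing), one produces a diffeomorphism satisfying all your hypotheses whose differential at some points of $A(0,r,10r)$ is a rotation at distance $O(1)$ from $dL$ --- this is exactly the John rotation phenomenon, where bilipschitz control yields BMO, not $L^\infty$, control of the rotation part. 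The same example shows your choice of $L$ by polar decomposition of $d\phi|_{x_0}$ is fragile: the rotation part at the single point $x_0$ need not be within $O(\delta)$ of the rotation that works in $C^0$ on the annulus. Without the $C^1$ estimate your error term $E$ contains $\chi\,(d\phi - dL)$, which can be of size $O(1)$; then $d\tilde\phi$ is a convex combination of two orthogonal matrices that are far apart, so $\tilde\phi^*\delta - \delta$ is not small and $\tilde\phi$ may even fail to be an immersion on the transition region.

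The paper avoids precisely this obstruction by never comparing $d\phi$ itself to $dL$: it interpolates between $L$ and the \emph{mollified} map $\phi_{\rho}$, where the mollification scale $\rho$ is comparable to $r$ on the outer transition annulus and tapers to zero as $|x| \searrow r$ so that $\tilde\phi = \phi$ for $|x| \leq r$. Mollification at a definite scale $\rho_0$ upgrades the $C^0$ estimate $\|\phi - L\|_{C^0} \leq c\delta\rho_0$ to the derivative estimate $\|d\phi_{\rho_0} - dL\|_{C^0} \leq c\delta$ (Lemmas \ref{lemma:closenesstoisofixedmollscale} and \ref{lemma:mollfiedptwisederivest}), the chaining of isometries is performed for $\phi_{1/16}$ on $A(0,9,10)$ where the cutoff acts, and the bilipschitz bound for the variably mollified map on the rest of the annulus is supplied by Corollary \ref{cor:annularbilipschitzestimate}. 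Your outline of the cutoff and of the final algebraic step $(dL+E)^T(dL+E) - I$ matches the paper, but to repair the argument you must either insert this mollification (or some other averaging that produces a genuine $C^1$-closeness statement) before interpolating; as written, the ``principal obstacle'' you identify is exactly the false step.
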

\begin{proof}
Let $c = c(n)$ and $\bar \delta = \bar \delta(n)$ be as in Lemma \ref{lemma:closenesstoisofixedmollscale}. Define $\rho(\ell)$ for $\ell \in [1/10, 10]$ by 
\begin{equation*} 
\rho(\ell) = \begin{cases}
		0 & \text{ for } \ell \leq 1\\
                 \frac{e}{16}\exp\left( - \frac{1}{ 1 - \frac{6-\ell}{5}}\right) & \text{ for } 1< \ell < 6\\
                 \frac{1}{16} & \text{ for } \ell \geq 6,
                 \end{cases}
\end{equation*}
so that $\rho$ is smooth and $\rho'$ is bounded on $[1,10]$.

We first show the case where $r=1$. By construction, $\rho \equiv 1/16$ on $[6,10]$. Fix some $x_0$ with $|x_0| = 9.5$. Then, for all $x\in B(x_0, 2(1+\delta)^2\rho(|x_0|))$, $|x| \geq 19/2 - (1+\delta)^2/2 > 1/10$ so $B(x_0, 2(1+\delta)^2\rho(|x_0|)) \subset D$. Also, $2(1+\delta)\rho(|x_0|) = (1+\delta)/8 < 1/4$ so, by hypothesis, $B(\phi(x_0), 2(1+\delta)\rho(|x_0|)) \subset C$. Using the notation of Appendix \ref{appendix:mollification}, by Lemma \ref{lemma:closenesstoisofixedmollscale} there exists a Euclidean isometry $L$ such that $L(x_0) = \phi(x_0)$ and
\begin{equation*}
16|| \phi_{1/16} - L||_{C^0(B(x_0, 1/16))} + || d\phi_{1/16} - dL||_{C^0(B(x_0, 1/16))} \leq c(n)\delta.
\end{equation*}

Let $x\in A(0, 9, 10)$, and let $(x_i)_{i=1}^{k}$ be a sequence of points in $A(0, 9, 10)$ such that $|x_0 - x_1| < 1/16,$ $|x_k - x| < 1/16$, and for $i = 1, \ldots, k-1$, $|x_i - x_{i+1}| < 1/16$. Observe that, by choosing the $x_i$ far enough apart, $k$ can be bounded above by some constant depending only on $n$. Arguing as above, we also have $B(x_i, 2(1+\delta)^2\rho(|x_0|)) \subset D$ and $B(\phi(x_i), 2(1+\delta)\rho(|x_0|)) \subset C$ so, by Lemma \ref{lemma:closenesstoisofixedmollscale}, there exist isometries $L^i$ such that
\begin{equation*}
16|| \phi_{1/16} - L^i||_{C^0(B(x_i, 1/16))} + || d\phi_{1/16} - dL^i||_{C^0(B(x_i, 1/16))} \leq c(n)\delta.
\end{equation*}
Then we have
\begin{equation}\label{eq:dphiicloseness}
\begin{split}
|d_{x_0}\phi_{1/16} - d_x\phi_{1/16}| &\leq |d_{x_0}\phi_{1/16} - dL| + |dL - d_{x_1}\phi_{1/16}| 
\\& + \left[\sum_{i=1}^{k-1} |d_{x_i}\phi_{1/16} - dL^i| + |dL^i - d_{x_{i+1}}\phi_{1/16}|\right] 
\\& + |d_{x_k}\phi_{1/16} - dL^k| + |dL^k - d_x\phi_{1/16}|
\\& \leq 2(k+2)c(n)\delta \leq c(n)\delta.
\end{split}
\end{equation}

Since this analysis holds for any such $x$, we also have
\begin{equation}\label{eq:mollisopointwiseest}
\begin{split}
|\phi_{1/16}(x) - L(x)| &\leq |\phi_{1/16}(x_0) - L(x_0)| + \int_0^1 |d_{\gamma(s)}\phi_{1/16}(\dot{\gamma}(s)) - dL(\dot{\gamma}(s))|ds
\\& \leq  |\phi_{1/16}(x_0) - L(x_0)| + \int_0^1 (|d_{x_0}\phi_{1/16}(\dot{\gamma}(s)) - dL(\dot{\gamma}(s))| 
\\& \qquad \qquad + |d_{\gamma(s)}\phi_{1/16}\dot{\gamma}((s)) - d_{x_0}\phi_{1/16}\dot{\gamma}((s))) |) ds
\\&\leq c(n)\delta
\end{split}
\end{equation}
where $\gamma$ is a path from $x$ to $x_0$ within $A(0, 9, 10)$, and in the last step we have applied the estimate (\ref{eq:dphiicloseness}) at $\gamma(s)$. 

Let $\chi: \R^n\to \R$ be a smooth cutoff function identically equal to $1$ on $B(0, 9)$, with support contained in $B(0, 10)$, such that
\begin{equation*}
||\nabla \chi||_{C^0(\R^n)} \leq 10.
\end{equation*} 
Now define $\tilde \phi$ on $D$ by
\begin{equation*}
\tilde \phi(x) = \chi(x) \phi_{\rho}(x) + (1- \chi(x))L(x).
\end{equation*}
Adjust $\bar \delta$ and $c(n)$ so that Corollary \ref{cor:annularbilipschitzestimate} holds, and also so that $.5(1-\delta) > 1/10$ for all $\delta < \bar \delta$. By Corollary \ref{cor:annularbilipschitzestimate}, $||\tilde \phi^*\delta - \delta||_{C^0(A(0, 1, 9))} \leq c(n)\delta$, since $\tilde \phi = \phi_{\rho}$ in this region, and this estimate also holds trivially on $\R^n\setminus B(0, 10)$, since $L$ is an isometry. Let $x\in A(0, 9, 10)$. Then we have
\begin{align*}
d_x\tilde \phi &= (\nabla_x\chi)\otimes(\phi_{1/16}(x) - L(x)) + dL + \chi(x)(d_x\phi_{1/16} - dL)
\\& = dL + E,
\end{align*}
where $|E|\leq c(n)\delta$ by choice of $L$, and (\ref{eq:dphiicloseness}) and (\ref{eq:mollisopointwiseest}). In particular, $||\tilde \phi^*\delta - \delta||_{C^0(A(0, 9, 10))} \leq c(n)\delta$. 

Now observe that, as $\ell \to 1$, $\rho(0)\xrightarrow[]{C^\infty} 0$, so if $|x| \leq 1$ then, by dominated convergence,
\begin{equation*}
\lim_{y\to x}\tilde\phi(y) = \lim_{y\to x}\phi_{\rho}(y) = \lim_{y\to x}\int_{\R^n}\phi(y - \rho(|y|)z)\zeta(z)dz = \phi(x),
\end{equation*}
where $\zeta$ is as in Appendix \ref{appendix:mollification}. Also, for $k\geq 1$,
\begin{align*}
\lim_{y \to x}d^k\tilde \phi(y) &= \lim_{y\to x}d^k\phi_\rho(y)
\\&  = \lim_{y\to x}\int_{\R^n}\sum_{i=1}^k d^i\phi\big|_{y - \rho(|y|)z}d^{k-i-1}\left(I - \rho'(|y|)\frac{y\otimes z}{|y|}\right)\zeta(z)dz
\\&= d^k\phi\big|_x.
\end{align*}
Therefore, $\tilde \phi$ agrees with $\phi$ on $D \cap B(0,1)$, and hence $|| \tilde \phi^*\delta - \delta||_{C^0(D\cap B(0,1))} \leq c(n)\delta$ as well. This proves the result for $r=1$. 

We now remove the assumption that $r=1$. Suppose the hypotheses hold for some $r$, and apply the previous analysis to the map $\phi'$ defined by $\phi'(x) = r^{-1}\phi(rx)$ to find some smooth map $\tilde \phi'$, equal to a Euclidean isometry $L'$ for $|x|\geq 10$, such that $L'(x') = \phi'(x')$ for some $|x'| = 9.5$ and
\begin{equation*}
|| (\tilde \phi')^*\delta - \delta||_{C^0(D')} \leq c(n)\delta,
\end{equation*}
where $D' = \{r^{-1}x : x\in D\}$.

Let $\tilde \phi(x) = r\tilde\phi'(r^{-1}x)$. Then we have
\begin{equation*}
|| \tilde \phi^*\delta - \delta||_{C^0(D)} =  || \tilde \phi'^*\delta - \delta||_{C^0(D')} \leq c(n)\delta.
\end{equation*}
Also, if $x_0 = rx'$, we have
\begin{align*}
r^{-1}L'(x_0) &= r^{-1}L'(rx') = L'(x') = \phi'(x')
\\&= r^{-1}\phi(rx') = r^{-1}\phi(x_0),
\end{align*}
so $L'(x_0) = \phi(x_0)$.
This proves the result.
\end{proof}

\begin{theorem}\label{thm:gluingtransmaptoiso}
Suppose that $M^n$ is a smooth manifold and $g$ is a $C^0$ metric on $M$. Suppose that $E$ is an end of $M$ and that $\Phi_1$ and $\Phi_2$ are two $C^0$-asymptotically flat coordinate charts for $E$, so that for $m=1,2$ there exist $c_m>0$ and $r_m >1$ such that
\begin{equation*}
|(\Phi_m)_* g - \delta| \big|_x \leq c_m |x|^{-\tau_m},
\end{equation*}
for all $|x| \geq r_m$, for some $\tau_m > 0$.

Then there exist $r_0 = r_0(\Phi_1, \Phi_2)$, $r_0'' = r_0''(n, \Phi_1, \Phi_2)> 0$ and $c = c(n, \Phi_1, \Phi_2)$ such that, for all $r> r_0''$, there exists a Euclidean isometry $L^r$ such that $|L^r(0)|\leq 25r$ and there exists a map $\tilde\phi^r: \R^n\setminus \overline{B(0, r_0/10)}\to \R^n$, which is a diffeomorphism onto its image, such that
\begin{equation}
\tilde \phi^r(x) = \begin{cases}
				\Phi_2\circ \Phi_1^{-1}(x) & \text{ for } |x| \leq r\\
				 L^r(x) & \text{ for } |x| \geq 10r
			\end{cases}
\end{equation}
and
\begin{equation}\label{eq:pulledbackmetricclosetoeucl}
|| (\tilde \phi^r)^*(\Phi_2)_*g - \delta||_{C^0(A(0, .9r, 11r))} \leq cr^{-\min\{\tau_1, \tau_2\}}.
\end{equation}
\end{theorem}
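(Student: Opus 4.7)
The plan is to reduce to Proposition \ref{prop:gluingwmollifiedmaps} applied to the transition map $\phi := \Phi_2\circ\Phi_1^{-1}$, and then correct from the pulled-back Euclidean metric to the pulled-back actual metric $(\Phi_2)_*g$. First I would extract from Lemma \ref{lemma:transmapdomains} that $\phi$ is defined on $\R^n\setminus \overline{B(0, r_0/10)}$, is locally $(1+cr^{-\min\{\tau_1,\tau_2\}})$-bilipschitz on $\R^n\setminus \overline{B(0, r/10)}$ for $r\geq r_0$, and satisfies $B(\phi(x), r/4)\subset \phi(\R^n\setminus\overline{B(0, r_0/10)})$ whenever $|x|\geq r > r_0'$. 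Setting $\delta_r := cr^{-\min\{\tau_1,\tau_2\}}$ and choosing $r_0''\geq r_0'$ large enough that $\delta_r<\bar\delta(n)$, the hypotheses of Proposition \ref{prop:gluingwmollifiedmaps} are verified with $D = \R^n\setminus\overline{B(0, r_0/10)}$ and $C = \phi(D)$. That proposition then produces a Euclidean isometry $L^r$ with $L^r(x_0)=\phi(x_0)$ for some $|x_0|=9.5r$, a local diffeomorphism $\tilde\phi^r$ on $D$ with the prescribed piecewise description, and
\begin{equation*}
\| (\tilde\phi^r)^*\delta - \delta\|_{C^0(D)} \leq c(n,\Phi_1,\Phi_2)\, r^{-\min\{\tau_1,\tau_2\}}.
\end{equation*}
Since $\tilde\phi^r$ agrees with an isometry outside $B(0,10r)$, Lemma \ref{lemma:generaldiffeogluing} upgrades it to a diffeomorphism onto its image.

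For the bound $|L^r(0)|\leq 25r$, writing $L^r(y)=O^r y + L^r(0)$ with orthogonal $O^r$ gives $L^r(0) = \phi(x_0) - O^r x_0$ and hence $|L^r(0)|\leq |\phi(x_0)|+9.5r$. A telescoping of the local bilipschitz estimate for $\phi$ along a path from a fixed reference point in $\partial B(0, r_0'+1)$ to $x_0$ yields $|\phi(x_0)| \leq (1+\delta_r)(9.5r + r_0'+1)+|\phi(\mathrm{ref})|$, which for $r\geq r_0''$ sufficiently large is at most, say, $10r$; hence $|L^r(0)|\leq 19.5r < 25r$.

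For the metric estimate (\ref{eq:pulledbackmetricclosetoeucl}) I would decompose
\begin{equation*}
(\tilde\phi^r)^*(\Phi_2)_*g - \delta \;=\; \bigl((\tilde\phi^r)^*\delta - \delta\bigr) + (\tilde\phi^r)^*\bigl((\Phi_2)_*g - \delta\bigr).
\end{equation*}
The first summand is $O(r^{-\min\{\tau_1,\tau_2\}})$ by the above. Using $|d\tilde\phi^r|\leq 1+c\delta_r$ together with the decay $|(\Phi_2)_*g - \delta|_y \leq c_2|y|^{-\tau_2}$ for $|y|\geq r_2$, the second summand is controlled of size $\lesssim r^{-\tau_2}$ provided we have a lower bound $|\tilde\phi^r(x)|\geq c(\Phi_1,\Phi_2)\,r$ on $A(0,.9r,11r)$. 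On $A(0,.9r,r)$ this is immediate, since $\tilde\phi^r=\phi$ and $|\phi(x)|\geq |x|/2$ by Lemma \ref{lemma:transmapdomains}. On the remaining region, where $L^r$ enters, I would invoke the results of Appendix \ref{appendix:almostisometries}: the $(1+\delta_r)$-bilipschitz map $\tilde\phi^r$ is an $O(\delta_r r)$-approximation of a single Euclidean isometry on compact subsets of $A(0,.9r,11r)$, which must coincide with $L^r$ by comparison on $|x|\geq 10r$; this together with the lower bound $|\phi(x_0)|\geq 4.75r$, the constraint $|L^r(0)|\leq 25r$, and the injectivity from Lemma \ref{lemma:generaldiffeogluing} forces $|L^r(x)|\geq c(\Phi_1,\Phi_2)\,r$ on the outer annulus.

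The principal technical obstacle is this last lower bound on $|L^r(x)|$: since $|L^r(0)|$ can itself be comparable to $r$, the triangle inequality applied to $L^r(x) = O^r(x-x_0)+\phi(x_0)$ does not preclude $|L^r(x)|$ from approaching $0$ somewhere in $A(0,10r,11r)$, so the argument must genuinely exploit the rigidity coming from the bilipschitz constant of $\tilde\phi^r$ tending to $1$ (via the almost-isometry results in Appendix \ref{appendix:almostisometries}) together with the global injectivity forced by Lemma \ref{lemma:generaldiffeogluing}, which rules out $L^r$ folding the outer annulus back across $\phi(B(0,r))$.
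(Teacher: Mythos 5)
Your skeleton coincides with the paper's: verify the hypotheses of Proposition \ref{prop:gluingwmollifiedmaps} via Lemma \ref{lemma:transmapdomains}, apply it to $\phi=\Phi_2\circ\Phi_1^{-1}$, upgrade $\tilde\phi^r$ to a global diffeomorphism via Lemma \ref{lemma:generaldiffeogluing}, bound $|L^r(0)|$ by a path estimate from a fixed reference sphere (the paper does the same telescoping from $\partial B(0,r_0)$; note only that your $(1+\delta_r)$ constant is valid outside $B(0,r/10)$, so on the initial portion of the path you must fall back on the $(1+\tfrac12)$ bound of Lemma \ref{lemma:transmapdomains} (\ref{item:transmapdefndoutsideofball}), which still gives $|L^r(0)|\leq 25r$ comfortably), and split $(\tilde\phi^r)^*(\Phi_2)_*g-\delta$ into $((\tilde\phi^r)^*\delta-\delta)+(\tilde\phi^r)^*((\Phi_2)_*g-\delta)$ exactly as the paper does. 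The genuine gap is the step you yourself flag and then leave unresolved: the lower bound $|\tilde\phi^r(x)|\geq c(\Phi_1,\Phi_2)\,r$, without which the second summand cannot be converted into an $r^{-\tau_2}$ decay. Your proposed mechanism is not a proof as stated: the lemmas of Appendix \ref{appendix:almostisometries} give closeness to an isometry only on balls, so the assertion that $\tilde\phi^r$ is $O(\delta_r r)$-close to a single isometry on the (non-convex) annulus, and that this isometry coincides with $L^r$, is itself a chaining statement you neither prove nor cite; and "injectivity rules out folding" is a slogan, not an argument. Moreover the bound is needed on all of $A(0,r,11r)$ --- where $\tilde\phi^r$ is a mollified/interpolated map, neither $\phi$ nor $L^r$ --- not only on the outer annulus. (To be fair, the paper is terse at exactly this point, disposing of it with "arguing as in the last step of Lemma \ref{lemma:transmapdomains} (\ref{item:bilipoutsideofball})".)

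The gap can be closed in two ways. (i) Essentially the paper's route: inside the proof of Proposition \ref{prop:gluingwmollifiedmaps}, estimate (\ref{eq:mollisopointwiseest}) shows $|L^r-\phi_{r/16}|\leq c(n)\delta_r r$ on \emph{all} of $A(0,9r,10r)$, and the mollification satisfies $|\phi_{\rho_r}-\phi|\leq (1+\delta_r)r/16$ there; combined with $|\phi(x)|\geq |x|/2$ from the proof of Lemma \ref{lemma:transmapdomains} (\ref{item:imagecontainsballs}) this gives $|L^r|\geq 4r$ on $A(0,9r,10r)$, and for $|x|\in[10r,11r]$ comparing with the radial projection $x''=9.5r\,x/|x|$ (so $|x-x''|\leq 1.5r$ and $L^r$ is an isometry) gives $|L^r(x)|\geq 2.5r$; the regions $r\leq|x|\leq 10r$ are handled identically since there $\tilde\phi^r$ is within $c\delta_r r+r/8$ of $\phi$. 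Note this uses information internal to the construction, not just the statement of Proposition \ref{prop:gluingwmollifiedmaps}. (ii) Alternatively, your injectivity instinct can be made rigorous with no rigidity at all: by Lemma \ref{lemma:transmapdomains} (\ref{item:imagecontainsmostofRn}) together with $|\phi(x)|\geq|x|/2$, one has $\phi(A(0,r_0/10,r))\supset A(0,\tilde r_0,r/2)$; injectivity of $\tilde\phi^r$ (Lemma \ref{lemma:generaldiffeogluing}) makes $\tilde\phi^r(\overline{A(0,r,11r)})$ disjoint from this set; that image is connected and contains the point $\phi(x)$ with $|x|=r$, whose norm is at least $r/2$, so it must lie entirely in the unbounded piece $\{|y|\geq r/2\}$ of the complement of $A(0,\tilde r_0,r/2)$, yielding $|\tilde\phi^r|\geq r/2$ on the whole region at once. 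Either completion makes your argument work; as submitted, the proposal does not contain this step.
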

\begin{proof}
Let $\phi = \Phi_2 \circ \Phi_1^{-1}$ and $\tau = \min\{\tau_1, \tau_2\}$. We apply Proposition \ref{prop:gluingwmollifiedmaps} to the map $\phi$. We first show that, by Lemma \ref{lemma:transmapdomains}, the hypotheses of Proposition \ref{prop:gluingwmollifiedmaps} hold: Let $r_0$ be as in Lemma \ref{lemma:transmapdomains}. Let $r_0'' \geq r_0' \geq r_0$, where $r_0'$ is as in Lemma \ref{lemma:transmapdomains}. Will we increase $r_0''$ throughout the proof as needed. Lemma \ref{lemma:transmapdomains} \ref{item:transmapdefndoutsideofball} implies that $\phi$ is defined on $\R^n\setminus \overline{B(0, r_0/10)}$, and by Lemma \ref{lemma:transmapdomains} \ref{item:bilipoutsideofball}, for all $r> r_0''$, $\phi$ is locally $(1+cr^{-\tau})$-bilipschitz on $\R^n\setminus \overline{B(0, r/10)}$, where $c$ is as in Lemma \ref{lemma:transmapdomains} \ref{item:bilipoutsideofball}.

Moreover, by Lemma \ref{lemma:transmapdomains} \ref{item:imagecontainsballs}, for all $r> r_0''$, $B(\phi(x), r/4)\subset \phi(\R^n\setminus \overline{B(0, r_0/10)})$. Increase $r_0''$ further, depending on $c$, $\tau$, and $n$ so that for all $r> r_0''$ we have $cr^{-\tau} < \bar \delta(n)$, where $\bar \delta(n)$ is as in Proposition \ref{prop:gluingwmollifiedmaps}. Since $\phi$ is locally $(1+cr^{-\tau})$-bilipschitz on $\R^n\setminus \overline{B(0, r/10)}$, we may apply Proposition \ref{prop:gluingwmollifiedmaps} to see that there exists a Euclidean isometry $L^r$ and a local diffeomorphism $\tilde \phi^r$ defined on $\R^n\setminus \overline{B(0, r_0/10)}$ such that $L^r(x_0) = \phi(x_0)$ for some $|x_0| = 9.5r$ and
\begin{equation*}
\tilde \phi^r(x) = \begin{cases}
				\phi(x) & \text{ for } |x| \leq r\\
				L^r(x) & \text{ for } |x| \geq 10r
			\end{cases}
\end{equation*} 
and
 \begin{equation}\label{eq:rgluedmapbilipschitz}
|| (\tilde \phi^r)^*\delta - \delta||_{C^0(\R^n\setminus \overline{B(0, r/10)})} \leq cr^{-\tau}.
\end{equation}
The latter condition implies that $\tilde \phi^r$ is a local diffeomorphism on all of $\R^n\setminus \overline{B(0, r_0/10)}$, since it also agrees with $\phi$ on $\overline{B(0, r)\setminus B(0,  r_0/10)}$. Therefore, Lemma \ref{lemma:generaldiffeogluing} implies that $\tilde \phi^r$ is a diffeomorphism onto its image.

To see why (\ref{eq:pulledbackmetricclosetoeucl}) is true, observe that
\begin{align*}
|| (\tilde \phi^r)^*(\Phi_2)_*g &- \delta||_{C^0(A(0, .9r, 11r))} 
\\& \leq || (\tilde \phi^r)^*(\Phi_2)_*g - (\tilde \phi^r)^*\delta||_{C^0(A(0, .9r, 11r))} + || (\tilde \phi^r)^*\delta - \delta||_{C^0(A(0, .9r, 11r))}
\\& \leq ||d\tilde \phi^r||_{C^0(A(0,.9r, 11r))} || (\Phi_2)_* g - \delta||_{C^0(\tilde \phi^r(A(0,.9r, 11r)))} +  || (\tilde \phi^r)^*\delta - \delta||_{C^0(A(0, .9r, 11r))}
\\& \leq cr^{-\tau},
\end{align*}
where the last step is due to (\ref{eq:rgluedmapbilipschitz}), and bounding $|| (\Phi_2)_* g - \delta||_{C^0(\tilde \phi^r(A(0,.9r, 11r)))}$ by arguing as in the last step of Lemma \ref{lemma:transmapdomains} \ref{item:bilipoutsideofball}.

It remains to bound $|L^r(0)|$. Write $L^r(x) = O^rx + \phi(x_0) - O^rx_0$, for some orthogonal matrix $O^r$, where we are using the fact that $L^r(x_0) = \phi(x_0)$. Then we have, by Lemma \ref{lemma:transmapdomains},
\begin{align*}
|L^r(0)|& \leq |\phi(x_0)| + |O^rx_0| 
\\& \leq \inf_{x\in \partial B(0, r_0)}\{|\phi(x_0) - \phi(x)| + |\phi(x)|\} + |x_0|
\\& \leq || d\phi ||_{C^0(\R^n\setminus B(0, r_0))}\dist(x_0, \partial B(0, r_0)) + ||\phi||_{C^0(\partial B(0, r_0))} + |x_0|
\\& \leq (1 + \tfrac{1}{2})(|x_0| - r_0) + ||\phi||_{C^0(\partial B(0, r_0))} + |x_0|
\\& \leq 23.75 r + || \phi||_{C^0(B(0, r_0))}
\\& \leq 25 r,
\end{align*}
provided $r_0''$ and hence $r$ is increased once more depending on $\phi$ so that $ || \phi||_{C^0(B(0, r_0))} \leq 1.25 r_0'' \leq 1.25 r$. This completes the proof.
\end{proof}

We are now ready to prove the main result of the section:
\begin{corollary}\label{cor:coordinatemonotonicity}
Suppose that $M^n$ is a smooth manifold and that $g$ is a $C^0$ metric on $M$. Suppose that $E$ is an end of $M$ and that $\Phi_1$ and $\Phi_2$ are two $C^0$-asymptotically flat coordinate charts for $E$ with decay rates $\tau_1, \tau_2>(n-2)/2$, decay constants $c_1, c_2>0$ and decay thresholds $r_1, r_2> 1$, respectively. Let $\tau = \min\{\tau_1, \tau_2\}$, $0 < \eta < 2\tau - (n-2)$, $\varphi: \R \to \R^{\geq 0}$ be a smooth cutoff function with $\supp(\varphi)\subset \subset (.9, 1.1)$, and for $r>0$ let $\varphi_{r^{-\eta}}(\ell, t)$ be the smooth solution corresponding to $\varphi$ given by Lemma \ref{lemma:cutofffunctiondecay}. Suppose that for some $\beta \in (0, 1/2)$, $g$ has nonnegative scalar curvature in the $\beta$-weak sense on $\Phi_1^{-1}(\R^n\setminus \overline{B(0,1)}) \cup \Phi_2^{-1}(\R^n\setminus \overline{B(0,1)})$. Then there exist $\bar r(\Phi_1, \Phi_2, \tau, n, \supp(\varphi), \eta, \beta)$ and $c = c(\Phi_1, \Phi_2, n, \varphi, \eta, \tau, \beta)$ such that, for all $r\geq \bar r$,
\begin{equation*}
 M_{C^0}(g, \Phi_2, \varphi_{(150r)^{-\eta}}(\cdot, 0), 150r) - M_{C^0}(g, \Phi_1, \varphi_{r^{-\eta}}(\cdot, 0), r) \geq -cr^{n-2-2\tau + \eta}.
 \end{equation*}
\end{corollary}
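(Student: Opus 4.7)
The plan is to build a bridge between the two charts via an auxiliary metric $\hat g$ on a subset of $\R^n$, obtained by gluing the transition map $\phi:=\Phi_2\circ\Phi_1^{-1}$ to a Euclidean isometry, and then chain Corollary~\ref{cor:C0monotonicity}, Lemma~\ref{lemma:rotationinvarianceofmass}, and Lemma~\ref{lemma:affineC0monotonicity} into a single inequality. First I would apply Theorem~\ref{thm:gluingtransmaptoiso} at the intermediate radius $R=1.1r$, producing a smooth diffeomorphism $\tilde\phi^R\colon\R^n\setminus\overline{B(0,r_0/10)}\to\R^n$ and a Euclidean isometry $L^R(x)=O^Rx+v^R$ with $|v^R|\le 25R$, satisfying $\tilde\phi^R=\phi$ on $B(0,R)$ and $\tilde\phi^R=L^R$ on $\R^n\setminus B(0,10R)$. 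Set $\hat g:=(\tilde\phi^R)^*(\Phi_2)_*g$. By the choice $R\ge 1.1r$, the annulus $A(0,.9r,1.1r)$ lies in $B(0,R)$, so $\hat g=(\Phi_1)_*g$ there and $M_{C^0}(\hat g,\varphi^1_{r^{-\eta}}(\cdot,0),r)=M_{C^0}(g,\Phi^1,\varphi^1_{r^{-\eta}}(\cdot,0),r)$; likewise the annulus at any scale $\rho\ge 10R/.9$ lies outside $B(0,10R)$, so $\hat g=(L^R)^*(\Phi_2)_*g$ there.

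Next I would iterate the second form of Corollary~\ref{cor:C0monotonicity} applied to $\hat g$ in two scale-ratio~$\le 10$ steps to pass from scale $r$ to the intermediate scale $\rho=50r$: first $r\to 10r$ with base cutoff $\varphi^1$ throughout, then $10r\to 50r$ switching the base from $\varphi^1$ to $\varphi^2$. This yields
\[
M_{C^0}(\hat g,\varphi^2_{(50r)^{-\eta}}(\cdot,0),50r)\ge M_{C^0}(\hat g,\varphi^1_{r^{-\eta}}(\cdot,0),r)-cr^{n-2-2\tau+\eta},
\]
and since $.9\cdot 50r=45r>11r=10R$, the left-hand mass equals $M_{C^0}((L^R)^*(\Phi_2)_*g,\varphi^2_{(50r)^{-\eta}}(\cdot,0),50r)$. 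To remove $L^R$ I would write $L^R=T_{v^R}\circ O^R$; Lemma~\ref{lemma:rotationinvarianceofmass} eliminates the rotation, and Lemma~\ref{lemma:affineC0monotonicity} applied to $(\Phi_2)_*g$ with parameter $R=1.1r$ and $b=25$ compares the translated mass to $M_{C^0}(g,\Phi^2,\varphi^2_{(150r)^{-\eta}}(\cdot,0),150r)$ up to $cr^{n-2-2\tau+\eta}$. The required inequalities $50r>26R/.9\approx 31.8r$ and $150r>(25R+1.1\cdot 50r)/.9\approx 91.7r$ both hold with room to spare, and concatenating the three estimates produces the claim.

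The main technical obstacle is verifying that $\hat g$ has $\beta$-weak nonnegative scalar curvature on the annulus $A(0,.8r,120r)$ required by the two applications of Corollary~\ref{cor:C0monotonicity}. Since $\hat g$ is the coordinate representation of $g$ in the chart $\Psi:=(\tilde\phi^R)^{-1}\circ\Phi_2$, the chart-invariance built into Definition~\ref{def:nonnegscalarRFdef} lets $g$'s $\beta$-weak condition transfer to $\hat g$ at any $y$ whose image $\tilde\phi^R(y)$ lies in the domain $\R^n\setminus\overline{B(0,1)}$ of $\Phi_2^{-1}$. For $y\in B(0,R)$ this is immediate from $|\phi(y)|\sim|y|$ via Lemma~\ref{lemma:transmapdomains}; on the glued annulus $A(0,R,10R)$, Proposition~\ref{prop:gluingwmollifiedmaps} ensures $\tilde\phi^R$ is close to a local isometry so its image stays macroscopically far from the origin; and on the outer $L^R$-region the bound $|L^R(y)|\ge|y|-|v^R|$ combined with the scale choices above keeps $\tilde\phi^R(y)$ out of $\overline{B(0,1)}$ except possibly on a Euclidean ball of radius $O(1)$, whose effect on the scalar-curvature integral in the proof of Proposition~\ref{prop:classicalmonotonicity} can be absorbed into the error term using the a~priori bound~\eqref{eq:RDTFscalargenerallowerbound} together with the fine pointwise control of Lemma~\ref{lemma:scalarcurvaturegluing}.
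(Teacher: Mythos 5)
Your skeleton is the same as the paper's: glue the transition map $\Phi_2\circ\Phi_1^{-1}$ to a Euclidean isometry via Theorem \ref{thm:gluingtransmaptoiso}, run Corollary \ref{cor:C0monotonicity} on the pulled-back metric to pass from scale $r$ out to $50r$, and then remove the isometry using Lemma \ref{lemma:rotationinvarianceofmass} for the rotation and Lemma \ref{lemma:affineC0monotonicity} (with $b=25$) for the translation; the deviations (gluing at $R=1.1r$ instead of $r$, two applications of Corollary \ref{cor:C0monotonicity} to $\hat g$ rather than one to $\hat g$ and one to $(L^r)^*(\Phi_2)_*g$, the superfluous second cutoff $\varphi^2$ -- the statement has a single $\varphi$) are cosmetic, and your radius checks for Lemma \ref{lemma:affineC0monotonicity} are fine.

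The problem is in your last paragraph, i.e. precisely in the step you identify as the main obstacle. For the passage $10r\to 50r$, Corollary \ref{cor:C0monotonicity} needs $\|\hat g-\delta\|_{C^0(A(0,8r,120r))}\le c_0(10r)^{-\tau}$, and on $\{|x|\ge 10R\}$ this means a lower bound on $|L^R(x)|$ of order $r$. The only information available is $|v^R|\le 25R=27.5r$ (together with the constraint, coming from closeness of $L^R$ to $\phi$ on $A(0,9R,10R)$ and $|\phi(x)|\gtrsim |x|$, that $|v^R|$ is either $\lesssim 5R$ or $\gtrsim 14R$). In the second regime the point $-(O^R)^{-1}v^R$, which $L^R$ sends to the origin, lies at distance roughly $15r$--$27r$ from $0$, i.e. inside $A(0,8r,120r)$, and the set where $|L^R(x)|\lesssim r$ -- hence where the decay of $(\Phi_2)_*g$ gives no bound of size $r^{-\tau}$, and near its center no bound at all -- is a ball of radius comparable to $r$, not the ``Euclidean ball of radius $O(1)$'' you assert. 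Moreover, even granting a small bad set, absorbing it ``into the scalar-curvature error term'' via (\ref{eq:RDTFscalargenerallowerbound}) and Lemma \ref{lemma:scalarcurvaturegluing} cannot work: the hypotheses of Corollary \ref{cor:C0monotonicity} are not only about scalar curvature, since Lemma \ref{lemma:massdistortionestimate} needs $\varepsilon\sim r^{-\tau}$ on the whole annulus to make the $\varepsilon^2r^{n-2}$ term acceptable, and Lemma \ref{lemma:Bartnikscalculation} needs $|g-\delta|$ small to control the quadratic terms; an order-one metric deviation on a region of diameter comparable to $r$ produces errors of order $r^{n-2}$, which cannot be hidden in $-cr^{n-2-2\tau+\eta}$. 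So the verification you sketch fails as stated; what this step actually requires is a lower bound $|L^R(x)|\ge c\,r$ on all of $A(0,8r,120r)$ for $r\ge\bar r(\Phi_1,\Phi_2)$ (equivalently, that the translation part of the approximating isometry is a definite factor smaller than $r$ at large scales), which is the point the comparison with the paper's second application of Corollary \ref{cor:C0monotonicity} to $(L^r)^*(\Phi_2)_*g$ rests on, and which your $O(1)$-ball/excision heuristic does not supply; in the paper the translation is otherwise confronted only at the final step, where the radii in Lemma \ref{lemma:affineC0monotonicity} are chosen so that the translated ball lies inside the excised inner region.
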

\begin{proof}
 Let $c$ and $\bar r$ be as in Corollary \ref{cor:C0monotonicity} where we take $c_0$ to be the constant ``$c$'' from Theorem \ref{thm:gluingtransmaptoiso}, and increase $\bar r$ so that $\bar r \geq r_0''$, where $r_0''$ is as in Theorem \ref{thm:gluingtransmaptoiso}. Increase $\bar r$ further so that $\bar r$ is at least as large as the threshold $\bar r$ given by Lemma \ref{lemma:affineC0monotonicity} with $b=25$. For $r > \bar r$ let $\tilde \phi^r$ be as in Theorem \ref{thm:gluingtransmaptoiso}. Let $c$ be as in Corollary \ref{cor:C0monotonicity} and increase $c$ so that $c$ is at least as large as the constant $c$ from Theorem \ref{thm:gluingtransmaptoiso} and the constant $c$ from Lemma \ref{lemma:affineC0monotonicity}. We apply Corollary \ref{cor:C0monotonicity} to the metric $(\tilde\phi^r)*(\Phi_2)_*g$, which is defined on $\R^n\setminus \overline{B(0, r_0/ 10)}$ for $r_0$ as in Theorem \ref{thm:gluingtransmaptoiso}, to find that for $r \geq \bar r$ and all $r' \in [\tfrac{1.1}{.9}r, 10r]$ we have
\begin{equation}\label{eq:masscomparisonwithgluedmap}
M_{C^0}((\tilde\phi^r)^*(\Phi_2)_*g, \varphi_{(r')^{-\eta}}(\cdot, 0), r') - M_{C^0}((\tilde\phi^r)^*(\Phi_2)_*g, \varphi_{r^{-\eta}}(\cdot, 0), r) \geq -cr^{n-2 -2\tau + \eta}.
\end{equation}
Replace $\bar r$ by $1.1\bar r$ so that (\ref{eq:masscomparisonwithgluedmap}) also holds for $r/1.1$, for any $r\geq \bar r$.

 By definition of $\tilde \phi^r$, we have 
 \begin{equation*}
 (\tilde\phi^r)^*(\Phi_2)_*g |_{x} = \begin{cases}
 						(\Phi_1)_*g|_x & \text{ for }|x|\leq r\\
						 (L^r)^*(\Phi_2)_*g|_x & \text{ for } |x|\geq 10r,
 					      \end{cases}
 \end{equation*}
 where $L^r$ is some Euclidean isometry such that $|L(0)| \leq 25r$. Therefore, we have
\begin{equation*}
 \begin{split}
 M_{C^0}((\tilde\phi^r)^*(\Phi_2)_*g, \varphi_{(10r/.9)^{-\eta}}(\cdot, 0), 10r/.9) &=  M_{C^0}((L^r)^*(\Phi_2)_*g, \varphi_{(10r/.9)^{-\eta}}(\cdot, 0) ,10r/.9),\\
 M_{C^0}((\tilde\phi^r)^*(\Phi_2)_*g, \varphi_{(r/1.1)^{-\eta}}(\cdot, 0),r/1.1) &=  M_{C^0}((\Phi_1)_*g, \varphi_{(r/1.1)^{-\eta}}(\cdot, 0), r/1.1).
 \end{split}
 \end{equation*}
Therefore, (\ref{eq:masscomparisonwithgluedmap}) implies 
 \begin{align*}
 M_{C^0}(&(L^r)^*(\Phi_2)_*g, \varphi_{(10r/.9)^{-\eta}}(\cdot, 0), 10r/.9) - M_{C^0}((\Phi_1)_*g, \varphi_{(r/1.1)^{-\eta}}(\cdot, 0),(r/1.1)) 
 \\& =  M_{C^0}((\tilde\phi^r)^*(\Phi_2)_*g, \varphi_{(10r/.9)^{-\eta}}(\cdot, 0), 10r/.9) -  M_{C^0}((\tilde\phi^r)^*(\Phi_2)_*g, \varphi_{(r/1.1)^{-\eta}}(\cdot, 0),r/1.1)
 \\& \geq M_{C^0}((\tilde\phi^r)^*(\Phi_2)_*g, \varphi_{(10r/.9)^{-\eta}}(\cdot, 0), 10r/.9) -M_{C^0}((\tilde\phi^r)^*(\Phi_2)_*g, \varphi_{(r/.9)^{-\eta}}(\cdot, 0),r/.9)
 \\& + M_{C^0}((\tilde\phi^r)^*(\Phi_2)_*g, \varphi_{(r/.9)^{-\eta}}(\cdot, 0),r/.9) - M_{C^0}((\tilde\phi^r)^*(\Phi_2)_*g, \varphi_{(r/1.1)^{-\eta}}(\cdot, 0),r/1.1)
 \\& \geq -cr^{n-2 - 2\tau + \eta}.
 \end{align*}
 Moreover, by Corollary \ref{cor:C0monotonicity} we have 
 \begin{align*}
 M_{C^0}((L^r)^*(\Phi_2)_*g,  \varphi_{(50r)^{-\eta}}(\cdot, 0), 50r) &-  M_{C^0}((L^r)^*(\Phi_2)_*g,  \varphi_{(10r/.9)^{-\eta}}(\cdot, 0), 10r/.9)
 \\& \geq -cr^{n-2 - 2\tau + \eta}.
 \end{align*}
 
Then, applying Lemma \ref{lemma:affineC0monotonicity} with $r_1 = 50r$ and $r_2 = 150r/1.1$, we find
 \begin{equation*}
 \begin{split}
 M_{C^0}((\Phi_2)_*g, \varphi_{(150r/1.1)^{-\eta}}(\cdot, 0),& 150r/1.1)
 \\& \geq M_{C^0}((L^r)^*(\Phi_2)_*g, \varphi_{(50r)^{-\eta}}(\cdot, 0), 50r)  - cr^{n-2 - 2\tau +\eta}
 \\& \geq M_{C^0}((L^r)^*(\Phi_2)_*g, \varphi_{(10r/.9)^{-\eta}}(\cdot, 0), 10r/.9)  - cr^{n-2 - 2\tau +\eta}
 \\& \geq M_{C^0}((\Phi_1)_*g, \varphi_{(r/1.1)^{-\eta}}(\cdot, 0) ,r/1.1)  - cr^{n-2 - 2\tau + \eta},
 \end{split}
 \end{equation*}
 where in the first step we have also used Lemma \ref{lemma:rotationinvarianceofmass}. Then replacing $r/1.1$ by $r$ yields the result.
\end{proof}

\section{The $C^0$ mass at infinity}\label{sec:masslimit}

\subsection{Taking limits at infinity}
In this section we prove Theorem \ref{thm:fullC0monotonicity}. We first record the following:
\begin{lemma}\label{lemma:prelimC0existence}
Let $M$ be a smooth manifold and $g$ a continuous Riemanian metric on $M$. Suppose that $E$ is an end of $M$
for which there is $C^0$-asymptotically flat coordinate chart $\Phi$ with decay rate $\tau > (n-2)/2$, decay threshold $r_0$, and decay coefficient $c_0$, and suppose that $g$ has nonnegative scalar curvature in the sense of Ricci flow on $\Phi^{-1}(\R^n\setminus \overline{B(0,1)})$. Let $\varphi: \R \to \R^{\geq 0}$ be a smooth cutoff function with $\supp(\varphi) \subset \subset (.9, 1.1)$. Fix $\eta \in (0, 2\tau - (n-2))$. For all $r>0$ let $\varphi_{r^{-\eta}}(\ell, t)$ denote the smooth time-dependent function corresponding to $\varphi$ given by Lemma \ref{lemma:cutofffunctiondecay}. Then the limit
\begin{equation*}
\lim_{r\to \infty} M_{C^0}(g, \Phi, \varphi_{r^{-\eta}}(\cdot, 0), r)
\end{equation*}
exists, and is either finite or $+\infty$.
\end{lemma}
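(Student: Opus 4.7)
The plan is to iterate Corollary~\ref{cor:C0monotonicity} along a geometric chain of radii to produce an almost-monotonicity estimate for
\[ a(r) := M_{C^0}(g, \Phi, \varphi_{r^{-\eta}}(\cdot, 0), r), \]
and then to extract existence of the limit via a standard $\liminf/\limsup$ comparison in which the error term decays to zero. I will set $\alpha := 1.1/.9$ and $\gamma := n-2-2\tau+\eta$; the hypothesis $\eta \in (0, 2\tau-(n-2))$ is exactly $\gamma < 0$. Since $g$ has nonnegative scalar curvature in the sense of Ricci flow on $\Phi^{-1}(\R^n\setminus \overline{B(0,1)})$, Definition~\ref{def:nonnegscalarRFdef} supplies some $\beta \in (0, 1/2)$ for which the $\beta$-weak condition holds throughout this region. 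Combined with the decay bound $|(\Phi_*g)_{ij}-\delta_{ij}||_x \leq c_0|x|^{-\tau}$ for $|x|\geq r_0$, this shows that Corollary~\ref{cor:C0monotonicity} applies to $(\Phi_*g)|_{A(0,.8r,12r)}$ for all $r$ larger than some $\bar r$ depending on $n, c_0, \tau, r_0, \supp(\varphi), \eta, \beta$.

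First, for such an $r$ and any $r' \geq \alpha r$, I will chain Corollary~\ref{cor:C0monotonicity} along the finite sequence $s_0 := r$, $s_k := \alpha^k r$ for $1 \leq k \leq M-1$, and $s_M := r'$, where $M := \lfloor \log_\alpha(r'/r) \rfloor \geq 1$. By the choice of $M$, every consecutive ratio $s_{k+1}/s_k$ lies in $[\alpha, \alpha^2] \subset [\tfrac{1.1}{.9}, 10]$ (the intermediate ratios are exactly $\alpha$, and the final one is $r'/(\alpha^{M-1}r) \in [\alpha, \alpha^2)$), so the corollary yields $a(s_{k+1}) \geq a(s_k) - c\, s_k^\gamma$ at each step. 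Telescoping and using $\gamma < 0$ to sum the geometric tail produces
\[ a(r') \geq a(r) - c\sum_{k=0}^{M-1}(\alpha^k r)^\gamma \geq a(r) - C\, r^\gamma, \]
with $C := c\,(1-\alpha^\gamma)^{-1} < \infty$.

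Second, fixing a single $r_1 \geq \bar r$, the estimate $a(r') \geq a(r_1) - C r_1^\gamma$ for all $r' \geq \alpha r_1$ shows that $\liminf_{r' \to \infty} a(r')$ is bounded below by a finite constant, so the limit (if it exists) cannot be $-\infty$. Moreover, taking $\liminf_{r'\to\infty}$ on the left and $\limsup_{r\to\infty}$ on the right of the almost-monotonicity bound (legitimate because the left-hand side does not depend on $r$), I will obtain
\[ \liminf_{r'\to\infty} a(r') \geq \limsup_{r\to\infty}\bigl(a(r) - Cr^\gamma\bigr) = \limsup_{r\to\infty} a(r), \]
where the last equality uses $Cr^\gamma \to 0$. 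Hence $\liminf$ and $\limsup$ agree, so $\lim_{r\to\infty} a(r)$ exists in $\R \cup \{+\infty\}$, which is the claim.

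The main obstacle is essentially bookkeeping: arranging the chain $\{s_k\}$ so that every consecutive ratio lies in the admissible range $[\tfrac{1.1}{.9}, 10]$ demanded by Corollary~\ref{cor:C0monotonicity}, and confirming that the $\bar r$-threshold propagates along the chain (which is automatic since $s_k \geq r \geq \bar r$). All the genuine analytic content has already been packaged into Corollary~\ref{cor:C0monotonicity}, which itself combines the Ricci--DeTurck flow distortion estimate (Lemma~\ref{lemma:massdistortionestimate}) with the classical mass monotonicity (Proposition~\ref{prop:classicalmonotonicity}); no new ingredient beyond these is needed here.
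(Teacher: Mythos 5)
Your proof is correct, and its analytic core is the same as the paper's: everything rests on the almost-monotonicity supplied by Corollary \ref{cor:C0monotonicity}, applied to $\Phi_*g$ on annuli $A(0,.8r,12r)$ with the $\beta$ from Definition \ref{def:nonnegscalarRFdef} (with the harmless replacement of $c_0$ by $c_0(.8)^{-\tau}$, which you implicitly absorb into $\bar r$ and $c$). Where you differ is in how the limit is extracted. The paper applies the corollary one step at a time, invokes Lemma \ref{lemma:monotonicityconverges} to get convergence of the subsequence $a(10^k)$ (finite or $+\infty$), and then shows $\lim_{r\to\infty}a(r)$ equals this subsequential limit by an $\varepsilon$-argument with a case analysis on where $r$ sits inside $[10^k,10^{k+1}]$, using the corollary on both sides of $r$. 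You instead telescope along the ratio-$\alpha$ chain to upgrade the local estimate to the global comparison $a(r')\geq a(r)-Cr^{\gamma}$ valid for all $r'\geq \alpha r$, with $C=c(1-\alpha^{\gamma})^{-1}$ finite because $\gamma<0$, and then conclude directly from $\liminf_{r'\to\infty}a(r')\geq \limsup_{r\to\infty}a(r)$ together with the lower bound $a(r')\geq a(r_1)-Cr_1^{\gamma}$ ruling out $-\infty$. Your packaging avoids Lemma \ref{lemma:monotonicityconverges} and the endpoint case analysis entirely, at the small cost of the chaining bookkeeping (which you handle correctly: all consecutive ratios lie in $[\alpha,\alpha^2)\subset[\tfrac{1.1}{.9},10]$ and all radii stay above $\bar r$); both routes are sound, and yours is arguably the cleaner limit argument, while the paper's subsequence-plus-interpolation scheme mirrors the structure it reuses later in Theorem \ref{thm:finitemass}.
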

\begin{proof}
Let $\beta\in (0, 1/2)$ be the parameter for which (\ref{eq:betaweakcondition}) holds everywhere for $g$. Let $\bar r = \bar r(n, c_0, \tau, \varphi, \eta, \beta)$ be as in Corollary \ref{cor:C0monotonicity}. Let $r> \max\{r_0/.8, \bar r\}$. Applying Corollary \ref{cor:C0monotonicity} to $\Phi_* g\big|_{A(0, .8r, 12r)}$ we find that for all $r'\in [\tfrac{1.1}{.9}r, 10r]$ we have
\begin{equation}\label{eq:masslimitexistsstep}
M_{C^0}(g, \Phi, \varphi_{(r')^{-\eta}}(\cdot, 0), r') - M_{C^0}(g, \Phi, \varphi_{r^{-\eta}}(\cdot, 0), r) \geq -c(n, \varphi, c_0, \tau, \beta, \eta)r^{n-2 -2\tau + \eta}
\end{equation}
As a shorthand, write $a(r):= M_{C^0}(g, \Phi, \varphi_{r^{-\eta}}(\cdot, 0), r)$, so that (\ref{eq:masslimitexistsstep}) means that for any sufficiently large $r>0$ and any $r'\in [\tfrac{1.1}{.9}r, 10r]$, $a(r') - a(r) \geq - c(n, \varphi, c_0, \tau, \beta, \eta)r^{-\delta + \eta}$ where $\delta = n-2 - 2\tau$ so $-\delta + \eta < 0$. In particular, Lemma \ref{lemma:monotonicityconverges} implies that for any $r>0$, $\lim_{k\to\infty} a(10^kr)$ exists, and is either finite or equal to $+\infty$. It remains to show that
\begin{equation*}
\lim_{r\to\infty}a(r) = \lim_{k\to\infty}a(10^k)=: a_{\infty},
\end{equation*}
say. Towards this objective, let $\varepsilon>0$. Choose $k_0$ sufficiently large so that for all $k\geq k_0$ we have $|a(10^k) - a_{\infty}| < \varepsilon/4$ and $c(10^k)^{-\delta + \varepsilon} < \varepsilon/ 4$, where $c$ is the constant from (\ref{eq:masslimitexistsstep}). 

Let $r> 10^{k_0 + 1}$, and choose $k \geq k_0 + 1$ such that $r\in [10^k, 10^{k+1}]$. First suppose that $r \in [\tfrac{1.1}{.9}10^k, \tfrac{.9}{1.1}10^{k+1}]$. Then $r\in [\tfrac{1.1}{.9}10^k, 10^{k+1}]$ and $10^{k+1}\in [\tfrac{1.1}{.9}r, 10r]$ so Corollary \ref{cor:C0monotonicity} implies
\begin{equation*}
a_{\infty} -\varepsilon/2 < a(10^k) - c(10^k)^{-\delta + \eta} \leq a(r) \leq a(10^{k+1}) + cr^{-\delta + \eta} < a_{\infty} + \varepsilon/2.
\end{equation*}
Now suppose $r \in [10^k, \tfrac{1.1}{.9}10^k)$ (resp. $r\in (\tfrac{.9}{1.1}10^{k+1}, 10^{k+1}]$). Choose $r' \in (\tfrac{1.1}{.9}r, (\tfrac{.9}{1.1})^2(10r))$ (resp. $r' \in ((\tfrac{1.1}{.9})^2(\tfrac{r}{10}), \tfrac{.9}{1.1}r)$). Then $r' \in [\tfrac{1.1}{.9}10^k, \tfrac{.9}{1.1}10^{k+1}]$, so, arguing as above, we have that $|a_{\infty} - a(r')| < \varepsilon/2$ and $|a_{\infty} - a(r'/10)| < \varepsilon/2$. Similarly, Corollary \ref{cor:C0monotonicity} implies
\begin{equation*}
a_{\infty} - \varepsilon \leq a(r'/10) - c(r'/10)^{-\delta + \eta} \leq a(r) \leq a(r') + c(r)^{-\delta + \eta} \leq a_{\infty} + \varepsilon.
\end{equation*}
This completes the proof.
\end{proof}

\begin{proof}[Proof of Remark \ref{rmk:etaindependence}]
To see why Remark \ref{rmk:etaindependence} is true, apply Lemma \ref{lemma:prelimC0existence} to Remark \ref{rmk:etamonotonicity} to find that
\begin{equation*}
\lim_{r\to \infty} M_{C^0}(g, \varphi_{r^{-\eta_1}}(\cdot, 0), r) \geq \lim_{r\to \infty} M_{C^0}(g, \varphi_{r^{-\eta_2}}(\cdot, 0), r).
\end{equation*}
Exchanging $\eta_1$ and $\eta_2$ yields the result.
\end{proof}

\begin{remark}\label{rmk:alternativelimit}
Assume we are in the setting of Lemma \ref{lemma:prelimC0existence}. Let $g_0$ be an extension of $g|_{A(0,.8r, 1.2r)}$ for some large $r$ and $g_t$ be a Ricci-DeTurck flow for $g_0$ whose existence is given by Lemma \ref{lemma:RDTFexistenceandests}. Observe that by Lemma \ref{lemma:massdistortionestimate} we have, 
\begin{equation*}
\lim_{r\to \infty} M_{C^0}(g, \varphi_{r^{-\eta}}(\cdot, 0), r) = \lim_{r\to \infty} M_{C^0}(g_{r^{2-\eta}}, \varphi, r).
\end{equation*}
\end{remark}

We are now ready to prove Theorem \ref{thm:fullC0monotonicity}.
\begin{proof}[Proof of Theorem \ref{thm:fullC0monotonicity}]
Let $\bar r = \bar r(n, c_0, \tau, \supp(\varphi^1), \supp(\varphi^2),  \eta, \beta)$ be as in the second statement of Corollary \ref{cor:C0monotonicity}, with $c_0 = \max\{c_1, c_2\}$ and $\tau = \min\{\tau_1, \tau_2\}$. Increase $\bar r$ as needed so that $\bar r$ is greater than the threshold $\bar r$ given by Corollary \ref{cor:coordinatemonotonicity}.  Also let $c = c(n, c_0, \varphi^1, \varphi^2, \tau, \beta, \eta)$ be as in Corollary \ref{cor:C0monotonicity}, and increase $c$ as needed so that $c$ is at least as large as the constants ``$c$'' given by Corollary \ref{cor:coordinatemonotonicity} for $\varphi^1$ and $\varphi^2$. Applying Corollary \ref{cor:C0monotonicity} to $(\Phi_2)_*g\big|_{A(0, .8(150)r, 12(150)r)}$ implies that
\begin{equation*}
M_{C^0}(g, \Phi_2, \varphi_{(200r)^{-\eta}}^2(\cdot, 0), 200r) \geq M_{C^0}(g, \Phi_2, \varphi_{(150 r)^{-\eta}}^1(\cdot, 0), 150r) - cr^{n-2 - 2\tau + \eta}.
\end{equation*}
Then, for all $r> \bar r$, Corollary \ref{cor:coordinatemonotonicity} implies that
\begin{equation*}
M_{C^0}(g, \Phi_2, \varphi_{(150 r)^{-\eta}}^1(\cdot, 0), 150r) \geq M_{C^0}(g, \Phi_1, \varphi_{r^{-\eta}}^1(\cdot, 0), r) - cr^{n-2 - 2\tau + \eta}
\end{equation*}
so
\begin{equation}\label{eq:fullC0monotonicity}
M_{C^0}(g, \Phi_2, \varphi_{(200r)^{-\eta}}^2(\cdot, 0), 200r) \geq M_{C^0}(g, \Phi_1, \varphi_{r^{-\eta}}^1(\cdot, 0), r) - 2cr^{n-2 - 2\tau + \eta}.
\end{equation}
This proves the first statement, with $c$ adjusted.

Towards the second, note that for $m= 1,2$, the limits
\begin{equation*}
\lim_{r\to \infty} M_{C^0}(g, \Phi_2, \varphi_{r^{-\eta}}^2(\cdot, 0), r), \lim_{r\to \infty} M_{C^0}(g, \Phi_1, \varphi_{r^{-\eta}}^1(\cdot, 0), r)
\end{equation*}
exist by Lemma \ref{lemma:prelimC0existence}, so letting $r\to \infty$ in (\ref{eq:fullC0monotonicity}) implies that
\begin{equation*}
\lim_{r\to \infty} M_{C^0}(g, \Phi_2, \varphi_{r^{-\eta}}^2(\cdot, 0), r) \geq \lim_{r\to \infty} M_{C^0}(g, \Phi_1, \varphi_{r^{-\eta}}^1(\cdot, 0), r).
\end{equation*}
Exchanging $\Phi_2$ with $\Phi_1$ and $\varphi^1$ with $\varphi^2$ proves the second statement of the theorem.
\end{proof}

\subsection{Finiteness conditions for the $C^0$ mass}
In this section we prove Theorem \ref{thm:fullC0existence}. We first establish the finiteness condition for the $C^0$ mass at infinity.

\begin{theorem}\label{thm:finitemass}
Suppose $g$ is a continuous Riemannian metric on $\R^n\setminus \overline{B(0, r_0)}$ such that, for some $\tau > (n-2)/2$, $c_0>0 , \bar r >r_0$ we have
\begin{equation*}
|g - \delta|\big|_x \leq c_0|x|^{-\tau}
\end{equation*}
for all $|x| > \bar r$.
Suppose $g$ has nonnegative scalar curvature in the sense of Ricci flow. Let $\varphi: \R \to \R^{\geq 0}$ be a smooth cutoff function with $\supp(\varphi)\subset \subset (.9, 1.1)$. Fix $\eta \in (0, 2\tau - (n-2))$. For all $r>0$ let $\varphi_{r^{-\eta}}(\ell, t)$ denote the smooth time-dependent function corresponding to $\varphi$ given by Lemma \ref{lemma:cutofffunctiondecay}. Then the limit $\lim_{r\to\infty}M_{C^0}(g, \varphi_{r^{-\eta}}(\cdot, 0), r)$ is finite if and only if the following condition holds:

There exists a sequence of numbers $r_k\to \infty$ such that $r_{k+1} > 1.1/.9 r_k >0$ for all $k$, and for which for all $k$ there exists an extension $g_0^k$ of $g|_{\R^n\setminus \overline{B(0, .7r_k)}}$ to all of $\R^n$ such that there is a Ricci-DeTurck flow $g_t^k$ for $g_0^k$ satisfying (\ref{eq:RDTFinitialcondition}), (\ref{eq:RDTFXest}), (\ref{eq:RDTFderivests}), and (\ref{eq:L1scalarcondition}).
\end{theorem}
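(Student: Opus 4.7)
Both directions of the proof rest on a three-step bridge comparing the $C^0$ local mass to the integrated scalar curvature of the Ricci-DeTurck flow $g^k_{T_k}$ at $T_k := (.9/1.1\,r_k)^{2-\eta}$. The steps are: (i) Lemma \ref{lemma:Bartnikscalculation} equates $m_{C^2}(g^k_{T_k}, r'') - m_{C^2}(g^k_{T_k}, r')$ with $\int_{A(0,r',r'')} R(g^k_{T_k})$ up to a Bartnik error $E_k := \int(c|g^k_{T_k}-\delta|\,|\nabla^2 g^k_{T_k}| + c|\nabla g^k_{T_k}|^2)\,dV_\delta$; (ii) Remark \ref{rmk:C0agreeswithC1average} combined with Lemma \ref{lemma:integralIVT} yields $M_{C^0}(g^k_{T_k}, \varphi, r) = m_{C^2}(g^k_{T_k}, r\ell)$ for some $\ell \in (.9,1.1)$, as in the proof of Proposition \ref{prop:classicalmonotonicity}; (iii) Lemma \ref{lemma:massdistortionestimate} relates $M_{C^0}(g^k_{T_k}, \varphi, r)$ to $M_{C^0}(g, \varphi_{r^{-\eta}}(\cdot,0), r)$ with error of order $r_k^{-2\tau} r^{n-2}$, negligible when $r$ is comparable to $r_k$.

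\textbf{Forward direction.} Assuming the sequence $(r_k)$ and extensions $(g_0^k)$ satisfy (\ref{eq:L1scalarcondition}), for each large $k$ and each $r' > 1.1/.9\,r_k$, the bridge applied between radii in $[.9r_k, 1.1r_k]$ and $[.9r', 1.1r']$ yields
\[
M_{C^0}(g^k_{T_k}, \varphi, r') - M_{C^0}(g^k_{T_k}, \varphi, 1.1/.9\,r_k) \leq \int_{A(0, .9r_k, 1.1r')} R(g^k_{T_k}) + E_k,
\]
whose right-hand side is controlled by the hypothesis after absorbing an $R^-$ contribution of size at most $cr_k^{-A}(r')^n$ from Lemma \ref{lemma:scalarcurvaturegluing} applied pointwise on the annulus. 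Translating via (iii) at $r\sim r_k$ provides a uniform upper bound on $M_{C^0}(g, \varphi_{r_k^{-\eta}}(\cdot,0), r_k)$ along the subsequence $r = r_k$, and Lemma \ref{lemma:prelimC0existence} upgrades this to finiteness of the full limit.

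\textbf{Reverse direction.} Assuming $L := \lim M_{C^0}(g, \varphi_{r^{-\eta}}(\cdot,0), r) < \infty$, take $r_k = \alpha^k r_0$ with $\alpha > 1.1/.9$ exceeding the thresholds of Lemma \ref{lemma:RDTFexistenceandests}, and use that lemma to build extensions $g_0^k$ of $g|_{\R^n\setminus\overline{B(0,.7r_k)}}$ with $||g_0^k - \delta||_{C^0(\R^n)} \leq c\,c_0\,r_k^{-\tau}$ together with their Ricci-DeTurck flows. Running the bridge in reverse gives
\[
\int_{A(0,.9r_k, 1.1r')} R(g^k_{T_k}) \leq M_{C^0}(g^k_{T_k}, \varphi, r') - M_{C^0}(g^k_{T_k}, \varphi, 1.1/.9\,r_k) + E_k,
\]
and step (iii) identifies both $M_{C^0}(g^k_{T_k}, \varphi, \cdot)$ terms with near-$L$ values of $M_{C^0}(g, \varphi_{(\cdot)^{-\eta}}(\cdot,0), \cdot)$. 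Taking $\sup_{r'}$ followed by $k\to\infty$ delivers (\ref{eq:L1scalarcondition}).

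\textbf{Main obstacle.} The chief difficulty is controlling $E_k$, and the mass-distortion error in step (iii), uniformly in $r'$ as $r'\to\infty$. A direct global application of (\ref{eq:RDTFXest})-(\ref{eq:RDTFderivests}) with $\varepsilon = c_0 r_k^{-\tau}$ gives only the pointwise bound $|g-\delta||\nabla^2 g| + |\nabla g|^2 \leq c r_k^{-2\tau-2+\eta}$, integrating to $c(r')^n r_k^{-2\tau-2+\eta}$, which diverges in $r'$; likewise the mass-distortion error scales as $r_k^{-2\tau} r^{n-2}$, large for $r \gg r_k$. I would remedy both by exploiting the improved pointwise decay $|g_0^k - \delta|(x) \leq c_0|x|^{-\tau}$ on $\R^n\setminus\overline{B(0,.7r_k)}$ together with localized heat-kernel representations for the Ricci-DeTurck flow on balls of radius $\sqrt{T_k}$ lying entirely in the asymptotic region (analogous to the derivation of Lemma \ref{lemma:RDTFscalarHKests}), producing pointwise decay $|g^k_{T_k} - \delta|(x) \leq c|x|^{-\tau}$ with correspondingly sharper derivative decay; this yields $E_k \leq c r_k^{n-2-2\tau+\eta}$ uniformly in $r'$ and a parallel refinement extending step (iii) to all $r \geq r_k$, both vanishing as $k\to\infty$ by the assumption $\eta < 2\tau - (n-2)$.
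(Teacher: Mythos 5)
Your plan reproduces the skeleton of the paper's proof: extensions of $g|_{\R^n\setminus\overline{B(0,.7r_k)}}$ via Lemma \ref{lemma:RDTFexistenceandests}, the conversion $M_{C^0}\leftrightarrow m_{C^2}$ via Remark \ref{rmk:C0agreeswithC1average} and Lemma \ref{lemma:integralIVT}, Lemma \ref{lemma:Bartnikscalculation} to pass to $\int R$, Lemma \ref{lemma:massdistortionestimate} and Remark \ref{rmk:massdistortiondiffradii} for the distortion, and the pointwise bound $R(g^k_t)_-|_x\leq c|x|^{-A}$ from Lemma \ref{lemma:scalarcurvaturegluing}. The genuine gap is that the proposal stops at what you yourself call the main obstacle, and the remedy you sketch does not close it. Even granting the localized decay you propose to prove, namely $|g^k_{T_k}-\delta|(x)\leq c|x|^{-\tau}$ with $|\nabla g^k_{T_k}|(x)\leq c|x|^{-\tau}T_k^{-1/2}$ and $|\nabla^2 g^k_{T_k}|(x)\leq c|x|^{-\tau}T_k^{-1}$, the Bartnik error over $A(0,.9r_k,1.1r')$ at $T_k\sim r_k^{2-\eta}$ is of order $T_k^{-1}\int_{cr_k}^{cr'}s^{\,n-1-2\tau}\,ds$, which diverges as $r'\to\infty$ whenever $\tau<n/2$; this range is allowed by the hypothesis $\tau>(n-2)/2$ and contains, e.g., $n=3$, $\tau=1$. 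So your claimed conclusion ``$E_k\leq cr_k^{n-2-2\tau+\eta}$ uniformly in $r'$'' does not follow from the estimates you propose, and the same objection applies to your extension of step (iii) to all $r\geq r_k$. The paper does not take this detour: it uses Lemma \ref{lemma:scalarcurvaturegluing} with arbitrarily large exponent $A$ only to make the negative part of $R$ harmless uniformly in $r'$, and otherwise bounds the quadratic and distortion terms directly from (\ref{eq:RDTFXest})--(\ref{eq:RDTFderivests}) with $\varepsilon\sim r_k^{-\tau}$, together with Lemma \ref{lemma:massdistortionestimate} and Remark \ref{rmk:massdistortiondiffradii}, absorbing everything into a single error $cr_k^{n-2-2\tau+\eta}$ that is independent of $r'$. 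Your plan neither reproduces that step nor supplies a working substitute, and this uniform-in-$r'$ error bound is exactly the crux of the theorem.

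A second, more easily repaired, problem is the logic of your forward direction. The displayed inequality controls $M_{C^0}(g^k_{T_k},\varphi,r')-M_{C^0}(g^k_{T_k},\varphi,\tfrac{1.1}{.9}r_k)$, hence after step (iii) it controls differences $a(r')-a(r_k)$, where $a(r):=M_{C^0}(g,\varphi_{r^{-\eta}}(\cdot,0),r)$; it gives no ``uniform upper bound on $M_{C^0}(g,\varphi_{r_k^{-\eta}}(\cdot,0),r_k)$ along the subsequence,'' and that is in any case not the statement one needs. The paper's route is to prove first that finiteness of $\lim_r a(r)$ is equivalent to the tail condition $\lim_{k\to\infty}\sup_{r'>1.1/.9\,r_k}\bigl(a(\tfrac{1.1}{.9}r')-a(\tfrac{.9}{1.1}r_k)\bigr)=0$ (a Cauchy-subsequence argument resting on Lemma \ref{lemma:prelimC0existence}), and then to show via the bridge that (\ref{eq:L1scalarcondition}) is equivalent to this tail condition. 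Your reverse direction implicitly uses the trivial half of this (if $a(r)\to L$ finite, the tail condition is automatic), but the direction ``condition $\Rightarrow$ finite'' should be phrased as: the bridge plus (\ref{eq:L1scalarcondition}) gives the tail condition, which together with Lemma \ref{lemma:prelimC0existence} rules out the limit being $+\infty$.
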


\begin{proof}
First note that if $|x|$ is large then 
\begin{equation*}
|x| - \frac{2^\beta}{2^\beta - 1}\left(\frac{|x|}{.9}\right)^{1-\eta \beta} \geq \frac{|x|}{2} > r_0
\end{equation*}
so $B(x, 2^\beta/(2^\beta - 1)(|x|/.9)^{1-\eta\beta} )\subset \R^n\setminus \overline{B(0, r_0)}$, and hence Lemma \ref{lemma:scalarcurvaturegluing} implies that for all $A>2$ there is a constant $c(n, A, \beta, \eta)$ such that for all $t\in (0, (|x|/ .9)^{2-\eta}$,
\begin{equation*}
R(g_t)\big|_x \geq -c(n,A,\beta,\eta) |x|^{-A}
\end{equation*}
or
\begin{equation}\label{eq:negativescalarcurvatureestimate}
R(g_t)_-\big|_x \leq c(n,A,\beta,\eta) |x|^{-A},
\end{equation}
where $f_-$ denotes the negative part of a function $f$.

For the sake of simplicity, let $a(r):= M_{C^0}(g, \varphi_{r^{-\eta}}(\cdot, 0), r)$. We now show that 
\begin{equation}\label{eq:finitelimit}
\lim_{r\to \infty} a(r) < \infty
\end{equation}
if and only if there exists a sequence $r_k\to \infty$ such that
\begin{equation}\label{eq:suplimitwithscaling}
\lim_{k\to \infty}\sup_{r' > 1.1/.9 r_k} a(1.1/.9r') - a(.9/1.1r_k) = 0.
\end{equation}

To see why these conditions are equivalent, suppose (\ref{eq:suplimitwithscaling}) fails. Let $r_k \to \infty$ be a strictly increasing sequence. By assumption there exists some $\varepsilon_0 > 0$ such that for all $k$ there exists $j>k$ with
\begin{equation}\label{eq:cauchylimitcontradictionassumption}
\sup_{r' > 1.1/.9 r_j} a(1.1/.9 r') - a(.9/1.1 r_j) > \varepsilon_0.
\end{equation}
We construct a new sequence $\tilde r_k\to \infty$ inductively as follows: By assumption there exists $j_1 > 1$ for which
\begin{equation*} 
\sup_{r' > 1.1/.9 r_{j_1}} a(1.1/.9 r') - a(.9/1.1 r_{j_1}) > \varepsilon_0.
\end{equation*}
Let $\tilde r_1 = .9/1.1 r_{j_1}$ and choose $r_1' > 1.1/.9 r_{j_1} = (1.1/.9)^2\tilde r_1$ such that $a(1.1/.9 r_1') - a(.9/1.1 r_{j_1}) > \varepsilon_0/2$. Setting $\tilde r_2 := 1.1/.9 r_1'$, this becomes $a(\tilde r_2) - a(\tilde r_1) > \varepsilon_0/2$. Now, given $\tilde r_{2k}$, choose $m_{2k+1}$ sufficiently large so that $r_{m_{2k+1}}> (1.1/.9)^3 \tilde r_{2k}$. Then there exists $j_{2k+1} > m_{2k+1}$ such that
\begin{equation*} 
\sup_{r' > 1.1/.9 r_{j_{2k+1}}} a(1.1/.9 r') - a(.9/1.1 r_{j_{2k+1}}) > \varepsilon_0.
\end{equation*}
Let $\tilde r_{2k+1} = .9/1.1 r_{j_{2k+1}}> (1.1/.9)^2\tilde r_{2k}$ and set $\tilde r_{2k + 2} := 1.1/.9 r_{2k + 1}'$ where $r_{2k+1}'$ is chosen so that $a(1.1/.9 r_{2k+1}') - a(.9/1.1 r_{j_{2k+1}}) > \varepsilon_0/2$, and hence $a(\tilde r_{2k+2}) - a(\tilde r_{2k+1}) > \varepsilon_0/2$. Then $\tilde r_k\to \infty$ and $(a(\tilde r_k))_{k=1}^{\infty}$ is not Cauchy, so (\ref{eq:finitelimit}) fails. In particular, (\ref{eq:finitelimit}) implies (\ref{eq:suplimitwithscaling}). Conversely, suppose that (\ref{eq:suplimitwithscaling}) holds. Pass to a subsequence $r_{k_j}$ so that for all $j$, $r_{k_{j+1}} > (1.1/.9)^2 r_{k_j}$. Let $\tilde r_j = .9/1.1 r_{k_j}$. Then (\ref{eq:suplimitwithscaling}) implies that
\begin{align*}
\lim_{j\to \infty}\sup_{m> j} |a(\tilde r_m) - a(\tilde r_j)| &\leq \lim_{j\to\infty }\sup_{r' > (1.1/.9)^2\tilde r_j} |a(1.1/.9 r') - a(\tilde r_{j})| 
\\& = \lim_{j\to \infty}\sup_{r' > 1.1/.9 r_{k_j}} |a(1.1/.9r') - a(.9/1.1 r_{k_j})| 
\\& =0,
\end{align*}
so $(a(\tilde r_j))_{j=1}^{\infty}$ is Cauchy, and hence it converges to some limit $a_\infty < \infty$. By Lemma \ref{lemma:prelimC0existence} it follows that (\ref{eq:finitelimit}) holds.

We now prove the theorem. First assume that the limit is finite. Let $r_k\to \infty$ be a sequence of numbers such that $r_{k+1} > 1.1/.9 r_k >0$ for all $k$, sufficiently large so that $|| g - \delta||_{C^0(\R^n\setminus B(0, .6 r_k))} < \bar \varepsilon$, where $\bar \varepsilon$ is as in Lemma \ref{lemma:RDTFexistenceandests}. For all $k$ let $g_0^k$ be the extension of $g|_{\R^n\setminus \overline{B(0, .7 r_k)}}$ to all of $\R^n$ given by Lemma \ref{lemma:RDTFexistenceandests}, so that there exists a Ricci-DeTurck flow $g_t^k$ for $g_0^k$ satisfying (\ref{eq:RDTFinitialcondition}), (\ref{eq:RDTFXest}), and (\ref{eq:RDTFderivests}). Now fix $k$, and fix $r' > 1.1/.9 r_k$. As in the proof of Proposition \ref{prop:classicalmonotonicity}, by Lemma \ref{lemma:integralIVT} there exist $\ell, \ell' \in [.9, 1.1]$ such that
\begin{equation*}
\begin{split}
M_{C^0}(g^k_{(.9/1.1 r_k)^{2-\eta}}, \varphi_{(1.1/.9 r')^{-\eta}}(\tfrac{(.9/1.1 r_k)^{2-\eta}}{(1.1/.9 r')^2}), 1.1/.9 r') &= m_{C^2}(g^k_{(.9/1.1 r_k)^{2-\eta}}, 1.1/.9\ell' r')\\
M_{C^0}(g^k_{(.9/1.1 r_k)^{2-\eta}}, \varphi_{(.9/1.1 r_k)^{-\eta}}(\tfrac{(.9/1.1 r_k)^{2-\eta}}{(.9/1.1 r_k)^2}), .9/1.1r_k) &= m_{C^2}(g^k_{(.9/ 1.1 r_k)^{2-\eta}}, .9/1.1\ell r_k).
\end{split}
\end{equation*}

Then, using (\ref{eq:negativescalarcurvatureestimate}), Lemma \ref{lemma:Bartnikscalculation} with (\ref{eq:RDTFderivests}) to bound the quadratic term, Remark \ref{rmk:massdistortiondiffradii}, and Lemma \ref{lemma:massdistortionestimate}  we have
\begin{align*}
& \bigg|\int_{A(0, .9r_k, 1.1r')} R(g^k_{(.9/1.1 r_k)^{2-\eta}})dx\bigg|  \leq \left|\int_{A(0, .9/1.1 \ell r_k, 1.1/.9\ell' r')} R(g^k_{(.9/1.1 r_k)^{2-\eta}})dx\right| + cr_k^{n-2-2\tau + \eta}
\\& \leq |m_{C^2}(g^k_{(.9/1.1 r_k)^{2-\eta}}, 1.1/.9\ell' r') - m_{C^2}(g^k_{(.9/1.1 r_k)^{2-\eta}}, .9/1.1\ell r_k)| + cr_k^{n-2-2\tau + \eta}
\\& = \bigg| M_{C^0}(g^k_{(.9/1.1 r_k)^{2-\eta}}, \varphi_{(1.1/.9 r')^{-\eta}}(\tfrac{(.9/1.1 r_k)^{2-\eta}}{(1.1/.9 r')^2}), 1.1/.9 r') 
\\& \qquad \qquad - M_{C^0}(g^k_{(.9/1.1 r_k)^{2-\eta}}, \varphi_{(.9/1.1 r_k)^{-\eta}}(\tfrac{(.9/1.1 r_k)^{2-\eta}}{(.9/1.1 r_k)^2}), .9/1.1r_k) \bigg|+ cr_k^{n-2-2\tau + \eta}
\\& \leq \left| M_{C^0}(g^k_{0}, \varphi_{(1.1/.9 r')^{-\eta}}(0), 1.1/.9 r') - M_{C^0}(g^k_{0}, \varphi_{(.9/1.1 r_k)^{-\eta}}(0), .9/1.1r_k)\right| + cr_k^{n-2-2\tau + \eta}
\\&= |a(1.1/.9 r') - a(.9/1.1 r_k)| + cr_k^{n-2-2\tau + \eta}.
\end{align*}
Then we have, by (\ref{eq:suplimitwithscaling}),
\begin{equation*}
\begin{split}
\lim_{k\to \infty} \sup_{r' > 1.1/.9 r_k}&\left| \int_{A(0, .9r_k, 1.1r')} R(g_{(.9/1.1 r_k)^{2-\eta}})dx\right| 
\\& \leq \lim_{k\to \infty}\sup_{r' > 1.1/.9 r_k} |a(1.1/.9 r') - a(.9/1.1 r_k)| = 0
\end{split}
\end{equation*}
so (\ref{eq:L1scalarcondition}) holds.

Conversely, suppose that (\ref{eq:L1scalarcondition}) holds. Let $r_k, g_0^k$, and $g_t^k$ be as given. Fix some $k$ and some $r' > 1.1/.9 r_k$. Arguing as in the previous step, we find that there exist $\ell, \ell' \in [.9, 1.1]$ for which we have
\begin{align*}
& |a(r') - a(r_k)| - cr_k^{n-2 - 2\tau + \eta}
\\ & \leq \bigg| M_{C^0}(g^k_{(.9/1.1 r_k)^{2-\eta}}, \varphi_{(1.1/.9 r')^{-\eta}}(\tfrac{(.9/1.1 r_k)^{2-\eta}}{(1.1/.9 r')^2}), r') 
\\& \qquad \qquad - M_{C^0}(g_{(.9/1.1 r_k)^{2-\eta}}, \varphi_{(.9/1.1 r_k)^{-\eta}}(\tfrac{(.9/1.1 r_k)^{2-\eta}}{(.9/1.1 r_k)^2}), r_k)\bigg| 
\\& = \left| m_{C^2}(g^k_{(.9/1.1 r_k)^{2-\eta}}, \ell' r') - m_{C^2}(g^k_{(.9/1.1 r_k)^{2-\eta}}, \ell r_k) \right|
\\&= \left| \int_{A(0, \ell r_k, \ell' r')} R(g^k_{(.9/1.1 r_k)^{2-\eta}}) - Q^R[g^k_{(.9/1.1 r_k)^{2-\eta}}]dx \right|
\\& \leq \int_{A(0, .9r_k, 1.1r')} R(g^k_{(.9/1.1 r_k)^{2-\eta}}) dx + cr_k^{n-2-2\tau + \eta},
\end{align*}
where again we have used (\ref{eq:negativescalarcurvatureestimate}) in the last step. Therefore, (\ref{eq:L1scalarcondition}) implies that
\begin{equation*}
\lim_{k \to \infty}\sup_{r' > 1.1/.9 r_k} |a(r') - a(r_k)|  = 0,
\end{equation*}
and hence (\ref{eq:suplimitwithscaling}) holds.
\end{proof}

We are now ready to prove Theorem \ref{thm:fullC0existence}. The proof follows quickly from our previous results.
\begin{proof}[Proof of Theorem \ref{thm:fullC0existence}]
To prove the first statement, observe that if $m_{ADM}(g, \Phi)$ is finite, then by Remark \ref{rmk:C0agreeswithC1average} we have
\begin{align*}
&\left| M_{C^0}(g, \Phi, \varphi_{r^{-\eta}}(0), r) - m_{ADM}(g, \Phi)\right| 
\\& = \left|\frac{\int_{.9 r}^{1.1 r}\tfrac{1}{r}\varphi_{r^{-\eta}}(\tfrac{u}{r}, 0)\left[ \int_{\S(u)} (\partial_j g_{ij} - \partial_i g_{jj})\nu^i dS - m_{ADM}(g, \Phi)\right]du}{\int_{.9}^{1.1}\varphi_{r^{-\eta}}(u, 0)du}\right|
\\& \leq \max_{u \in [.9, 1.1]} \left| \int_{\S(ur)} (\partial_j g_{ij} - \partial_i g_{jj})\nu^i dS - m_{ADM}(g, \Phi) \right|
\\& \xrightarrow[r \to \infty]{}0,
\end{align*}
so
\begin{equation*}
\lim_{r\to \infty} M_{C^0}(g, \Phi, \varphi_{r^{-\eta}}(0), r) = m_{ADM}(g, \Phi).
\end{equation*}

If $m_{ADM}(g, \Phi) = \infty$, then for all $N>0$ there exists $r_N$ such that for all $r> r_N$,
\begin{equation*}
 \int_{\S(r)} (\partial_j g_{ij} - \partial_i g_{jj})\nu^i dS > N.
\end{equation*}
Then, for all $r> r_N/.9$, we have
\begin{equation*}
M_{C^0}(g, \Phi, \varphi_{r^{-\eta}}(0), r) \geq \min_{u\in [.9, 1.1]}  \int_{\S(ur)} (\partial_j g_{ij} - \partial_i g_{jj})\nu^i dS > N,
\end{equation*}
so
\begin{equation*}
\lim_{r\to \infty} M_{C^0}(g, \Phi, \varphi_{r^{-\eta}}(0), r) = \infty.
\end{equation*}
The proof the show the result when $m_{ADM}(g, \Phi) = -\infty$ is similar.

We now prove the second statement. By Lemma \ref{lemma:prelimC0existence}, $\lim_{r\to\infty} M_{C^0}(g, \Phi, \varphi_{r^{-\eta}}(\cdot, 0), r)$ exists. By Theorem \ref{thm:fullC0monotonicity}, the limit is independent of choice of $\Phi$ and $\varphi$. By Theorem \ref{thm:finitemass} applied to $\Phi_*g$, the limit is finite if and only if the condition (\ref{eq:L1scalarcondition}) holds.
\end{proof}

\appendix
\section{A bilipschitz map is $C^0$-close to a Euclidean isometry}\label{appendix:almostisometries}

\begin{lemma}\label{lemma:bilipschitztoalmostisometry} 
For all $\delta < 1$, $r>0$ the following is true:

Suppose that $C,D\subset \R^n$ are some domains, and $\phi: D\to C$ is a diffeomorphism. If, for some $x_0 \in D$, $B(x_0, (1+\delta)r)\subset D$, $B(\phi(x_0), r)\subset C$, and $\phi$ is locally $(1+\delta)$-bilipschitz on $B(x_0, (1+\delta )r)$, then $\phi$ is a $4\delta r$-isometry on $B(x_0, r/(1+\delta))$, i.e. if  $|| \phi^*\delta - \delta||_{C^0(B(x_0, (1+\delta)r))} \leq \delta$ then for all $x,y \in B(x_0, r/(1+\delta))$ we have
\begin{equation*}
||\phi(x) - \phi(y)| - |x-y| |\leq 4\delta r.
\end{equation*}
\end{lemma}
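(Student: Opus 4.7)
The proof naturally splits into upper and lower bounds on $|\phi(x) - \phi(y)| - |x-y|$ for $x,y \in B(x_0, r/(1+\delta))$. For the upper bound, I would observe that $B(x_0, r/(1+\delta))$ is convex and sits inside $B(x_0, (1+\delta)r)$, so the straight segment from $x$ to $y$ lies entirely in the region where the local $(1+\delta)$-bilipschitz hypothesis holds. Integrating $|d\phi| \leq 1+\delta$ along this segment yields $|\phi(x) - \phi(y)| \leq (1+\delta)|x-y|$, and since $|x-y| < 2r/(1+\delta)$ this gives $|\phi(x) - \phi(y)| - |x-y| \leq \delta|x-y| < 2\delta r$.

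For the lower bound, the idea is to apply (\ref{eq:localglobalbilipdiffeo}) to the restricted diffeomorphism $\phi|_{B(x_0,(1+\delta)r)} \colon B(x_0, (1+\delta)r) \to \phi(B(x_0,(1+\delta)r))$, which is locally $(1+\delta)$-bilipschitz. The domain hypothesis of (\ref{eq:localglobalbilipdiffeo}) is automatic by convexity of $B(x_0,(1+\delta)r)$. The image hypothesis asks that the segment $\sigma(t) = (1-t)\phi(x) + t\phi(y)$ lie in $\phi(B(x_0, (1+\delta)r))$; by the upper bound applied with $y = x_0$, we at least know $\phi(x), \phi(y) \in \overline{B(\phi(x_0), r)}$, so $\sigma \subset B(\phi(x_0), r) \subset C$. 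Granting the segment lies in the image of the ball, (\ref{eq:localglobalbilipdiffeo}) combined with (\ref{eq:bilipschitzinverse}) gives $|x-y| \leq (1+2\delta)|\phi(x) - \phi(y)|$, whence
\[
|x-y| - |\phi(x) - \phi(y)| \;\leq\; \frac{2\delta |x-y|}{1+2\delta} \;<\; \frac{4\delta r}{(1+\delta)(1+2\delta)} \;\leq\; 4\delta r.
\]

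The main obstacle is the verification that $\sigma \subset \phi(B(x_0, (1+\delta)r))$. My approach would be a continuity argument on the lift $\alpha(t) := \phi^{-1}(\sigma(t))$, which is a well-defined continuous path from $x$ to $y$ in $D$ because $\phi$ is a diffeomorphism onto $C$ and $\sigma \subset C$. Letting $T = \sup\{t \in [0,1] : \alpha([0,t]) \subset B(x_0, (1+\delta)r)\}$, I would integrate the bound $|d\phi^{-1}| \leq 1/(1-\delta)$ (coming from local $(1+\delta)$-bilipschitz) along $\alpha|_{[0,T]}$ together with the triangle inequality through $x$ to show that, were $T<1$, the quantity $|\alpha(T) - x_0|$ would be strictly less than $(1+\delta)r$, contradicting $\alpha(T) \in \partial B(x_0, (1+\delta)r)$; hence $T = 1$. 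The delicate point is that the precise slack between the inner radius $r/(1+\delta)$ where the endpoints sit, the codomain radius $r$, and the outer domain radius $(1+\delta)r$ is exactly what makes this computation close — this is the arithmetic justification for the particular combination of radii in the hypotheses of the lemma, and is the step that requires the most care.
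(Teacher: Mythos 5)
Your overall route is the same as the paper's: show $\phi(x),\phi(y)\in B(\phi(x_0),r)$ by integrating $|d\phi|\le 1+\delta$ along segments from $x_0$, note the straight segment $\sigma$ between $\phi(x)$ and $\phi(y)$ lies in $B(\phi(x_0),r)\subset C$, and then pull back along $\sigma$ with the bound on $d\phi^{-1}$ as in (\ref{eq:localglobalbilipdiffeo}) to get $(1-2\delta)|x-y|\le|\phi(x)-\phi(y)|\le(1+2\delta)|x-y|$, which with $|x-y|<2r/(1+\delta)$ gives the $4\delta r$ bound. You are also right that the only delicate point is why the bound on $d\phi^{-1}$ may be invoked along $\sigma$, i.e.\ why $\phi^{-1}(\sigma(t))$ stays in the ball $B(x_0,(1+\delta)r)$ where the local bilipschitz hypothesis holds; the paper's own proof passes over this quickly.

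However, the continuity argument you propose for exactly this point does not close. If $T<1$ is the first exit time of $\alpha=\phi^{-1}\circ\sigma$ from $B(x_0,(1+\delta)r)$, integrating $|d\phi^{-1}|$ along $\alpha|_{[0,T]}$ only gives $|\alpha(T)-x|\le (1+\delta)\,T\,|\phi(x)-\phi(y)|$, and $|\phi(x)-\phi(y)|$ can be of size nearly $2r$; the triangle inequality through $x$ then only yields $|\alpha(T)-x_0|\le r/(1+\delta)+2(1+\delta)r$, roughly $3r$, which is far above the threshold $(1+\delta)r$, so no contradiction with $\alpha(T)\in\partial B(x_0,(1+\delta)r)$ arises. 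The arithmetic cannot be rescued along this route, because the length of $\sigma$ is comparable to $2r$ while the available slack in the domain ball is only about $\delta r$ beyond $|x-x_0|$. The fix is to lift the \emph{radial} segments from $\phi(x_0)$ instead: for $z\in B(\phi(x_0),r)$ the path $s\mapsto\phi^{-1}((1-s)\phi(x_0)+sz)$ starts at $x_0$ and, for as long as it remains in $B(x_0,(1+\delta)r)$, has length at most $(1+\delta)|z-\phi(x_0)|<(1+\delta)r$; the first-exit-time argument applied to this path shows it never reaches the boundary, hence $\phi^{-1}\bigl(B(\phi(x_0),r)\bigr)\subset B(x_0,(1+\delta)r)$. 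This puts every point $\phi^{-1}(\sigma(t))$ in the good ball, so $|d\phi^{-1}|\le 1+2\delta$ (via (\ref{eq:bilipschitzinverse})) holds along $\sigma$ and your lower-bound computation goes through; it is in this radial lift, with path length $<r$ against domain radius $(1+\delta)r$, that the particular radii $r/(1+\delta)$, $r$, $(1+\delta)r$ are actually matched. (Both your argument and the paper's use the inverse estimate, which needs $\delta<1/2$; this costs nothing, since for $\delta$ bounded away from $0$, say $\delta\ge 0.4$, one already has $|x-y|<2r/(1+\delta)\le 4\delta r$ and the lower bound is trivial.)
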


\begin{proof}
First assume that $\phi(x_0) = 0$ and $x_0 = 0$. Note that, for all $x\in B(0, r/(1+\delta))$, we have
\begin{equation*}
|\phi(x) - 0| \leq \int_0^1 |d\phi_{sx}||x|ds \leq (1+ \delta)r/(1+\delta) = r.
\end{equation*} 
In particular, for all $x, y \in B(0, r/(1+\delta))$, $\phi(x), \phi(y) \in B(0, r)$, so the shortest path from $\phi(x)$ to $\phi(y)$ is contained in the ball $B(0,r) \subset C$.

Now fix some $x, y \in B(0, r/(1+\delta))$, so that $\phi(x), \phi(y) \in B(0,r)$. By (\ref{eq:localglobalbilipdiffeo}) we have
\begin{equation}\label{eq:bilipschitzpointwisecondition}
(1-2\delta)|x-y|  \leq |\phi(x) - \phi(y)| \leq (1+2\delta)|x-y|.
\end{equation}
 We now address the assumptions that $x_0 = 0$ and $\phi(0) = 0$. For general $x_0$ and $\phi(x_0)$, apply (\ref{eq:bilipschitzpointwisecondition}) to the map $\hat \phi(x):= \phi(x+ x_0) - \phi(x_0)$. Then for all $x,y \in B(x_0, r/(1+\delta))$, $x- x_0 \in B(0, r/(1+\delta))$, $y- x_0 \in B(0, r/(1+\delta))$, and
\begin{equation*}
|\phi(x) - \phi(y)| = |\hat \phi(x- x_0) - \hat \phi(y- x_0)|
\end{equation*}
so by (\ref{eq:bilipschitzpointwisecondition}) we have
\begin{align*}
(1-2\delta)|x-y| & = (1-2\delta)|(x- x_0) - (y- x_0)| \leq |\hat \phi(x - x_0) - \hat \phi(y-x_0)|
\\& = |\phi(x) - \phi(y)|,
\end{align*}
and similarly,
\begin{equation*}
|\phi(x) - \phi(y)| = |\hat \phi(x- x_0) - \hat \phi(y- x_0)| \leq (1+2\delta)|x-y|.
\end{equation*}
as above. In particular,
\begin{equation*}
||\phi(x) - \phi(y)| - |x-y|| \leq 2\delta|x-y| \leq 4r\frac{\delta}{1+\delta} \leq 4r\delta.
\end{equation*}
\end{proof}

We now record the following result, which is a special case of \cite[Theorem $1$]{Vestfrid05}. See Section \ref{subsec:bilipschitzmaps} for the definition of a $\delta$-isometry.
\begin{lemma}\label{lemma:almostisometryclosetoisowmultloss}
There exists $c= c(n)$ and $\bar \delta = \bar \delta(n)< 1$ such that for all $\delta < \bar \delta$ the following is true:

If $\phi$ is a continuous $\delta$-isometry on some ball $B(x_0, r)$, then there exists a Euclidean isometry $L$ such that $L(x_0) = \phi(x_0)$ and
\begin{equation*}
|| \phi - L||_{C^0(B(x_0, r))} \leq  c\delta.
\end{equation*}

In particular, for all $\delta < \bar \delta$, if $\phi: D\to C$ is a diffeomorphism between some domains $C,D \subset \R^n$ such that $B(x_0, (1+\delta)r)\subset D$,  $B(\phi(x_0), r)\subset C$, and $\phi$ is locally $(1+\delta)$-bilipschitz on $B(x_0, (1+\delta)r)$, then there exists a Euclidean isometry $L$ such that $L(x_0) = \phi(x_0)$ and 
\begin{equation*}
|| \phi - L||_{C^0(B(x_0, r/(1+\delta)))} \leq 4c\delta r.
\end{equation*}
\end{lemma}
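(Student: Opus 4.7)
The statement has two parts. Part (a) is the standalone claim about $\delta$-isometries on a ball, and part (b) (``in particular'') is the deduction for bilipschitz diffeomorphisms. My plan is to obtain (a) from \cite[Theorem $1$]{Vestfrid05}, with a minor translation adjustment to enforce $L(x_0)=\phi(x_0)$, and then derive (b) directly by combining (a) with Lemma \ref{lemma:bilipschitztoalmostisometry}.

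For (a), Vestfrid's theorem applied to the bounded set $B(x_0,r)$ produces some Euclidean isometry $\tilde L:\mathbb{R}^n\to\mathbb{R}^n$ and a universal constant $c_0=c_0(n)$ such that
\begin{equation*}
\sup_{x\in B(x_0,r)}|\phi(x)-\tilde L(x)|\leq c_0(n)\,\delta.
\end{equation*}
Writing $\tilde L(x)=Ox+v$ for an orthogonal matrix $O$ and $v\in\mathbb{R}^n$, define
\begin{equation*}
L(x):=\tilde L(x)+\bigl(\phi(x_0)-\tilde L(x_0)\bigr)=Ox+\bigl(v+\phi(x_0)-\tilde L(x_0)\bigr).
\end{equation*}
Then $L$ is a Euclidean isometry, $L(x_0)=\phi(x_0)$ by construction, and for every $x\in B(x_0,r)$ the triangle inequality gives
\begin{equation*}
|\phi(x)-L(x)|\leq|\phi(x)-\tilde L(x)|+|\tilde L(x_0)-\phi(x_0)|\leq 2c_0(n)\,\delta.
\end{equation*}
Redefining $c=c(n):=2c_0(n)$ yields part (a). The only delicate point here is that Vestfrid's theorem is stated for $\delta$-isometries on arbitrary subsets of $\mathbb{R}^n$ and does not require continuity beyond what one uses to measure the error pointwise, so no further reduction is needed.

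For (b), the hypotheses $B(x_0,(1+\delta)r)\subset D$, $B(\phi(x_0),r)\subset C$, and the local $(1+\delta)$-bilipschitz condition on $B(x_0,(1+\delta)r)$ are precisely those of Lemma \ref{lemma:bilipschitztoalmostisometry}. That lemma asserts that $\phi$ restricted to $B(x_0,r/(1+\delta))$ is a $4\delta r$-isometry. I then apply part (a) to $\phi|_{B(x_0,r/(1+\delta))}$, with the ``$\delta$'' parameter there taken to be $4\delta r$ and the ``$r$'' parameter there taken to be $r/(1+\delta)$. Choose $\bar\delta=\bar\delta(n)<1$ small enough so that Lemma \ref{lemma:bilipschitztoalmostisometry} applies (which only requires $\delta<1$). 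The output of (a) is a Euclidean isometry $L$ with $L(x_0)=\phi(x_0)$ and
\begin{equation*}
\|\phi-L\|_{C^0(B(x_0,r/(1+\delta)))}\leq c(n)\cdot 4\delta r=4c\delta r,
\end{equation*}
which is exactly the claim.

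\textbf{Main obstacle.} There is no real analytic obstacle: part (a) is cited from the literature, and part (b) is a two-line reduction via Lemma \ref{lemma:bilipschitztoalmostisometry}. The only thing to be careful about is the scaling when invoking (a) inside the proof of (b) — namely that the isometry-defect parameter $4\delta r$ carries a factor of $r$, so the final estimate in (b) is linear in $r$, as the statement requires.
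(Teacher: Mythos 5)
Your argument follows the paper's proof essentially verbatim: the first statement is quoted from Vestfrid's Theorem 1 (your translation adjustment to pin $L(x_0)=\phi(x_0)$, at the cost of doubling the constant, is a harmless addition), and the second statement is the same short reduction via Lemma \ref{lemma:bilipschitztoalmostisometry}. The only point worth making explicit is that when you invoke part (a) with defect parameter $4\delta r$ you should first rescale by $1/r$ (so the defect becomes $4\delta$, which lies below the threshold after shrinking $\bar\delta(n)$ by a factor of $4$), since $4\delta r$ need not be smaller than $\bar\delta$ when $r$ is large; the bound linear in $r$ then follows by scaling back.
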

\begin{proof}
The second statement follows from the first statement and Lemma \ref{lemma:bilipschitztoalmostisometry}. The first statement is due to \cite[Theorem $1$]{Vestfrid05}.
\end{proof}

\section{Bilipschitz multiplicative loss with varying mollification scale}\label{appendix:mollification}
Here we will fix some conventions involving mollified maps. Let $\zeta: \R^n \to \R $ denote the standard mollifier,
\begin{equation*}
\zeta(z) = c(n)\exp\left(-\frac{1}{1 - |z|^2}\right),
\end{equation*}
where $c(n)$ is a normalization constant chosen so that
\begin{equation*}\label{eq:zetaintegratesto1}
\int_{\R^n}\zeta(z)dz = 1.
\end{equation*} 
Observe that $\zeta \in W^{1,1}(\R^n)$, and that $\zeta \equiv 0$ outside of $B(0, 1)$.

If $D\subset \R^n$ is some domain and $F:D \to \R^n$ is any continuous map, then, for any $\rho_0>0$ we use $F_{\rho_0}$ to denote the map, defined on $\{x\in D: B(x, \rho_0)\subset D\}$, which is given by
\begin{equation*}
F_{\rho_0}(x) = \int_{\R^n}F(x - \rho_0z)\zeta(z)dz = \int_{\R^n}F(x-z)\zeta\left(\frac{z}{\rho_0}\right)\rho_0^{-n}dz = \int_{\R^n}F(z)\zeta\left(\frac{x-z}{\rho_0}\right)\rho_0^{-n}dz.
\end{equation*}

If $\rho: [0,\infty)\to [0, \infty)$ is any continuous function, then we use $F_{\rho}$ to denote the map given by $F_{\rho}(x) = (F_{\rho(|x|)})(x)$, i.e. 
\begin{equation}\label{eq:defmollwvaryingscale}
F_{\rho}(x) = \int_{\R^n}F(x - \rho(|x|)z)\zeta(z)dz,
\end{equation}
so $F_{\rho}$ is defined on $\{x\in D : B(x, \rho(|x|))\subset D\}$. For $x$ such that $\rho(|x|)\neq 0$, we have
\begin{equation}\label{eq:varyingmollscalecov}
F_{\rho}(x) = \int_{\R^n}F(z)\zeta\left(\frac{x-z}{\rho(|x|)}\right)\rho(|x|)^{-n}dz.
\end{equation}

We first show the following result, for a constant mollification scale.
\begin{lemma}\label{lemma:closenesstoisofixedmollscale}
There exists $c = c(n)$ and $\bar \delta = \bar \delta(n)$ such that for all $\delta < \bar \delta$, the following is true for all $\rho_0 >0$:

Suppose $\phi: D\to C$ is a diffeomorphism between some domains $C,D \subset \R^n$ such that $B(x_0, 2(1+\delta)^2\rho_0) \subset D$, $B(\phi(x_0), 2(1+\delta)\rho_0)\subset C$. If $\phi$ is locally $(1+\delta)$-bilipschitz on $B(x_0, 2(1+\delta)^2\rho_0)$, then there exists a Euclidean isometry $L^{x_0,\rho_0}$ such that $L^{x_0,\rho_0}(x_0) = \phi(x_0)$ and 
\begin{equation*}
\rho_0^{-1}|| \phi_{\rho_0} - L^{x_0,\rho_0} ||_{C^0(B(x_0, \rho_0))} + || d\phi_{\rho_0} - dL^{x_0, \rho_0}||_{C^0(B(x_0, \rho_0))} \leq c\delta
\end{equation*}
\end{lemma}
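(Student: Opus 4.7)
The plan is to reduce to Lemma \ref{lemma:almostisometryclosetoisowmultloss} and then transfer the resulting $C^0$-closeness through the mollification, with the derivative estimate handled by an integration-by-parts trick rather than by pointwise averaging of $d\phi$. First I would apply Lemma \ref{lemma:almostisometryclosetoisowmultloss} with the choice $r = 2(1+\delta)\rho_0$. This matches its hypotheses exactly: $B(x_0, (1+\delta)r) = B(x_0, 2(1+\delta)^2\rho_0) \subset D$, $B(\phi(x_0), r) = B(\phi(x_0), 2(1+\delta)\rho_0) \subset C$, and $\phi$ is locally $(1+\delta)$-bilipschitz on $B(x_0, 2(1+\delta)^2\rho_0)$. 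The lemma then produces a Euclidean isometry $L = L^{x_0,\rho_0}$ with $L(x_0) = \phi(x_0)$ and, since $r/(1+\delta) = 2\rho_0$,
\begin{equation*}
\|\phi - L\|_{C^0(B(x_0, 2\rho_0))} \leq c(n)\delta\rho_0.
\end{equation*}

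Next I would deduce the $C^0$-estimate on the mollified maps. Because $L$ is affine and $\zeta$ is radially symmetric with $\int\zeta = 1$, a direct computation gives $L_{\rho_0}(x) = L(x)$ wherever $L_{\rho_0}$ is defined. Hence, for $x \in B(x_0, \rho_0)$ (so that $B(x, \rho_0) \subset B(x_0, 2\rho_0) \subset D$),
\begin{equation*}
\phi_{\rho_0}(x) - L(x) \;=\; (\phi - L)_{\rho_0}(x) \;=\; \int_{\R^n}\bigl[\phi(x - \rho_0 z) - L(x - \rho_0 z)\bigr]\zeta(z)\,dz,
\end{equation*}
whose absolute value is at most $\|\phi - L\|_{C^0(B(x_0, 2\rho_0))} \leq c(n)\delta\rho_0$, as required.

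For the derivative estimate, the naive approach of bounding $|d\phi - dL|$ pointwise and then averaging fails, since the $(1+\delta)$-bilipschitz condition does not directly give $|d\phi - dL| = O(\delta)$ (one would have to argue that the pointwise orthogonal factors in the polar decomposition of $d\phi$ are all close to the single matrix $dL$). Instead I would integrate by parts in the convolution formula: using the representation $\phi_{\rho_0}(x) = \int\phi(y)\zeta((x-y)/\rho_0)\rho_0^{-n}\,dy$ (valid on $B(x_0, \rho_0)$) and the analogous identity for $L$ (which reproduces $dL$ upon differentiating, since $L_{\rho_0} = L$), we obtain
\begin{equation*}
d\phi_{\rho_0}(x) - dL \;=\; \int_{\R^n}\bigl[\phi(y) - L(y)\bigr]\,\rho_0^{-1}(\nabla\zeta)\!\left(\tfrac{x-y}{\rho_0}\right)\rho_0^{-n}\,dy.
\end{equation*}
Changing variables $w = (x-y)/\rho_0$ and applying the $C^0$-bound from the previous paragraph on the support of $w \mapsto (\nabla\zeta)(w)$, the right-hand side is bounded in absolute value by $c(n)\delta\rho_0 \cdot \rho_0^{-1}\|\nabla\zeta\|_{L^1(\R^n)} \leq c(n)\delta$. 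Combining with the $C^0$-estimate yields the claimed bound.

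The only real obstacle is picking the radii correctly: to bound the convolution on $B(x_0, \rho_0)$ one needs $C^0$-closeness on the enlarged ball $B(x_0, 2\rho_0)$, yet this enlarged ball must still sit inside the region on which the bilipschitz hypothesis is furnished, with an extra $(1+\delta)$-buffer demanded by Lemma \ref{lemma:almostisometryclosetoisowmultloss}. The choice $r = 2(1+\delta)\rho_0$ balances these two constraints exactly, which is why the hypothesis is stated with the specific radii $2(1+\delta)^2\rho_0$ and $2(1+\delta)\rho_0$.
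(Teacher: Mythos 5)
Your proposal is correct and follows essentially the same route as the paper: apply Lemma \ref{lemma:almostisometryclosetoisowmultloss} (with the radius chosen so the hypotheses match and the $C^0$-bound lands on $B(x_0,2\rho_0)$), use that the isometry is unchanged by mollification, bound $\phi_{\rho_0}-L$ by the sup of $\phi-L$ on the enlarged ball, and obtain the derivative bound by differentiating the mollification kernel so that only the $C^0$-closeness of $\phi$ to $L$ is needed. The paper's proof of the derivative estimate is the same kernel-differentiation computation you describe, so there is nothing further to add.
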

\begin{proof}
Let $c$ and $\bar \delta$ be as in Lemma \ref{lemma:almostisometryclosetoisowmultloss}. We will adjust $c$ throughout the proof. By Lemma \ref{lemma:almostisometryclosetoisowmultloss} there exists a Euclidean isometry $L$ such that $L(x_0) = \phi(x_0)$ and $|| \phi - L||_{C^0(B(x_0,2\rho_0))} \leq c\delta\rho_0$, so we have
\begin{align*}
|| \phi_{\rho_0} - L||_{C^0(B(x_0,\rho_0))} &=  \sup_{x\in B(x_0,\rho_0)}\left| \int_{B(x,\rho_0)}(\phi(z) - L(z))\zeta\left(\frac{x-z}{\rho_0}\right)\rho_0^{-n}dz \right|
\\& \leq || \phi - L||_{C^0(B(x_0,2\rho_0))} \leq  c\delta\rho_0,
\end{align*}
with $c$ adjusted. Similarly,
\begin{align*}
||d\phi_{\rho_0} - dL||_{C^0(B(x_0,\rho_0))} &= \sup_{x\in B(x_0,\rho_0)}\left| \int_{B(x,\rho_0)}(\phi(z) - L(z))\otimes (\nabla\zeta(|\cdot|))\big|_{\frac{x-z}{\rho_0}}\rho_0^{-n-1}dz \right|
\\& \leq \frac{c(n)}{\rho_0}|| \phi - L||_{C^0(B(x_0, 2\rho_0))} \leq c\delta,
\end{align*}
with $c$ adjusted. 
\end{proof}

\begin{lemma}\label{lemma:isometriesundermollification}
Let $L$ be any Euclidean isometry and $\rho: [0, \infty) \to [0, \infty)$ be any $C^1$ function. Then
\begin{equation*}
L_{\rho}\equiv L.
\end{equation*}
\end{lemma}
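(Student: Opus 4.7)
The plan is a direct computation exploiting two facts: affine maps pass through convolution nicely, and the standard mollifier $\zeta$ is radially symmetric.

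First I would write the Euclidean isometry in affine form $L(y) = Oy + v$ for some orthogonal matrix $O \in O(n)$ and some translation $v \in \R^n$. Plugging this into the definition (\ref{eq:defmollwvaryingscale}) of the mollification with varying scale gives
\begin{equation*}
L_\rho(x) = \int_{\R^n} \bigl(O(x - \rho(|x|) z) + v\bigr) \zeta(z) \, dz = Ox + v - \rho(|x|) \, O\!\int_{\R^n} z\, \zeta(z)\, dz,
\end{equation*}
where I have used that $\int_{\R^n} \zeta(z)\, dz = 1$ to produce the term $Ox + v = L(x)$.

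The key step is then to observe that $\int_{\R^n} z\, \zeta(z)\, dz = 0$. Indeed, $\zeta(z)$ depends only on $|z|$, so for each coordinate $i$ the integrand $z^i \zeta(z)$ is odd in $z^i$ while the domain $\R^n$ is symmetric under $z^i \mapsto -z^i$; hence the integral vanishes component by component. Combining the two displays yields $L_\rho(x) = L(x)$ for every $x$ where the mollification is defined, independent of the choice of $\rho$.

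I do not anticipate any substantive obstacle: the only subtlety worth flagging is the degenerate case $\rho(|x|) = 0$, but in that case the mollified value reduces directly to $L(x)$ since $L$ is continuous and the convolution measure is a Dirac mass in the limit (alternatively, the displayed identity above holds regardless, because the vanishing of $\int z\,\zeta(z)\,dz$ means the coefficient of $\rho(|x|)$ is zero).
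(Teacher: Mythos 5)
Your proposal is correct and follows essentially the same route as the paper: write $L(y)=Oy+v$, expand the mollification using $\int_{\R^n}\zeta(z)\,dz=1$, and observe that $\int_{\R^n}z\,\zeta(z)\,dz=0$ by the symmetry of $\zeta$ (the paper phrases this as $\int_{\R^n}Oz\,\zeta(z)\,dz=0$ via the change of variables $y=Oz$, which is the same fact). No gaps.
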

\begin{proof}
First note that, for any orthogonal matrix $O$, we have
\begin{equation}\label{eq:orthogonalmatrixmollification}
\int_{\R^n} Oz\zeta(z)dz = 0.
\end{equation}
This follows from the fact that
\begin{equation*}
\int_{\R^n}y\exp\left(-\frac{1}{1 - |y|^2}\right)dy = 0,
\end{equation*}
after performing the change of variables $y = Oz$. Now write $L(x) = Ox + v$ for some orthogonal matrix $O$ and some fixed vector $v$. Then we have
\begin{align*}
L_{\rho}(x) &= \int_{\R^n}L(x - \rho(|x|)z)\zeta(z)dz
\\& = \int_{\R^n}(Ox - \rho(|x|)Oz + v)\zeta(z)dz
\\&= Ox -\rho(|x|)\int_{\R^n}Oz\zeta(z)dz + v
\\&= Ox + v = L(x).
\end{align*}
\end{proof}

\begin{lemma}\label{lemma:mollfiedptwisederivest}
If $\phi: D \to \R^n$ is any $C^0$ map defined on a subset $D\subset \R^n$, $L$ is any isometry, and $\rho: [0, \infty) \to [0,\infty)$ is a $C^1$ function, then, for any point $x$ such that $\rho(|x|)\neq 0$ and $B(x, \rho(|x|))\subset D$, we have
\begin{equation*}
|d\phi_{\rho} - dL|\big|_x \leq c(n)|| \phi - L||_{C^0(B(x, \rho(|x|)))}\left| \frac{1}{\rho(|x|)} + \frac{\rho'(|x|)}{\rho(|x|)} \right|
\end{equation*}
\end{lemma}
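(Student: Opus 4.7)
The plan is to reduce to the case $L \equiv 0$, move the derivative onto the mollifying kernel via its $x$-dependence (no derivatives of $\phi$ are used, which is essential since only a $C^0$ bound on $\phi-L$ is available), and bound the resulting kernel derivative by a change of variables.

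First, since mollification with varying scale is linear in the integrand and by Lemma~\ref{lemma:isometriesundermollification} we have $L_\rho \equiv L$, we may write
\begin{equation*}
d\phi_\rho - dL = d\phi_\rho - dL_\rho = d(\phi - L)_\rho.
\end{equation*}
Set $\psi := \phi - L$. Then, using the change of variables form (\ref{eq:varyingmollscalecov}) at points where $\rho(|x|) \neq 0$, we have
\begin{equation*}
\psi_\rho(x) = \int_{\R^n}\psi(y)\, K(x,y)\,dy, \qquad K(x,y) := \zeta\!\left(\frac{x-y}{\rho(|x|)}\right)\rho(|x|)^{-n}.
\end{equation*}
The integrand is supported in $y \in B(x,\rho(|x|))\subset D$, where by hypothesis $\psi$ is controlled by $\|\psi\|_{C^0(B(x,\rho(|x|)))}$.

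Next, differentiate $K$ in $x$ by the chain rule. Setting $u = (x-y)/\rho(|x|)$ and using $\partial_i \rho(|x|) = \rho'(|x|) x_i/|x|$, the factor $\partial_{x_i}[(x_j-y_j)/\rho(|x|)]$ yields two terms: one bounded by $1/\rho(|x|)$ and one bounded by $|x-y||\rho'(|x|)|/\rho(|x|)^2$. The factor $\partial_{x_i}[\rho(|x|)^{-n}] = -n\rho(|x|)^{-n-1}\rho'(|x|) x_i/|x|$ gives an extra term of size $n\rho(|x|)^{-n-1}|\rho'(|x|)|\zeta(u)$. Combining, for $y \in B(x,\rho(|x|))$ (where $|x-y| \leq \rho(|x|)$, hence $|u|\leq 1$),
\begin{equation*}
|d_x K(x,y)| \leq \rho(|x|)^{-n}|\nabla\zeta(u)|\!\left(\frac{1}{\rho(|x|)} + \frac{|\rho'(|x|)|}{\rho(|x|)}\right) + c(n)\rho(|x|)^{-n-1}|\rho'(|x|)|\zeta(u).
\end{equation*}

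Finally, integrate this bound in $y$ and make the change of variables $u = (x-y)/\rho(|x|)$, $dy = \rho(|x|)^n du$. The $y$-integrals become universal constants: $\int_{B(0,1)}|\nabla\zeta(u)|\,du$ and $\int_{B(0,1)}\zeta(u)\,du = 1$. This produces
\begin{equation*}
\int_{B(x,\rho(|x|))} |d_x K(x,y)|\,dy \leq c(n)\!\left(\frac{1}{\rho(|x|)} + \frac{|\rho'(|x|)|}{\rho(|x|)}\right),
\end{equation*}
which, after pulling $\psi$ out in $C^0$ norm under the integral, yields the claimed estimate (absorbing $|\rho'|/\rho$ into the expression on the right-hand side). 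The entire argument is a routine chain-rule computation; no step should pose any real obstacle, and the ingredients are: linearity of mollification, Lemma~\ref{lemma:isometriesundermollification} to eliminate $L$, and the trivial support bound $|x-y|\leq \rho(|x|)$.
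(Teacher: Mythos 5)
Your proposal is correct and follows essentially the same route as the paper's proof: eliminate $L$ via Lemma~\ref{lemma:isometriesundermollification}, differentiate the kernel in the change-of-variables form (\ref{eq:varyingmollscalecov}), and bound using the support condition $|x-z|\leq\rho(|x|)$ together with $\|\zeta\|_{W^{1,1}}$. The only cosmetic difference is that you keep $|\rho'|$ explicitly, which if anything is slightly more careful than the paper's final display.
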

\begin{proof}
Since $L$ is invariant under mollification by Lemma \ref{lemma:isometriesundermollification}, $dL|_x = dL_{\rho}|_x$. Therefore, we have
\begin{align*}
|d\phi_{\rho} - dL|\big|_x &\leq  \left| \int_{\R^n} (\phi(z) - L(z))\otimes (\nabla \zeta)(\tfrac{x- z}{\rho(|x|)})\left[\frac{I}{\rho(|x|)} - (x - z)\otimes \frac{\rho'(|x|)x}{\rho(|x|)^2|x|}\right]   \rho(|x|)^{-n}dz \right|
\\& + \left| \int_{\R^n} (\phi(z) - L(z))\otimes \zeta(\tfrac{x- z}{\rho(|x|)})\left(-n\rho(|x|)^{-n-1}\frac{\rho'(|x|)x}{|x|}\right)dz \right|
\\& \leq c(n)||\phi - L||_{C^0(B(x,\rho(|x|)))}\left| \frac{1}{\rho(|x|)} + \frac{\rho'(|x|)}{\rho(|x|)} \right| \int_{\R^n} |\nabla \zeta|(\tfrac{x- z}{\rho(|x|)})\left[ 1+ \frac{|x - z|}{\rho(|x|)}\right]\rho(|x|)^{-n}dz 
\\& +  c(n)||\phi - L||_{C^0(B(x,\rho(|x|)))}\left|\frac{\rho'(|x|)}{\rho(|x|)}\right|\left|\int |\zeta(\tfrac{x- z}{\rho(|x|)})|\rho(|x|)^{-n}dz\right|
\\& \leq c(|| \zeta||_{W^{1,1}(\R^n)}, n)||\phi - L||_{C^0(B(x,\rho(|x|)))}\left| \frac{1}{\rho(|x|)} + \frac{\rho'(|x|)}{\rho(|x|)} \right|.
\end{align*}
\end{proof}

\begin{corollary}\label{cor:annularbilipschitzestimate}
There exist $c = c(n, b)$ and $\bar \delta = \bar\delta(n) < 1$ such that for all $\delta < \bar \delta$ and all $r>0$ the following is true:

Suppose that $\phi: D\to C$ is a diffeomorphism between some domains $C,D\subset \R^n$ such that  $A(0, .5(1-\delta)r,  (10.5+ .5\delta)r)\subset D$ and, for all $x\in A(0, r, 10r)$, $B(\phi(x), r/4)\subset C$. Suppose that $\phi$ is locally $(1+\delta)$-bilipschitz on $A(0, .5(1-\delta)r,  (10.5+ .5\delta)r)$. Let $\rho: [1, \infty) \to (0, \tfrac{1}{4}]$ be a $C^1$ function with $|\rho'| \leq b$. If $\rho_r(\ell) = \rho(\ell/r)r$ then
 \begin{equation*}
 || \phi_{\rho_r}^*\delta - \delta ||_{C^0(A(0, r, 10r))} \leq c\delta.
 \end{equation*}
\end{corollary}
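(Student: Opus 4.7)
The plan is to work pointwise: at each $x \in A(0, r, 10r)$ I will construct a Euclidean isometry $L^x$ that is $C^0$-close to $\phi$ on the ball of radius $\rho_r(|x|)$ centered at $x$, then feed this approximation into Lemma \ref{lemma:mollfiedptwisederivest} to obtain a pointwise bound on $|d\phi_{\rho_r} - dL^x|_x$. Because $L^x$ is a Euclidean isometry, $(dL^x)^*\delta = \delta$, so any such bound transfers directly to $|\phi_{\rho_r}^*\delta - \delta|_x$ by a standard bilinear estimate. Taking suprema over $x$ then gives the stated $C^0$ bound on the annulus.

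First I would produce the approximating isometries. Fix $x \in A(0, r, 10r)$ and apply the second statement of Lemma \ref{lemma:almostisometryclosetoisowmultloss} with $x_0 = x$ and $s = \rho_r(|x|) \leq r/4$. The image-ball hypothesis $B(\phi(x), s) \subset C$ is automatic from $s \leq r/4$ and the assumption $B(\phi(x), r/4) \subset C$. The source-ball hypothesis $B(x, (1+\delta)s) \subset D$ reduces, at the boundary cases $|x| = r$ and $|x| = 10r$, to the inequalities $r - (1+\delta)r/4 \geq .5(1-\delta)r$ and $10r + (1+\delta)r/4 \leq (10.5 + .5\delta)r$, both of which hold once $\bar\delta(n)$ is small enough; thus $\phi$ is locally $(1+\delta)$-bilipschitz on this ball by hypothesis. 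The lemma produces a Euclidean isometry $L^x$ with $L^x(x) = \phi(x)$ and $\|\phi - L^x\|_{C^0(B(x, \rho_r(|x|)/(1+\delta)))} \leq 4c(n)\delta\,\rho_r(|x|)$. A one-line triangle inequality, using that $\phi$ is $(1+\delta)$-bilipschitz and $L^x$ is a $1$-Lipschitz isometry, extends this to
\[
\|\phi - L^x\|_{C^0(B(x, \rho_r(|x|)))} \leq c(n)\,\delta\,\rho_r(|x|).
\]

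Next I would apply Lemma \ref{lemma:mollfiedptwisederivest} with $\rho = \rho_r$ and $L = L^x$ at the point $x$. Since $\rho_r(|x|) > 0$, $B(x, \rho_r(|x|)) \subset D$, and $|\rho_r'(|x|)| = |\rho'(|x|/r)| \leq b$, the lemma yields
\[
|d\phi_{\rho_r} - dL^x|_x \leq c(n)\,\|\phi - L^x\|_{C^0(B(x, \rho_r(|x|)))} \cdot \frac{1 + |\rho_r'(|x|)|}{\rho_r(|x|)} \leq c(n, b)\,\delta,
\]
the key point being that the factor $\rho_r(|x|)$ from the $C^0$ bound cancels the $1/\rho_r(|x|)$ in the derivative estimate, so the bound is independent of the (variable) mollification scale. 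Finally, for any unit vectors $v, w$ at $x$,
\begin{align*}
|(\phi_{\rho_r}^*\delta - \delta)(v,w)| &= |\delta(d\phi_{\rho_r} v, d\phi_{\rho_r} w) - \delta(dL^x v, dL^x w)| \\
&\leq |d\phi_{\rho_r} - dL^x|_x \,\bigl(|d\phi_{\rho_r}|_x + |dL^x|_x\bigr) \leq c(n, b)\,\delta,
\end{align*}
using $|dL^x|_x = 1$ and $|d\phi_{\rho_r}|_x \leq 1 + c(n,b)\delta$ from the previous display (after shrinking $\bar\delta$ once more). Taking the supremum over $x \in A(0, r, 10r)$ gives the claim. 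The only real obstacle is the bookkeeping of the geometric containment conditions at the inner and outer boundaries of the annulus — once those arithmetic inequalities are verified, everything else is a direct combination of the two appendix lemmas.
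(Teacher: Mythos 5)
Your proposal is correct and follows essentially the same route as the paper's proof: approximate $\phi$ near each $x$ by a Euclidean isometry via Lemma \ref{lemma:almostisometryclosetoisowmultloss}, feed the resulting $C^0$ bound into Lemma \ref{lemma:mollfiedptwisederivest} so the factor $\rho_r(|x|)$ cancels the $1/\rho_r(|x|)$, and conclude with the bilinear estimate $|\phi_{\rho_r}^*\delta-\delta|_x\le(|d\phi_{\rho_r}|+|dL^x|)\,|d\phi_{\rho_r}-dL^x|_x$. Your extra triangle-inequality step extending the $C^0$ closeness from $B(x,\rho_r(|x|)/(1+\delta))$ to $B(x,\rho_r(|x|))$ is a welcome bit of care that the paper's write-up passes over silently.
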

\begin{proof}
Let $\bar \delta$ and $c$ be as in Lemma \ref{lemma:almostisometryclosetoisowmultloss}. We will adjust $c$ throughout the proof.

We first record a pointwise estimate. Fix some $x\in A(0, r, 10 r)$ so that 
\begin{equation*}
B(x, (1+\delta)\rho_r(|x|))\subset A(0, .5(1-\delta)r,  (10.5+ .5\delta)r) \subset D.
\end{equation*} 
Then Lemma \ref{lemma:almostisometryclosetoisowmultloss} implies that there is a Euclidean isometry $L^{x, r}$ such that 
\begin{equation*}
|| \phi - L^{x, r}||_{C^0(B(x, \rho_r(|x|)/(1+\delta)))} \leq c\delta \rho_r(|x|).
\end{equation*} 

In particular, Lemma \ref{lemma:mollfiedptwisederivest} implies that
\begin{equation*}
| d\phi_{\rho_r}  - dL^{x, r}|\big|_x \leq c\delta \rho_r(|x|)\left(\frac{1}{\rho_r(|x|)} + \frac{\rho_r'(|x|)}{\rho_r(|x|)}\right) = c\delta\left( 1 + \rho'(|x|/r) \right) \leq c\delta(1+b),
\end{equation*}
with $c$ adjusted.

Therefore, for all $x\in A(0, r, 10 r)$ we have
\begin{equation*}
|\phi_{\rho_r}^*\delta - \delta|\big|_x \leq (|d\phi_{\rho_r}|\big|_x + |dL^{x, r}|)| d\phi_{\rho_r}  - dL^{x, r}|\big|_x \leq c\delta,
\end{equation*}
with $c$ adjusted again.
\end{proof}

\bibliographystyle{plain}
\bibliography{C0ADMbib}

\end{document}